\documentclass[a4paper, 10pt, twoside]{article}

\usepackage{exscale, fullpage,url}
\usepackage[centertags]{amsmath}
\usepackage{color}
\usepackage{amssymb}
\usepackage{amsthm}
\usepackage{dsfont}
\usepackage[all]{xy}
\usepackage{mathrsfs}
\usepackage{upgreek}
\usepackage{tikz-cd}
\usepackage{hyperref}
\usepackage{graphicx,fp}

\newif\ifpdf
\pdffalse
\pdftrue

\newtheorem{definition}{Definition}[section]
\newtheorem*{theorem-non}{Theorem}
\newtheorem{theorem}[definition]{Theorem}
\newtheorem{proposition}[definition]{Proposition}
\newtheorem{lemma}[definition]{Lemma}
\newtheorem{corollary}[definition]{Corollary}

\newtheorem{remark}[definition]{Remark}

\newtheorem{example}[definition]{Example}
\newtheorem{thm}[definition]{Theorem}
\newtheorem{defn}[definition]{Definition}

\newcommand{\nd}{\noindent}

\newcommand{\dV}{{\mathds V}}
\newcommand{\dR}{{\mathds R}}
\newcommand{\dC}{{\mathds C}}
\newcommand{\dQ}{{\mathds Q}}
\newcommand{\dN}{{\mathds N}}

\newcommand{\dZ}{{\mathds Z}}
\newcommand{\dP}{{\mathds P}}

\newcommand{\dL}{{\mathbb L}}

\newcommand{\bD}{{\mathbb D}}

\newcommand{\mbh}{\mathds{H}}

\newcommand{\cD}{\mathcal{D}}
\newcommand{\cE}{\mathcal{E}}

\newcommand{\cH}{\mathcal{H}}
\newcommand{\cI}{\mathcal{I}}

\newcommand{\cK}{\mathcal{K}}
\newcommand{\cL}{\mathcal{L}}
\newcommand{\cM}{\mathcal{M}}
\newcommand{\cN}{\mathcal{N}}
\newcommand{\cO}{\mathcal{O}}

\newcommand{\cR}{\mathcal{R}}

\newcommand{\SSC}{\scriptscriptstyle}

\DeclareMathOperator{\Spec}{\textup{Spec}\,}

\DeclareMathOperator{\DR}{\mathit{DR}}

\DeclareMathOperator{\ord}{\textup{ord}}

\DeclareMathOperator{\FL}{\textup{FL}}
\DeclareMathOperator{\id}{\textup{id}}
\DeclareMathOperator{\Gr}{\textup{Gr}}
\DeclareMathOperator{\gr}{\textup{Gr}}

\newcommand{\QDM}{\textup{QDM}}

\newcommand{\MF}{\textup{MF}}
\newcommand{\qdeg}{\textup{qdeg}}
\newcommand{\ck}[1]{{\overset{\tiny \;\SSC(\vee)}{#1}}}
\newcommand{\MHM}{\textup{MHM}}

\newcommand{\XSig}{X_{\!\Sigma}}
\newcommand{\KM}{\cK\!\cM^\circ}

\newcommand{\mbc}{\mathds{C}}
\newcommand{\mbd}{\mathbb{D}}
\newcommand{\mbl}{\mathds{L}}
\newcommand{\mbn}{\mathds{N}}
\newcommand{\mbp}{\mathds{P}}
\newcommand{\mbq}{\mathds{Q}}
\newcommand{\mbr}{\mathds{R}}

\newcommand{\mbz}{\mathds{Z}}

\newcommand{\mcd}{\mathcal{D}}

\newcommand{\mcf}{\mathcal{F}}

\newcommand{\mch}{\mathcal{H}}
\newcommand{\mci}{\mathcal{I}}

\newcommand{\mck}{\mathcal{K}}
\newcommand{\mcl}{\mathcal{L}}
\newcommand{\mcm}{\mathcal{M}}
\newcommand{\mcn}{\mathcal{N}}
\newcommand{\mco}{\mathcal{O}}

\newcommand{\mcr}{\mathcal{R}}

\newcommand{\msc}{\mathscr{C}}
\newcommand{\msj}{\mathscr{J}}
\newcommand{\msk}{\mathscr{K}}
\newcommand{\msl}{\mathscr{L}}
\newcommand{\msm}{\mathscr{M}}
\newcommand{\msn}{\mathscr{N}}
\newcommand{\msp}{\mathscr{P}}
\newcommand{\msr}{\mathscr{R}}
\newcommand{\mss}{\mathscr{S}}
\newcommand{\mst}{\mathscr{T}}
\newcommand{\msv}{\mathscr{V}}
\newcommand{\msw}{\mathscr{W}}
\newcommand{\msx}{\mathscr{X}}
\newcommand{\msy}{\mathscr{Y}}

\newcommand{\tJ}{\textup{J}}
\newcommand{\tI}{\textup{I}}
\newcommand{\tK}{\textup{K}}

\newcommand{\tN}{\textup{N}}
\newcommand{\tR}{\textup{R}}
\newcommand{\tS}{\textup{S}}

\newcommand{\ra}{\rightarrow}
\newcommand{\lra}{\longrightarrow}
\newcommand{\p}{\partial}

\newcommand{\ch}{\mbc^{\beta,H}_T}
\newcommand{\pch}{{^p}\mbc^{\beta,H}_T}
\newcommand{\iasu}{{\ck{\cI}_{\!A^s_u}}{}^{\!\!\!\!\!\!\!\!\widetilde{\beta}}}
\newcommand{\siasu}{{\ck{I}_{\!A^s_u}}{}^{\!\!\!\!\!\!\!\!\widetilde{\beta}}}

\newsavebox\foobox
\newcommand\slbox[2]{  \FPdiv{\result}{#1}{57.296}  \FPtan{\result}{\result}  \slantbox[\result]{#2}}\newcommand{\slantbox}[2][30]{        \mbox{        \sbox{\foobox}{#2}        \hskip\wd\foobox
        \pdfsave
        \pdfsetmatrix{1 0 #1 1}        \llap{\usebox{\foobox}}        \pdfrestore
}}
\newcommand\rotslant[3]{\rotatebox{#1}{\slbox{#2}{#3}}}

\setlength{\parindent}{0cm}

\begin{document}
\title{Hypergeometric Hodge modules}
\author{Thomas Reichelt and Christian Sevenheck}

\maketitle

\begin{abstract}
We consider mixed Hodge module structures on GKZ-hypergeometric differential systems.
We show that the Hodge filtration on these $\cD$-modules is given by the order filtration,
up to a suitable shift. As an application, we prove a conjecture on
the existence of non-commutative Hodge structures on the reduced quantum $\cD$-module
of a nef complete intersection inside a toric variety.
\end{abstract}

\renewcommand{\thefootnote}{}
\footnote{\noindent 2010 \emph{Mathematics Subject Classification.}
14F10 (primary), 32C38 (secondary)\\
Keywords: GKZ systems, Hodge modules, Hodge filtration, Radon transformation, non-commutative Hodge structures\\
The authors are supported by the DFG grants RE 3567/1-1 and SE 1114/5-1.
}

\tableofcontents

\section{Introduction}
\label{sec:Introduction}

In a series of papers Gel'fand, Graev, Kapranov and Zelevinski{\u\i} \cite{GGZ87}, \cite{GKZ89} introduced a system of differential equations which generalize the classical differential
systems satisfied by the hypergeometric functions of
Gau\ss, Appell, Bessel and others. These generalized systems are nowadays called GKZ-systems. The initial data of a GKZ system consists of a $d \times n$ integer matrix and a parameter vector $\beta$. Although the definition of a GKZ system has a combinatorial flavor it was early realized that at least for non-resonant parameter vectors $\beta$ GKZ-systems come from geometry \cite{GKZ1}, i.e. they are isomorphic to a direct image of some twisted  structure sheaf on an algebraic variety.  In \cite{Reich2}, the first-named author has shown that certain GKZ-systems actually carry a much richer structure, namely, they underlie \emph{mixed Hodge modules} in the sense of M.~Saito (see \cite{SaitoMHM}). One of the main goals of this paper is the explicit calculation of the corresponding Hodge filtration on these modules.\\

An important application of GKZ systems is mirror symmetry for weak Fano complete intersections in toric varieties. We have shown
in our previous papers \cite{ReiSe,ReiSe2} how to express variants of the mirror correspondence as an equivalence of differential systems of ``GKZ-type''. However, an important point was left open in these articles: The mirror statements given there actually
involve differential systems (i.e., holonomic $\cD$-modules) with some additional data, sometimes called \emph{lattices}.
These are constructed by a variant of the Fourier-Laplace transformation from regular holonomic \emph{filtered} $\cD$-modules.
The filtration in question is the
Hodge filtration on these modules, but a concrete description of it is missing in \cite{ReiSe, ReiSe2}. As a consequence,
the most important Hodge theoretic property of the differential system entering in the mirror correspondence
was formulated only as a conjecture in \cite{ReiSe2} (conjecture 6.15): the so-called \emph{reduced quantum $\cD$-module},
which governs certain Gromov-Witten invariants of nef complete intersections in toric varieties conjecturally underlies
a \emph{variation of non-commutative Hodge structures}. We prove this conjecture here (see Theorem \ref{thm:ncHodge2}),
it appears as a consequence of the main result of the present paper, which determines the Hodge filtration
on the GKZ-systems. More precisely, as GKZ-systems are defined as cyclic quotients of the Weyl algebra, we obtain (Theorem \ref{thm:HodgeGKZ}) that this Hodge filtration is given by the filtration induced
from the order of differential operators up to a suitable shift.
In some sense, this finishes the Hodge theoretic study of mirror symmetry for this class of varieties since we can now express the mirror correspondence as an isomorphism of non-commutative Hodge structures, which are the correct generalization of ordinary Hodge structures in the case where the underlying differential equations acquire irregular singularities, as it is the case
for the quantum $\cD$-module of weak Fano  varieties (in contrast to the Calabi-Yau case).
\\

Another application of our main result, which can be found in the two papers \cite{CDS} and\cite{SevCastReich}, is the calculation of the so-called irregular Hodge filtration on certain $1$-dimensional classical hypergeometric modules. The irregular Hodge filtration has been introduced by C.~Sabbah (see \cite{Sa15}) in order to attach Hodge-type numerical invariants (namely dimensions of graded parts of a filtration) to differential systems acquiring irregular singularities. In geometric situations, like those where regular functions on quasi-projective manifolds are studied as Landau-Ginzburg models of certain quantum cohomology theories, the irregular Hodge filtration has a concrete description using certain logarithmic de Rham complexes, as has been shown by Esnault, Sabbah and Yu  (\cite{SaEsYu}), see also the discussion in \cite{KKP2}). Classical hypergeometric systems are also the most prominent example of \emph{rigid} $\cD$-modules (see \cite{KatzN1})), so the computation of these invariants for them is of particular interest. It turns out that confluent classical hypergeometric modules (these are precisely those with irregular singularities) are obtained from GKZ-systems by a dimensional reduction and a Fourier-Laplace transformation. Using our result (i.e., Theorem \ref{thm:HodgeGKZ}), one can explicitly describe the irregular Hodge filtration (and give  closed formulas for irregular Hodge numbers) of certain such systems
(see \cite[Theorem 4.7]{CDS} and \cite[Theorem 5.9]{SevCastReich} for more details).
\\

Let us give a short overview on the content of this article and the precise statements of the main results. Notice that section \ref{sec:Example} below provides a detailed description of these
results and parts of their proofs for rather simple example, which is related to the quantum $\cD$-module of $\dP^1$. We advise the reader to go through this example in order to understand the strategy of the proof in the general case in the main body of this article.

The main result of this paper is obtained in two major steps, which occupy section \ref{sec:HodgeOnTorusEmbed} resp. section \ref{sec:Radon}. First we study embeddings of tori into affine spaces given by a monomial map $h_A: T=(\dC^*)^d \hookrightarrow \dC^n; (t_1,\ldots,t_d)
\mapsto (\underline{t}^{\underline{a}_1},\ldots,\underline{t}^{\underline{a}_n})$, where $\underline{t}^{\underline{a}_i}=\prod_{k=1}^d t_j^{a_{ki}}$
and where the matrix of columns $A=(\underline{a}_i)_{i=1,\ldots,n}
\in \textup{M}(d\times n,\dZ)$ satisfies certain combinatorial properties related to the geometry
of the semi-group ring $\dC[\dN A]$. Consider the twisted structure sheaf $\mco_T^\beta := \mcd_T/\mcd_T(\p_{t_1}t_1+\beta_1, \ldots, \p_{t_d} t_d +\beta_d)$. It was shown in \cite{SchulWalth2} that the direct image $h_{A +} \mco_T^\beta$ has an explicit description as a Fourier-Laplace transformed GKZ-system $\check{\mcm}^\beta_A$ (cf. Definition \ref{def:SWGarbe}) in case the parameter vector $\beta$ is not strongly resonant (cf. Definition \ref{def:sRes}). We consider the corresponding direct image $\mch^0( h_{A*} {^p}\mbc^{H,\beta}_T)$ in the category of complex mixed Hodge modules,
and calculate its Hodge filtration (cf. Theorem \ref{thm:hpot}) in case that the parameter vector $\beta$ lies in the set of admissible parameters $\mathfrak{A}_A$ (cf. Formula \eqref{eq:defAdm}). More precisely, this first result can be stated as follows.
\begin{theorem-non}[Theorem \ref{thm:hpot} below]
For $\beta \in \mathfrak{A}_A$ the Hodge filtration on $\check{\mcm}^\beta_A$ is equal to the order filtration shifted by $n-d$, i.e.
\[
F_{p+(n-d)}^H \check{\mcm}^\beta_A = F_{p}^{ord} \check{\mcm}^\beta_A\, .
\]
\end{theorem-non}

 If the matrix $A$ we started with satisfies
a homogeneity property, then the underlying $\cD$-module of this mixed Hodge module is a (monodromic) Fourier-Laplace transformation
of the GKZ-system we are interested in. It should be noticed that Theorem \ref{thm:hpot} is of independent interest, its statement is related to
the description of the Hodge filtration on various cohomology groups associated with singular toric varieties. We plan to discuss this question
in a subsequent work.
The main point in Theorem \ref{thm:hpot} is to determine the canonical $V$-filtration on the direct image module
along the boundary divisor $\overline{im(h_A)}\backslash im(h_A)$, i.e., the calculation of some Bernstein polynomials.\\

The second step, carried out in section \ref{sec:Radon} consists in studying the behavior of a twisted structure sheaf on a torus under a certain integral transformation which generalizes the Radon transformation in \cite{Reich2}.
It is well-known (see \cite{Brylinski} and \cite{AE}) that there is a close relation
between the Fourier-Laplace transformation and the Radon transformation for holonomic $\cD$-modules, however, the former  does not a priori preserve
the category of mixed Hodge modules whereas the latter does. This fact is one of the main points in the proof of the existence of a mixed Hodge module
structure on GKZ-systems in \cite{Reich2}. We calculate the behaviour of the Hodge filtration under the various functors entering into
the integral transformation functor, an essential tool for these calculations is the so called \emph{Euler-Koszul-complex} (or some variants of it) as
introduced in \cite{MillerWaltherMat}. We finally get the following statement for the Hodge filtration on the GKZ system $\mcm^{\widetilde{\beta}}_{\widetilde{A}}$ (cf. Theorem \ref{thm:HodgeGKZ}). We call a matrix homogenous if all
of its columns lie in an affine hyperplane. Moreover, an integer matrix is called normal, if the semi-group generated by its columns is the intersection of the cone generated by these
columns with the lattice generated by them (see Formula \eqref{eq:Bnormal} below).
\begin{theorem-non}[Theorem \ref{thm:HodgeGKZ} below]
Let $\widetilde{A}$ be a homogeneous, normal $(d+1) \times (n+1)$ integer matrix, $\widetilde{\beta} \in \mathfrak{A}_{\widetilde{A}}$ and $\beta_0 \in (-1,0]$. Then the GKZ-system $\mcm^{\widetilde{\beta}}_{\widetilde{A}}$ carries the structure of a mixed Hodge module whose Hodge filtration is given by the shifted order filtration, i.e.
\[
(\mcm^{\widetilde{\beta}}_{\widetilde{A}}, F^H_\bullet) \simeq (\mcm^{\widetilde{\beta}}_{\widetilde{A}}, F^{ord}_{\bullet +d}).
\]
\end{theorem-non}

The second last part of section \ref{sec:Radon} deals with the Hodge module structure on the holonomic dual GKZ-system (which, under the assumptions on the inital data, is also a GKZ-system). The last subsection of section \ref{sec:Radon} explains how one can deduce from our main result
the computation of Batyrev (see \cite{Bat4}) of the Hodge filtration on the relative cohomology of
smooth affine hypersurfaces in algebraic tori.

Finally, in section \ref{sec:LGModel} we explain the above mentioned conjecture from \cite{ReiSe2} and show how its proof can be deduced from our main result (cf. Theorem  \ref{thm:ncHodge2}). To formulate this result, consider a smooth projective toric variety $\XSig$ and a split globally generated vector bundle $\cE$ satisfying some positivity assumptions (ensuring, in particular, that the subvariety $Y:=s^{-1}(0)$ for a generic section $s\in\Gamma(\XSig,\cE)$
is smooth weak Fano). Then we have the reduced quantum $\mcd$-module  $\overline{\QDM}(\XSig,\cE)$ (see \cite[Section 4.1]{ReiSe2})
and the mirror map $\textup{Mir}$ (see Theorem \ref{thm:MirMM17} below as well as \cite[Theorem 6.9]{ReiSe2} and the references therein). We get the following result.
\begin{theorem-non}[Theorem  \ref{thm:ncHodge2} below]
Let $\XSig$ be a $k$-dimensional smooth, toric variety, let $\cL_1,\ldots,\cL_l$  be globally generated line bundles
such that $-K_{\XSig}-\cE$ is nef, where $\cE=\oplus_{j=1}^l\cL_j$, $\cL_j$ being ample for $j=1,\ldots,l$. Then the smooth $\cR_{\dC_z\times\KM}$-module
$(\id_{\dC_z}\times \textup{Mir})^*\overline{\QDM}(\XSig,\cE)$
underlies a variation of pure polarized non-commutative Hodge structures.
\end{theorem-non}

While we were working on this paper, a preprint of T.~Mochizuki (\cite{Mo15}) appeared where \cite[Conjecture 6.15]{ReiSe2} is shown with rather different methods. The arguments in his paper work entirely in the category of mixed twistor modules, using the full strength of \cite{Mo12}. It seems possible from his approach to obtain our main result by considering equivariant twistor modules (see \cite[Section 6.4]{Mo15}). However, this treatment of the regular case in loc.cit. (i.e., the case of mixed Hodge modules) seems to be done in a more restrictive setup, in two ways: First of all, there is an extra assumption (\cite[Assumption 6.51]{Mo15}) excluding the calculation of the Hodge filtration on GKZ-systems for examples coming from local mirror symmetry (however, the mirror symmetry consequence, i.e., the proof of
\cite[Conjecture 6.15]{ReiSe2} is coverd by \cite{Mo15}). On the other hand, there is no discussion on GKZ-systems  (or its twistor versions) for non-zero parameter vectors $\beta$ in loc.cit. in contrast to our main result (i.e., in constrast to Theorem
\ref{thm:HodgeGKZ}, which holds for non-zero parameters satisfying a natural
combinatorial condition). Besides, we feel that the calculation of the Hodge filtration on GKZ-systems which only uses properties of the category of mixed Hodge modules (not passing via twistor modules) may be of independent interest.

To finish this introduction, we introduce some notation and conventions used throughout the paper.
Let $X$ be a smooth algebraic variety over $\mbc$ of dimension $d_X$.
We denote by $M(\mcd_X)$ the abelian category of algebraic left $\mcd_X$-modules on $X$ and denote the abelian subcategory of (regular) holonomic $\mcd_X$-modules by $M_h(\mcd_X)$ (resp. $(M_{rh}(\mcd_X))$.  The full triangulated subcategory in $D^b(\mcd_X)$, consisting of objects with (regular) holonomic cohomology, is denoted by $D^b_{h}(\mcd_X)$ (resp. $D^b_{rh}(\mcd_X)$).

\noindent Let $f: X \ra Y$ be a map between smooth algebraic varieties. Let $M \in D^b(\mcd_X)$ and $N \in D^b(\mcd_Y)$, then we denote by
\[
f_+ M := Rf_*(\mcd_{Y \leftarrow X} \overset{L}{\otimes} M) \quad  \text{resp.}\quad  f^+ M:= \mcd_{X \ra Y} \overset{L}{\otimes} f^{-1}M [d_X - d_Y]
\]
the direct resp. inverse image for $\mcd$-modules.
Recall that the functors $f_+,f^+$ preserve (regular) holonomicity (see e.g., \cite[Theorem 3.2.3]{Hotta}).
\noindent We denote by $\mbd: D^b_h(\mcd_X) \ra (D^b_h(\mcd_X))^{opp}$ the holonomic duality functor.
Recall that for a single holonomic $\mcd_X$-module $M$, the holonomic dual is also a single holonomic $\mcd_X$-module (\cite[Proposition 3.2.1]{Hotta}) and that holonomic duality preserves regular holonomicity ( \cite[Theorem 6.1.10]{Hotta}).

\noindent For a morphism $f: X \ra Y$ between smooth algebraic varieties we additionally define the functors $f_\dag := \mbd \circ f_+ \circ \mbd$ and $f^\dag := \mbd \circ f^+ \circ \mbd$.\\

Let $MF(\mcd_X)$ be the category of filtered $\mcd_X$-modules $(M,F)$ where the ascending filtration $F_{\bullet}$ satisfies
\begin{enumerate}
\item $F_p M = 0$ for $p \ll 0 $
\item $\bigcup_p F_p M = M$
\item $(F_p \mcd_X) F_q M \subset F_{p+q} M$ for $p \in \mbz_{\geq 0},\; q \in\mbz$
\end{enumerate}
where $F_{\bullet} \mcd_X$ is the filtration by the order of the differential operator.\\

We denote by $\MHM(X)$ the abelian category of algebraic mixed Hodge modules and by $D^b \MHM(X)$ the corresponding bounded derived category. The forgetful functor to the bounded derived category of regular holonomic $\mcd$-modules is denoted by
\[
Dmod: D^b \MHM(X) \lra D^b_{rh}(\mcd_X)\, .
\]
\noindent For each morphism $f:X \ra Y$ between complex algebraic varieties, there are induced functors
\[
f_*,f_!: D^b \MHM(X) \lra D^b \MHM(Y)
\]
and
\[
f^*, f^!: D^b \MHM(Y) \ra D^b \MHM(X)\, ,
\]
which are interchanged by $\mbd$. The functors $f_*, f_!,f^*,f^!$  lift the analogous functors $f_+, f_\dag, f^\dag,  f^+$ on $D^b_{rh}(\mcd_X)$. Let $\mbq^{H}_{pt}$ be the unique mixed Hodge structure with $\gr^W_i = \gr^F_i = 0$ for $i \neq 0$ and underlying vector space $\mbq$. Denote by $a_{X} : X \lra \{pt\}$ the map to the point and set
\[
\mbq^H_X := a_X^* \mbq^H_{pt}\, .
\]
The shifted object ${^p}\mbq^H_X:= \mbq^H_X[d_X]$ lies in $\MHM(X)$ and is equal to $( \mco_X, F, \mbq_X[d_X],W)$ with $\gr^F_p = 0$   for $p \neq 0$ and $\gr^W_i = 0$ for $i \neq d_X$.
We have $\mbd \mbq^H_X \simeq a_X^! \mbq^H_{pt}$ and, since $X$ is smooth, the isomorphism
\begin{equation}\label{eq:dualQ}
\mbd \mbq^H_X  \simeq \mbq^H_X (d_X)[2 d_X]\, .
\end{equation}
Here $(d_X)$ denotes the Tate twist (see e.g., \cite[page 257]{SaitoMHM}). \\

We also have to consider the category $\MHM(X,\mbc)$ of complex mixed Hodge modules which can be defined as follows (see \cite[Definition 3.2.1]{DettSa}): First note that one can naturally extend the notion of a $\dQ$-mixed Hodge module (i.e., an object of $\MHM(X)$) to $\dR$-mixed Hodge module, due to the work of Mochizuki on mixed twistor modules (see in particular \cite[Section 13.5]{Mo12}, where the notion of a $K$-mixed Hodge module is considered, $K$ being any subfield of $\dR$). Then we say that a filtered $\cD_X$-module
$(M,F_\bullet)$ underlies a complex mixed Hodge module if it is a direct summand of a filtered $\cD_X$-module that underlies an $\dR$-mixed Hodge module. Many properties of the category of $\dR$-mixed Hodge modules carry over to $\MHM(X,\dC)$ since they are stable by direct summands.

Let $T = (\mbc^*)^d$ be a torus with coordinates $t_1,\ldots, t_d$ and $\beta \in \mbr^d$. We denote by $\mco^{\beta}_T$ the $\mcd_T$-module
\[
\mco^\beta_T = \mcd_T/ ((\p_{t_i} t_i + \beta_i)_{i=1,\ldots,d})
\]
 and  by $\mbc^{H,\beta}_T$ the complex Hodge module $(\mco_T^\beta,F,W)$ with $\gr^F_p \mbc^{H,\beta}_T = 0$ for $p \neq 0$ and $\gr^W_i \mbc^{H,\beta}_T = 0$ for $i \neq d$. Finally we set ${^p}\mbc^{H,\beta}_T := \mbc^{H,\beta}_T[d_T]$.

\vspace*{0.5cm}

\textbf{Acknowledgments:} We thank Takuro Mochizuki, Claude Sabbah and Uli Walther for many stimulating discussions on the content of this paper.

\section{A guiding example}
\label{sec:Example}

In this section, we intend to discuss a particular example, related to the quantum differential equation
of $\dP^1$, where most of the techniques used in the main body of the paper can be written down quite explicitly.
We hope that this section will help the reader to find his way through the technical difficulties of the paper.

Let $\widetilde{A}$ be the following $2\times 3$-matrix with integer entries:
$$
\begin{pmatrix}
1 & 1 & 1 \\
0 & 1 & -1
\end{pmatrix}.
$$
As explained in Definition \ref{def:GKZ} below, any $d\times n$-integer matrix $A$ together
with a vector $\beta\in \dC^d$ defines
a cyclic $\cD_{\dC^n}$-module called a GKZ-system and
denoted by $\cM_A^\beta$. For the above matrix, this system for the vector $\beta=0$ is given by
$$
\cM_{\widetilde{A}}^0=\cD_{\dC^3}/\left(\partial_{\lambda_0}^2-\partial_{\lambda_1}\partial_{\lambda_2},
\lambda_0\partial_{\lambda_0}+\lambda_1\partial_{\lambda_1}+\lambda_2\partial_{\lambda_2},\lambda_1\partial_{\lambda_1}-\lambda_2\partial_{\lambda_2}\right)
$$
It is well known that $\cM_{\widetilde{A}}^0$ is holonomic (see \cite{Adolphson}) and regular (\cite{SchulWalth1}). Moreover, it follows from \cite[Theorem 3.5]{Reich2} that $\cM_{\widetilde{A}}^0$ underlies
a mixed Hodge module ${^H\!}\cM_{\widetilde{A}}^0\in \MHM(\dC^3)$. The purpose
of this introductory section is to explain and partly prove the following statement, which is a very special case of the main theorem of this paper (Theorem \ref{thm:HodgeGKZ}). \begin{thm}\label{thm:HodgeMirroP1}
We have an isomorphism of filtered $\mcd_{\mbc^3}$-modules
$$
(\mcm_{\widetilde{A}}^0, F_\bullet^H )\simeq (\mcm_{\widetilde{A}}^0, F_{\bullet+1}^{ord}),
$$
where $F_\bullet^H$ denotes the filtration such that the filtered module $(\cM_{\widetilde{A}}^0, F_\bullet^H)$ underlies the mixed Hodge module
${^H\!}\cM_{\widetilde{A}}^0$ and where $F_\bullet^{ord}$ is
the filtration induced on $\cM_{\widetilde{A}}^0$ by the filtration on $\cD_{\dC^3}$ by orders of differential operators.
\end{thm}

The proof of this theorem will be done in several steps that parallel the main steps of the proof of Theorem \ref{thm:HodgeGKZ}. The two major simplifications are that we are dealing with the very special matrix $\widetilde{A}$, whereas
in Theorem \ref{thm:HodgeGKZ} any $(d+1)\times (n+1)$-matrix $\widetilde{A}$ satisfying some combinatorial conditions is considered, and
moreover we restrict to the parameter value $\beta=0$ (compare with the general definition of GKZ-systems $\cM_{\widetilde{A}}^\beta$ in Definition
\ref{def:GKZ} resp. with the Definition \ref{def:SWGarbe} for the sheaf $\check{\cM}_{\widetilde{A}}^\beta$). This avoids considering some rather
involved combinatorial condition relating ${\widetilde{A}}$ and $\beta$ (see the definition of the set $\mathfrak{A}_{\widetilde{A}}$ in Equation
\eqref{eq:defAdm} below).

We first consider the morphism
$$
\begin{array}{rcl}
h_{\widetilde{A}}: (\dC^*)^2 & \longrightarrow & \dC^3 =:W \\
    (t_0,t_1) & \longmapsto & (t_0,t_0\cdot t_1, t_0\cdot t_1^{-1})
    =:(w_0,w_1,w_2)
\end{array}
$$
where the exponents of the monomials in the components of the map are exactly the columns of $\widetilde{A}$.

As explained in more detail in Theorem \ref{thm:SW} and Proposition \ref{prop:SWgeneral} below, it follows from a result of Schulze and Walther (\cite{SchulWalth2}) that we have the following isomorphism of $\cD_W$-modules
$$
\begin{array}{rcl}
h_{\widetilde{A},+} \cO_{(\dC^*)^2}  & \cong &
h_{\widetilde{A},+} \cD_{(\dC^*)^2}/(\partial_{t_0}t_0+2,\partial_{t_1}t_1-1) \\ \\
& \stackrel{!}{\cong} & \cD_W/\left(w_0^2-w_1w_2, \partial_{w_0}w_0+\partial_{w_1}w_1+\partial_{w_2}w_2,\partial_{w_1}w_1-\partial_{w_2}w_2\right) =: \check{\cM}_{\widetilde{A}}^0.
\end{array}
$$
The main point in this result is to show that (left) multiplication by $w_0$
is invertible on $\check{\cM}_{\widetilde{A}}^0$, then the isomorphism follows since the map $h_{\widetilde{A}}$ can be decomposed
as
$$
h_{\widetilde{A}}:(\dC^*)^2
\stackrel{h_{\widetilde{A},1}}{\longrightarrow}
\dC^*\times \dC^2 \stackrel{h_{\widetilde{A},2}}{\longrightarrow} W
$$
where both $h_{\widetilde{A},1}$ and $h_{\widetilde{A},2}$ are embeddings and where
$h_{\widetilde{A},1}$ sends
$(t_0,t_1)$ to $(t_0,t_0\cdot t_1, t_0\cdot t_1^{-1})$
and is closed (this follows since
the map $\dC^*\rightarrow \dC^2, t\mapsto (t,t^{-1})$ is a closed embedding), whereas $h_{\widetilde{A},2}$ is the canonical open embedding of $\dC^*\times \dC^2$ into $W$.

A second consequence of Proposition \ref{prop:SWgeneral} below is that $\check{\cM}_{\widetilde{A}}^0$ underlies a mixed Hodge module on $W$, namely the object $h_{\widetilde{A},*}
{^p}\mbq^H_T \in \MHM(W)$. A first step to prove Theorem \ref{thm:HodgeMirroP1} above is to compute the Hodge filtration on $\check{\cM}_{\widetilde{A}}^0$. This task can be divided into two steps:
First we have to compute the Hodge filtration on the module ${^*\!}\check{\cM}_{\widetilde{A}}^0:=h_{\widetilde{A},1,+} \cO_{(\dC^*)^2}$ which underlies the mixed Hodge module
$h_{\widetilde{A},1,*} {^p}\mbq^H_T \in \MHM(\dC^*\times \dC^2)$. This is done via a rather direct argument
since $h_{\widetilde{A},1}$ is closed. The second step, which is more delicate, is to obtain from this the Hodge filtration
on $h_{\widetilde{A},2,+}{^*\!}\check{\cM}^0_{\widetilde{A}}=\check{\cM}^0_{\widetilde{A}}$. \\

Here we are faced with the fundamental problem of extending a mixed Hodge module from the complement of a (smooth) divisor to the total space. While this operation is easily understood at the level of $\cD$-modules, one cannot simply use the direct image functors for $\cO$-modules to calculate the extension of the filtration steps of the Hodge filtration since those are by definition $\cO$-coherent, a property that is lost under direct images of open embeddings. As is explained in more detail at the beginning of section \ref{subsec:CalcHodgeFiltrationSWGarbe}, this problem is solved by intersecting the direct image with the canonical $V$-filtration along the divisor in question. In order to compute the Hodge filtration
on $h_{\widetilde{A},2,+}{^*\!}\check{\cM}^0_{\widetilde{A}}$, we thus have to calculate this $V$-filtration along $w_1=0$.

Notice that in the main body of the text, we follow a slightly different strategy, due to the fact that the factorization of $h_{\widetilde{A}}$ used above
into a closed an open embedding may look different depending on the shape of the matrix $\widetilde{A}$. In general, one can always consider the factorization into a map between tori, which is a closed embedding (which would be the map from $(\dC^*)^2$ to $(\dC^*)^3$ in the above example) followed by the canonical open embedding from the torus into affine space. The latter, however, is the extension over a normal crossing divisor, which is not smooth. In order to apply the techniques sketched above, one has to compose further with a graph embedding with respect to the equation of the normal crossing divisor (see diagram \eqref{eq:DiagGraphEmbeddingNCD} and the arguments following it).

The first (easy) step of the calculation of the Hodge filtration on $\check{\cM}_{\widetilde{A}}^0$ can be formulated as follows.
\begin{lemma}[Compare Lemma \ref{lem:directMapk} below for the general case]\label{lem:HodgeFiltOpDirectImageEx}
The direct image $\cH^0 h_{\widetilde{A},1,+} \cO_{(\dC^*)^2}$ is isomorphic to the
cyclic $\cD_{\dC^* \times \dC^2}$-module
$$
\cD_{\dC^* \times \dC^2}/
\left (
w_0^2-w_1w_2, \partial_{w_0}w_0+\partial_{w_1}w_1+\partial_{w_2}w_2,\partial_{w_1}w_1-\partial_{w_2}w_2
\right)
$$
and the Hodge filtration on this module is given, under this isomorphism, by the induced order filtration, shifted by one, i.e., we have
$$
F_p^H \cH^0 h_{\widetilde{A},1,+} \cO_{(\dC^*)^2} =
F^{ord}_{p-1} \left[\cD_{\dC^* \times \dC^2}/
\left (
w_0^2-w_1w_2, \partial_{w_0}w_0+\partial_{w_1}w_1+\partial_{w_2}w_2,\partial_{w_1}w_1-\partial_{w_2}w_2\right)\right]
$$
\end{lemma}
\begin{proof}
We can factor $h_{\widetilde{A},1}$ further as $h_{\widetilde{A},1}=\widetilde{\widetilde{h}}_{\widetilde{A},1}\circ\widetilde{h}_{\widetilde{A},1}$,
where
$$
\begin{array}{rcl}
\widetilde{h}_{\widetilde{A},1}: (\dC^*)^2 & \longmapsto & (\dC^*)^3 \\ \\
    (t_0,t_1) & \longmapsto & (t_0,t_0\cdot t_1, t_0\cdot t_1^{-1})
\end{array}
$$
and where $\widetilde{\widetilde{h}}_{\widetilde{A},1}:(\dC^*)^3\hookrightarrow\dC^*\times \dC^2$ is
the canonical open embedding. Then since $h_{\widetilde{A},1}$ is a closed embedding, we know that
the support of ${^*\!}\cM_{\widetilde{A}}^0$ is disjoint from the divisor $(\dC^*\times \dC^2)\backslash(\dC^*)^3$,
which implies that $F_p^H {^*\!}\cM_{\widetilde{A}}^0 = \widetilde{\widetilde{h}}_{\widetilde{A},1,*} F_p^H \widetilde{h}_{\widetilde{A},1,+} \cO_{(\dC^*)^2}$.
It therefore suffices to determine the filtration steps $F_p^H \widetilde{h}_{\widetilde{A},1,+} \cO_{(\dC^*)^2}$,
or, more precisely, to show that
$$
F_p^H \widetilde{h}_{\widetilde{A},1,+} \cO_{(\dC^*)^2} = F^{ord}_{p-1} \left[\cD_{(\dC^*)^3}/
\left (
w_0^2-w_1w_2, \partial_{w_0}w_0+\partial_{w_1}w_1+\partial_{w_2}w_2,\partial_{w_1}w_1-\partial_{w_2}w_2\right)\right]
$$
Consider the coordinate change
$$
\begin{array}{rcl}
\phi:(\dC^*)^3 & \longrightarrow & (\dC^*)^3 \\
    (w_0,w_1,w_2) & \longmapsto &
    (w_0,w_1/w_0,w_1w_2/w_0^2)=:(u_0,u_1,u_2)
\end{array}
$$
so that $(\phi\circ \widetilde{h}_{\widetilde{A},1})(t_0,t_1)=(t_0,t_1,1)$ and
$$
(\phi\circ \widetilde{h}_{\widetilde{A},1})_+\cO_{(\dC^*)^2} =
\cD_{(\dC^*)^3}/(u_2-1,\partial_{u_0} u_0+2, \partial_{u_1} u_1) = \left(\cD_{(\dC^*)^2}/(\partial_{u_0} u_0+2, \partial_{u_1} u_1)\right) [\partial_{u_2}].
$$
According to \cite[Formula (1.8.6)]{Saitobfunc}, we have
$$
F^H_{p+1} (\phi\circ \widetilde{h}_{\widetilde{A},1})_+\cO_{(\dC^*)^2} =
\sum_{p_1+p_2=p} F_{p_1}^H \left(\cD_{(\dC^*)^2}/(\partial_{u_0} u_0+2, \partial_{u_1} u_1\right) \partial^{p_2}_{u_2}
$$
Since $F^H_{p_1}\left(\cD_{(\dC^*)^3}/(\partial_{u_0} u_0, \partial_{u_1} u_1\right))=F^{ord}_{p_1}\left(\cD_{(\dC^*)^3}/(\partial_{u_0} u_0, \partial_{u_1} u_1\right)$, we obtain
from the above formula that
$$
F^H_p (\phi\circ \widetilde{h}_{\widetilde{A},1})_+\cO_{(\dC^*)^2} =
F^{ord}_{p-1}\left(\cD_{(\dC^*)^3}/(u_2-1,\partial_{u_0} u_0, \partial_{u_1} u_1)\right).
$$
Since $\phi$ is invertible, we obtain that the Hodge filtration
on $\phi^{-1}_+((\phi\circ \widetilde{h}_{\widetilde{A},1})_+ \cO_{(\dC^*)^2})  = \widetilde{h}_{\widetilde{A},1,+}\cO_{(\dC^*)^2}$ is the
order filtration on $\cD_{(\dC^*)^3}/
\left (
w_0^2-w_1w_2, \partial_{w_0}w_0+\partial_{w_1}w_1+\partial_{w_2}w_2,\partial_{w_1}w_1-\partial_{w_2}w_2\right)$,
shifted by one. As discussed above, the closure of the support of $\widetilde{h}_{\widetilde{A},1,+}\cO_{(\dC^*)^2}$ in $\dC^*\times \dC^2$ lies entirely in the torus $(\dC^*)^3$, therefore, we obtain
$$
F_p^H h_{\widetilde{A},1,+} \cO_{(\dC^*)^2} =
F^{ord}_{p-1} \left[\cD_{\dC^* \times \dC^2}/
\left (
w_0^2-w_1w_2, \partial_{w_0}w_0+\partial_{w_1}w_1+\partial_{w_2}w_2,\partial_{w_1}w_1-\partial_{w_2}w_2\right)\right],
$$
as required.
\end{proof}

The next step is to compute the Hodge filtration of the open direct image $h_{\widetilde{A},2,+}{^*\!}\check{\cM}^0_{\widetilde{A}}=\check{\cM}^0_{\widetilde{A}}$. As mentioned
above, this needs information on the canonical $V$-filtration of the module $\check{\cM}^0_{\widetilde{A}}$ with respect to
the smooth divisor $\{w_0=0\}$. More precisely, we have the following important formula (see Formula
\eqref{eq:defHodgefilt} below, as well as \cite[Proposition 4.2.]{Saitobfunc}):
\begin{equation}\label{eq:SaitoFormulaFirst}
F_p^H \check{\cM}^0_{\widetilde{A}} = \sum_{i\geq 0} \partial_{w_0}^i \left(V^0 \check{\cM}^0_{\widetilde{A}} \cap h_{\widetilde{A},2,*} F^H_{p-i} {^*\!}\check{\cM}_{\widetilde{A}}^0\right)
\end{equation}
Hence we need to determine the object $V^0 \check{\cM}^0_{\widetilde{A}}$. For what follows, it is more convenient to work out everything at the level of global sections. Since all modules we are considering here are defined on affine spaces, this is obviously sufficient.

We refer to \cite{MebkhMais} for details
on the $V$-filtration. For what follows in this introduction, we only need that $V^0 D_{\dC^3}=\dC[w_0,w_1,w_2]\langle w_0\partial_{w_0}, \partial_{w_1},\partial_{w_2}\rangle$ and the following characterization of the canonical $V$-filtration of the holonomic module $\check{M}_{\widetilde{A}}^0=\Gamma(\dC^3, \check{\cM}^0_{\widetilde{A}})$, copied from \cite[Definition 4.3-3, Proposition 4.3-9]{MebkhMais} (notice again that we work at the level of global sections):
For any $m\in \check{M}_{\widetilde{A}}^0$ we consider its Bernstein-Sato polynomial $b_m(x)\in \dC[x]$,
which is the unique monic polynomial of smallest degree satisfying the functional equation $b_m(\partial_{w_0}w_0)m
\in w_0\cdot V^0(D_{\dC^3})m$. The set of roots of $b_m(x)$ is denoted by $\ord(m)$. Then we have
$$
V^\alpha \check{M}^0_{\widetilde{A}} := \left\{m\in \check{M}^0_{\widetilde{A}}\;|\, \ord(m)\subset [\alpha,\infty)\right\}.
$$
Our first step is to compute the Bernstein-Sato polynomial for the class $[1]\in \check{M}_{\widetilde{A}}^0$. Here we have the following result.
\begin{proposition}[Compare Lemma \ref{lem:compV1} below for the general case]\label{prop:BPolExample}
Consider the class of $1\in D_{\dC^3}$ in the quotient
$\check{M}_{\widetilde{A}}^0$, denoted by $[1]$. Then we have
$b_{[1]}(s)=s^2$.
\end{proposition}
\begin{proof}
It is sufficient to find a functional equation in $D_{\dC^3}$ of the form $(\partial_{w_0}w_0)^2 = w_0\cdot P+\check{I}_{\widetilde{A}}^0
$, where
$$
\check{I}_{\widetilde{A}}^0:=\left(w_0^2-w_1w_2, \partial_{w_0}w_0+\partial_{w_1}w_1+\partial_{w_2}w_2,\partial_{w_1}w_1-\partial_{w_2}w_2\right)
$$
and where $P\in\dC[w_0,w_1,w_2]\langle w_0\partial_{w_0},\partial_{w_1},\partial_{w_2}\rangle$.
We will show that
\begin{equation}\label{eq:FuncEqBernstein}
(-\partial_{w_1}w_1-\partial_{w_2}w_2)^2 \in  w_0\cdot \dC[w_1,w_2]\langle \partial_{w_1},\partial_{w_2} \rangle +\check{I}_{\widetilde{A}}^0,
\end{equation}
which suffices to conclude.
We have
$$
\begin{array}{rcl}
(-\partial_{w_1}w_1-\partial_{w_2}w_2)^2 & = & 2\partial_{w_1}\partial_{w_2}w_1w_2+(\partial_{w_1}w_1)^2+(\partial_{w_2}w_2)^2 \\ \\
& \equiv & 2 w_0^2 \cdot \partial_{w_1}\partial_{w_2} +(\partial_{w_1}w_1)^2+(\partial_{w_2}w_2)^2\quad\text{mod}\quad \check{I}_{\widetilde{A}}^0 \\ \\
&=&  2 w_0^2 \cdot \partial_{w_1}\partial_{w_2} +(\partial_{w_1}w_1 - \partial_{w_2}w_2)^2 +2\partial_{w_1}\partial_{w_2}w_1w_2\\ \\
&\equiv &  w_0^2 \cdot 4\cdot\partial_{w_1}\partial_{w_2} \quad\text{mod}\quad \check{I}_{\widetilde{A}}^0,
\end{array}
$$
which shows Formula \eqref{eq:FuncEqBernstein}.
\end{proof}

Notice that the above calculation can be extended to any matrix $\widetilde{A}$ of the form
$$
\widetilde{A}=
\left(
\begin{matrix}
1 & 1 & \ldots & 1 \\ 0 & a_{11} & \dots & a_{1n} \\ \vdots & \vdots & &\vdots \\ 0 & a_{d1} & \dots &a_{dn}
\end{matrix} \right)
$$
where the columns of the matrix
$$
A=
\left(
\begin{array}{ccc}
 a_{11} & \ldots & a_{1n}  \\
\vdots  &  & \vdots \\
a_{n1} & \ldots & a_{nn}
\end{array}
\right)
$$
are
the primitive integral generators of the fan of a smooth projective toric Fano manifold.
Then one has to consider the classical cohomology algebra of this manifold, which admits a toric description (see,
e.g. \cite[Section 5.2]{Fulton}), and the functional equation (i.e. the analogue of Formula \ref{eq:FuncEqBernstein})
can be deduced from the relations in this algebra.
Notice also that this is in fact an argument which is a much simplified version of the one used to prove
Lemma \ref{lem:compV1} below (actually, the proof of this lemma relies on the main result of the separate paper \cite{ReichSevWalth}, which is based on general arguments from toric algebra, such as Euler-Koszul complexes, toric modules etc.). Lemma \ref{lem:compV1} is also more general in the sense that the matrix considered there
is not necessarily defined by the rays of a smooth toric variety.

We have the following consequence of the above calculation which gives complete control
on the integer part of the canonical $V$-filtration on $\check{M}$.
\begin{corollary}[Compare Proposition \ref{prop:Vfilt} below for the general case]\label{cor:VFiltEx}
Denote by $V^\bullet_{ind} \check{M}_{\widetilde{A}}^0$ the filtration induced on $\check{M}_{\widetilde{A}}^0$ by the $V$-filtration
(with respect to $w_0$) on $D_{\dC^3}$. Then for all $k\in \dZ$, we have $V^k_{ind} \check{M}_{\widetilde{A}}^0 = V^k \check{M}_{\widetilde{A}}^0$.
\end{corollary}
\begin{proof}
The proof is more or less similar to the general case in Proposition \ref{prop:Vfilt} below. In the case $k \geq 0$ any element $[P]\in V^k_{ind} \check{M}_{\widetilde{A}}^0$ has an expression
$$
[P]=[\sum_{i=0}^{l} w_0^k (w_0\partial_{w_0})^i\cdot P_i] + [R]
$$
where $P_i\in \dC[w_1,w_2]\langle \partial_{w_1},\partial_{w_2} \rangle$ and where
$[R]\in V^{k+1}_{ind} \check{M}_{\widetilde{A}}^0$. On the other hand, if $k>0$, we can always write an element
$[P]\in V^{-k}_{ind} \check{M}_{\widetilde{A}}^0$ as
$$
[P]=[\sum_{i=0}^{l} \partial_{w_1}^k (w_1\partial_{w_1})^i\cdot P_i] + [R].
$$
where $P_i$ and $[R]$ are as above. One  easily deduces (see the calculations in the proof of
Proposition \ref{prop:Vfilt} below) from the functional equation
$(\partial_{w_0}w_0)^2[1]\in V^1_{ind}\check{M}_{\widetilde{A}}^0$ proved in Proposition \ref{prop:BPolExample} above that we have
$$
\begin{array}{rclll}
  (\partial_{w_0}w_0-k)^2[P] & \in & V^{k+1}_{ind} \check{M}_{\widetilde{A}}^0&&\textup{for }k\geq 0 \textup{ and }[P]\in V^k_{ind} \check{M}_{\widetilde{A}}^0\\ \\
  (\partial_{w_0}w_0+k)^2[P] & \in & V^{-k+1}_{ind} \check{M}_{\widetilde{A}}^0&&\textup{for }k > 0 \textup{ and }[P]\in V^{-k}_{ind} \check{M}_{\widetilde{A}}^0\\
\end{array}
$$
General considerations on the canonical $V$-filtration (see, e.g., \cite[section 4.2 and 4.3]{MebkhMais} and the argument in the proof of Proposition \ref{prop:Vfilt}) then imply
that $V^k_{ind} \check{M}_{\widetilde{A}}^0 = V^k \check{M}_{\widetilde{A}}^0$.
\end{proof}

Finally, we arrive at the following first main step toward the proof of theorem \ref{thm:HodgeMirroP1}.
\begin{proposition}[Compare Theorem \ref{thm:hpot} below for the general case]
\label{prop:HodgeTorusEmbExamples}
Let $\widetilde{A}$ and $h_{\widetilde{A}}:(\dC^*)^2\rightarrow \dC^3=W$ be as above, then for any $k\in \dZ$ we have the following isomorphism of $\cO_W$-modules
$$
F_p^H h_{\widetilde{A},+} \cO_{(\dC^*)^2} \cong F^{ord}_{p-1} \left[\cD_W/\left (
w_0^2-w_1w_2, \partial_{w_0}w_0+\partial_{w_1}w_1+\partial_{w_2}w_2,\partial_{w_1}w_1-\partial_{w_2}w_2\right)\right]
$$
\end{proposition}
\begin{proof}
The main tool to obtain a description of the Hodge filtration is Formula \eqref{eq:SaitoFormulaFirst} from above (see
\cite[Proposition 4.2.]{Saitobfunc}) which at the level of global sections reads
\begin{equation}\label{eq:SaitoFormulaEx}
F^H_p \check{M}_{\widetilde{A}}^0 = \sum_{i \geq 0} \partial_{w_0}^i \left(V^0 \check{M}_{\widetilde{A}}^0 \cap  F^H_{p-i} {^*\!}\check{M}_{\widetilde{A}}^0\right).
\end{equation}
Recall from Lemma \ref{lem:HodgeFiltOpDirectImageEx} that for all $l\in \dZ$ we have
$$
F^H_l {^*\!}\check{M}_{\widetilde{A}}^0 = F^{ord}_{l-1} {^*\!}\check{M}_{\widetilde{A}}^0.
$$
In particular, since $F^{ord}_p {^*\!}\check{M}_{\widetilde{A}}^0=0$ for all $p<0$, we have
$F^H_p \check{M}_{\widetilde{A}}^0=0$ for all $p<1$. On the other hand, we have seen in Proposition
\ref{prop:BPolExample} that $[1]\in V^0 \check{M}_{\widetilde{A}}^0$. Obviously, we have
$[1]\in F_0^{ord} {^*\!}\check{M}_{\widetilde{A}}^0$, which implies that
$[1]\in F_1^H \check{M}_{\widetilde{A}}^0$. Since $\check{M}_{\widetilde{A}}^0$ is a cyclic $D_W$-module and since
both filtrations $F^H$ and $F^{ord}$ on it are good filtrations, we obtain
the inclusion
$$
F_{p-1}^{ord} \check{M}_{\widetilde{A}}^0 \subset F_p^H \check{M}_{\widetilde{A}}^0
$$
for all $p\in \dZ$. It remains to show the reverse inclusion
$F_p^H \check{M}_{\widetilde{A}}^0 \subset F_{p-1}^{ord} \check{M}_{\widetilde{A}}^0$. Using formula
\eqref{eq:SaitoFormulaEx} as well as Corollary \ref{cor:VFiltEx} and Lemma \ref{lem:HodgeFiltOpDirectImageEx} this amounts to
$$
\sum_{i\geq 0} \partial_{w_0}^i\left(V_{ind}^0 \check{M}_{\widetilde{A}}^0 \cap F_{p-i}^{ord} {^*\!}\check{M}_{\widetilde{A}}^0\right)
\subset F^{ord}_p \check{M}_{\widetilde{A}}^0.
$$
We obviously have $\partial_{w_0}^i F^{ord}_{p-i} \check{M}_{\widetilde{A}}^0  \subset F^{ord}_p \check{M}_{\widetilde{A}}^0$ for all $i$ so
it only remains to show that
$$
V_{ind}^0 \check{M}_{\widetilde{A}}^0 \cap F_l^{ord} {^*\!}\check{M}_{\widetilde{A}}^0 \subset F^{ord}_l \check{M}_{\widetilde{A}}^0
$$
for all $l\in \dZ$. Consider any class $[P] \in V_{ind}^0 \check{M}_{\widetilde{A}}^0 \cap F_l^{ord} {^*\!}\check{M}_{\widetilde{A}}^0$.
Since we have ${^*\!}\check{M}_{\widetilde{A}}^0= \check{M}_{\widetilde{A}}^0[w_0^{-1}]$, we can write
$$
P=w_0^{-k} P_k + w_0^{-k+1} P_{-k+1}+\ldots,
$$
where $P_i\in\dC[w_1,w_2]\langle \partial_{w_0},\partial_{w_1},\partial_{w_2}\rangle$. It follows that
$w_0^k\cdot [P]\in V_{ind}^k \check{M}_{\widetilde{A}}^0 \cap F_l^{ord} \check{M}_{\widetilde{A}}^0$. We thus have to prove
$$
V_{ind}^k \check{M}_{\widetilde{A}}^0 \cap F_l^{ord} \check{M}_{\widetilde{A}}^0 \subset w_0^k F_l^{ord}\check{M}_A^0.
$$
for all $k,l\in \dZ$. Take any class $[Q]\in V_{ind}^k \check{M}_{\widetilde{A}}^0 \cap F_l^{ord} \check{M}_{\widetilde{A}}^0$,
then suppose that we can find a representative $Q\in V^k D_W \cap F_l D_W$ of $[Q]$. This means that
$Q=w_0^k\cdot \widetilde{Q}$, with $\widetilde{Q}\in F_lD_W$, as required. Hence we obtain $[Q]\in w_0^k F_l^{ord}\check{M}_{\widetilde{A}}^0$.
It thus remains  to show the existence of such a representative $Q\in V^k D_W \cap F_l D_W$, and this is exactly the content of the next lemma.
\end{proof}
\begin{lemma}[Compare Proposition \ref{prop:CompFiltOnSWGarbe} below for the general case]
Let $A$ be as above, then for all $k,l\in \dZ$, the morphism
$$
V^kD_W\cap F_l D_W \longrightarrow V_{ind}^k \check{M}_{\widetilde{A}}^0 \cap F^{ord}_l \check{M}_{\widetilde{A}}^0
$$
is surjective.
\end{lemma}
\begin{proof}
The proof relies on the theory of Gr\"obner bases in the Weyl algebra. We will not give any definition here, but we refer to subsection \ref{subsec:Vfilt} for details about monomial orders and Gr\"obner bases in the non-commutative setup.

Consider any class $m\in V^k_{ind} \check{M}_{\widetilde{A}}^0 \cap F_l^{ord} \check{M}_{\widetilde{A}}^0$. Then we can find $P\in F_l D_W$ and $Q\in V^k D_W$ such that $[P]=[Q]=m$, that is, $P=Q+i$ for some
$$
i\in \check{I}^0_{\widetilde{A}}=\left(w_0^2-w_1w_2, \partial_{w_0}w_0+\partial_{w_1}w_1+\partial_{w_2}w_2,\partial_{w_1}w_1-\partial_{w_2}w_2\right).
$$
We chose a minimal $r\in\dN$ with $Q\in F_r D_W$. If $r\leq l$, we are done since then $Q\in V^kD_W\cap F_l D_W$ is the preimage of $m$ we are looking for. Hence suppose $r>l$. It is easy to see that then
$i\in F_r D_W$ and the class of $i$ in $\gr_r^F D_W$ is non-zero. For any operator $R\in D_W$, write
$$
\sigma(R)\in \gr^F_\bullet D_W=\dC[w_0,w_1,w_2,\xi_0,\xi_1,\xi_2]=:\dC[\underline{w},\underline{\xi}]
$$
for its symbol. The three generators of $\check{I}_{\widetilde{A}}^0$ from above
form a Gr\"obner basis of this ideal with respect to the partial ordering given by the weight vector (\underline{0},\underline{1}) (i.e. where
$w_i$ has weight $0$ and $\partial_{w_i}$ has weight $1$), notice that this weight vector induces the filtration $F_\bullet$ on $D_W$.
This can be directly shown by a Macaulay2 calculation, the corresponding general
result is Corollary \ref{cor:constrGroebbasis}
below, where we treat a slightly different situation however (we add a column to our matrix which is the sum of all other columns, this corresponds to composing with the graph embedding of the equation of a normal crossing divisor, see
also the remarks before Lemma \ref{lem:directMapk} above).

We can therefore  conclude that there is an expression
$$
\sigma(i) = \widetilde{i}_1 \cdot (w_0^2-w_1w_2)+
\widetilde{i}_2 \cdot (\xi_0w_0+\xi_1w_1+\xi_2w_2)+
\widetilde{i}_3 \cdot (\xi_1w_1-\xi_2w_2).
$$
with $\widetilde{i}_1,\widetilde{i}_2,\widetilde{i}_3 \in\dC[\underline{w},\underline{\xi}]$. Let $i_1,i_2,i_3\in D_W$ be the normally ordered operators
obtained from $\widetilde{i}_1,\widetilde{i}_2,\widetilde{i}_3$ by replacing $\xi_k$ by $\partial_{w_k}$. Then we
define
$$
i':=
i_1(w_0^2-w_1w_2)+i_2(\partial_{w_0}w_0+\partial_{w_1}w_1+\partial_{w_2}w_2)+i_3(\partial_{w_1}w_1-\partial_{w_2}w_2)\in \check{I}_{\widetilde{A}}^0
$$
and clearly $i'\in F_l D_W$.
However, we also have $i'\in V^k D_W$,
since it can again be shown by a direct computation that the three polynomials $w_0^2-w_1w_2, \xi_0w_0+\xi_1w_1+\xi_2w_2, \xi_1w_1-\xi_2w_2$ form a Gr\"obner basis of
the ideal they generate with respect to a partial ordering
given by the weight vector $(-1,0,0,1,0,0)$
(i.e. the weight of $w_0$ is $-1$, the weight of $\xi_0$
is $1$ and all other weights are zero). Notice again
that this weight vector yields the filtration induced
from $V^\bullet D_W$ on $\dC[\underline{w},\underline{\xi}]$.
The corresponding general result (for the case
of the extended matrix with one added column) is found in Corollary \ref{cor:constrGroebbasis}, 2. below.

Summarizing, we obtain that
the operator $Q-i'$ satisfies
\begin{enumerate}
    \item $[Q-i']=[Q]=m$.
    \item $Q-i'\in V^k D_W$
    \item $Q-i'\in F_{l-1} D_W$ (this follows from $\sigma(Q)=\sigma(i)=\sigma(i')$).
\end{enumerate}
Hence we see by descending induction on $l$
that we can construct an operator $Q'\in F_l D_W \cap V^k D_W$ such that $[Q']=m$. This shows the statement.
\end{proof}

We have finished the proof of Proposition \ref{prop:HodgeTorusEmbExamples} above, which roughly summarizes the content of section \ref{sec:HodgeOnTorusEmbed} in the main body of the paper for our particular example. We now turn to the statements corresponding to section \ref{sec:Radon} below (for our example), that is, we are going to complete the proof of
Theorem \ref{thm:HodgeMirroP1}.

Recall that the GKZ system $\mcm^0_{\widetilde{A}}$ can be described as a Fourier-Laplace transform of a torus embedding
\[
\mcm^0_{\widetilde{A}} \simeq \FL(h_{\widetilde{A},+} \mco_{(\mbc^*)^2})
\]
Since the matrix $\widetilde{A}$ is homogeneous (i.e. $(1,\ldots,1)$ is in its row span) the $\mcd$-module $\mcm^0_{\widetilde{A}}$ has a different presentation involving only (proper) direct image functors and inverse image functors but excluding
the use of the Fourier-Laplace transformation (see \cite[Proposition 2.7(iii)]{Reich2}). Let us recall some ingredients of this construction in the present situation. Consider the torus embedding
\begin{align*}
g : \mbc^* &\lra \mbp^2  \\
t &\mapsto (1:t:t^{-1}),
\end{align*}
then $\mcm^0_{\widetilde{A}}$ can be described by a Radon type transform of the $\mcd$-module $g_{+} \mco_{\mbc^*}$. More precisely, we have a commutative diagram
\[
\xymatrix{ & U \ar[dl]_{\pi_1^U}  \ar[dr]^{\pi_2^U } \ar[d]_{j_U} & \\\mbp^2 & \mbp^2 \times \mbc^3 \ar[l]_{\pi_1} \ar[r]^{\pi_2} & \mbc^3 \\ }
\]
where $U$ is the complement of the universal hyperplane in $\mbp^2 \times \mbc^3$, i.e. $U:=\{\lambda_0 w_0+\lambda_1 w_1+\lambda_2 w_2 \neq 0\}$.
The GKZ-system $\mcm^0_{\widetilde{A}}$ is now given by
\[
\mcm^0_{\widetilde{A}} \simeq \mcr^\circ_c(g_{+} \mco_{\mbc^*}) \simeq  \pi_{2\dag}^U \pi_1^{U,\dag} g_{+} \mco_{\mbc^*} \simeq \pi_{2,+} \left(\pi_1^\dag  g_{+} \mco_T \otimes j_{U,\dag} \mco_U \right)
\]
where the last isomorphism follows from the projection formula.

Since $\mco_{\mbc^*}$ carries a trivial Hodge module structure and since the category of algebraic mixed Hodge modules is stable under the (proper) direct image functor and the (exceptional) inverse image functor this induces a mixed Hodge module structure on $\mcm^0_{\widetilde{A}}$.

For technical reasons that mainly occur when  dealing with the case $\beta \neq 0$ as we do in the main body of this paper, we will pursue a slightly different approach.

Consider the map $F: \mbc^* \times \mbc^3 \ra \mbc$  given by the Laurent polynomial $\lambda_0 +\lambda_1 t + \lambda_2 t^{-1}$ and let $j: \mbc^* \ra \mbc$ the canonical embedding. Denote by $p$ resp. $q$ the projection from $\mbc^* \times \mbc^3$ to the second resp. first factor. We consider the integral transform of $\mco_{\mbc^*}$ from $\mbc^*$ to $\mbc^3$ with kernel $F^\dag j_\dag \mco_{\mbc^*}$ and prove in Proposition \ref{prop:GKzeqtwRad} that this integral transfrom is isomorphic to the GKZ-system $\mcm_{\widetilde{A}}^0$ (In the more general case of a non-zero $\widetilde{\beta}=(\beta_0,\beta)$ we would start with $\mco_{\mbc^*}^\beta$ and use the kernel $F^\dag j_\dag \mco_{\mbc^*}^{\beta_0}$).

Since this integral transformation preserves the category of mixed Hodge modules we define a Hodge module structure on the GKZ system by
\begin{equation}\label{eq:exintegraltrans}
{^H}\mcm^0_{\widetilde{A}} := \mch^{6}(p_*(q^* {^p} \mbc_{\mbc^*} \otimes F^* j_! {^p} \mbc_{\mbc^*}))
\end{equation}
In proposition \ref{prop:compRadtwRad} we prove that this approach coincides with the Radon transform for integer $\beta_0$.

In order to compute the Hodge filtration on \eqref{eq:exintegraltrans} explicitly we have to consider a partial compactification of $\mbc^* \times \mbc^3$, since the projection $p: \mbc^* \times \mbc^2$ is not proper.  For this we use the locally closed embedding $g: \mbc^* \ra \mbp^2$. As an intermediate step we compute the Hodge filtration on the mixed Hodge module
\[
{^H}\mcn:= ( g \times id)_* (q^* {^p} \mbc_{\mbc^*} \otimes F^* j_! {^p} \mbc_{\mbc^*})
\]
The space $\mbp^2 \times \mbc^3$ is covered by the three charts $W_u = \{w_u \neq 0\}$ for $u=0,1,2$. The map $g$ factors over each chart and is given by
\begin{align*}
g_0: \mbc^*  & \lra W_0  &  g_1: \mbc^* &\lra W_1  & g_2: \mbc^* &\lra W_2 \\
t &\mapsto (t, t^{-1}) & t &\mapsto (t^{-1}, t^{-2}) &  t &\mapsto (t,t^2)
\end{align*}

We obtain in Formula \eqref{eq:presWu} below that the restriction of $^{H}\mcn$ to $W_u$ can be written as a direct product
\[
{^H}\mcn_u = \mch^0({^p}g_{u*} \mbc_{\mbc^*}^{H}) \boxtimes \mch^2(pr_u j_! {^p} \mbc_{\mbc*}^{H})
\]
where $pr_u: \mbc^3 \ra \mbc $ is the projection to the $(u+1)$-th factor. The Hodge filtration on the first factor can be computed by using Theorem \ref{thm:hpot}, the computation of the second factor is straightforward (cf. Remark \ref{rem:pdi1}) (we check the assumption of Theorem \ref{thm:hpot} in Lemma \ref{lem:MatrixAu}).

Define the matrices \setlength{\arraycolsep}{4pt}
\[
A^s_0 := \left(\begin{matrix}
0 & 0 & 1 & 1 & 1 \\
1 & -1 & 0 & 1 &-1
\end{matrix} \right)\quad
A^s_1 := \left(\begin{matrix}
0 & 0 & 1 & 1 & 1 \\
-1 & -2 & 0 & 1 &-1
\end{matrix} \right)\quad
A^s_2 := \left(\begin{matrix}
0 & 0 & 1 & 1 & 1 \\
1 & 2 & 0 & 1 &-1
\end{matrix} \right)
\]
\setlength{\arraycolsep}{6pt}
We show in Lemma \ref{lem:descrNu} that the $\mcd$-module underlying ${^H}\mcn_u$ is isomorphic to a partial Laplace transformation of $\mcm^0_{A^s_u}$ in the $w$-variables. More precisely we have
\begin{align*}
\mcn_0 = \mcd_{\mbc^2 \times \mbc^3} / \ck{\cI}_{\!A^s_0}, \qquad \mcn_1 = \mcd_{\mbc^2 \times \mbc^3} / \ck{\cI}_{\!A^s_1}, \qquad \mcn_2 = \mcd_{\mbc^2 \times \mbc^3} / \ck{\cI}_{\!A^s_2}
\end{align*}
where $\ck{\cI}_{\!A^s_0}$ is generated by Euler operators
\[
 \ck{E^0_0}:=\lambda_0\p_{\lambda_0} + \lambda_1 \p_{\lambda_1} + \lambda_2 \p_{\lambda_2},\quad \ck{E^0_1}:= -w_{10}\p_{w_{10}} + w_{20}\p_{w_{20}} + \lambda_1 \p_{\lambda_1} - \lambda_2 \p_{\lambda_2}
\]
and the box operators
\[
\ck{\Box}_{(1,1,0,0,0)} := w_{10}w_{20}-1, \quad \ck{\Box}_{(0,0,2,-1,-1)}:= \p_{\lambda_0}^2 - \p_{\lambda_1} \p_{\lambda_2}, \qquad  \ck{\Box}_{(0,1,-1,1,0)} := w_{20} \p_{\lambda_1} - \p_ {\lambda_0}
\]
The ideal $\ck{\cI}_{\!A^s_1}$ is generated by Euler operators
\[
\ck{E^1_0}:= \lambda_0 \p_{\lambda_0} + \lambda_1 \p_{\lambda_1} + \lambda_2 \p_{\lambda_2},\quad \ck{E^1_1}:= w_{01}\p_{w_{01}} + 2 w_{21}\p_{w_{21}} + \lambda_1 \p_{\lambda_1} - \lambda_2 \p_{\lambda_2}
\]
and box operators
\[
\ck{\Box}_{(2,-1,0,0,0)} := w_{01}^2-w_{21}, \quad \ck{\Box}_{(0,0,2,-1,-1)}:= \p_{\lambda_0}^2 - \p_{\lambda_1} \p_{\lambda_2}, \qquad \ck{\Box}_{(1,0,-1,1,0)} := w_{01} \p_{\lambda_1} - \p_{\lambda_0}
\]
The ideal $\ck{\cI}_{\!A^s_2}$ is generated by Euler operators
\[
\ck{E^2_0}:=\lambda_0 \p_{\lambda_0} + \lambda_1 \p_{\lambda_1} + \lambda_2 \p_{\lambda_2},\quad \ck{E^2_1}:=-w_{02}\p_{w_{01}} - 2 w_{12}\p_{w_{21}} + \lambda_1 \p_{\lambda_1} - \lambda_2 \p_{\lambda_2}.
\]
and box operators
\[
\ck{\Box}_{(2,-1,0,0,0)}:= w_{02}^2-w_{12}, \quad \ck{\Box}_{(0,0,2,-1,-1)}:=\p_{\lambda_0}^2 - \p_{\lambda_1} \p_{\lambda_2}, \qquad \ck{\Box}_{(1,0,1,-1,0)}:=   w_{02}\p_{\lambda_0} - \p_{\lambda_1}
\]
The Hodge filtration on these systems is given by $F^H_{p+1} \mcn_u = F_p^{ord} \mcd_{W_u \times \mbc^3} / \ck{\cI}_{\!A^s_u}$.\\

Using the fact that $\mcn_u$ is a partial Fourier-Laplace transform of $\mcm_{A^s_u}$ we use the results of section \ref{subsec:GKZ} to construct a strict resolution of $(\mcn_u,F^H_\bullet)$ at the level of global sections which is given by the Euler-Koszul complex
\[
\xymatrix{\ck{K}{}^\bullet_u \;:= \;D_{W_u \times \mbc^3} / \ck{J_{\!A^s_u}} \ar[rr]^{\cdot (-\ck{E^u_1},\,\ck{E^u_0})^t} && ( D_{W_u \times \mbc^3} / \ck{J_{\!A^s_u}})^2 \ar[rr]^{\cdot (\ck{E^u_0},\, \ck{E^u_1})} && D_{W_u \times \mbc^3} / \ck{J_{\!A^s_u}} }
\]
where the left ideal $J_{\!A^s_u}$ is generated by the "box-type" generators from above.

It remains to compute the projection of $(\mcn, F^H_\bullet)$ under the map $\mbp^2 \times \mbc^3\ra \mbc^3$.
In order to do this we lift the filtered $D$-modules $(N_u, F^H_\bullet)$ as well as their strictly filtered resolution $(\ck{K}{}^\bullet_u ,F_\bullet)$ to the category of $\textup{R}_{\msw_u \times \msc^3}$-modules where $\msw_u \times \msc^3 := \mbc_z \times W_u \times \mbc^3$ and
\[
\textup{R}_{\msw_u \times \msc^3} = \mbc[z,(w_{iu})_{i \neq u},\lambda_0,\lambda_1,\lambda_2]\langle (z\p_{w_{iu}})_{i \neq u}, z\p_{\lambda_0},z\p_{\lambda_1},z\p_{\lambda_2} \rangle
\]

This is done by the Rees construction, i.e. we associate to the filtered $D_{W_U \times \mbc^3}$-modules $(N_u,F^H_\bullet)$  resp. $(\ck{K}{}^\bullet_u,F_\bullet)$ the $\textup{R}_{\msw_u \times \msc^3}$-modules
\[
\textup{N}_u  := R_F N_u = \bigoplus_{p \in \mbz} F_p N_u z^p , \qquad \textup{K}^\bullet_u := R_F \ck{K}{}^\bullet_u
\]
and similarly for the filtered $\mcd_{\mbp^2 \times \mbc^3}$-module $(\mcn,F^H_\bullet)$ to which we associate $\msn := \mco_{\msp \times \msc^3} \otimes_{\mco_{\mbp^2 \times \mbc^3}[z]} R_{F^H} \mcn$, where $\msp\times \msc^3:=\dC_z\times \dP^2\times \dC^3$.

Instead of computing the projection of the filtered $\mcd$-module $(\mcn, F^H_\bullet)$ we compute the projection of the $\msr$-module $\msn$. This is given by
\[
\pi_{2+} \msn \simeq R\pi_{2*} \DR_{\msp \times \msc^3 / \msc^3}(\msn),
\]
where this time $\pi_2$ denotes the map $\msp \times \msc^3 \ra \msc^3$.

Since this is hard to compute directly we construct a resolution $\msk^\bullet$ from the local resolutions $\textup{K}^\bullet_u$ and get the double complex $\Omega^{\bullet+2}_{\msp \times \msc^3 / \msc^3} \otimes \msk^\bullet$:
\[
\xymatrix{\msk^0  \ar[rr]^-{_{II}d^{1,0}} & & \Omega^1_{\msp \times \msc^3/\msc^3} \otimes \msk^0 \ar[rr]^-{_{II}d^{2,0}} & & \Omega^2_{\msp \times \msc^3/\msc^3} \otimes \msk^0 \\
\msk^{-1}  \ar[u]^-{_{I}d^{0,0}} \ar[rr]^-{_{II}d^{1,-1}} & & \Omega^1_{\msp \times \msc^3/\msc^3} \otimes \msk^{-1} \ar[u]^-{_{I}d^{1,0}} \ar[rr]^-{_{II}d^{2,-1}} & & \Omega^2_{\msp \times \msc^3/\msc^3} \otimes \msk^{-1} \ar[u]^-{_{I}d^{2,0}} \\
\msk^{-2} \ar[u]^-{_{I}d^{0,-1}}  \ar[rr]^-{_{II}d^{1,-2}} & & \Omega^1_{\msp \times \msc^3/\msc^3} \otimes \msk^{-2} \ar[u]^-{_{I}d^{1,-1}} \ar[rr]^-{_{II}d^{2,-2}} & & \Omega^2_{\msp \times \msc^3/\msc^3} \otimes \msk^{-2} \ar[u]^-{_{I}d^{2,-1}} }
\]
This double complex gives rise to two spectral sequences: The first one is given by first taking cohomology in the vertical direction. This gives the ${_I}E_1$-page where only the ${_I}E_1^{0,q}$-terms are non-zero and are isomorphic to $\Omega^{q+2}_{\msp \times \msc^3 /\msc^3} \otimes \mcn$. If we consider the second spectral sequence and take cohomology in the horizontal direction we get the $_{II}E_1$-page. Here $_{II}E_1^{p,q}= 0$ for $ q \neq 0$ and we set $\mcl^\bullet:= _{II}E_1^{p,0}$. Since both spectral sequences degenerate at the second page we get a quasi-isomorphism $\Omega^{\bullet+2}_{\msp \times \msc^3 /\msc^3} \otimes \mcn \simeq \mcl^\bullet$ (cf. Propsoition \ref{prop:quasiIsos}).

In order to get an explicit representation of $\mcl^\bullet$ we introduce a sheaf of rings $\mss$ on $\msp \times \msc^3$ and an ideal $\msj \subset \mss$ which are locally given by
\[
\Gamma(\msw_u \times \msc^3,\mss) := S_{\msw_u \times \msc^3} :=   \mbc[z,\lambda_0,\lambda_1,\lambda_2,(w_{iu})_{i \neq u}]\langle z \p_{\lambda_0}, z \p_{\lambda_1}, z \p_{\lambda_2}\rangle
\]
resp.
\[
\Gamma(\msw_u \times \msc^3,\msj) = J_{A^s_u}
\]
where $J_{A^s_u}$ is the left ideal in $S_{\msw_u \times \msc^3}$ generated by the corresponding box operators.

Define the following Euler operators
\[
\tilde{E}_0 = \lambda_0 z \p_{\lambda_0} + \lambda_1 z \p_{\lambda_1} +\lambda_2 z \p_{\lambda_2},\qquad  \tilde{E}_1 = \lambda_1 z \p_{\lambda_1} - \lambda_2 z \p_{\lambda_2}
\]
we get the following quasi-isomorphism
\[
\mcl^\bullet \simeq Kos^\bullet(z^{-1} \mss / \msj, (\tilde{E}_k)_{k=0,1})
\]
hence we get $\pi_{2+} \msn \simeq R \pi_{2*} \left( Kos^\bullet(z^{-1} \mss / \msj, (\tilde{E}_k)_{k=0,1}) \right)$.

Since $\msc^3=\dC_z\times \dC^3$ is affine it is enough to compute the global sections of $\pi_{2+} \msn$ which are given by
\[
R \Gamma \left( Kos^\bullet(z^{-1} \mss / \msj, (\tilde{E}_k)_{k=0,1}) \right).
\]
 We will show that each term in the Koszul complex is $\Gamma$-acyclic which boils down to the fact that $\mss/ \msj$ is $\Gamma$-acyclic.

Define the matrix
\[
A^s := \left(\begin{matrix}
1 & 1 & 1 & 0 & 0 & 0\\
0 & 0 & 0 & 1 & 1 & 1\\
0 & -1 & -1 & 0 & -1 & -1
\end{matrix} \right)
\]
and the ring
\[
\textup{S}:= \mbc[z,w_0,w_1,w_2,\lambda_0,\lambda_1,\lambda_2]\langle z \p_{\lambda_0}, z \p_{\lambda_1} , z \p_{\lambda_2} \rangle
\]
Let $J_{A^s}$ be the left ideal in $S$ generated by
\[
w_0^2-w_1w_2, \qquad \p_{\lambda_0}^2 - \p_{\lambda_1} \p_{\lambda_2},\qquad w_1\p_{\lambda_1}-w_0 \p_{\lambda_0},\qquad w_2\p_{\lambda_2} - w_0 \p_{\lambda_0}
\]
(these are box operators with respect to the matrix $A^s$). The associated sheaf $\widetilde{S /J_{A^s}}$ is isomorphic to $\mss / \msj$. The associated graded module is defined by
\[
\Gamma_*(\mss /\msj) := \bigoplus_{a \in \mbz} \Gamma(\msp \times \msc^3, (\mss/\msj )(a))
\]
The difference between $S/J_{A^s}$ and the associated graded module is measured by local cohomology modules of $S/J_{A^s}$, more precisely we have (cf. Proposition \ref{prop:GrLocCoh})
\begin{align*}
0 \lra &H^0_{(\underline{w})}(S/J_{A^s}) \lra S/J_{A^s} \lra \Gamma_*(\mss /\msj) \lra H^1_{(\underline{w})}(S/J_{A^s}) \lra 0 \\
\end{align*}
and
\begin{equation}
\label{eq:locCohomExample}
\bigoplus_{a\in \mbz} H^i(\msp \times \msc^3, (\mss / \msj)(a)) \simeq H^{i+1}_{(\underline{w})}(S/J_{A^s})
\end{equation}
where $(\underline{w})$ is the ideal in $\mbc[z,w_0,w_1,w_2,\lambda_0,\lambda_1,\lambda_2]$ generated $w_0,w_1,w_2$.  Notice that all terms involved carry a natural $\mbz$-grading  by setting $\deg(w_i) =1$ and $\deg(\lambda_i) = deg(\p_{\lambda_i}) = 0$ for $i=0,1,2$. The generators of $J_{A^s}$ lie in the commutative subring $T := \mbc[w_0,w_1,w_2,\p_{\lambda_0},\p_{\lambda_1},\p_{\lambda_2}] \subset S$.
We denote by $K_{A^s}$ the corresponding ideal in $T$. It is easily seen that the ring $T/K_{A^s}$ is isomorphic to the semi-group ring $\mbc[\mbn A^s]$.

We prove that the local cohomology modules turning up in Formula \eqref{eq:locCohomExample} above can be rewritten as follows (cf. Lemma \ref{lem:tensorprod}):
\[
H^k_{(\underline{w})}(S/ J_{A^s}) \simeq S \otimes_T H^k_I(\mbc[\mbn A^s])
\]
where the ideal $I \subset \mbc[\mbn A^s]$ is generated by $w_0,w_1,w_2$. Hence, we have reduced the problem to a well-known subject in commutative algebra, since the local cohomology groups $H^k_I(\mbc[\mbn A^s])$  can be explicitly computed by the so-called Ishida complex. Let $\sigma$ be th face which is generated by the first three columns of $A^s$ (the columns which correspond to variables $w_0,w_1,w_2$). For a face $\tau \subseteq \sigma$ we define the localization $\mbc[\mbn A^s]_\tau := \mbc[\mbn A^s +  \mbz(A^s \cap \tau)]$. Put
\[
L^k_\tau := \bigoplus_{\tau \subset \sigma \atop \dim \tau =k} \mbc[\mbn A^s]_\tau
\]
The Ishida complex therefore  takes the form
\[
L^\bullet_\sigma : 0 \lra L^0_\sigma \lra L^1_\sigma \lra L^2_\sigma \lra 0
\]
We prove in Proposition \ref{prop:LocCohIshida} that $H^k_I(\mbc [\mbn A^s]) \simeq H^k(L^\bullet_\sigma)$. Finally we show in Corollary \ref{cor:IshidaNegative} that $H^k(L^\bullet_\sigma) = 0$ for $k \neq 2$ (so we have local cohomology only in the top degree) and that the $\mbz$-degrees in $H^2(L^\bullet_\sigma)$ are purely negative. We refer the reader to Example \ref{ex:locCohP1s} for more details in this particular case.\\

We can therefore conclude that
\[
S/J_{A^s} \simeq \Gamma_*(\mss/\msj)\qquad \text{and} \qquad H^i(\msp \times \msc^3, \mss/\msj)) = 0 \quad \text{for all}\; i \geq 1
\]

Putting things together we conclude that the global sections of $\pi_{2+} \mcn$ are given by $\Gamma(Kos^\bullet(z^{-1} \mss / \msj, (\tilde{E}_k)_{k=0,1}))$. The latter one can be easily computed and gives
\[
\Gamma \mch^0 \pi_{2+} \mcn = z^{-1} R_{\msc^3} / I^\lambda_{\widetilde{A}} \qquad \text{and} \qquad \Gamma \mch^i \pi_{2+} \mcn = 0 \quad \text{for}\; i \geq 1
\]
where $R_{\msc^3} := \mbc[z,\lambda_0,\lambda_1,\lambda_2]\langle z \p_{\lambda_0}, z \p_{\lambda_1}, z \p_{\lambda_2}\rangle$ and the left ideal $I^\lambda_{\widetilde{A}}$ is generated by the box operator $(z\p_{\lambda_0})^2 - (z \p_{\lambda_1})(z \p_{\lambda_2})$ and the Euler operators $\tilde{E}_0, \tilde{E}_1$. But this shows that
\[
(\mcm_{\widetilde{A}}^0,F_\bullet^H) \simeq (\mcm^0_{\widetilde{A}}, F^{ord}_{\bullet +1})
\]
which is the statement of Theorem \ref{thm:HodgeMirroP1} resp. that of Theorem \ref{thm:HodgeGKZ} below in
the general case.

\section{GKZ-systems and the Fourier-Laplace transformation}

We start by introducing GKZ-systems as well as their Fourier-Laplace transformed versions. Throughout the whole paper, we let $W$ be a finite-dimensional vector space over $\mbc$ and denote by $V$ its dual vector space. We will fix coordinates $w_1,\ldots, w_n$ on $W$ and
dual coordinates $\lambda_1,\ldots,\lambda_n$ on $V$.

\subsection{GKZ-systems and strict resolutions}\label{subsec:GKZ}

Given a $d \times n$ integer matrix $A=(a_{ki})$ we denote by $\underline{a}_1,\ldots, \underline{a}_n$ its columns. We define
\[
\mbn A := \sum_{i=1}^n \mbn \underline{a}_i
\]
and similarly for $\mbz A$ and $\mbr_{\geq 0} A$. Throughout the paper we assume that the matrix $A$ satisfies
\[
\mbz A = \mbz^d\, .
\]
\begin{definition}\label{def:GKZ}
Let $A =(a_{ki})$ be a $d\times n$ integer matrix with $\dZ A=\dZ^d$ and  $\beta = (\beta_1, \ldots , \beta_d) \in \mbc^d$. Write $\mbl_A$ for the $\mbz$-module of integer relations among the columns of $A$ and write $\mcd_V$ for the sheaf of rings of differential operators on $V$. Define
\[
\mcm^\beta_A := \mcd_V/ \cI_A\, ,
\]
where $\cI_A$ is the sheaf of left ideals generated by \[
\Box_{\underline{l}} := \prod_{i : l_i <0} \p_{\lambda_i}^{-l_i} - \prod_{i: l_i > 0} \p_{\lambda_i}^{l_i}
\]
for all $\underline{l}\in\dL_A$
and
\[
E_k -\beta_k := \sum_{i=1}^n a_{ki} \lambda_i \p_{\lambda_i} - \beta_k
\]
for $i=1,\ldots,d$.
\end{definition}

Since GKZ-systems are defined on the affine space $V\cong\mbc^{n}$, we will often work with the $D$-modules of global sections $M^\beta_A := \Gamma(\mbc^{n} , \mcm^\beta_A)$ rather than with the sheaves themselves.\\

We will now discuss filtrations on GKZ-systems given by a weight vector $(u,v) \in \mbz^{2n}$.  This weight vector induces an increasing  filtration on $D_{V}$  given by
\[
F^{(u,v)}_p D_{V} =  \left\lbrace \sum_{\sum_i u_i \gamma_i + v_i \delta_i \leq p \atop \text{finite}} c_{\gamma \delta} \lambda^\gamma \p_{\lambda}^\delta  \mid \gamma, \delta \in \mbz_{\geq 0}^n \right\rbrace
\]
where we set $\lambda^\gamma := \prod_{i=1}^n \lambda_i^{\gamma_i}$ etc. . For an element $P = \sum c_{\gamma\delta} \lambda^\gamma \p_{\lambda}^\delta$ we define $ord_{(u,v)}(P) := \max\{\sum_i u_i \gamma_i + v_i \delta_i \mid c_{\gamma \delta} \neq 0 \}$.
The associated graded ring $\gr^{(u,v)}_\bullet D_{V}$ is given by $\bigoplus_p F_p^{(u,v)}D_{V} / F_{p-1}^{(u,v)} D_{V}$.\\

In order to construct a strictly filtered resolution of $\mcm^\beta_A$, we use the theory of Euler-Koszul complexes as introduced in
\cite{MillerWaltherMat}. We will work at the level of global sections.
We briefly recall the definition of the Euler-Koszul complex $(K^\bullet, E-\beta)$
from \cite[Definition 4.2]{MillerWaltherMat} (where it is called  $\mck_\bullet(E-\beta;\mbc[\mbn A])$ and placed in positive homological degrees).
Its terms are given by
\[
K^{-l} = \bigoplus_{0 \leq i_1 < \ldots < i_l \leq l} (D_{V} / J_A) e_{i_1\ldots i_{i_l}}\, ,
\]
where the left ideal $J_A \subset \mbc[\underline{\p}]:= \mbc[\p_{\lambda_1},\ldots, \p_{\lambda_n}]$ is generated by
\[
\Box_{\underline{l}} := \prod_{i : l_i <0} \p_{\lambda_i}^{-l_i} - \prod_{i: l_i > 0} \p_{\lambda_i}^{l_i},\quad\forall \underline{l}\in \mbl_A
\]
A simple computation using the fact that $\sum_{i=1}^n l_i a_{ki} = 0$ shows that the maps
\begin{align}
D_{V}/D_V J_{A} &\lra D_{V}/ D_V J_{A} \notag \\
P &\mapsto P \cdot (E_k-\beta_k) \qquad \text{for} \quad k= 1, \ldots ,d\label{eq:EulerFieldsWellDefined}
\end{align}
are well defined. Moreover, we have $[E_{k_1}-\beta_{k_1}, E_{k_2}-\beta_{k_2}] = 0$ for $k_1, k_2 \in \{1, \ldots ,d \}$, and hence
we can build the Koszul complex
\[
(K^\bullet,E-\beta) = (\ldots \overset{d_{-2}}\lra K^{-1} \overset{d_{-1}}\lra K^0 \ra 0):= \textup{Kos}( D_{V}/D_V J_{A}, (E_k-\beta_k)_{1 = 0 , \ldots ,d})\, .
\]
with $D_V$-linear differential
\[
d_{-l}(e_{i_1\ldots i_l}) := \sum_{k=1}^l (-1)^{l-1}(E_{i_k}-\beta_{i_k})e_{i_1\ldots \hat{i}_k\ldots i_l}
\]
If we assume that the semigroup $\mbn A$ satisfies
\[
\mbn A  =\mbz^d \cap \mbr_{\geq 0} A
\]
then by a classical
result due to Hochster (\cite[theorem 1]{Hoch}) it follows that the semigroup ring $\mbc[\mbn A]$ is Cohen-Macaulay. It was shown in \cite[Remark 6.4]{MillerWaltherMat} that in this case $(K^\bullet,E-\beta)$ is a resolution of $M^\beta_A$ for all $\beta \in \mbc^d$.\\

Notice that the filtration $F^{(u,v)}_\bullet$ on $D_V$ induces a filtration on $D_V/D_V J_A$ which we denote by the same symbol. We define the following filtration on each term of the Koszul complex $(K^\bullet,E-\beta)$:
\[
F^{(u,v)}_{p}K^{-l} := \bigoplus_{0 \leq i_1 < \ldots < i_l \leq l} F^{(u,v)}_{p -\sum_{k=1}^l c_{i_k} }(D_{V} / D_V J_A) e_{i_1\ldots i_{i_l}}\, ,
\]
where $c_{i} = ord_{(u,v)}(E_i-\beta_i)$. This shows that the complex $((K^\bullet,E-\beta),F^{(u,v)}_\bullet)$ is filtered, i.e. that the differential $d$ respects the filtration
\[
d_{-l}(F^{(u,v)}_p K^{-l}) \subset F^{(u,v)}_p\, d_{-l}(K^{-l}) := \textup{im} ( d_{-l}) \cap F^{(u,v)}_p\, K^{-l+1} \, .
\]
We recall the following well-known criterion for a complex to be strictly filtered, which means
\[
d_{-l}(F^{(u,v)}_p K^{-l}) = F^{(u,v)}_p d_{-l}(K^{-l}) = im(d_{-l}) \cap F^{(u,v)}_p K^{-l+1}
\]

\begin{lemma}\label{lem:equivChar}
Let
\[
0 \lra (M_1,F) \overset{d_1}{\lra} \ldots \overset{d_{n-1}}{\lra} (M_n,F) \ra 0
\]
be a sequence of filtered $D$-modules with bounded below filtration. The following properties are equivalent.
\begin{enumerate}
\item The map $d_k$ is strict.
\item $H^{k} (F_p M_\bullet) \simeq F_p H^{k}(M_\bullet)$ for all $p$.
\item $H^{k} (gr^F_p M_\bullet) \simeq\gr^F_p H^k (M_\bullet)$ for all $p$.
\end{enumerate}
\end{lemma}

\begin{remark}\label{rem:strictresprop} Suppose that we have $\dN A=\dZ^d \cap \dR_{\geq 0} A$, then in order to prove that the filtered complex $((K^\bullet,E-\beta),F^{(u,v)}_\bullet)$ is strict it is enough to show that $H^{-l}(\gr^{(u,v)}_\bullet K^{\bullet}) = 0$ for $l > 1$ and $H^0(\gr^{(u,v)}_\bullet K^{\bullet}) = \gr^{(u,v)}_\bullet M^\beta_A$, since we already know that $H^{-l}(K^\bullet) = 0$ for $l > 1$ and $H^0(K^\bullet) = M_A^\beta$.
\end{remark}

\subsection{Fourier-Laplace transformed GKZ-systems}

Let as before $W$ be a $n$-dimensional vector space over $\mbc$ and denote by $V$ its dual vector space. Let $X$ be a smooth algebraic variety and $E = X \times W$ be a trivial vector bundle and $E':= X \times V$ its dual. We write $\langle , \rangle: W \times V \ra \mbc$ for the canonical pairing which extends to a function $\langle,\rangle: E \times E' \ra \mbc$.
\begin{definition}
Define $\mcl := \mco_{E \times_X E'} e^{-\langle, \rangle}$ which is by definition the free rank one module with differential given by the product rule. Denote by $p_1: E \times_X E' \ra E$, $p_2: E \times_X E \ra E'$  the canonical projections. For $\mcm \in D^b_h(\mcd_E)$ the Fourier-Laplace transformation is then defined by
\[
\FL_X(\mcm) := p_{2+}(p_1^+ \mcm \overset{L}\otimes \mcl)[-n]
\]
\end{definition}

\begin{definition}\label{def:SWGarbe}
Let $A=(a_{ki})$ be a $d \times n$ integer matrix. Let $\beta \in \mbc^d$. Write $\mbl_A$ for the $\mbz$-module of relations among the columns of $A$ and write $\mcd_{W}$ for the sheaf of rings of algebraic differential operators on $W$. Define
\[
\check{\mcm}^\beta_A := \mcd_{W}/\left( (\check{\Box}_{\underline{m}})_{\underline{m} \in \mbl_A} , (\check{E}_k +\beta_k)_{k=1, \ldots ,d}\right)\, ,
\]
where
\begin{align}
\check{E}_k &:= \sum_{i=1}^{n} a_{ki} \p_{w_i} w_i  \quad  \text{for}\;\; k= 1, \ldots , d \notag \\
\check{\Box}_{\underline{m} \in \mbl_A} &:= \prod_{m_i > 0} w_i^{m_i} - \prod_{m_i < 0} w_i^{-m_i}\, . \label{eq:boxopSW}
\end{align}
\end{definition}
Again we will often work with the $D_W$-module of global sections
\[
\check{M}^\beta_A := \Gamma(W, \check{\mcm}^\beta_A)
\]
of the $\mcd_W$-module $\check{\mcm}^\beta_A$. Sometimes we will be interested  in the case $\beta = 0$ and will write
\[
\check{\mcm}_A := \check{\mcm}_A^0 \qquad \text{and} \qquad M_A := \Gamma(W, \check{\mcm}_A)\, .
\]

\begin{remark}
Notice that $\check{\mcm}^\beta_A$ is just a Fourier-Laplace transformation (in all variables) of the GKZ-system $\mcm^\beta_A$ (cf. Definition \ref{def:GKZ}).
\end{remark}

\noindent The semigroup ring associated with the  matrix $A$ is
\[
\mbc[\mbn A] \simeq \mbc[\underline{w}]/\left((\check{\Box}_{\underline{m}})_{\underline{m} \in \mbl_A}\right)\, ,
\]
where $\mbc[\underline{w}]$ is the commutative ring $\mbc[w_1, \ldots, w_n]$ and the isomorphism follows from \cite[Theorem 7.3]{MillSturm}.
The rings $\mbc[\underline{w}]$ and $\mbc[\mbn A]$ are naturally $\mbz^{d}$-graded if we define $\deg(w_j) = \underline{a}_j$ for $j=1, \ldots , n$. This is compatible with the $\mbz^{d}$-grading of the Weyl algebra $D_W$ given by $\deg(\p_{w_j}) = - \underline{a}_j$ and $\deg(w_j) = \underline{a}_j$.

\begin{defn}[{\cite[Definition 5.2]{MillerWaltherMat}}]
Let $N$ be a finitely generated $\mbz^{d}$-graded $\mbc[\underline{w}]$-module. An element
$\alpha \in \mbz^{d}$ is called a true degree of $N$ if $N_\alpha$ is non-zero. A vector $\alpha \in \mbc^{d}$ is called a quasi-degree of $N$, written $\alpha \in qdeg(N)$, if
$\alpha$ lies in the complex Zariski closure $qdeg(N)$ of the true degrees of $N$ via the natural embedding $\mbz^{d} \hookrightarrow \mbc^{d}$.
\end{defn}

\noindent Schulze and Walther now define the following set of parameters:
\begin{defn}[\cite{SchulWalth2}]\label{def:sRes}
The set
\[
sRes(A) := \bigcup_{j=1}^n sRes_j(A)\, ,
\]
where
\[
sRes_j(A) := \{ \beta \in \mbc^{d} \mid \beta \in -(\mbn +1)\underline{a}_j + qdeg(\mbc[\mbn A] / ( w_j ))\}
\]
is called the set of strongly resonant parameters of $A$.
\end{defn}

The matrix $A$ is called pointed if  $0$ is the only unit in $\mbn A$. The matrix $A$ gives rise to a map from a torus $T = (\mbc^*)^d$ with coordinates $(t_1, \ldots ,t_d)$ into the affine space $W = \mbc^n$ with coordinates $w_1, \ldots , w_n$:
\begin{align}
h_A: T &\lra W \notag \\
(t_1, \ldots ,t_d) &\mapsto (\underline{t}^{\underline{a}_1},\ldots, \underline{t}^{\underline{a}_n})\, , \notag
\end{align}
where $\underline{t}^{\underline{a}_i} := \prod_{k=1}^d t_k^{a_{ki}}$. Notice that the map $h_A$ is affine and a locally closed embedding, hence the direct image functor for $\mcd_T$-modules $(h_A)_+$ is exact.\\

For a pointed matrix $A$ Schulze and Walther computed the direct image of the twisted structure sheaf
\[
\mco_T^\beta := \mcd_T / \mcd_T \cdot \left(  \p_{t_1} t_1 + \beta_1, \ldots \p_{t_d} t_d  + \beta_d\right)
\]
under the morphism $h_A$.

\begin{thm}[\cite{SchulWalth2}\label{thm:SW} Theorem 3.6, Corollary 3.7]\label{thm:GKZisoSW}
Let $A$ a pointed $(d \times n)$ integer matrix satisfying $\mbz A = \mbz^d$, then the following statements are equivalent
\begin{enumerate}
\item $\beta \notin sRes(A)$.
\item $\check{\mcm}^\beta_A \simeq (h_A)_+ \, \mco_{T}^\beta$.
\item Left multiplication with $w_i$ is invertible on $\check{M}^{\beta}_{A}$ for $i= 1, \ldots ,n$.
\end{enumerate}
\end{thm}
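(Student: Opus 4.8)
The plan is to prove the equivalences $(2)\Leftrightarrow(3)$ and $(1)\Leftrightarrow(3)$ separately. The first is essentially formal, using only that $h_B$ factors through a torus, together with standard facts about $\mcd$-module localisation. The second is the substantial one; I would treat it one coordinate $w_j$ at a time, translating the invertibility of $w_j$ on $\check{M}^\beta_B$ into the vanishing of an Euler--Koszul homology group attached to a local cohomology module of the semigroup ring $\mbc[\mbn B]$, and then reading off the answer from the combinatorics of $\mbn B$.

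For $(2)\Leftrightarrow(3)$, factor $h_B$ as $T\xrightarrow{\,h_B^\circ\,}W^\circ\xrightarrow{\,\iota\,}W$, where $W^\circ:=(\mbc^*)^s$, the map $h_B^\circ$ is the closed embedding onto the subtorus $\im(h_B)$, and $\iota$ is the affine open embedding. Since $\iota$ is an open embedding, every coordinate $w_i$ acts invertibly on $\iota_+\mcn$ for an arbitrary $\mcd_{W^\circ}$-module $\mcn$; hence all the $w_i$ act invertibly on $(h_B)_+(\mco_{T}\,\underline{t}^\beta)=\iota_+\big((h_B^\circ)_+(\mco_{T}\,\underline{t}^\beta)\big)$, which proves $(2)\Rightarrow(3)$. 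For the converse one first checks --- for \emph{every} $\beta$, with no resonance hypothesis, because everything takes place on the torus --- that $\check{\mcm}^\beta_B|_{W^\circ}\cong(h_B^\circ)_+(\mco_{T}\,\underline{t}^\beta)$: on $W^\circ$ the binomials $\check{\Box}_{\underline{m}}$ become, after multiplication by the invertible monomial $\prod_{m_i<0}w_i^{-m_i}$, the relations $\underline{w}^{\underline{m}}-1$ cutting out $\im(h_B)$, so $\check{\mcm}^\beta_B|_{W^\circ}$ is supported there, and Kashiwara's equivalence together with the operators $\check{E}_k+\beta_k$ identifies it with the rank-one connection $\mco_{T}\,\underline{t}^\beta$ on $\im(h_B)$ (the matching of the $\beta$-normalisations being the one built into the definition of $\check{\mcm}^\beta_B$, via the transfer bimodule $\mcd_{W\leftarrow T}$). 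Now if all the $w_i$ act invertibly on $\check{M}^\beta_B$, the adjunction morphism $\check{\mcm}^\beta_B\to\iota_+\iota^+\check{\mcm}^\beta_B$ is an isomorphism (a $\mcd_W$-module on which every coordinate acts invertibly equals the direct image of its restriction to $W^\circ$), and hence $\check{\mcm}^\beta_B\cong\iota_+\big((h_B^\circ)_+(\mco_{T}\,\underline{t}^\beta)\big)=(h_B)_+(\mco_{T}\,\underline{t}^\beta)$, which is $(3)\Rightarrow(2)$.

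For $(1)\Leftrightarrow(3)$, fix $j\in\{1,\dots,s\}$. Since $\check{M}^\beta_B$ is holonomic, $w_j$ acts invertibly on it if and only if the localisation map $\check{M}^\beta_B\to\check{M}^\beta_B[w_j^{-1}]$ is an isomorphism. Using the presentation $\mbc[\underline{w}]/\big((\check{\Box}_{\underline{m}})_{\underline{m}\in\mbl_B}\big)\cong R:=\mbc[\mbn B]$, realise $\check{M}^\beta_B$ as the zeroth Euler--Koszul homology of $R$ with respect to the parameter $\beta$, in the sense of \cite{MillerWaltherMat}; since the Euler--Koszul construction commutes with localisation at $w_j$, the module $\check{M}^\beta_B[w_j^{-1}]$ is the zeroth Euler--Koszul homology of $R[w_j^{-1}]$. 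As $R$ is a domain, there is a short exact sequence of $\mbz^r$-graded $R$-modules
\[
0\lra R\lra R[w_j^{-1}]\lra H^1_{(w_j)}(R)\lra 0,\qquad H^1_{(w_j)}(R)=R[w_j^{-1}]/R,
\]
and the associated long exact sequence of Euler--Koszul homology reduces the question to the vanishing, in all homological degrees, of the Euler--Koszul homology of the ``boundary'' module $H^1_{(w_j)}(R)$. The latter is the increasing union of the finitely generated toric submodules $w_j^{-k}R/R\cong R/(w_j^k)$, $k\geq1$, and its set of true degrees is the set of true degrees of $R/(w_j)$ translated by $-\underline{b}_j,-2\underline{b}_j,\dots$. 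Running a degree-by-degree analysis of its Euler--Koszul complex --- using the vanishing criterion of \cite{MillerWaltherMat} on the finite pieces and keeping track of the gluing maps --- one finds that this homology vanishes precisely when $\beta\notin sRes_j(B)=-(\mbn+1)\underline{b}_j-qdeg(R/(w_j))$, the sign being the one coming from the Fourier--Laplace transformation relating $\check{\mcm}^\beta_B$ to the GKZ system $\mcm^\beta_B$. Combining over all $j$ gives $(3)\Leftrightarrow\beta\notin sRes(B)$, that is, $(3)\Leftrightarrow(1)$.

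The main obstacle is precisely this last homology computation, and it is genuinely delicate. In particular it is \emph{not} controlled by the naive assertion that the Euler--Koszul homology of $H^1_{(w_j)}(R)$ vanishes exactly for $-\beta\notin qdeg\big(H^1_{(w_j)}(R)\big)$: that Zariski closure is typically all of $\mbc^r$, so the criterion would be vacuous. One has to follow the transition maps between the Euler--Koszul homologies of the finite toric pieces $R/(w_j^k)$, which can become zero in the colimit even when no individual piece has vanishing homology; it is exactly this phenomenon that produces the \emph{discrete} shift $-(\mbn+1)\underline{b}_j$ --- rather than $-\mbn\underline{b}_j$ --- and hence the sharp, non-Zariski-closed set $sRes_j(B)$. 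Equivalently, this step is the determination of the canonical $V$-filtration, i.e.\ of the relevant Bernstein polynomials, of $\check{\mcm}^\beta_B$ along each hyperplane $\{w_j=0\}$, and it is carried out in \cite{SchulWalth2}.
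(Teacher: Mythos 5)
First, note what you are comparing against: the paper does not prove Theorem \ref{thm:GKZisoSW} at all --- it is imported verbatim from Schulze--Walther \cite{SchulWalth2} (Theorem 3.6, Corollary 3.7) and used as a black box in everything that follows (e.g.\ in Proposition \ref{prop:SWgeneral}). So your proposal is really an attempted reconstruction of an external proof. Your treatment of $(2)\Leftrightarrow(3)$ is correct and is the standard argument: $h_B$ factors as a closed embedding of a subtorus into $(\mbc^*)^s$ followed by the open embedding $\iota$ into $W$; the identification $\check{\mcm}^\beta_B|_{(\mbc^*)^s}\simeq (h^\circ_B)_+(\mco_T\,\underline{t}^\beta)$ indeed holds for every $\beta$ (your appeal to Kashiwara equivalence is a bit quick on why the cyclic quotient is not strictly larger than the pushforward; a Smith-normal-form change of coordinates, exactly as the paper does in the analogous Lemma \ref{lem:directMapk}, settles this cleanly); and invertibility of all $w_i$ is precisely the statement that the adjunction map $\check{\mcm}^\beta_B\to\iota_+\iota^+\check{\mcm}^\beta_B$ is an isomorphism.

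For $(1)\Leftrightarrow(3)$, however, there is a genuine gap, and you flag it yourself. After the reduction via the Euler--Koszul long exact sequence to the non-finitely generated toric module $H^1_{(w_j)}(\mbc[\mbn B])$, the decisive step --- determining for exactly which $\beta$ its Euler--Koszul homology vanishes, which is what produces the precise countable union $-(\mbn+1)\underline{b}_j-qdeg(\mbc[\mbn B]/(w_j))$ rather than a Zariski closure --- is only described and then delegated back to \cite{SchulWalth2}. Since that computation \emph{is} the content of the equivalence $(1)\Leftrightarrow(3)$, what you have is an accurate roadmap of the Schulze--Walther proof, not a proof. Two further points deserve care if you want to complete it: (i) the long exact sequence by itself only shows that vanishing of \emph{all} Euler--Koszul homology of $H^1_{(w_j)}$ is sufficient for the localisation map to be an isomorphism; for the converse (invertibility of $w_j$ forces $\beta\notin sRes_j(B)$) you need the finer statement that nonvanishing already occurs in homological degree zero, which for the finitely generated pieces follows from the quasi-degree criterion of \cite{MillerWaltherMat} but must be tracked through the colimit; (ii) the behaviour of the transition maps $R/(w_j^k)\to R/(w_j^{k+1})$ in the colimit, which you correctly identify as the source of the discrete shift, is exactly where the argument is delicate and cannot be waved through. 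As it stands, the proposal is structurally sound but incomplete at the one step that matters.
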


Notice that Schulze and Walther \cite{SchulWalth2} use the GKZ-system $\mcm^\beta_A$ and the convention $deg(\p_{\lambda_j}) = \underline{a}_j$. We will use $\check{\mcm}^\beta_A$ and  $deg(w_j) = \underline{a}_j $ instead.\\

The aim of section is to generalize the implication $1. \Rightarrow 2.$ to the case of a non-pointed matrix $A$. For this we set $\underline{a}_0 := 0$.
We will associate to the matrix $A$ the homogenized $(d+1 \times n+1)$ matrix $\widetilde{A}$ with columns $\widetilde{\underline{a}}_i := (1, \underline{a}_i)$ for $i = 0, \ldots ,n$. Notice that $\mbz \widetilde{A} = \mbz^{d+1}$ holds and that the matrix $\widetilde{A}$ is pointed in any case. Consider now the augmented map
\begin{align}
h_{\widetilde{A}}: \widetilde{T} \lra \widetilde{W} \notag \\
(t_0, \ldots , t_d) &\mapsto (t_0 \underline{t}^{\underline{a}_0}, t_0 \underline{t}^{\underline{a}_1}, \ldots, t_0 \underline{t}^{\underline{a}_n})\, ,
\end{align}
where $\widetilde{T} = (\mbc^*)^{d+1}$ and $\widetilde{W} = \mbc^{n+1}$ with coordinates $w_0, \ldots ,w_n$. Let $\widetilde{W}_0$ be the subvariety of $\widetilde{W}$ given by $w_0 \neq 0$ and denote by $k_0: \widetilde{W}_0 \ra \widetilde{W}$ the canonical embedding. The map $h_{\widetilde{A}}$ factors through $\widetilde{W}_0$ which gives rise to a map $h_0$ with $ h_{\widetilde{A}} = k_0 \circ h_0$. We get the following commutative diagram
\begin{equation}\label{eq:facdiagu}
\begin{tikzcd}
\widetilde{T} \ar{r}{h_0} \ar{d}{\pi} \ar[bend left=35]{rr}{h_{\widetilde{A}}} & \widetilde{W}_0 \ar{d}{\pi_0} \ar{r}{k_0}& \widetilde{W} \\ T \ar{r}{h_A} & W &
\end{tikzcd}
\end{equation}
where $\pi$ is the projection which forgets the first coordinate and $\pi_0$ is given by
\begin{align}
\pi_0: \widetilde{W}_0 &\lra W \notag \\
(w_0,w_1,\ldots ,w_n) &\mapsto (w_1 /w_0, \ldots , w_n /w_0)\, . \notag
\end{align}

\begin{lemma}\label{lem:SWchart1}
For each $\beta_0 \in \mbz$ we have an isomorphism:
\[
\mch^0 \left(h_{A +} \mco_T^\beta\right) \simeq
\mch^0\left(\pi_{0 +} k_0^+\left(h_{\widetilde{A} +} \, \mco_{\widetilde{T}}^{(\beta_0,\beta)}\right)\right)\, .
\]
\end{lemma}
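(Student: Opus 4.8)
The statement to prove is Lemma \ref{lem:SWchart1}: for each $\beta_0 \in \mbz$,
\[
\mch^0\left((h_B)_+ \mco_T\,\underline{t}^\beta\right) \simeq \mch^0\left((\pi_0)_+ k_0^+\left((h_{\widetilde{B}})_+\,\mco_{\widetilde{T}}\,\underline{t}^{(\beta_0,\beta)}\right)\right).
\]

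The plan is to exploit the commutative diagram \eqref{eq:facdiagu}. First I would observe that since $h_{\widetilde{B}} = k_0 \circ h_0$, we have $k_0^+ (h_{\widetilde{B}})_+ \,\mco_{\widetilde{T}}\,\underline{t}^{(\beta_0,\beta)} = k_0^+ (k_0)_+ (h_0)_+ \,\mco_{\widetilde{T}}\,\underline{t}^{(\beta_0,\beta)}$; since $k_0$ is the open embedding of $\widetilde{W}_0 = \{w_0 \neq 0\}$, the counit $k_0^+ (k_0)_+ \to \id$ is an isomorphism (for an open embedding $k_0^+ = k_0^{-1}$ up to the dimension shift, which here is zero, and $k_0^+ (k_0)_+ \simeq \id$ on $D^b_h$). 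Hence the right-hand side reduces to $\mch^0\left((\pi_0)_+ (h_0)_+ \,\mco_{\widetilde{T}}\,\underline{t}^{(\beta_0,\beta)}\right) = \mch^0\left((h_B)_+ \pi_+ \,\mco_{\widetilde{T}}\,\underline{t}^{(\beta_0,\beta)}\right)$, using $\pi_0 \circ h_0 = h_B \circ \pi$ and the compatibility of direct images with composition.

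So everything comes down to showing $\mch^0\left(\pi_+ \,\mco_{\widetilde{T}}\,\underline{t}^{(\beta_0,\beta)}\right) \simeq \mco_T\,\underline{t}^\beta$, where $\pi: \widetilde{T} = (\mbc^*)^{r+1} \to T = (\mbc^*)^r$ forgets the coordinate $t_0$. Here I would use the explicit presentation $\mco_{\widetilde{T}}\,\underline{t}^{(\beta_0,\beta)} = \mcd_{\widetilde{T}}/\mcd_{\widetilde{T}}\cdot(\p_{t_0}t_0 + \beta_0, \p_{t_1}t_1 + \beta_1, \ldots, \p_{t_r}t_r + \beta_r)$. Writing $\pi = \id_T \times a$ where $a: \mbc^* \to \{pt\}$, the direct image factors as an external object, and $\mch^0(\pi_+ -)$ is computed by the relative de Rham complex in the $t_0$-direction, i.e.\ the two-term complex $[\mco_{\widetilde{T}}\,\underline{t}^{(\beta_0,\beta)} \xrightarrow{\p_{t_0}} \mco_{\widetilde{T}}\,\underline{t}^{(\beta_0,\beta)}]$ pushed forward. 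Since $\beta_0 \in \mbz$, the operator $\p_{t_0}$ acts on the one-dimensional-over-$\mco_T$ module $t_0^{\beta_0}\,\mco_T$ and one checks directly that $\p_{t_0} t_0^{\beta_0} = \beta_0 t_0^{\beta_0 - 1}$, so the cokernel in degree $0$ is $t_0^{\beta_0}\,\mco_T / t_0^{\beta_0 - 1}\,\mco_T \cdot \mbc$... more carefully: on the Laurent module $\mbc[t_0^{\pm}]\,t_0^{\beta_0}$ over $\mco_T$, $\p_{t_0}$ is surjective (since $\beta_0 + n \neq 0$ forces... no — if $\beta_0$ is a nonpositive integer there is a kernel/cokernel issue). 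The point, and this is where care is needed, is that $\mco_{\widetilde{T}}$ contains $\mbc[t_0^{\pm}]$ which as a $\mbc\langle t_0, \p_{t_0}\rangle$-module is $(t_0)_+ \mco_{\mbc^*}$ pushed to a point: $\mch^0(a_+ \mco_{\mbc^*} t_0^{\beta_0}) = \mbc$ for $\beta_0 \in \mbz$ (it is the cohomology of $\mbc^*$ in the relevant degree, twisted trivially since $\beta_0$ is an integer), and tensoring with $\mco_T\,\underline{t}^\beta$ gives the claim. I would spell this out using the Künneth formula for $\mcd$-module direct images and the standard computation that $a_+ \mco_{\mbc^*}$ has $\mch^0 = \mbc = \mch^1$ shifted appropriately (with our normalization $a^+$ includes $[-1]$, so after the shift $\mch^0(\pi_+ \mco_{\widetilde{T}}\,\underline{t}^{(\beta_0,\beta)})$ picks out exactly one copy of $\mco_T\,\underline{t}^\beta$).

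The main obstacle is bookkeeping: getting the cohomological degree shifts right (the convention $f^+ = \mcd_{X\to Y}\otimes f^{-1}(-)[d_X - d_Y]$ versus the direct image normalization), and ensuring that the integrality hypothesis $\beta_0 \in \mbz$ is used precisely where it is needed — namely to guarantee that the rank-one local system $t_0^{\beta_0}$ on $\mbc^*$ is trivial so that $\mch^0$ of its pushforward to a point is one-dimensional rather than zero. I would double-check that the isomorphism is canonical enough that it does not depend on choosing a branch of $t_0^{\beta_0}$, which it is since $\beta_0 \in \mbz$ makes $t_0^{\beta_0}$ an honest element of $\mco_{\widetilde{T}}$. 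The rest — commuting $\mch^0$ past the exact functors $(h_B)_+$ (exact since $h_B$ is an affine locally closed embedding) and $k_0^+$ (exact since $k_0$ is an open embedding) — is formal.
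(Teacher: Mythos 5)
Your proposal is correct and follows essentially the same route as the paper: it runs through the commutative diagram using $h_{\widetilde{B}}=k_0\circ h_0$, the isomorphism $k_0^+(k_0)_+\simeq \id$, the identity $\pi_0\circ h_0 = h_B\circ\pi$, exactness of $(h_B)_+$, and the fact that $\pi$ is a $\mbc^*$-fibration so that $\mch^0\pi_+\,\mco_{\widetilde{T}}\,\underline{t}^{(\beta_0,\beta)}\simeq\mco_T\,\underline{t}^\beta$ for $\beta_0\in\mbz$. The only difference is that you spell out the last point via the relative de Rham complex, which the paper states without detail.
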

\begin{proof}
We show the claim by using the following isomorphisms
\begin{align}
\mch^0 h_{A+} \mco_T^\beta &\simeq
\mch^0 h_{A+} \mch^0 \pi_+ \mco_{\widetilde{T}}^{(\beta_0,\beta)} \simeq \mch^0 h_{A +} \pi_+ \mco_{\widetilde{T}}^{(\beta_0,\beta)} \simeq
\mch^0 \pi_{0 +}  h_{0 +} \mco_{\widetilde{T}}^{(\beta_0,\beta)} \notag \\ &\simeq
\mch^0 \pi_{0 +} k_0^+ k_{0 +} h_{0 +} \mco_{\widetilde{T}}^{(\beta_0,\beta)} \simeq
\mch^0 \pi_{0 +}  k_0^+ h_{\widetilde{A} +} \mco_{\widetilde{T}}^{(\beta_0,\beta)}\, . \notag
\end{align}
The first isomorphism follows  from the fact that $\pi$ is a projection with fiber $\mbc^*$, the second isomorphism follows from the exactness of $(h_A)_+$ and the fourth from the fact that $k_0^+ (k_0)_+ \simeq id_{\widetilde{W}_0}$.

\end{proof}

The following proposition is the generalization of Theorem \ref{thm:SW} to the non-pointed case.

\begin{proposition}\label{prop:SWgeneral}
Let $A=(a_{ki})$ be a $d \times n$ integer matrix satisfying $\mbz A = \mbz^d$ and let $\beta\in \mbc^d$ with $\beta \notin sRes(A)$, then $\mch^0 \left((h_A)_+ \mco_T^\beta\right)$ is isomorphic to $\check{\mcm}_{A}^\beta$.
\end{proposition}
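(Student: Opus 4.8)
The plan is to reduce the non-pointed case to the pointed case treated by Schulze--Walther (Theorem \ref{thm:GKZisoSW}), using the factorization diagram \eqref{eq:facdiagu} together with Lemma \ref{lem:SWchart1}. By that lemma, for any choice of $\beta_0 \in \mbz$ we have
\[
\mch^0\bigl((h_B)_+ \mco_T\, \underline{t}^\beta\bigr) \simeq \mch^0\bigl((\pi_0)_+ k_0^+ (h_{\widetilde{B}})_+\, \mco_{\widetilde{T}}\,\underline{t}^{(\beta_0,\beta)}\bigr)\, .
\]
Since $\widetilde{B}$ is pointed and satisfies $\mbz\widetilde{B} = \mbz^{r+1}$, Theorem \ref{thm:GKZisoSW} applies to it: provided $(\beta_0,\beta) \notin sRes(\widetilde{B})$, we get $(h_{\widetilde{B}})_+ \mco_{\widetilde{T}}\, \underline{t}^{(\beta_0,\beta)} \simeq \check{\mcm}^{(\beta_0,\beta)}_{\widetilde{B}}$ as $\mcd_{\widetilde{W}}$-modules. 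So the first step is to check that one can choose $\beta_0 \in \mbz$ with $(\beta_0,\beta) \notin sRes(\widetilde{B})$; this should follow from the hypothesis $\beta \notin sRes(B)$ by analyzing how the strongly resonant locus of $\widetilde{B}$ sits over that of $B$ (the extra homogenizing coordinate $w_0$ gives one more degree of freedom, and $sRes_j(\widetilde{B})$ is a countable union of shifted quasi-degree sets, so only countably many $\beta_0$ are excluded — in fact a cleaner route is to show directly that $(\beta_0,\beta)\in sRes(\widetilde B)$ forces $\beta\in sRes(B)$ for all $\beta_0$, or for all but finitely many).

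The second step is the $\mcd$-module computation identifying
\[
\mch^0\bigl((\pi_0)_+ k_0^+ \check{\mcm}^{(\beta_0,\beta)}_{\widetilde{B}}\bigr) \simeq \check{\mcm}^\beta_B\, .
\]
Here $k_0$ is the open embedding $\{w_0 \neq 0\} \hookrightarrow \widetilde{W}$, so $k_0^+$ is just localization at $w_0$, i.e. inverting $w_0$ in $\check{\mcm}^{(\beta_0,\beta)}_{\widetilde{B}}$; by part (3) of Theorem \ref{thm:GKZisoSW} the other $w_i$ are already invertible, and one checks $w_0$ becomes invertible too after localizing (or that the localization is computed by a simple rank-one argument). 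Then $\pi_0 : \widetilde{W}_0 \to W$, $(w_0,\dots,w_s)\mapsto(w_1/w_0,\dots,w_s/w_0)$, is (up to the product with $\mbc^*$ in the $w_0$-direction) a trivial $\mbc^*$-bundle, so $(\pi_0)_+$ amounts to taking the $\beta_0$-homogeneous part with respect to the Euler field $\widetilde E_0 := \sum_{i=0}^s \p_{w_i} w_i$ in the $w_0$-direction and then identifying the coordinates. Concretely: change coordinates on $\widetilde{W}_0$ to $(w_0, v_1,\dots,v_s)$ with $v_i = w_i/w_0$, rewrite the box operators $\check\Box_{\underline m}$ for $\underline m\in \mbl_{\widetilde B}$ and the Euler operators $\check E_k$ in these coordinates, observe that $\mbl_{\widetilde B}$ is exactly the set of relations $\underline l\in\mbl_B$ extended by $l_0 = -\sum l_i$ so that the $v_i$-box operators become the $\check\Box_{\underline l}$ for $\underline l\in\mbl_B$, and that $\widetilde E_0$ acting on the $\beta_0$-graded piece recovers exactly the relation $w_0\mapsto$ invertible plus the remaining Euler operators $\check E_k + \beta_k$. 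Pushing forward along the projection $(w_0,v)\mapsto v$ (integrating out $w_0$, which contributes the cohomology of $\mbc^*$ concentrated in one degree by the $\beta_0$-homogeneity) then yields the presentation $\mcd_W/((\check\Box_{\underline l})_{\underline l\in\mbl_B},(\check E_k+\beta_k)_k) = \check{\mcm}^\beta_B$.

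The main obstacle I expect is the bookkeeping in this second step — specifically, making the coordinate change $w_i = w_0 v_i$ interact correctly with the noncommutative relations $\p_{w_i} w_i$ in the Euler operators and with the lattice of relations, and verifying that after localization at $w_0$ and restriction to a single Euler-degree the surviving operators are precisely the defining operators of $\check{\mcm}^\beta_B$ with no spurious extra relations or missing ones. One must also be careful that $(\pi_0)_+$ is computed in the right cohomological degree, i.e. that $\mch^0$ of the pushforward is what appears and higher direct images vanish or are irrelevant; this is where the choice of $\beta_0\in\mbz$ (rather than arbitrary $\beta_0\in\mbc$) matters, since it guarantees the relevant $\mbc^*$-equivariant local system is trivial and the pushforward along $\mbc^*$ is clean. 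The compatibility of the two gradings on $D$ recorded just before Definition 2.6 (the match $\deg(w_j) = \underline b_j$, $\deg(\p_{w_j}) = -\underline b_j$) is the technical input that makes the Euler-operator manipulation go through.
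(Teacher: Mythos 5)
Your proposal follows essentially the same route as the paper: reduce via Lemma \ref{lem:SWchart1} to the pointed matrix $\widetilde{B}$, invoke Theorem \ref{thm:GKZisoSW} once $(\beta_0,\beta)\notin sRes(\widetilde{B})$, and then compute $\mch^0\left((\pi_0)_+ k_0^+\check{\mcm}^{(\beta_0,\beta)}_{\widetilde{B}}\right)$ by separating the variable $w_0$ from $w_i/w_0$. The paper does exactly this, factoring $\pi_0=p\circ f$ with $f(w)=(w_0,w_1/w_0,\ldots,w_s/w_0)$, identifying $\mch^0 f_+$ of the restricted module with $\check{\mcm}^\beta_B\boxtimes \mcd_{\mbc^*_{w_0}}/(\p_{w_0}w_0+\beta_0)$, and using $\beta_0\in\mbz$ so that $\mch^0 p_+$ of this external product is $\check{\mcm}^\beta_B$; your description of ``integrating out $w_0$'' is this same computation.

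The one step where your reasoning is genuinely weaker is the existence of $\beta_0\in\mbz$ with $(\beta_0,\beta)\notin sRes(\widetilde{B})$. The argument ``only countably many $\beta_0$ are excluded'' does not establish anything, since $\beta_0$ ranges over the countable set $\mbz$; more importantly, $sRes_j(\widetilde{B})$ is a countable union of translated quasi-degree sets (finite unions of affine subspaces), and the whole line $\{(\beta_0,\beta):\beta_0\in\mbc\}$ could be contained in one of them. Excluding that possibility is precisely where the hypothesis $\beta\notin sRes(B)$ must enter, and your primary argument never uses it, although the conclusion certainly depends on it. Your fallback suggestion (show that $(\beta_0,\beta)\in sRes(\widetilde{B})$ for all large $\beta_0$ forces $\beta\in sRes(B)$) is the correct statement, but you leave it unproved; the paper disposes of this step by citing \cite[Lemma 1.16]{Reich2}. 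With that reference (or a proof of that comparison of resonance sets) supplied, the remainder of your argument coincides with the paper's proof.
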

\begin{proof}
The proof relies on Lemma  \ref{lem:SWchart1} and the theorem of Schulze and Walther in the pointed case. Notice that we can find a $\beta_0 \in \mbz$ with $\beta_0 \gg 0$ such that $(\beta_0,\beta) \notin sRes(\widetilde{A})$ by \cite[Lemma 1.16]{Reich2} (in loc. cit. the statement is formulated for $\beta \in \mbq^d$ but the proof carries over almost word for word in this more general case).

Consider the following isomorphism on $\widetilde{W}_0$:
\begin{align}
f: \widetilde{W}_0 &\lra  \mbc^*_{w_0} \times W \notag \\
(w_0, \ldots w_n) &\mapsto ((w_0, w_1/w_0, \ldots ,w_{n}/w_0) \notag
\end{align}
together with the canonical projection $p: W \times \mbc^*_{w_0} \ra W$ which forgets the first coordinate. This factors $\pi_0 = p \circ f$, which gives (using Lemma \ref{lem:SWchart1} above)
\begin{align}
\mch^0 \left((h_A)_+ \mco_T^\beta\right) \simeq &\mch^0\left( (\pi_0)_+ \left((h_{\widetilde{A}})_+ \mco_{\widetilde{T}}^{(\beta_0,\beta)}\right)_{\mid \widetilde{W}_0}\right) \simeq  \mch^0\left(p_{+} f_{+} \left((h_{\widetilde{A}})_+ \mco_{\widetilde{T}}^{(\beta_0,\beta)}\right)_{\mid \widetilde{W}_0}\right) \notag \\
\simeq  &\mch^0 \left( p_{+} f_{+} (\check{\mcm}^{(\beta_0,\beta)}_{\widetilde{A}})_{\mid \widetilde{W}_0}\right). \notag
\end{align}
The $\mcd$-module $\mch^0 f_{+} (\check{\mcm}^{(\beta_0,\beta)}_{\widetilde{A}})_{\mid \widetilde{W}_0}$ is isomorphic to $\mcd_{W\times \mbc^*_{w_0}} / \mci'_0$ where $\mci'_0$ is generated by
\[
\check{\Box}_{\underline{m} \in \mbl_{A}} = \prod _{i : m_i >0,} w_{i}^{m_i} - \prod_{i: m_i <0 , } w_{i}^{-m_i}
\]
and
\[
Z_0 = \p_{w_0} w_0 + \beta_0 \qquad \text{and} \qquad E_k = \sum_{i=1}^n a_{ki }\p_{w_i} w_i + \beta_k \]
Hence $\mch^0 f_{+} (\check{\mcm}_{\widetilde{A}}^{(\beta_0,\beta)})_{\mid \widetilde{W}_0}$ is isomorphic to $\check{\mcm}^\beta_A \boxtimes \mcd_{\mbc^*_{w_0}}/ ( \p_{w_0} w_0+ \beta_0)$ as a $\mcd$-module. We therefore have
\[
\mch^0 \left( p_{+} f_{+} (\check{\mcm}_{\widetilde{A}}^{(\beta_0,\beta)})_{\mid \widetilde{W}_0}\right)\simeq \mch^0 \left( p_{+} \mch^0 f_{+} (\check{\mcm}_{\widetilde{A}}^{(\beta_0,\beta)})_{\mid \widetilde{W}_0}\right)  \simeq \mch^0 p_+ \left(\check{\mcm}^\beta_A \boxtimes \mcd_{\mbc^*_{w_0}}/ ( \p_{w_0} w_0 +\beta_0)\right) \simeq \check{\mcm}^\beta_A \, .
\] \\
\end{proof}

\section{Hodge filtration on torus embeddings}

\label{sec:HodgeOnTorusEmbed}

The aim of this section is to compute explicitly the Hodge filtration
of $(h_A)_+ \mco^\beta_T$ as a mixed Hodge module  for certain values of $\beta$ (cf. Theorem \ref{thm:hpot}). We will use this result in section \ref{sec:Radon} where the behavior of mixed Hodge modules obtained by such torus embeddings under the twisted Radon transformation is studied.

\subsection{V-filtration}\label{subsec:Vfilt}

As above let $A$ be a $d \times n$ integer matrix s.t. $\mbz A = \mbz^d$.  In this section we additionally assume that the matrix $A$ satisfies the following conditions:\\
\begin{equation}\label{eq:Bnormal}
\mbn A = \mbz^d \cap \mbr_{\geq 0}A \qquad \text{and} \qquad \mbn A \neq \mbz^d
\end{equation}
where $\mbr_{\geq 0}A$ is the cone generated by the columns of $A$.
As already noticed above, the first  condition is equivalent to the fact that the semigroup ring $\mbc[\mbn A]$ is normal
(see, e.g., \cite[Section 6.1]{HerzBr}).
We will again consider the locally closed embedding
\begin{align}
h_A: T &\lra W \notag \\
(t_1, \ldots t_d) &\mapsto (\underline{t}^{\underline{a}_1}, \ldots , \underline{t}^{\underline{a}_n})\, . \notag
\end{align}
Put $D:=\{w_1\cdot\ldots\cdot w_n =0\}\subset W$, $W^*:=W\backslash D$, and consider the decomposition $h_A=l_A\circ k_A$, where
\begin{align}
k_A: T &\lra W^* \notag \\
(t_1, \ldots t_d) &\mapsto (\underline{t}^{\underline{a}_1}, \ldots , \underline{t}^{\underline{a}_n})\, . \notag
\end{align}
and where $l_A: W^* \rightarrow W$ is the canonical open embedding.
\begin{lemma}\label{lem:SWclosedembed}
The morphism $k_A:T\rightarrow W^*$ is a closed embedding.
\end{lemma}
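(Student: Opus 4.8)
The plan is to reduce the claim to the set-theoretic identity
\[
Y_B\cap\Supp(D)=Y_B\setminus T
\]
inside $Y_B=\overline{\mathrm{im}(h_B)}=\Spec(\mbc[\mbn B])$, i.e.\ to the statement that the divisor $D$ carves out on $Y_B$ exactly its toric boundary. First I would note that, since $h_B$ is a locally closed embedding and since $\underline t^{\underline b_i}\neq 0$ for every $t\in T$ and every $i$, the image of $h_B$ is contained in $(\mbc^*)^s\subset W\setminus D$; as $h_B=l_B\circ k_B$ with $l_B\colon W\setminus D\hookrightarrow W$ the open embedding, it follows that $k_B$ is again a locally closed embedding, inducing an isomorphism of $T$ onto $\mathrm{im}(k_B)=\mathrm{im}(h_B)$ (with its reduced structure). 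Hence it only remains to see that this image is \emph{closed} in $W\setminus D$. Its closure in $W\setminus D$ is $\overline{\mathrm{im}(h_B)}\cap(W\setminus D)=Y_B\setminus\big(Y_B\cap\Supp(D)\big)$, and since $\mathrm{im}(h_B)=T$ is exactly the open dense torus stratum of $Y_B$, the map $k_B$ is a closed embedding precisely when $Y_B\cap\Supp(D)=Y_B\setminus T$.

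To prove this identity I would use Lemma \ref{lem:bounddiv}. The inclusion $Y_B\cap\Supp(D)\subseteq Y_B\setminus T$ is immediate: $\Supp(D)$ is contained in the union of the coordinate hyperplanes $\{w_{i_k}=0\}$, none of which meets $(\mbc^*)^s\supseteq T$. For the reverse inclusion, recall from Lemma \ref{lem:bounddiv} that $D=\mathrm{div}(w_{i_1}\cdots w_{i_l})$ with $\{i_1,\dots,i_l\}=J_1\cup J_2$, so that under the isomorphism $\mbc[\underline w]/\big((\check\Box_{\underline m})_{\underline m\in\mbl_B}\big)\simeq\mbc[\mbn B]$ the defining function $w_{i_1}\cdots w_{i_l}$ restricts on $Y_B$ to the monomial $\underline t^{c'}$ with $c'=\sum_{j\in J_1\cup J_2}\underline b_j$, and that $\langle c',v_a\rangle=1>0$ for every primitive generator $v_a$ of the dual cone $\sigma$, i.e.\ $c'\in\mathrm{int}(\mbn B)$. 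A monomial $\underline t^{c'}\in\mbc[\mbn B]$ with $c'$ interior is a unit on $T$ but vanishes on every torus-invariant stratum $Y_B^0(\Gamma)$ (for a proper face $\Gamma\subsetneq\mbr_{\geq 0}B$), since the latter is cut out by $\underline t^{\underline b}=0$ for $\underline b\notin\Gamma$ and $c'\notin\Gamma$; equivalently, $\mathrm{div}_{Y_B}(\underline t^{c'})=\mathrm{div}_{Y_B}(\underline t^c)=D_B$, whose support is $Y_B\setminus T$ by construction. Therefore $Y_B\cap\Supp(D)=Y_B\setminus T$, which finishes the proof.

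The only genuine point is the reverse inclusion $Y_B\setminus T\subseteq\Supp(D)$: one must know that the particular divisor $D$ extracted in Lemma \ref{lem:bounddiv} actually meets every component of the toric boundary of $Y_B$, and this is where the Gorenstein hypothesis \eqref{eq:intc} (entering through $c\in\mathrm{int}(\mbn B)$, hence $c'\in\mathrm{int}(\mbn B)$, and through the facet-by-facet description of $c'$) is essential. All the remaining steps are formal manipulations with images and closures of locally closed embeddings.
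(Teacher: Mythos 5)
Your proof is correct and follows essentially the same route as the paper: the paper's (very brief) argument also invokes the toric stratification of $Y_B=\overline{h_B(T)}$ together with Lemma \ref{lem:bounddiv} to conclude that all boundary strata $Y_B^0(\Gamma)$, $\Gamma\subsetneq\mbr_{\geq 0}B$, lie in $D_B=Y_B\cap D$, so that the image of $k_B$ is closed in $W\setminus D$. You merely spell out the details (in particular why $\underline{t}^{c'}$, being an interior monomial, vanishes on every proper stratum) that the paper leaves implicit.
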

\begin{proof}
This is clear, as the image of $k_A$ is precisely the vanishing locus
of $(\check{\Box}_{\underline{m}})_{\underline{m}\in \mbl_A} \subset \Gamma(W^*,\mco_{W^*})$.
\end{proof}

The aim of this subsection is to compute parts of the canonical (descending) $V$-filtration of $\check{\mcm}^\beta_A  \simeq h_{A+} \mco^\beta_T$ (or Kashiwara-Malgrange filtration) along the normal crossing divisor $D$ for certain values of $\beta$.

We review very briefly some facts about the $V$-filtration for differential modules. Let $X=\Spec(R)$ be a smooth affine variety and $Y = div(t)$ be
a smooth reduced principal divisor. Denote by $I = (t)$ the corresponding ideal. Let as before $D_X=\Gamma(X,\cD_X)$ be the
ring of algebraic differential operators on $X$, then the $V$-filtration on $D_X$ is defined by
\[
V^k D_X = \{ P \in D_X \mid P I^j \subset I^{j+k} \quad \text{for any}\quad j \in \mbz\}\, ,
\]
where $I^j = R$ for $j \leq 0$. One has
\begin{align}
V^k D_X &= t^k V^0 D_X\, , \notag \\
V^{-k}D_X &= \sum_{0 \leq j \leq k} \p_t^j V^0 D_X\, . \notag
\end{align}
Choose a total ordering $<$ on $\mbc$ such that, for any $\alpha, \beta \in \mbc$, the following conditions hold:
\begin{enumerate}
\item $\alpha < \alpha+1$,
\item $\alpha < \beta \quad \text{if and only if} \quad \alpha +1 < \beta+1$,
\item $\alpha < \beta +m \quad \text{for some}\; m \in \mbz$.
\end{enumerate}
We recall the definition of the canonical $V$-filtration (see, e.g., \cite[Section 1]{Saitobfunc}).
\begin{definition}\label{def:CanonicalVFiltration}
Let $N$ be a coherent $D_X$-module. The canonical $V$-filtration (or Kashiwara-Malgrange filtration) is an exhaustive filtration on $N$ indexed discretely by $\mbc$ with total order as above and is uniquely determined by the following conditions
\begin{enumerate}
\item $(V^k D_X)(V^\alpha N) \subset V^{\alpha+k}N$ for all $k,\alpha$
\item $V^{\alpha}N$ is coherent over $V^0D_X$ for any $\alpha$
\item $t (V^\alpha N) = V^{\alpha+1}N$ for $\alpha \gg 0$
\item the action of $\p_t t - \alpha$ on $\gr^\alpha_V N = V^\alpha N / V^{> \alpha} N$ is nilpotent
\end{enumerate}
where $V^{> \alpha} N := \bigcup_{\beta > \alpha} V^\beta$.\\

The canonical $V$-filtration is unique if it exists. Its existence is guaranteed if $N$ is $D_X$-holonomic.
\end{definition}

We reduce the computation of the $V$-filtration on $\check{M}_A^\beta$ along the possibly singular divisor $D$ to the computation of a $V$-filtration along a smooth divisor by considering the following graph embedding:
\begin{align}
i_{g} : W &\lra W \times \mbc_t \notag \\
(w_1, \ldots , w_{n}) &\mapsto (w_1, \ldots , w_{n} , w_1 \cdot \ldots \cdot w_n)\, . \notag
\end{align}
Instead of computing the $V$-filtration on $\check{M}^\beta_A$, we will compute it on $\Gamma(W \times \mbc_t, \mch^0(i_{g +} \check{\mcm}_A^\beta))$ along $t=0$ (notice that $i_g$ is an affine embedding hence $i_{g +}$ is exact). In order to compute the direct image we consider the composed map
\begin{align}
i_g \circ h_A: T &\lra W \times \mbc_t \notag \\
(t_1, \ldots , t_{d}) &\mapsto (\underline{t}^{\underline{a}_1},  \ldots , \underline{t}^{\underline{a}_{n}}, \underline{t}^{\underline{a}_1 + \ldots + \underline{a}_n})\, . \label{eq:graphembed}
\end{align}
Notice that the matrix $A'$, which is built from the columns $\underline{a}_{1}, \ldots , \underline{a}_{n}, \underline{a}_1 + \ldots  + \underline{a}_n$, gives a saturated semigroup $\mbn A'=\mbn A$. Hence we can apply again Proposition \ref{prop:SWgeneral}  to compute
\[
\check{\mcm}_{A'}^\beta \simeq \mch^0 i_{g +} \check{\mcm}_A^\beta \simeq \mch^0(i_g \circ h_A)_+ \mco_T^\beta\, .
\]
This means that $\mch^0 i_{g +} \check{\mcm}_A^\beta$ is a
cyclic $\mcd_{W \times \mbc_t}$-module $\mcd_{W\times \mbc_t} / \mci'$, where $\mci'$ is generated by
\begin{equation}\label{eq:Eprime}
\check{E}'_k := \sum_{i=1}^{n} a_{ki} \p_{w_i} w_i + c_k \p_t t + \beta_k \quad \text{for}\;\; k= 1, \ldots , d\, ,
\end{equation}
where $c_k = a_{k1}+ \ldots + a_{k n}$ is the $k$ -th component of $c \in \mbz^d$
and
\begin{equation}\label{eq:Boxprime}
\check{\Box}_{\underline{m} \in \mbl_{A'}} := \begin{cases}\prod_{m_i > 0} w_i^{m_i}t^{m_{n+1}} - \prod_{m_i < 0} w_i^{-m_i} & \text{for}\; m_{n+1} \geq 0 \\
\prod_{m_i > 0} w_i^{m_i} - \prod_{m_i < 0} w_i^{-m_i} t^{-m_{n+1}} & \text{for}\; m_{n+1} < 0 \end{cases}
\end{equation}
where $\mbl_{A'}$ is the $\mbz$-module of relations among the columns of $A'$.\\

We are going to use the following characterization of the canonical $V$-filtration along $t = 0$.
\begin{proposition}\cite[Definition 4.3-3, Proposition 4.3-9]{MebkhMais}\label{prop:KMfilteq}
Let $n \in N$ and set $E:= \p_{t} t$.  The Bernstein-Sato polynomial of $n$ is the unitary polynomial of smallest degree, satisfying
\[
b(E)n \in V^{1}(D_X)n\, .
\]
We denote it by $b_n(x) \in \mbc[x]$ and denote the set of roots of $b_n(x)$ by $ord(n)$. The canonical $V$-filtration on $N$ is then given by
\[
V^{\alpha} N = \{n \in N \mid ord(n) \subset [\alpha, \infty)\}\, .
\]
\end{proposition}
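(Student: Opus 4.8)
The statement is the standard description of the Kashiwara--Malgrange filtration in terms of $b$-functions, so my plan is to reprove \cite[Proposition~4.3-9]{MebkhMais} directly from the four defining properties of $V^\bullet N$ recalled above. The key reduction to set up first is this: for $n\in N$ put $M_n:=(V^0D_X)\,n$, a cyclic --- hence $V^0D_X$-coherent --- submodule of $N$; since $V^1D_X=t\,V^0D_X$ one has $(V^1D_X)\,n=t\,M_n$, and $Q_n:=M_n/tM_n$ is a module over $V^0D_X/V^1D_X$ generated by the class $\bar n$ of $n$. The image of $E=\p_t t$ is central in $V^0D_X/V^1D_X$ (it commutes with all of $V^0D_X$ modulo $V^1D_X$). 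Hence for a polynomial $b$ one has $b(E)\,n\in(V^1D_X)\,n$ exactly when $b(E)\,\bar n=0$ in $Q_n$, and, by centrality, exactly when $b(E)$ annihilates all of $Q_n$; so the Bernstein--Sato polynomial $b_n$ is precisely the minimal polynomial of $E$ acting on $Q_n$, and it remains to prove that this polynomial exists and that $ord(n)\subseteq[\alpha,\infty)$ precisely when $n\in V^\alpha N$.

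I would establish the existence of $b_n$ and the inclusion $V^\alpha N\subseteq\{\,n:ord(n)\subseteq[\alpha,\infty)\,\}$ in one go. Fix $n\neq0$ and let $\gamma_0:=\max\{\gamma:n\in V^\gamma N\}$, which exists since $V^\bullet N$ is exhaustive and discretely indexed. As $V^0D_X$ preserves $V^{\gamma_0}N$, we get $M_n\subseteq V^{\gamma_0}N$, so $M_n=V^{\gamma_0}M_n$ for the induced filtration $V^\bullet M_n:=M_n\cap V^\bullet N$, which by uniqueness is the canonical $V$-filtration of $M_n$. By the property $t\,V^\gamma M_n=V^{\gamma+1}M_n$ for $\gamma\gg0$ there is a $\gamma_1$ with $V^{>\gamma_1}M_n\subseteq tM_n$, hence $Q_n$ is a quotient of $V^{\gamma_0}M_n/V^{>\gamma_1}M_n$; the latter carries a finite filtration with graded pieces $Gr^\gamma_V M_n$ for the finitely many jumps $\gamma\in[\gamma_0,\gamma_1]$, and on each of them $E-\gamma$ is nilpotent (by nilpotence of $E-\gamma$ on $Gr^\gamma_V N$ together with strictness $Gr^\gamma_V M_n\hookrightarrow Gr^\gamma_V N$). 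Thus $E$ satisfies the monic polynomial $\prod_\gamma(x-\gamma)^{m_\gamma}$ on $Q_n$ with all $\gamma\ge\gamma_0$; so $b_n$ exists and $ord(n)\subseteq[\gamma_0,\infty)\subseteq[\alpha,\infty)$ whenever $n\in V^\alpha N$.

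For the opposite inclusion, suppose $ord(n)\subseteq[\alpha,\infty)$ but $n\notin V^\alpha N$, and set $\gamma_0:=\max\{\gamma:n\in V^\gamma N\}<\alpha$, so the class $\bar n$ of $n$ in $Gr^{\gamma_0}_V N$ is non-zero. Since $V^0D_X$ preserves $V^{\gamma_0}N$ and $t\,V^{\gamma_0}N\subseteq V^{\gamma_0+1}N$, we obtain $b_n(E)\,n\in(V^1D_X)\,n=t\,(V^0D_X)\,n\subseteq V^{\gamma_0+1}N\subseteq V^{>\gamma_0}N$, i.e.\ $b_n(E)\,\bar n=0$ in $Gr^{\gamma_0}_V N$; and by nilpotence $(E-\gamma_0)^m\bar n=0$ for some $m\ge1$. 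Since $\gamma_0<\alpha$ is not a root of $b_n$, $\gcd\!\big(b_n(x),(x-\gamma_0)^m\big)=1$, so a B\'ezout identity $1=p(x)b_n(x)+q(x)(x-\gamma_0)^m$ gives $\bar n=p(E)b_n(E)\bar n+q(E)(E-\gamma_0)^m\bar n=0$, contradicting $\bar n\neq0$. Hence $n\in V^\alpha N$, and the displayed description of $V^\alpha N$ follows.

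The genuinely hard ingredient is the structural fact invoked in the first inclusion (and already in the well-posedness of the whole setup): that $N$, and then $M_n$ with the induced filtration, is specializable along $Y=div(t)$, so that the canonical $V$-filtration exists, is discretely indexed, and restricts strictly to the canonical filtration on the coherent submodule $M_n$. For the regular holonomic modules that occur in this paper this is Kashiwara's theorem on the rationality and finiteness of $b$-functions; once it is available, everything above is linear algebra over $\mbc$ together with the four defining properties of $V^\bullet$.
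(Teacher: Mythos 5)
The paper does not actually prove this proposition: it is quoted from \cite{MebkhMais} (Definition 4.3-3, Proposition 4.3-9) and used as a black box, so there is no in-paper argument to compare yours with. Your reconstruction is, in substance, the standard proof of the cited result and is essentially correct: you reduce $b_n$ to the minimal polynomial of $E=\p_t t$ acting on $Q_n=(V^0D_X)n/(V^1D_X)n$ via the centrality of $E$ in $\gr^0_V D_X$; you bound its roots from below by $\gamma_0=\max\{\gamma\mid n\in V^\gamma N\}$ using the finite induced filtration on $M_n=(V^0D_X)n$ and the nilpotency of $E-\gamma$ on $Gr^\gamma_V N$; and the converse inclusion via coprimality and a B\'ezout identity in $Gr^{\gamma_0}_V N$ is exactly right. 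The one point where your justification is off is the claim that the induced filtration $M_n\cap V^\bullet N$ is ``by uniqueness the canonical $V$-filtration of $M_n$'': $M_n$ is only a coherent $V^0D_X$-module, not a $\cD_X$-module, so the uniqueness characterization does not apply to it, and property (3) of the canonical filtration cannot be invoked for $M_n$ verbatim. What you actually need, namely $M_n\cap V^{>\gamma_1}N\subset tM_n$ for some $\gamma_1$, follows instead from coherence: the induced filtration is a good $V$-filtration on the coherent $V^0D_X$-submodule $M_n$, and an Artin--Rees argument over the Rees ring of $V^\bullet D_X$ gives the desired containment. This is precisely the ``hard ingredient'' you flag in your last paragraph, together with the existence and discreteness of the canonical $V$-filtration itself (specializability), which the proposition's setup already presupposes; with that substitution your argument is complete and matches the argument in the cited source.
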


We will use this characterization to compute the canonical $V$-filtration on $\check{M}_{A'}^\beta$ along $t=0$ for certain $\beta \in \mbr^d$.\\

Let $\underline{c}:= \underline{a}_1 + \ldots +\underline{a}_n$. For all facets $F$ of $\mbr_{\geq 0} A' =\mbr_{\geq 0} A$ let $0 \neq \underline{n}_F \in \mbz^d$ be the uniquely determined primitive, inward-pointing, normal vector of $F$, i.e. $\underline{n}_F$ satisfies $\langle \underline{n}_F, F \rangle = 0$, $\langle \underline{n}_F, \mbn A \rangle  \subset \mbz_{\geq 0}$ and $\lambda \cdot \underline{n}_F \not \in \mbz^d$ for $\lambda \in [0,1)$ (where $\langle \cdot, \cdot \rangle$ is the Euclidean pairing). Set
\[
e_F := \langle \underline{n}_F, \underline{c} \rangle \in \mbz_{\geq 0}.
\]
We show that $e_F$  is always positive. We have $\underline{c} \neq 0$ since otherwise $0 = - \underline{a}_1 - \ldots -\underline{a}_n \in \mbn A$ and therefore $-\underline{a}_i \in \mbn A$ for all $i \in \{1,\ldots,n\}$ which contradicts the assumption $\mbn A \neq \mbz^d$. Furthermore $\underline{c}$ lies in the interior of $\mbr_{\geq 0}A'$. In order to see this assume to the contrary that $\underline{c}$ lies on some facet $F$ of $\mbr_{\geq 0} A'$.  Then $\langle \underline{n}_F, \underline{c} \rangle = 0 $ holds. For $\underline{a}_i \not \in F$ we have on the one hand $\underline{c} - \underline{a}_i \in \mbn A$ and on the other hand $\langle \underline{c}- \underline{a}_i, \underline{n}_F\rangle < 0$ which is a contradiction. Hence $\underline{c}$ is in the interior of $\mbr_{\geq 0} A$, which shows $e_F \in \mbz_{> 0}$.

We define the following set of admissible parameters $\beta$:
\begin{equation}\label{eq:defAdm}
\mathfrak{A}_A := \bigcap_{F : F \; \text{facet}} \lbrace \mbr \cdot F - [0, \frac{1}{e_F}) \cdot \underline{c} \rbrace
\end{equation}

\begin{lemma}\label{lem:compV1}
Suppose as above that $\mbn A = \mbz^d \cap \mbr_{\geq 0} A$. Consider the cyclic $D_{W\times \dC_t}$-module $\check{M}_{A'}^\beta$, and its generator $[1]\in\check{M}_{A'}$.
Then we have
$ord([1]) \subset [0,1)$ if $\beta \in \mathfrak{A}_A$.
\end{lemma}
\begin{proof}
It was shown in \cite[Theorem 3.5]{ReichSevWalth} that the roots of $b_{[1]}(x)$ for $[1] \in \check{M}^\beta_A$ are contained in the set $\{\epsilon \in \mbc \mid \epsilon \cdot \underline{c} \in \qdeg(\mbc[\mbn A'] / (t)) -\beta  \}$ which is discrete since $\underline{c}$ lies in the interior of $\mbr_{\geq 0} A' = \mbr_{\geq 0} A$ and $\qdeg(\mbc[\mbn A']/(t))$ is a finite union of parallel translates of the complex span of faces of $\mbr_{\geq 0}A'$ (cf. \cite{MillerWaltherMat}).  We will now compute an estimate of the quasi-degrees $\qdeg(\mbc[\mbn A'] / (t))$. For this we remark that $0 = [P] \in \mbc[\mbn A'] /(t)$ for $P \in \mbc[\mbn A']$ iff $\exists P' \in \mbc[\mbn A']$ with $ P = P' \cdot t$. In this case we have $\deg(P)  \in \mbn A +\underline{c}$. \\

Set $ L_F := \{\frac{k}{e_F} \cdot \underline{c} + \mbc \cdot F \mid k = 0,\ldots e_F-1\}$. Then $L = \bigcup_{F : \text{facet}} L_F$ is Zariski closed  and we will show that the set $\deg(\mbc[\mbn A'] / (t))$ is contained in L. Let $P \in \mbc[\mbn A']$ with $0 \neq [P] \in \mbc[\mbn A'] / (t)$ and set $\underline{p}:= deg(P) \in \mbn A$.  Since $- \underline{c} \notin \mbr_{\geq 0}A$ there exist  a facet $F$ and some $\lambda \in [0,1)$ such that $\underline{p} - \lambda \underline{c} \in F$, i.e. $\underline{p} = \lambda \underline{c} + \underline{f}$ for some $\underline{f} \in F$.  We have $\lambda \cdot e_F = \langle \lambda \underline{c}+ \underline{f}, \underline{n}_F\rangle = \langle \underline{p}, \underline{n}_F \rangle \in \mbz_{\geq 0}$. Hence $\underline{p} \in L_F \subset L$.\\

Since  $\qdeg(\mbc[\mbn A'] / (t))$ is by definition the Zariski closure of $\deg(\mbc[\mbn A'] / (t)) $ the former set is contained in $L$. In particular this shows that the roots of $b_{[1]}(x)$ are contained in the set $\{\epsilon \in \mbc \mid  \epsilon \cdot \underline{c} \in L - \beta\}$. Since $L$ is a union of hypersurfaces which are defined over $\mbr$, $\underline{c} \in \mbz^d$  and $\beta \in \mbr^d$, this set is equal to $\{\epsilon \in \mbr \mid  \epsilon \cdot \underline{c} \in L - \beta\}$. Hence for $\beta \in \bigcap_{F : \text{facet}} \lbrace \mbr \cdot F - [0,\frac{1}{e_F}) \cdot \underline{c}\rbrace $ we can guarantee that the roots of $b_{[1]}(x)$ are contained in $[0,1)$.
\end{proof}

We will prove a basic lemma on the set $\mathfrak{A}_A$ which will be of importance later.
\begin{lemma}\label{lem:AvalsRes}
Suppose $\mbn A = \mbz^d \cap \mbr_{\geq 0}$. Then $\mathfrak{A}_A \cap sRes(A) = \emptyset$.
\end{lemma}
\begin{proof}
Recall that $sRes(A) = \bigcup_{j=1}^n sRes_j(A) = \bigcup_{j=1}^n -(\mbn +1)\underline{a}_j + \textup{qdeg}(\mbc[\mbn A] /((w_j))$. Therefore it is enough to show that
\begin{equation}\label{eq:Aqdegemptyset}
\mathfrak{A}_A \cap \left\lbrace -(\mbn +1) \underline{a}_j + \textup{qdeg}(\mbc[\mbn A] /(w_j)) \right\rbrace = \emptyset
\end{equation}
holds.  The following estimate of the quasi-degrees of $\mbc[\mbn A]/(w_j))$ can be shown similarly as in the proof of the lemma above
\[
\textup{qdeg}(\mbc[\mbn A] /((w_j)) \subset L_j := \bigcup_{F : \underline{a}_j \not \in F} \{ \frac{k}{e_{F,j}} \cdot \underline{a}_j + \mbc \cdot F \mid k= 0, \ldots , e_{F,j} -1\}
\]
where $e_{F,j} := \langle \underline{n}_F, \underline{a}_j \rangle$. Hence it is enough to show that for each $j \in \{1,\ldots, n\}$ and each facet $F$ with $\underline{a}_j \not \in F$ the following holds
\begin{equation}\label{eq:sRescutaP}
\left\lbrace \mbr \cdot F- [0, \frac{1}{e_F}) \cdot \underline{c} \right\rbrace \; \cap \; \left\lbrace-(\mbn +1) \underline{a}_j + \bigcup_{k=0}^{e_{F,j} -1}  \frac{k}{e_{F,j}} \cdot  \underline{a}_j + \mbr \cdot F \right\rbrace = \emptyset
\end{equation}
Since $F$ has codimension one in $\mbr^d$ and $\underline{a}_j, \underline{c} \not \in \mbr \cdot F$ we can write $ \underline{c} = \lambda \underline{a}_j + f$ for some $f \in \mbr \cdot F$. We get $e_F = \lambda e_{F,j}$. We conclude that \eqref{eq:sRescutaP} is equivalent to
\[
\left\lbrace \mbr \cdot F- [0, \frac{1}{e_{F,j}}) \cdot \underline{a}_j \right\rbrace \; \cap \; \left\lbrace-(\mbn +1) \underline{a}_j + \bigcup_{k=0}^{e_{F,j} -1}  \frac{k}{e_{F,j}} \cdot  \underline{a}_j + \mbr \cdot F \right\rbrace = \emptyset \, .
\]
But this holds since $(-\frac{1}{e_{F,j}},0] \cap \lbrace -(\mbn +1) +  \{0, \frac{1}{e_{F,j}}, \ldots, \frac{e_{F,j} -1}{e_{F,j}}\} \rbrace = \emptyset$.
\end{proof}

\begin{example}
The sets $sRes(A)$ and $\mathfrak{A}_A$  for the matrix
\[
A = \left(\begin{matrix}-1 & 0 & 1 & 2 \\ \phantom{-}1 & 1 & 1 & 1 \end{matrix}\right)
\]
are sketched below.
\begin{center}
\newdimen\scale
\scale=0.9cm
\begin{tikzpicture}
 \filldraw[black,opacity=.15] (0,0) -- (\scale*-2.05,\scale*2.05) -- (\scale*4.05,\scale*2.05)  -- (0,0);
 \filldraw[black,opacity=.4] (0,0) -- (\scale*-.66,\scale*-.33) -- (\scale*-.33,\scale*-.66) -- (\scale*.33,\scale*-.33)  -- (0,0);

 \foreach \x in {-3,...,4}{
   \foreach \y in {-2,...,2}{
     \node[draw,circle,inner sep=0.5pt,fill] at (\scale*\x,\scale*\y) {};
   }
 }
\draw (\scale*-3,\scale* -2) -- (\scale*4, \scale *1.5);
\draw (\scale*-2,\scale* -2) -- (\scale*4, \scale *1);
\draw (\scale*-1,\scale* -2) -- (\scale*4, \scale *0.5);
\draw (\scale*0,\scale* -2) -- (\scale*4, \scale *0);
\draw (\scale*1,\scale* -2) -- (\scale*4, \scale *-0.5);
\draw (\scale*2,\scale* -2) -- (\scale*4, \scale *-1);
\draw (\scale*3,\scale* -2) -- (\scale*4, \scale *-1.5);
\draw (\scale*-3,\scale* 2) -- (\scale*1, \scale *-2);
\draw (\scale*-3,\scale* 1) -- (\scale*0, \scale *-2);
\draw (\scale*-3,\scale* 0) -- (\scale*-1, \scale *-2);
\draw (\scale*-3,\scale* -1) -- (\scale*-2, \scale *-2);

\draw(-2.5,-2.35)--(-1.9,-2.35);
\node[] at (-1.0,-2.39) {$sRes(A)$};
\filldraw[black,opacity=.15] (0,-2.2) -- (0.6,-2.2) -- (0.6,-2.5) -- (0,-2.5) -- (0,-2.2);
\node[] at (1.2,-2.41) {$\mbr_{\geq 0} A$};
\filldraw[black,opacity=.4] (2,-2.2) -- (2.6,-2.2) -- (2.6,-2.5) -- (2,-2.5) -- (2,-2.2);
\node[] at (3.05,-2.41) {$\mathfrak{A}_A$};
\end{tikzpicture}
\end{center}
\end{example}

We give another estimate of the set $\mathfrak{A}_A$:
\begin{lemma}
Suppose $\mbn A= \mbz^d \cap \mbr_{\geq 0} A$. Let $\underline{b} \in \mbz^d \cap \textup{int}(\mbr_{\geq 0}A)$, then $\mathfrak{A}_{A} \subset-\underline{b} + \textup{int}(\mbr_{\geq 0} A)$.
\end{lemma}
\begin{proof}
Let $F$ be a face of $\mbr_{\geq 0}A$. Since $F$ has codimension and $\underline{b}, \underline{c} \not \in \mbr F$ we can write $\underline{c} = \lambda \underline{b} +f$ for some $f \in \mbr \cdot F$. If we set $e_{F,\underline{b}} := \langle\underline{n}_F, \underline{b}\rangle \in \mbz_{> 0}$ we get $e_F = \lambda e_{F,\underline{b}}$. hence
\[
\lbrace \mbr \cdot F - [0, \frac{1}{e_F}) \cdot \underline{c}) \rbrace = \lbrace \mbr \cdot F - [0, \frac{1}{e_{F,\underline{b}}}) \cdot \underline{b}  \rbrace \subset \lbrace \mbr \cdot F - [0,1) \underline{b}\rbrace \subset \lbrace \mbr \cdot F - \underline{b} + (0,\infty)\underline{b}  \rbrace \subset  - \underline{b} + \lbrace \mbr \cdot F + (0,\infty)n_{\underline{F}}  \rbrace
\]
hence
\[
\mathfrak{A} = \bigcap_F \lbrace \mbr \cdot F - [0, \frac{1}{e_F}) \cdot \underline{c}) \rbrace \subset - \underline{b} + \text{int}(\mbr_{\geq 0}A)
\]
\end{proof}

Next we draw a consequence for the canonical $V$-filtration with respect to $t=0$ on $\mch^0 i_{g +} \check{\mcm}_A^\beta$. We will not compute all of its filtration steps, but
those corresponding to integer indices, which is sufficient for our purpose.
For this consider the induced $V$-filtration on $\check{M}_{A'}^\beta = \Gamma(W \times \mbc_t, \mch^0 i_{g+} \check{\mcm}_A^\beta)$
\[
V_{ind}^k \check{M}_{A'}^\beta := \{ [P] \in  \check{M}_{A'}^\beta \mid P \in V^k D_{W \times \mbc_t} \}\, .
\]
It is readily checked that $V_{ind}^k \check{M}_{A'}^\beta$ is a good $V$-filtration on
$\check{M}_{A'}^\beta$. As $\check{M}_{A'}$ is holonomic, hence specializable along any smooth hypersurface, it admits
a Bernstein polynomial $b_{V_{ind}}(x)$ in the sense of \cite[D\'efinition 4.2-3]{MebkhMais}. On the other hand, for any
section $\sigma:\dC/\dZ \rightarrow \dC$ of the canonical projection $\dC\rightarrow \dC/\dZ$, there is a \emph{unique}
good ($\dZ$-indexed) $V$-filtration $V^\bullet_\sigma \check{M}_{A'}^\beta$ on $\check{M}_{A'}^\beta$ such that
the roots of $b_{V^\bullet_\sigma}(x)$ lie in $\textit{Im}(\sigma)$ (see loc.cit., Proposition 4.2-6). From this we deduce the following result, which describes the integral part of the canonical $V$-filtration on $\check{M}_{A'}^\beta$.

\begin{proposition}\label{prop:Vfilt}
If $\dN A = \dZ^d\cap \dR_{\geq 0} A$ and $\beta \in \mathfrak{A}_A$, then for any $k\in \dZ$, we have the following equality
\[
V^k \check{M}_{A'}^\beta = V^k_{ind} \check{M}_{A'}^\beta\, .
\]
\end{proposition}

\begin{proof}
Recall (see \cite[Proposition 4.3-5]{MebkhMais}) that we have $V^{\alpha+k} \check{M}_{A'}^\beta = V^k_{\sigma_\alpha} \check{M}_{A'}^\beta$
for any $\alpha \in \dC, k\in \dZ$, where $\sigma_\alpha:\dC/\dZ \rightarrow \dC$ is the section of $\dC\rightarrow \dC/\dZ$ with image equal to $[\alpha,\alpha+1)$.  Hence, in order to prove the proposition it is enough to show that $V^k_{\sigma_0} \check{M}_{A'}^\beta = V^k_{ind} \check{M}_{A'}^\beta$. Using loc. cit., Proposition 4.2-6 it remains to show that the roots of the Bernstein polynomial $b_{V_{ind}}(x)$ are contained in $[0,1)$.\\

An element $[P]$ of $V^k_{ind}\check{M}_{A'}^\beta$ for $k \geq 0$ can be written as
\[
[P] = [\sum_{i=0}^l t^k (\p_{t} t)^i P_i] + [R]\, ,
\]
where $[R] \in V^{k+1}_{ind} \check{M}_{A'}^\beta$ and $P_i \in \mbc[w_1,\ldots,w_{n}]\langle \p_{w_1}, \ldots , \p_{w_{n}} \rangle$.
We have
\begin{align}
b_{[1]}(\p_t t-k) \cdot [P] &= [\sum_{i=0}^l t^k (\p_{t} t)^i P_i  \cdot b_{[1]}(\p_t t)]  + b_{[1]}(\p_t t-k) \cdot [R]  \notag \\
&= \sum_{i=0}^l t^k (\p_{t} t)^i P_i  \cdot b_{[1]}(\p_t t) \cdot [1] + b_{[1]}(\p_t t-k) \cdot [R]\, . \notag
\end{align}
But $\sum_{i=0}^l t^k (\p_{t} t)^i P_i  \cdot b_{[1]}(\p_t t) \cdot [1] \in V^{k+1}_{ind} \check{M}_{A'}^\beta$ because $\sum_{i=0}^l t^k (\p_{t} t)^i P_i \in V^k D$ and $b_{[1]}(\p_t t) \cdot [1] \in V^1_{ind} \check{M}_{A'}^\beta$. Therefore
\[
b_{[1]}(\p_{t}t - k) \cdot [P]  \in  V^{k+1}_{ind} \check{M}_{A'}^\beta.
\]
Now let $[P] \in V^{-k}_{ind}\check{M}_{A'}^\beta$ with $k >0$. It can be written as
\[
[P] = [\sum_{i=0}^l \p_{t}^k(\p_{t} t)^i P_i] +[R]\, ,
\]
where $[R] \in V^{k+1}_{ind} \check{M}_{A'}^\beta$. By a similar argument we have
\[
b_{[1]}(\p_{t} t+k)\cdot [P] \in V^{-k+1}_{ind} \check{M}_{A'}^\beta\, .
\]
This shows $b_{V_{ind}}(x) \mid b_{[1]}(x)$. Because of Lemma \ref{lem:compV1} the roots of $b_{V_{ind}}(x)$ are contained in $[0,1)$, the claim follows.

\end{proof}

\subsection{Compatibility of filtrations}

In this subsection we are going to show a compatibility result between two filtrations on the
$D_{W'}$-module $\check{M}_{A'}^\beta$ (recall that the $d\times (n+1)$-matrix $A'$ has
columns $\underline{a}_1,\ldots,\underline{a}_n,
\underline{a}_1+\ldots+\underline{a}_n$). Let, as before, $F^{ord}$ be the
filtration induced on $M_{A'}^\beta$ by the filtration $F_\bullet D_{W'}$ by orders
of differential operators. Moreover, let $V^\bullet D_{W'}$ be the $V$-filtration on $D_{W'}$ with respect to the coordinate $w_{n+1}$,
and denote as before by $V^\bullet_{ind} M_{A'}^\beta$ the induced
filtration on $M_{A'}^\beta$.
Then the main result of this subsection can be stated as follows.

\begin{proposition}\label{prop:CompFiltOnSWGarbe}
Let $A$ be a $d\times n$-integer matrix and suppose that $\mbn A = \mbz^d \cap \mbr_{\geq 0} A$, $\mbn A \neq \mbz^d$. Let
$A':=(\underline{a}_1,\ldots,\underline{a}_n,
\underline{a}_1+\ldots+\underline{a}_n)$ and consider the left $D_{W'}$-module
$$
\check{M}_{A'}^\beta = D_{W'} /\left( (\check{\Box}_{\underline{m}})_{\underline{m} \in \mbl_{A'}} + (\check{E}'_k+\beta_k)_{k=1, \ldots ,d}\right).
$$
Then the map
$$
V^k D_{W'} \cap F_p D_{W'} \lra V^k_{ind} \check{M}_{A'}^\beta \cap F_p^{ord} \check{M}^\beta_{A'}\, .
$$
is surjective.
\end{proposition}

The proof of this result will occupy this entire section. Before going into it, let us comment on how this result will enter in the calculation of the Hodge filtration on $\check{M}_{A'}^\beta$. As will be explained in more detail at the beginning of section  \ref{subsec:CalcHodgeFiltrationSWGarbe}, we consider the mixed Hodge module
$h_{A*} \pch$ with underlying $\cD_W$-module $h_{A+}\cO_T^\beta$.
If $A$ and $\beta$ satisfy the assumptions of Proposition \ref{prop:SWgeneral}, then this $\cD_W$-module is $\check{\cM}_A^\beta$. In order to compute the Hodge filtration on its module of global
sections $\check{M}_A^\beta$, we will first consider the module $\check{M}_{A'}^\beta=\Gamma(W', h_{A'+}^\beta \cO_T^\beta)$ and compute
the Hodge filtration on it. We will use the fact that the embedding $h_{A'}:T \hookrightarrow W'$ can be factored as
$$
\begin{tikzcd}
T \ar{r} & W\times \dC^*_t \ar{r}{j} & W'=W \times \dC_t,
\end{tikzcd}
$$
where the first morphism is a \emph{closed} embedding, and the second one is
the canonical open embedding of $W\times \dC^*_t$ into $W'$.
Then the main tool to compute the Hodge filtration on $\check{M}_{A'}^\beta$ is the following formula of Saito
(see formula \eqref{eq:defHodgefilt} below). Let $(\cM, F^H_\bullet)$ be any filtered $\cD_{W \times \dC^*_t}$-module underlying a complex mixed Hodge module in $\MHM(W\times \dC^*_t,\dC)$. Then the direct image $j_+ \cM$ underlies
a complex mixed Hodge module on $W'$, and its Hodge filtration is given as
$$
F^H_p j_+ \mcm = \sum_{i \geq 0}\p_t^i \left(V^0 j_+ \mcm \cap j_*( F^H_q \mcm)\right),
$$
where $V^\bullet j_+ \cM$ denotes the canonical $V$-filtration on
$j_+\cM$ with respect to the divisor $\{t=0\}$.
We are going to apply this formula for the case where $\cM$ is the direct image of $\cO_T^\beta$ under the map
$T\rightarrow W\times \dC^*_t$ (so that $j_+\cM=\check{\cM}_{A'}^\beta$). Since this map is a closed immersion, we can explicitly calculate
the Hodge filtration on this direct image, i.e., it is given as the shifted order filtration for
a cyclic presentation. Moreover, if $\beta$ satisfies the assumptions of Proposition
\ref{prop:Vfilt}, then we have
$V^k \check{M}_{A'}^\beta = V^k_{ind} \check{M}_{A'}^\beta$ for all $k\in \dZ$ (in particular, for $k=0$), so that we have to compute the intersection of the order filtration on $M_{A'}^\beta$ with the \emph{induced} $V$-filtration on that module.
As we will see below in section \ref{subsec:CalcHodgeFiltrationSWGarbe}, this is possible since these two filtrations
satisfy the compatibility statement of the above proposition.
Its proof relies
on the very specific structure of the hypergeometric ideal $\check{I}_{A'} = \left((\check{\Box}_{\underline{m}})_{\underline{m}\in\mbl_{A'}}+(\check{E}_k+\beta_k)_{k=1,\ldots,d}\right)\subset D_{W'}$ and uses
non-commutative Gr\"obner basis techniques, a good reference for results needed is \cite{SST}. We will recall the
main definitions for the reader's convenience.\\

To simplify the notation, we rename the coordinate $t$ on $W'$ to be $w_{n+1}$,
that is $\Gamma(W',\mco_{W'})=\mbc[w_1,\ldots,w_n,w_{n+1}]$. We work in the Weyl algebra
$D_{W'}=\mbc[w_1,\ldots,w_{n+1}]\langle \p_{w_1},\ldots,\p_{w_{n+1}}\rangle$.
Any operator $P\in D_{W'}$ has the so-called normally ordered expression $P=\sum_{(\gamma,\delta)} c_{\gamma\delta}  w^{\gamma} \p_{w}^\delta  \in D_{W'}$,
where the sum runs over all pairs $(\gamma,\delta)$ in some finite subset of $\dN^{2(n+1)}$.\\

First we define partial orders on the set of monomials in $D_{W'}$ resp. $\mbc[w]:=\mbc[{w_1},\ldots, {w_{n+1}}]$ resp. $\mbc[w,\xi]:= \mbc[w_1,\ldots,w_{n+1},\xi_1,\ldots,\xi_{n+1}]$ by choosing the weight vectors $(u,v)\in \mbz^{2(n+1)}$ with $u_i + v_i \geq 0$ resp. $u \in \mbz^{n+1}$. This means that the variables $w_i$ have weight $u_i$ and the partial differentials $\p_{w_i}$ resp. $\xi_i$ have weight $v_i$ . The associated partial order in $D_{W'}$ is defined as follows: If for two monomials
$ w^\gamma \p_{w}^\delta ,  w^c \p_{w}^d $ we have $\sum_i u_i c_i + v_i d_i < \sum_i u_i \gamma_i + v_i \delta_i $, then by definition $ w^\gamma \p_{w}^\delta$ is larger then $ w^c \p_{w}^d $, we write $  w^c \p_{w}^d  \prec_{(u,v)}  w^\gamma \p_{w}^\delta$ and similarly for $\mbc[w]$ and $\mbc[w,\xi]$. The weight vector $(u,v)$ induces an increasing resp. decreasing filtrations on $D_{W'}$  given by
\[
F^{(u,v)}_p D_{W'} =  \left\lbrace \sum_{\sum_i u_i \gamma_i + v_i \delta_i \leq p} c_{\gamma \delta} w^\gamma \p_w^\delta \right\rbrace \qquad \text{resp.} \qquad F_{(u,v)}^p D_{W'} =  \left\lbrace \sum_{\sum_i u_i \gamma_i + v_i \delta_i \geq p} c_{\gamma \delta} w^\gamma \p_w^\delta \right\rbrace
\]
We define the graded ring $\gr^{(u,v)}_\bullet D_{W'} := \bigoplus_p F_p^{(u,v)}D_{W'} / F_{p-1}^{(u,v)} D_{W'}$   associated with the weight $(u,v)$. Notice that for $(u,v) = (0,\ldots,0,1\ldots,1)$ (i.e. the $w_i$ have weight $0$ and the $\p_{w_i}$ have weight $1$) the ascending filtration $F^{(u,v)}_\bullet D_{W'}$ is the order filtration $F_\bullet D_{W}'$ and for $(u,v) = (0,\ldots,0, -1,0,\ldots,0,1)$ the descending filtration $F^\bullet_{(u,v)}D_{W'}$ is the $V$-filtration with respect to $w_{n+1}$.\\

We get well-defined maps

\begin{align*}
in_{(u,v)}: D_{W'} & \longrightarrow  \Gr_\bullet^{(u,v)} D_{W'} = \mbc[w,\xi]  \\
P= \sum_{\gamma,\delta} c_{\gamma\delta}  w^{\gamma} \p_{w}^\delta   &\longmapsto  in_{(u,v)}(P) :=\!\!\!\! \sum_{\sum_i u_i\gamma_i+v_i\delta_i = m}\!\!\!\!\!\!\!\!\!\! c_{\gamma\delta}  w^{\gamma} \xi^\delta  \\
\end{align*}
where $m:= ord_{(u,v)}(P) := \max\{\sum_i u_i \gamma_i + v_i \delta_i \mid c_{\gamma \delta} \neq 0 \}$ and \\
\begin{align*}
in_u : \mbc[w] & \lra \gr_\bullet^u \mbc[w] = \mbc[w] \\
  Q= \sum_{\gamma} c_{\gamma} w^\gamma  &\longmapsto  in_{u}(Q) :=\!\!\!\! \sum_{\sum_i u_i \gamma_i =m}\!\!\!\! c_{\delta} w^\delta\\
\end{align*}
where $m = \max\{\sum_u u_i \gamma_i \mid c_{\gamma} \neq 0 \}$ and \\
\begin{align*}
in_{(u,v)} : \mbc[w,\xi] & \lra \gr_\bullet^{(u,v)} \mbc[w,\xi] = \mbc[w,\xi] \\
  R= \sum_{\gamma,\delta} c_{\gamma} w^\gamma \xi^\delta  &\longmapsto  in_{(u,v)}(Q) :=\!\!\!\! \sum_{\sum_i u_i \gamma_i + v_i \delta_i =m}\!\!\!\! c_{\delta} w^\delta\\
\end{align*}
where $m:= ord_{(u,v)}(R) := \max\{\sum_i u_i \gamma_i + v_i \delta_i \mid c_{\gamma \delta} \neq 0 \}$. Notice that, in constrast to the case of a total ordering, the initial terms $in_{(u,v)}$ resp. $in_u$ are not monomials.\\

Let $I' \subset D_{W'}$ be a left ideal.  The set $in_{(u,v)}(I')$ is an ideal in $\gr_\bullet^{(u,v)}D_{W'}$  and is called initial ideal of $I'$ with respect to the weight vector $(u,v)$. A finite subset $G$ of $D_{W'}$ is a Gr\"obner basis of $I'$ with respect to $(u,v)$ if $I'$ is generated by $G$ and $in_{(u,v)}(I')$ is generated by $in_{(u,v)}(G)$. Similarly, let $J' \subset \mbc[w]$  resp. $K' \subset \mbc[w,\xi]$ be an ideal. The set $in_{u}(J')$  resp. $in_{(u,v)}(K')$ is an ideal in $\gr_\bullet^u \mbc[w]$ resp. $\gr_\bullet^{(u,v)}\mbc[w,\xi]$ and is called initial ideal of $J'$ resp. $K'$ with respect to the weight vector $u$ resp. $(u,v)$. The definition of a Gr\"obner basis is parallel to the definition above.\\

Let $\check{I}_{A'} =\left((\check{\Box}_{\underline{m}})_{\underline{m}\in\mbl_{A'}}+(\check{E'_k}+\beta_k)_{k=1,\ldots,d}\right)$ be the hypergeometric ideal.  The fake initial ideal $fin_{(u,v)}(\check{I}_{A'})$ is the following ideal in $\gr_{(u,v)}D_{W'}$:
\[
fin_{(u,v)}(\check{I}_{A'}) := \gr_{(u,v)}\mcd_{W'} \cdot in_u(\check{J}_{A'}) + \sum_{k=1}^d \gr_{(u,v)}\mcd_{W'} \cdot in_{(u,v)}( \check{E}_k + \beta_k)
\]
where $\check{J}_{A'} \subset \mbc[w]$ is the ideal generated by $(\check{\Box}_{\underline{m}})_{\underline{m}\in\mbl_{A'}}$.\\

Consider the Koszul complex
\[
\ldots \overset{d_{-2}}{\lra} K^{-1}(\gr_\bullet^{(u,v)}(D_{W'}/D_{W'}\check{J}_{A'})) \overset{d_{-1}}{\lra} K^0(\gr_\bullet^{(u,v)}(D_{W'}/D_{W'}\check{J}_{A'})) \lra 0
\]
where
\[
K^{-p}(\gr_\bullet^{(u,v)}(D_{W'}/D_{W'}\check{J}_{A'}) = \bigoplus_{1\leq i_1 < \ldots < i_p \leq s+1} \gr_\bullet^{(u,v)}(D_{W'}/D_{W'}\check{J}_{A'})e_{i_1\ldots i_p}
\]
and
\[
d_{-p}(e_{i_1 \ldots i_p}) = \sum_{k=1}^{p}(-1)^{k-1}in_{(u,v)}(\check{E}_k + \beta_k)e_{i_1\ldots \widehat{i}_k \ldots i_p}
\]

The following statement is an easy adaption of \cite[Theorem 4.3.5]{SST}
\begin{proposition}\label{prop:infinH1}
If the cohomologgy $H^{-1}(K^ \bullet (\gr_\bullet^{(u,v)}(D_{W'}/D_{W'}\check{J}_{A'})))$ vanishes, then the initial ideal satisfies $in_{(u,v)}(\check{I}_{A'}) =  fin_{(u,v)}(\check{I}_{A'})$.
\end{proposition}
\begin{proof}
After a Fourier-Laplace transform $w_i \ra \p_{x_i}$ and $ \p_{w_i} \ra -x_i$ the proof carries over word for word from loc. cit. (Notice that in Chapter 4 of loc. cit. the homogenity of $A'$ assumed, however the proof of this statement does not need this requirement).
\end{proof}

Recall that $A'$ is a matrix built from the matrix $A$ by adding a column which is the sum over all columns of $A$.  Let $\check{J}_A \subset \mbc[w_1,\ldots,w_n]$ be the ideal generated by $(\check{\Box}_l)_{l \in \mbl_A}$. We choose generators $g_1,\ldots, g_{\ell-1}$ of $\check{J}_{A}$. Notice that $g_1,\ldots, g_{\ell-1}, g_\ell := w_{n+1}- w_1\cdot \ldots \cdot w_n$ is a basis of $\check{J}_{A'} \subset \mbc[w_1,\ldots, w_{n+1}]$.
\begin{lemma}\label{lem:defg}
The elements $g_1,\ldots, g_\ell$ form a Gr\"obner basis of $\check{J}_{A'}$ with respect to the weight vector $(0,\ldots,0,-e)$ with $e > 0$.
\end{lemma}
\begin{proof}
We have already seen that $g_1,\ldots, g_\ell$ is a basis of $\check{J}_{A'}$. It remains to prove that $in_{(0,\ldots,0,-e)}(g_1)= g_1,\ldots,in_{(0,\ldots,0,-e)}(g_{\ell-1})=g_{\ell-1},in_{(0,\ldots,0,-e)}(g_{\ell})=w_1\cdot \ldots \cdot w_n$ is a basis of $in_{(0,\ldots,0,-e)}(\check{J}_{A'})$. Let
\begin{equation}\label{eq:smallGroeb}
x= \sum_{i=1}^{\ell} x_i g_i
\end{equation}
 and $-e\cdot N:= \max \{ \ord_{(0,\ldots,0,-e)}(x_i g_i) \mid i= 1,\ldots, \ell\} $. Assume that $\ord_{(0,\ldots,0,-e)}(x) < -e \cdot N$, then the maximal $w_{n+1}$-degree component of the equation \eqref{eq:smallGroeb} is given by
\[
0 = \sum_{i=1}^{\ell-1} w_{n+1}^N p_i g_i + w^N_{n+1}p_\ell \cdot (w_1\cdot \ldots \cdot w_n)
\]

for polynomials $p_i \in \mbc[w_1,\ldots,w_{n}]$. Since $\check{J}_A = (g_1,\ldots,g_{\ell-1})$ is a prime ideal and $w_1\cdot \ldots \cdot w_n \not \in \check{J}_A$ we conclude that $p_\ell \in \check{J}_{A}$. Hence there exist polynomial $q_i \in \mbc[w_1,\ldots,w_n]$ such that $ p_\ell = \sum_{i=1}^{\ell-1} q_i g_i$. We get
\[
x = \sum_{i=1}^\ell x_i g_i - \sum_{i=1}^{\ell-1} w_{n+1}^N p_i g_i - w^N_{n+1}p_\ell \cdot g_\ell + w^{N+1}_{n+1} \left( \sum_{i=1}^{\ell-1} q_i g_i \right) = \sum_{i=1}^{\ell} x'_i g_i
\]
for $x'_i \in \mbc[w_1,\ldots,w_{n+1}]$ with $\max\{ ord_{(0,\ldots,0,-e)}(x'_i g_i) \mid i =1,\ldots ,\ell \}< -e \cdot N $. By induction we can reduce to the case   $\ord_{(0,\ldots,0,-e)}(x) = -e \cdot N$. In this case we get for the maximal  $w_{n+1}$-degree component
\[
in_{(0,\ldots,0,-e)}(x) = \sum w^N_{n+1} p'_i g_i + w^N_{n+1} p'_\ell \cdot (w_1 \cdot \ldots \cdot w_n) =  \sum w^N_{n+1} p'_i in_{(0,\ldots,0,-e)}(g_i) + w^N_{n+1} p'_\ell in_{(0,\ldots,0,-e)}(g_\ell)
\]
for polynomials $p_i' \in \mbc[w_1,\ldots,w_n]$. This shows the claim.

\end{proof}

\begin{proposition}\label{prop:fineqin}
Let $A$ be a $d \times n$ integer matrix such that $\mbn A = \mbr_{\geq 0}A \cap \mbz^{d}$ and $\mbn A \neq \mbz^d$. Let $A'$ be the matrix built from $A$ by adding a column which is the sum over all columns of $A$.  Then
\[
fin_{(u,v)}(\check{I}_{A'}) = in_{(u,v)}(\check{I}_{A'})
\]
if
\begin{enumerate}
\item $(u,v) = (0,0,\ldots, 0,1,1,\ldots ,1)$
\item $(u,v) = (0,\ldots,0,-e,1,\ldots,1,1+e)$ for $ 0 < e < 1$.
\end{enumerate}
\end{proposition}
\begin{proof}

The first case was proven in \cite[Corollary 4.36]{SST} for homogeneous $A$.
In order to prove the statement for $(u,v) = (0,0,\ldots,0,1,1,\ldots,1)$ in the general case we first observe that $\gr_{(v,u)}(D_{W'} / \check{J}_{A'})$  is isomorphic to
\[
\mbc[\xi_1,\ldots, \xi_{n+1}]\otimes_\mbc \mbc[\mbn A']
\]
which is Cohen-Macaulay by the assumption $\mbn A = \mbr_{\geq 0}A \cap \mbz^{d}$ and the fact that $\mbn A = \mbn A'$ as well as $\mbr_{\geq 0} A = \mbr_{\geq 0} A'$. It follows from \cite[Theorem 1.2]{Berk} that the elements $in_{(u,v)}(\check{E}_k+ \beta_k)$ are part of a system of parameters in $\mbc[\xi_1,\ldots, \xi_{n+1}]\otimes_\mbc \mbc[\mbn A']$ and since this ring is Cohen-Macaulay they also form a regular sequence. Therefore $H^{-1}(K^\bullet(\gr_{(u,v)}(D_{W'}/D_{W'}\check{J}_{A'}))) = 0$ and the claim follows from Proposition \ref{prop:infinH1}.

We prove the second claim. Since $\mbn A \neq \mbz^d$ holds the last column of $A'$, which is the sum of the columns of $A$, is non-zero (this was shown above Lemma \ref{lem:compV1}). Hence we can assume (by elementary row manipulations of $A'$, which do not change the ideal $\check{I}_{A'}$) that the last column of $A'$ is zero except for the entry in the first row. Set
\[
\check{e}_k := \sum_{i=1}^n a_{ki} w_i \xi_i \qquad \text{for} \quad k=1,\ldots ,d.
\]
We will use the generators $g_1,\ldots, g_{\ell-1}$ of $\check{J}_A$ from Lemma \ref{lem:defg}. It follows from \cite[Theorem 1.2]{Berk} that $\check{e}_1,\ldots , \check{e}_d$ is part of a system of parameters for
\begin{align*}
\mbc[\xi_1,\ldots,\xi_n] \otimes_\mbc \mbc[\mbn A] &\simeq \mbc[\xi_1,\ldots,\xi_n,w_1,\ldots,w_n]/ \mbc[\xi_1,\ldots,\xi_n,w_1,\ldots,w_n] \check{J}_A \\
&\simeq \mbc[\xi_1,\ldots,\xi_n,w_1,\ldots,w_n]/(g_1,\ldots, g_{\ell-1})
\end{align*}
where $\mbc[\mbn A]$ has Krull dimension $d$.
Therefore
\[
\mbc[\xi_1,\ldots,\xi_n,w_1,\ldots,w_n]/(g_1,\ldots, g_{\ell-1},\check{e}_1,\ldots, \check{e}_r)
\]
has Krull dimension $n$.\\

We will show that the Krull dimension of
\begin{equation}\label{eq:diffKrull}
\mbc[\xi_1,\ldots,\xi_n,w_1,\ldots,w_n]/(g_1,\ldots, g_{\ell-1},w_1\cdot\ldots\cdot w_n, \check{e}_2,\ldots,\check{e}_r)
\end{equation}
is also $n$ (notice that we omitted $\check{e}_1$). The variety corresponding  to $\mbc[\xi_1,\ldots, \xi_n] \otimes_\mbc \mbc[\mbn A]$ is
\[
\mbc^n \times X_A \subset \mbc^n \times \mbc^n
\]
where $X_A := \textup{Spec}\, \mbc[\mbn A]$.  The toric variety $X_A$ is a finite disjoint union of torus orbits where the big dense torus lies in $\{ w_1 \cdot \ldots \cdot w_n \neq 0\}$ and the smaller dimensional tori lie in $\{w_1 \cdot \ldots \cdot w_n = 0\}$. Hence
\begin{equation}\label{eq:Compring1}
\mbc[\xi_1,\ldots, \xi_n,w_1,\ldots,w_n]/(g_1,\ldots,g_{\ell-1},w_1\cdot \ldots \cdot w_n)
\end{equation}
has Krull dimension $n+d-1$. The torus orbits of $X_A$ correspond to the faces of the cone $\mbr_{\geq 0} A$ where the big dense torus corresponds to $\mbr_{\geq 0}A$ itself. For a face $\tau \subsetneq \mbr_{\geq 0}A$ the torus orbit $Orb(\tau)$ is given by $Orb(\tau) = X_A \cap (\mbc^*_w)^\tau$ where $(\mbc^*)_w^\tau = \{w \in \mbc^n \mid w_i = 0 \; \text{for} \; a_i \not \in \tau\, , w_j \neq 0 \; \text{for} \; a_j \in \tau\}$. Hence it suffices to prove that $(\mbc^n \times Orb(\tau)) \cap V((\check{e}_2,\ldots,\check{e}_d)$ has dimension $n$, where $V((\check{e}_2,\ldots,\check{e}_d)$ is the vanishing locus of the ideal generated by $\check{e}_2,\ldots,\check{e}_d$. Set $\mbc^\tau_\xi = \{\xi \in \mbc^n \mid \xi_i = 0 \; \text{for}\; a_i \not \in \tau\}$. It is enough to show that $\mbc^\tau_\xi \times Orb(\tau) \cap V((\check{e}_2^\tau,\ldots, \check{e}_d^\tau))$ has dimension at most $\sharp\{i \mid a_i \in \tau \}$, where $\check{e}^\tau_k := \sum_{i : a_i \in \tau}a_{ki}w_i \xi_i$. The codimension of $V((\check{e}_2^\tau,\ldots, \check{e}_d^\tau))$ is $\dim(\tau)$ since $(1,0,\ldots,0) = \frac{1}{c}(a_1+ \ldots + a_n) $ ( for a suitable $c \in \mbz\setminus \{0\}$) lies in the interior of $\mbr_{\geq 0}A$, hence not in $\tau$ and therefore the matrix $(a_{ki})_{k \geq 2, i : a_i \in \tau}$ has rank $\dim(\tau)$. By \cite[Lemma 1.1]{Berk} the intersection of $\mbc^\tau_\xi \times Orb(\tau)$ with $V((\check{e}^\tau_2,\ldots,\check{e}^\tau_d))$ is transverse.
Since the codimension of $V((\check{e}^\tau_2,\ldots,\check{e}^\tau_d))$ is $\dim Orb(\tau) = \dim(\tau)$ the intersection has dimension $\sharp\{i \mid a_i \in \tau \}$. This shows that the Krull dimension of \eqref{eq:diffKrull} is $n$.\\

Let
\begin{align*}
\check{e}'_1 &:=  \check{e}_1 + (\sum_{i=1}^n a_{1i})x_{n+1} \xi_{n+1} = in_{(0,\ldots,0,-e,1,\ldots,1,1+e)}(\check{E}'_1 +\beta_1) \\
\check{e}'_k &:= \check{e}_k = in_{(0,\ldots,0,-e,1,\ldots,1,1+e)}(\check{E}'_k + \beta_k) \qquad \text{for} \quad k= 2, \ldots ,d.
\end{align*}
and
\begin{align*}
\overline{g}_i &:= in_{(0,\ldots,0,-e,1,\ldots,1,1+e)}(g_i) = g_i \qquad \qquad \quad \text{for} \quad i=1,\ldots , \ell-1 \\
\overline{g}_\ell &:= in_{(0,\ldots,0,-e,1,\ldots,1,1+e)}(g_\ell) = w_1\cdot \ldots \cdot w_n
\end{align*}
\end{proof}
Since $\overline{g}_1,\ldots, \overline{g}_\ell$ and $\check{e}'_2,\ldots, \check{e}'_d$ are independent of $w_{n+1},\xi_{n+1}$ and $\check{e}'_1 = \check{e}_1 + (\sum_{i=1}^n a_{1i})x_{n+1} \xi_{n+1}$ is (for degree reasons) a non-zerodivisor on
\[
\mbc[w_{n+1},\xi_{n+1}] \otimes_\mbc \mbc[\xi_1,\ldots,\xi_n,w_1,\ldots,w_n]/(\overline{g}_1,\ldots, \overline{g}_{\ell}, \check{e}_2,\ldots,\check{e}_d)\, ,
\]
one easily sees that
\[
\mbc[w_1,\ldots, w_{n+1},\xi_1,\ldots, \xi_{n+1}] / (\overline{g}_1,\ldots, \overline{g}_{\ell},\check{e}'_1,\ldots, \check{e}'_d)
\]
has Krull dimension $n+1$. It follows from \eqref{eq:Compring1} that $\mbc[w_1,\ldots, w_{n+1}, \xi_1,\ldots, \xi_{n+1}] /(\overline{g}_1,\ldots, \overline{g}_\ell)$ has Krull dimension $(n+d+1)$, hence $\check{e}'_1, \ldots, \check{e}'_d$ is part of a system of parameters.
By the assumption on $A$ the ring
\[
\mbc[w_1,\ldots,w_{n+1},\xi_1,\ldots,\xi_{n+1}] / (\overline{g}_1,\ldots, \overline{g}_{\ell-1}) \simeq \mbc[w_{n+1}, \xi_1,\ldots, \xi_{n+1}] \otimes_\mbc \mbc[\mbn A]
\]
is Cohen-Macaulay. Since $\overline{g}_\ell = w_1 \cdot \ldots \cdot w_n$ is not a zero-divisor in the ring above (because $\mbc[\mbn A]$ has no non-zero zero-divisors), we see that the ring
\[
\mbc[w_1,\ldots, w_{n+1},\xi_1,\ldots, \xi_{n+1}] / (\overline{g}_1,\ldots, \overline{g}_{\ell})
\]
is also Cohen-Macaulay and therefore $\check{e}'_1,\ldots , \check{e}'_d$ is a regular sequence in $\mbc[w_1,\ldots, w_{n+1},\xi_1,\ldots, \xi_{n+1}] / (\overline{g}_1,\ldots, \overline{g}_{\ell})$.\\

Since
\[
\gr_{(0,\ldots,0,-e,1,\ldots,1,1+e)} (D_{W'}/ D_{W'} \check{J}_{A'} ) \simeq \mbc[w_1,\ldots, w_{n+1},\xi_1,\ldots, \xi_{n+1}] / (\overline{g}_1,\ldots, \overline{g}_{\ell})
\]
and $\check{e}'_k = in_{(0,\ldots,0,-e,1,\ldots,1,1+e)}(\check{E}'_k+\beta_k)$ for $k=1,\ldots ,d$, we have
\[
H^{-1}(K^\bullet(\gr_{(0,\ldots,0,-e,1,\ldots,1,1+e)}(D_{W'}/D_{W'}\check{J}_{A'}))) = 0.
\]
Using again Proposition \ref{prop:infinH1}, this shows the second claim.
\begin{corollary}\label{cor:constrGroebbasis}
Let $g_1,\ldots, g_\ell \in \mbc[w_1,\ldots,w_{n+1}]$ be the generators  of $\check{J}_{A'}$ defined above Lemma \ref{lem:defg}.
\begin{enumerate}
\item The $(g_i)_{i=1,\ldots, \ell}$ together with  $(\check{E}'_k+ \beta_k)_{k=1,\ldots,d}$ form a Gr\"obner basis  of $\check{I}_{A'}$ with respect to the weight vector $(u,v) =(0,\ldots,0,1\ldots,1)$.
\item  Let $(u,v) = (0,\ldots,0,1,\ldots,1)$ and set $\tilde{g}_i := in_{(u,v)}(g_i)$ and $\tilde{E}'_k = in_{(u,v)}(\check{E}'_k+ \beta_k)$. The elements $(\tilde{g}_i)_{i=1,\ldots,\ell}$ and $(\tilde{E}'_k)_{k=1,\ldots,d}$ form a Gr\"obner-basis  of
\begin{align*}
in_{(u,v)}(\check{I}_{A'})  &= in_{(u,v)}((g_1,\ldots, g_\ell, \check{E}'_1+\beta_1,\ldots, \check{E}'_d +\beta_d))\\
&= (\tilde{g}_1,\ldots, \tilde{g}_\ell, \tilde{E}'_1,\ldots,\tilde{E}'_d) \subset \mbc[w_1,\ldots, w_{n+1},\xi_1,\ldots, \xi_{n+1}]
\end{align*}
with respect to the weight vector $(0, \ldots ,0 ,-1,0,\ldots, 0, 1)$ (i.e. $w_{n+1}$ has weight $-1$ and $\xi_{n+1}$ has weight $+1$).
\end{enumerate}
\end{corollary}
\begin{proof}
1.) The set $(g_i)_{i = 1,\ldots, \ell}$ is a Gr\"obner basis for $\check{J}_{A'}$. Therefore the elements $in_u(g_i)=g_i$ generate $in_u(\check{J}_{A'})=\check{J}_{A'}$. The elements $(g_i)_{i =1,\ldots,\ell}$ and $(\check{E}_k+\beta_k)_{k=1,\ldots , d}$ generate $\check{I}_{A'}$ and the elements $(in_{u,v}(g_i))_{i=1,\ldots,\ell}$ and $(in_{(u,v)}(\check{E}_k+\beta_k))_{k=1,\ldots, d}$ generate $fin_{(u,v)}$ by definition. The claim follows now from Proposition \ref{prop:fineqin} 1. .\\

2.) It follows from the first point that the $\tilde{g}_i = in_{(u,v)}(g_i)$ and the $\tilde{E}'_k = in_{(u,v)}(\check{E}'_k+\beta_k)$ generate $in_{(u,v)}(\check{I}_{A'})$. We have to show that the $in_{(0,\ldots,0,-1,0,\ldots,0,1)}(\tilde{g}_i)$ for $i=1,\ldots,d$ and the $in_{(0,\ldots,0,-1,0,\ldots,0,1)}(\tilde{E}'_k)$ generate $in_{(0,\ldots,0,-1,0,\ldots,0,1)}(in_{(u,v)}(\check{I}_{A'}))$. But this follows from (cf. \cite[Lemma 2.1.6 (2)]{SST})
\begin{align*}
in_{(0,\ldots,0,-1,0,\ldots,0,1)}(\tilde{g}_i) &= in_{(0,\ldots,0,-e,1,\ldots,1,1+e)}(g_i)  & &\text{for} \; i=1,\ldots, \ell \\
in_{(0,\ldots,0,-1,0,\ldots,0,1)}(\tilde{E}'_k) &= in_{(0,\ldots,0,-e,1,\ldots,1,1+e)}(\check{E}'_k +\beta_k) & &\text{for} \; k=1,\ldots, d \\
in_{(0,\ldots,0,-1,0,\ldots,0,1)}(in_{(u,v)}(\check{I}_{A'})) &= in_{(0,\ldots,0,-e,1,\ldots,1,1+e)}(\check{I}_{A'})
\end{align*}
for $0 < e \ll 1$ and Proposition \ref{prop:fineqin} 2.  .
\end{proof}

The second notion we are going to introduce relates the order filtration $F_\bullet$ on $D_{W'}$ with the $V$-filtration
that already occurred in the last subsection. Here we consider the
descending $V$-filtration on $D_{W'}$ with respect to $w_{n+1} = 0$, which we denote again by by $V^\bullet D_{W'}$. We have
\[
V^0D_{W'} = \sum_{i,k \geq 0} (\p_{w_{n+1}} w_{n+1})^i (w_{n+1})^k P_i
\]
for $P_i \in \mbc[ w_1, \ldots , w_{n}] \langle \p_{w_1}, \ldots , \p_{w_{n}} \rangle$ and
\begin{equation}\label{eq:Vfiltcomp}
V^k D_{W'} = w_{n+1}^k V^0 D_{W'} \quad \text{and} \quad V^{-k} D_{W'} = \sum_{j \geq 0} \p_{w_{n+1}}^j V^0 D_{W'}
\end{equation}
for $k >0$.\\

Recall the left ideal
$\check{I}_{A'} \subset D_{W'}$  and the left $D_{W'}$-modules $\check{M}^\beta_{A'} := D_{W'} / \check{I}_{A'}$ from above.
We define filtrations $V^\bullet_{ind}$ and $F^{ord}_{\bullet}$ on $\check{M}^\beta_{A'}$ by:
\[
V^k_{ind} \check{M}_{A'}^\beta := \frac{V^k D_{W'} + \check{I}_{A'}}{\check{I}_{A'}} \quad \text{and} \quad F_p^{ord} \check{M}_{A'}^\beta := \frac{F_p D_{W'} +\check{I}_{A'}}{\check{I}_{A'}}\, .
\]

We are now ready to prove the main result of this subsection.

\begin{proof}[Proof of Proposition \ref{prop:CompFiltOnSWGarbe}]
 Let $m \in  V^k_{ind} \check{M}^\beta_{A'} \cap F_p^{ord} \check{M}^\beta_{A'}$. We can find $P,Q \in D_{W'}$ such that $P \in F_p D_{W'}$, $Q \in V^k D_{W'}$ and $[P] = m = [Q]$, i.e. $P = Q - i$ for some $i \in \check{I}_{A'}$. We have to find a $Q'$ with $Q' \in V^k D_{W'} \cap F_p D_{W'}$ with $P= Q'-i'$ for $i' \in I$. We will construct this element $Q'$ by decreasing induction on the order of $Q$ by killing its leading term in each step. For this we will use the special Gr\"obner basis of $\check{I}_{A'}$ which we constructed in Corollary \ref{cor:constrGroebbasis} above.\\

Recall that the  weight vector $(u,v):= (0,\ldots,0,1,\ldots,1)$ induces the order filtration $F_\bullet^{(u,v)} = F_\bullet^{ord}$ on $D_{W'}$. If $ R \in D_{W'}$ and $k:=ord_{(u,v)}(R)$ we define the symbol of $R$ by $\sigma_k(R) = in_{(u,v)}(R)$ and set $\sigma_q(R) = 0$ for $q \neq k$. We define a second weight vector $(u',v'):= (0,\ldots,0,-1,0,\ldots,0,1)$ which induces the descending $V$-filtration from \eqref{eq:Vfiltcomp} on $D_{W'}$. The $V$-filtration and $F$-filtration also induce filtrations $\tilde{V}$  and $\tilde{F}$ on $\gr_\bullet^{(u,v)} D_{W'} = \gr_\bullet^{F}D_{W'} =\mbc[w_1,\ldots, w_{n+1},\xi_1,\ldots,\xi_{n+1}]$.\\

Let $t_Q := \ord_{(u,v)} Q$, $t_i := \ord_{(u,v)} i$ and set $t := \max(t_Q,t_i)$. Obviously we have $t \geq p$. If $t = p$ we are done. Hence, we assume $t >p$, thus we have
\[
0 =  \sigma_t(P) = \sigma_t(Q -i)
\]
and therefore $t= t_Q = t_i$ which implies $\sigma_t (Q) = \sigma_t(i) \neq 0$. Set $k_Q:= ord_{(u',v')}(\sigma_t(Q))$, then we have $\sigma_t(i) =\sigma_t(Q) \in \tilde{V}^{k_Q}$.\\

Recall from Corollary \ref{cor:constrGroebbasis} that $\check{I}_{A'}$ is generated by $\{G_1,\ldots,G_m\} := \{g_1,\ldots, g_\ell,\check{E}'_1+\beta_1,\ldots, \check{E}'_d+\beta_d\}$ and these elements form a Gr\"obner basis with respect to weight vector $(u,v)$  and  their initial forms
\[
\{\tilde{G}_1,\ldots, \tilde{G}_m\} := \{in_{(u,v)}(G_1),\ldots, in_{(u,v)}(G_m)\}
\]
are a Gr\"obner basis of $in_{(u,v)}(\check{I}_{A'})$ with respect to the weight vector $(u',v')$. Therefore we can write
\[
\sigma_t(i) = \sum_{l=1}^m \tilde{i}_l \tilde{G}_l
\]
with $\tilde{i}_l \in \mbc[w_1,\ldots,w_{n+1},\xi_1,\ldots, \xi_{n+1}]$. Using a commutative version of \cite[Theorem 1.2.10]{SST} we can assume that $\tilde{i}_l \in \tilde{V}^{k_Q - k_{l}}$ where $k_{l} = ord_{(u',v')}(\tilde{G}_l)$. Since the elements $\tilde{G}_l$ are homogeneous with respect to the variables $\xi_1,\ldots, \xi_{n+1}$ we can also assume that $\tilde{i}_l \in \tilde{F}_{t-t_l} \gr^F_\bullet D_{W'}$ where $t_l = ord_{(u,v)}(\tilde{G}_l)$.    Let $i_l \in D_{W'}$ be the normally ordered element  which we obtain from $\tilde{i}_l$ by replacing $\xi_i$ with $\p_{w_i}$. One sees easily that $i_l G_l \in F_t D_{W'} \cap V^{k_Q} D_{W'}$. Therefore the element $i' := \sum_{l =1}^m i_l G_l$ has the following two properties
\[
\sigma_t(i') = \sigma_t(i) =\sigma_t(Q) \qquad \text{and} \qquad i' \in V^{k_{Q}}D_{W'}
\]

where the second property follows from $ord_{(u',v')}(G_l) = ord_{(u',v')}(\tilde{G}_l)$.
We therefore have
 \[
 P = Q - i' - (i - i')
 \]
 with $Q - i' \in F_{t-1} D_{W'} \cap V^{k_Q} D_{W'}$.
 Since obviously we have $k \leq k_Q$, we conclude that $Q - i' \in F_{t-1} D_{W'} \cap V^k D_{W'}$. The claim now follows by descending induction on the order $t$.
\end{proof}

\subsection{Calculation of the Hodge filtration}
\label{subsec:CalcHodgeFiltrationSWGarbe}

In this subsection we want to compute the Hodge filtration on the mixed Hodge module
\[
\mch^0( h_{A*} \pch)\, ,
\]
recall from section \ref{sec:Introduction} that $\pch=\ch[d] \in \MHM(T)$.
Also recall that $\pch$ has the underlying filtered $\mcd$-module $(\mco^\beta_T,F^H_\bullet \mco^\beta_T)$, where the Hodge filtration is given by
\[
F_p^H \mco^\beta_T = \begin{cases}\mco^\beta_T & \text{for}\; p \geq 0 \\ 0 & \text{else}\, . \end{cases}
\]

We will use several different presentations of $\mco^\beta_T$ as a $\mcd_T$-module, namely, for each $\alpha =(\alpha_k)_{k=1,\ldots ,d} \in \mbz^d$ we have a $D_T$-linear isomorphism
\[
\Gamma(T,\mco^\beta_T) \simeq D_T/(\p_{t_k} t_k  + \beta_k +  \alpha_k)_{k=1,\ldots,d}
\]
such that the Hodge filtration is simply the order filtration on the right hand side.\\

As we have seen in Lemma \ref{lem:SWclosedembed}, the morphism
$h_A$ can be decomposed into the closed embedding $k_A:T \rightarrow W^*=W\backslash D$ and the canonical
open embedding $l_A:W^*\rightarrow W$. We have to determine the Hodge filtration on the direct image modules
for both mappings. The former is (after some coordinate change) a rather direct calculation, and will be carried
out in Lemma \ref{lem:directMapk} below. However, understanding the behaviour of the Hodge filtration under the direct
image of an open embedding of the complement of a divisor (like the map $l_A$) is more subtle and at the heart of the theory of mixed Hodge modules (see, e.g., \cite[Section (2.b)]{SaitoMHM}).
More precisely, since the steps of the Hodge filtration of a mixed Hodge module are coherent modules
over the structure sheaf of the underlying variety, the usual direct image functors are not suitable
for the case of an open embedding as they do not preserve coherence. In order to circumvent this
difficulty, one uses the canonical $V$-filtration along the boundary divisor, as computed in subsection \ref{subsec:Vfilt} above. Let us give an overview
of the strategy to be used below. The actual calculation will be finished only in Theorem \ref{thm:hpot},
the main step being Proposition \ref{Prop:ExtensionGraphSmoothDivisor}.

We will need the following formula (copied from \cite[Proposition 4.2.]{Saitobfunc} ) which describes the extension of a mixed Hodge module over a smooth hypersurface. Let $X$ be a smooth variety, let $t, x_1, \ldots x_n$ be local coordinates on $X$ and $j:Y \hookrightarrow X$ be a smooth hypersurface given by $t= 0$. Let ${^H\!\!}\mcm$ be a mixed Hodge module on $X \setminus Y$ with underlying filtered $\mcd$-module $(\mcm,F^H_\bullet \mcm)$, then
\begin{equation}\label{eq:defHodgefilt}
F^H_p \mch^0 j_+ \mcm = \sum_{i \geq 0}\p_{t}^iF^H_{p-i}V^0 \mch^0 j_+ \mcm, \quad \text{where} \quad F^H_q V^0 \mch^0 j_+ \mcm := V^0 \mch^0 j_+ \mcm \cap j_*( F^H_q \mcm)\, \, ,
\end{equation}
here $V^0 \mch^0 j_+ \mcm$ is the canonical $V$-filtration on the $\mcd$-module $\mch^0 j_+ \mcm \simeq j_* \mcm$, as introduced in Definition \ref{def:CanonicalVFiltration}.\\

If $Y$ is a non-smooth hypersurface locally given by $g = 0$, we consider (locally) the graph embedding
\begin{align}
i_g: X &\lra X \times \mbc_t \notag \\
x &\mapsto (x, g(x)) \notag
\end{align}
together with its restriction $i_g^\circ: X\setminus Y \ra X \times \mbc^*_t$. Notice that $i^\circ_g$ is a closed embedding. Given a mixed Hodge module $\mcm$ on $X \setminus Y$ we proceed as follows. We first extend the Hodge filtration of $(i^\circ_g)_+ \mcm$ over the smooth divisor given by $\{t = 0\}$ as explained above. Afterward, we restrict the mixed Hodge module which we obtained to the smooth divisor given by $\{t = g\}$.\\

After these general remarks, we come back to the situation of the torus embedding $h_A: T \rightarrow W$ described at the beginning of
this section. Consider the following commutative diagram
\begin{equation}\label{eq:DiagGraphEmbeddingNCD}
\begin{tikzcd}
T \ar{r}{k_A} \ar[bend left=35]{rr}{h_A} & W^* \ar{r}{l_A} \ar{drr}[swap]{i^\circ_g} & W \ar{r}{i_g} & W \times \mbc_t \\ & & & W \times \mbc^*_t \ar{u}{j_t}
\end{tikzcd}
\end{equation}
where $W^*:= W \setminus D = W \setminus \{w_1 \ldots w_n =0\} \simeq  (\mbc^*)^n$ and where  $i_g$ is the graph embedding
\begin{align}
i_g: W &\lra W \times \mbc_t \label{eq:graphem} \\
w &\mapsto (w, w_1 \cdot \ldots \cdot w_n) \notag
\end{align}
associated with the function $g:W\rightarrow \dC_t,w\mapsto w_1\cdot\ldots\cdot w_n$.
Notice that $i_g \circ l_A$ factors over $W \times \mbc^*_t$. We have the following isomorphisms
\[
i_{g + } h_{A+} \mco_T^\beta \simeq i_{g + } l_{A+} k_{A+} \mco_T^\beta \simeq j_{t+} i^\circ_{g +} k_{A+} \mco_T^\beta\, .
\]
\begin{lemma}\label{lem:directMapk}
The direct image $\mch^0 k_{A+} \mco_T^\beta$ is isomorphic to the cyclic $\mcd_{W^*}$-module
$$
{^*\!}\check{\mcm}^\beta_{A'}:=\mcd_{W^*} / \check{\mci}^*_\beta
$$ where $\check{\mci}^*_\beta$ is the left ideal generated by $(\check{E}_k+ \beta_k)_{k=1,\ldots,
d}$ for $\beta = (\beta_k)_{k=1, \ldots , d} \in \mbr^{d}$ and $(\check{\Box}_{\underline{m}})_{\underline{m} \in \mbl_A}$.
Furthermore, the Hodge filtration on $\check{\mcm}_{A'}$ is equal to the induced order filtration, shifted by $n-d$, i.e.
\[
F^H_{p}\,{^*\!}\check{\mcm}_{A'}\,  = F^{ord}_{p-(n-d)}\, \mcd_{W^*}/\check{\mci}^*_\beta\, .
\]
\end{lemma}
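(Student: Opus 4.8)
The statement has two parts: the presentation of $\mch^0 k_{B+}\mco_T$ as a cyclic $\mcd_{W^*}$-module, and the identification of the Hodge filtration with the shifted order filtration. For the first part I would argue that $k_B$ is a closed embedding (Lemma \ref{lem:SWclosedembed}), that $W^* = W\setminus D$ contains the torus $T=(\mbc^*)^r$ as a closed subvariety, and that the restriction of $\check\mcm_B$ (equivalently of $h_{B+}\mco_T$) to $W^*$ is computed by restricting the presentation $\mcd_W/(\check\Box_{\underline m},\check E_k+\beta_k)$. Since on $W^*$ all the coordinates $w_{i_1},\dots,w_{i_l}$ are invertible, the relations $\check\Box_{\underline m}$ for $\underline m\in\mbl_B$ already cut out (the restriction of) $\mbc[\mbn B]$, and one needs $\mbz B = \mbz^r$ together with Proposition \ref{prop:SWgeneral} (applied, say, after the graph-embedding trick as in the preceding subsections, or directly, since $0\notin sRes(B)$ by the Gorenstein hypothesis) to conclude that $\mch^0 k_{B+}\mco_T \simeq \mcd_{W^*}/\check\mci^*$ with $\check\mci^*$ generated by $(\check E_k+\beta_k)_k$ and $(\check\Box_{\underline m})_{\underline m\in\mbl_B}$. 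Here the freedom in $\beta\in\mbz^r$ comes from the corresponding freedom in the presentation $\Gamma(T,\mco_T)\simeq D_T/(t_k\p_{t_k}+\alpha_k)_k$ of the structure sheaf on the torus.

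\textbf{The Hodge filtration.}
For the second part, I would use that $k_B$ is a closed embedding together with Kashiwara's equivalence. Concretely, choose coordinates on $W^* \simeq \mbc^{s-l}\times(\mbc^*)^l$ so that $T$ is the zero locus of $s-r$ of the affine coordinates (using $\mbz B = \mbz^r$, so the torus $T$ has codimension $s-r$ in $W^*$). The mixed Hodge module $k_{B*}{^p}\mbq^H_T$ has underlying filtered $\mcd$-module obtained by the Kashiwara equivalence from $(\mco_T, F^H)$ with $F^H$ concentrated in degree $0$; under a closed embedding of codimension $c$ into a smooth variety, the Hodge filtration of the pushforward of ${^p}\mbq^H$ is $F_p = \sum_{|\nu|\le p} \mco\cdot\p_\nu$ acting on the generator, where $\p_\nu$ ranges over normal derivatives — i.e. it is exactly the order filtration shifted by the codimension $c = s-r$, because $({^p}\mbq^H_T)$ sits in Hodge degree $0$ and Kashiwara's functor shifts the filtration index by the codimension (see M.~Saito's formula for $i_{+}$ along a closed embedding). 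Thus on the cyclic presentation $\mcd_{W^*}/\check\mci^*$ one gets $F^H_{p+(s-r)} = F^{ord}_p$ provided the generator $[1]$ sits in Hodge degree $s-r$ and the ideal $\check\mci^*$ is generated by operators respecting the order filtration, which it is.

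\textbf{Main obstacle.}
The delicate point is that the presentation $\mcd_{W^*}/\check\mci^*$ is \emph{not} manifestly the Kashiwara presentation: the generators $\check E_k+\beta_k$ are first-order and the $\check\Box_{\underline m}$ can have high order, so it is not obvious that the filtration induced from the order of differential operators on this particular cyclic presentation agrees with the intrinsic Hodge filtration. The right way to handle this is to produce an explicit change of presentation, or to show that the order filtration on $\mcd_{W^*}/\check\mci^*$ is a good filtration satisfying the two characterizing properties of the Hodge filtration for a pushforward under a closed immersion: (i) $F^{ord}$ is compatible with $V^\bullet$ along each of the $s-r$ equations cutting out $T$ in the strong sense of strictness (this is where the compatibility-of-filtrations results of the previous subsection, i.e.\ Lemma \ref{lem:compVF} and the purity of the generators $\check\Box_{\underline m}$, $\check E_k$, enter), and (ii) the restriction of $(\mcd_{W^*}/\check\mci^*, F^{ord}[s-r])$ to $T$ recovers $(\mco_T, F^H)$ in degree $0$. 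Verifying (i) for all relevant smooth equations simultaneously — equivalently, that iterated $V$-filtrations and the order filtration are strictly compatible — is the technical heart of the argument, and it is exactly for this that one needs the pure-order generators and the $\mbz B = \mbz^r$, normality and Gorenstein hypotheses set up earlier.
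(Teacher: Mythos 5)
Your overall intuition (closed embedding of codimension $s-r$, Saito's formula for the pushforward shifting the Hodge filtration by the codimension, triviality coming from the fact that $T$ is closed in $W^*$) is the right one, but the proposal has a genuine gap exactly at the point you flag as the ``main obstacle'', and the fallback you sketch does not close it. First, you cannot simply ``choose coordinates on $W^*\simeq \mbc^{s-l}\times(\mbc^*)^l$ so that $T$ is the zero locus of $s-r$ of the affine coordinates'': the image of $T$ is a monomially embedded subtorus, and no global coordinate change of $W^*$ straightens it; so the identification of the cyclic presentation $\mcd_{W^*}/\check{\mci}^*$ with the Kashiwara--Saito presentation is left unproved. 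The paper resolves precisely this by the Smith normal form $B=C\cdot E\cdot F$: the monomial automorphisms $k_C$, $k_F$ of the tori and the inclusion $k_E$ of the coordinate subtorus $\{\tilde w_{r+1}=\dots=\tilde w_s=1\}\subset(\mbc^*)^s$ reduce everything to a genuine coordinate closed embedding inside the \emph{open subtorus} $(\mbc^*)^s$, where Saito's formula gives $F^H_{p+(s-r)}=F^{ord}_p$ on the explicit cyclic presentation, and the remaining open embedding $j_k:(\mbc^*)^s\hookrightarrow W^*$ is filtration-trivial because, by Lemma \ref{lem:SWclosedembed}, the module has no support on $W^*\setminus(\mbc^*)^s$, so $F^H_p$ extends as $j_{k*}F^H_p$. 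This factorization is the missing idea in your argument.

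Second, your proposed repair — checking strict compatibility of $F^{ord}$ with iterated $V$-filtrations along $s-r$ equations cutting out $T$, using Lemma \ref{lem:compVF} and the pure-order generators — is both unworked and misplaced: in the paper those tools (the $V$-filtration computation and the compatibility Lemma \ref{lem:compVF}) are needed only for the genuinely nontrivial later step, namely extending the Hodge filtration over the divisor $D$ after the graph embedding along $\{t=0\}$; for the present lemma no $V$-filtration enters at all, precisely because the support stays away from $D$. A smaller issue: deriving the presentation by restricting $\check{\mcm}^\beta_B$ via Proposition \ref{prop:SWgeneral} only covers $\beta\notin sRes(B)$, whereas the lemma asserts the statement for every $\beta\in\mbz^r$; in the paper this freedom comes from the choice of $\alpha\in\mbz^r$ in the presentation $\Gamma(T,\mco_T)\simeq D_T/(t_k\p_{t_k}+\alpha_k)_k$, pushed through the factorization, which your text mentions but does not actually implement since you never carry the direct image through step by step.
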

\begin{proof}
We factor the map $k_A$ from above in the following way. Let $A = C \cdot E \cdot F$ be the Smith normal form of $A$, i.e. $C =(c_{pq}) \in GL(d,\mbz)$, $F=(f_{uv}) \in GL(n,\mbz)$ and $E = ( I_d, 0_{d,n-d})$. This gives rise to the maps
\begin{align}
k_C : T &\lra T \notag \\
(t_1, \ldots, t_{d}) &\mapsto (\tilde{t}_1, \ldots , \tilde{t}_{d}) =(\underline{t}^{\underline{c}_1}, \ldots , \underline{t}^{\underline{c}_{d}}) \notag \\
k_E : T &\lra (\mbc^*)^{n} \notag \\
(\tilde{t}_1,\ldots , \tilde{t}_{d}) &\mapsto (\tilde{w}_1, \ldots, \tilde{w}_{n})= (\tilde{t}_1,\ldots, \tilde{t}_{d},1,\ldots ,1) \notag \\
k_F : (\mbc^*)^{n} &\lra W^* \notag \\
(\tilde{w}_1, \ldots , \tilde{w}_{n}) &\mapsto (w_1, \ldots , w_{n})= (\underline{\tilde{w}}^{\underline{f}_1}, \ldots , \underline{\tilde{w}}^{\underline{f}_{n}})\, . \notag
\end{align}

For $\gamma \in \mbz^d$ we have
\[
k_{A+} \mco_T^\gamma \simeq ( k_F \circ k_E \circ k_C)_+ \mco_T^\gamma \simeq  k_{F +} k_{E +} k_{C +} \mco_T^\gamma\, .
\]
Since all maps and spaces involved are affine, we will work at the level of global sections.
We have $\Gamma(T, \mco_T^{\gamma}) = D_T/(\p_{t_k}t_k+\gamma_k )_{k=1,\ldots,d}=D_T/(t_k \p_{t_k} +\gamma_k+ 1 )_{k=1,\ldots,d}$. Notice that the Hodge filtration in this presentation is simply the order filtration.
Since $k_C$ is a change of coordinates we have $\Gamma(T, \mch^0 (k_C)_+ \mco_T^\gamma) \simeq D_T/ (\sum_{i=1}^d c_{ki} \tilde{t}_i \p_{\tilde{t}_i} + \gamma_k + 1)_{k=1, \ldots ,d}$ and again the Hodge filtration is equal to the order filtration.
We now calculate $\Gamma((\mbc^*)^{n}, k_{E+}k_{C+}\mco_T^\gamma)$. We have
\begin{align}
&\Gamma((\mbc^*)^{n}, \mch^0 k_{E+}k_{C+}\mco_T^\gamma) \simeq \Gamma(T, \mch^0 k_{C+}\mco_T^\gamma)[\p_{\tilde{w}_{d+1}}, \ldots, \p_{\tilde{w}_{n}}] \notag \\ \simeq &D_{(\mbc^*)^{n}} /(\sum_{i=1}^{d} c_{ki} \tilde{w}_i \p_{\tilde{w}_i} + \gamma_k + 1)_{k=1, \ldots ,d}, (\tilde{w}_i -1)_{i=d+1, \ldots , n}\, . \label{eq:dimagek1}
\end{align}
The Hodge filtration is (cf. \cite[Formula (1.8.6)]{Saitobfunc}) \begin{align}
&F_{p+(n-d)}^H \left(\Gamma((\mbc^*)^{n}, \mch^0 k_{E+}k_{C+}\mco_T^\gamma)\right) = \sum_{p_1+p_2 =p} F_{p_1}^H\Gamma(T, \mch^0 k_{C+}\mco_T^\gamma) \otimes \underline{\p}^{p_2} \notag \\
=  &\sum_{p_1+p_2 =p} F_{p_1}^{ord}\Gamma(T, \mch^0 k_{C+}\mco_T^\gamma) \otimes \underline{\p}^{p_2} =F_p^{ord} \left( \Gamma((\mbc^*)^{n}, \mch^0 k_{E+}k_{C+}\mco_T^\gamma) \right)\, .
\end{align}
Hence we see that the Hodge filtration on the presentation \eqref{eq:dimagek1} shifted by $(n-d)$ is equal to the order filtration, i.e. $F_{p+(n-d)}^H = F^{ord}_p$.

The map $k_F$ is again a change of coordinates, so we have
\begin{align}
\Gamma((\mbc^*)^{n},\mch^0 k_{F +} k_{E_+} k_{C +} \mco_T^\gamma) &\simeq D_{(\mbc^*)^{n}}/ \left( (\sum_{j=1}^{n} a_{kj} w_j \p_{w_j}+ \gamma_k +1)_{k=1, \ldots , d}, (\underline{w}^{\underline{m}_i} -1 )_{i=d+1,\ldots ,n}\right) \notag \\
&\simeq D_{(\mbc^*)^{n}}/ \left( (\sum_{j=1}^{n} a_{kj} w_j  \p_{w_j}+\gamma_k +1)_{k=1, \ldots , d},(\check{\Box}_{\underline{m}})_{\underline{m} \in \mbl_A}\right)\, , \label{eq:dimagek2}
\end{align}
where $\underline{m}_i$ are the columns of the inverse matrix $M = F^{-1}$. The first isomorphism follows from the equality $A = C \cdot E \cdot F$. The second isomorphism follows from the fact that an element $\underline{m} \in \mbz^{n}$ is a relation between the columns of $A$ if and only if it is a relation between the columns of $E \cdot F$.
So the Hodge filtration on the presentation \eqref{eq:dimagek2} shifted by $(n-d)$ is again the order filtration.
We have
\[
\sum_{j=1}^{n} a_{kj} w_j \p_{w_j} +\gamma_k + 1 = \sum_{j=1}^{n} a_{kj}\p_{w_j} w_j - \sum_{j=1}^n a_{kj} +\gamma_k + 1\, .
\]
Setting $\gamma_k := \sum_{j=1}^{n} a_{kj} +\beta_k -1$, shows that
\begin{equation}\label{eq:tTransform}
 {^*\!}\check{\mcm}^\beta_{A'} = \mch^0 k_{A+}\mco_T^{\sum_{j=1}^{n} \underline{a}_{j} +\beta -\mathbf{1}} \simeq \mch^0 k_{A+}\mco_T^\beta
\end{equation}
where the last isomorphism is given by right multiplication with $\underline{t}^{-\sum_{j=1}^{n} \underline{a}_{j} +\mathbf{1}}$ (here $\mathbf{1}\! :=\! (1,\ldots,1)\! \in\! \mbz^d$).
\end{proof}

The next step is to compute the Hodge filtration of $h_{A +} \mco_T^\beta \simeq l_{A+} k_{A+} \mco_T^\beta$
from that of $k_{A+}\mco_T^\beta$. As the map $l_A$ is an open embedding of the complement of a normal crossing divisor,
we need to consider the graph embedding embedding $i^\circ_g$ with respect to the function $g=w_1\cdots w_n$.
We proceed as described at the beginning of this section, i.e., we first extend the module
$i^\circ_{g +} k_{A +}\mco_T^\beta $ over the smooth divisor $\{t=0\}$.

\begin{lemma}\label{lem:resgraphembed}
The direct image $i^\circ_{g+} k_{A+} \mco_T^\beta$ is isomorphic to the cyclic $\mcd_{W \times \mbc_t^*}$-module $\mcd_{W \times \mbc_t^*} / \mci^\circ$ where $\mci^\circ$ is the left ideal generated by $(\check{E}'_k+\beta_k)_{k=1,\ldots d}$ for $\beta = (\beta_k)_{k=1,\ldots ,d} \in \mbr^{d}$ and $(\check{\Box}_{\underline{m}})_{\underline{m} \in \mbl_{A'}}$.
Recall that the vector fields $\check{E}'_k$ have been defined in formula \eqref{eq:Eprime} as
$\check{E}'_k := \sum_{i=1}^{n} b_{ki} \p_{w_i} w_i + c_k \p_t t \quad \text{for}\;\; k= 1, \ldots , d$.
Furthermore, the Hodge filtration shifted by $n-d+1$ is equal to the induced order filtration, i.e., we have
$$
F^H_{p}i^\circ_{g+} k_{A+} \mco_T^\beta \simeq F^{ord}_{p-(n-d+1)}\mcd_{W\times \dC_t^*}/\mci^\circ\, .
$$
\end{lemma}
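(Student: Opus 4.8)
The plan is to observe that this lemma is simply Lemma~\ref{lem:directMapk} applied to the enlarged matrix $B'$ with columns $\underline b_1,\ldots,\underline b_s,c'$, where $c':=\underline b_{i_1}+\ldots+\underline b_{i_l}$. Indeed, the composite $i^*_f\circ k_B\colon T\to W\times\mbc^*_t$ sends $(t_1,\ldots,t_r)$ to $(\underline t^{\underline b_1},\ldots,\underline t^{\underline b_s},\underline t^{c'})$, which is exactly the monomial map attached to $B'$; so once one knows that $W\times\mbc^*_t\subset W\times\mbc_t$ is the complement of the boundary divisor of $B'$ in the sense of Lemma~\ref{lem:bounddiv}, the present claim becomes Lemma~\ref{lem:directMapk} for $B'$ with $s$ replaced by $s+1$, which produces exactly the shift $(s+1)-r=s-r+1$.

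The first step is to check that $B'$ meets the standing hypotheses of this section. One has $\mbz B'=\mbz^r$ (as $B'$ contains all columns of $B$), and, as noted after \eqref{eq:graphembed}, the semigroup $\mbn B'=\mbn B$ is saturated; since moreover $\mbr_{\geq 0}B'=\mbr_{\geq 0}B$, the conditions \eqref{eq:Bnormal} and \eqref{eq:intc} pass to $B'$ with the same interior element $c$, and $\mbn B'=\mbn B\neq\mbz^r$ by our running assumption. The second step is to identify the boundary divisor of $B'$: under the isomorphism $\mbc[w_1,\ldots,w_s,t]/(\check\Box_{\underline m})_{\underline m\in\mbl_{B'}}\simeq\mbc[\mbn B']$ the coordinate $t$ (the $(s+1)$-st one) corresponds to $\underline t^{c'}$, and since $div(\underline t^{c'})=div(\underline t^{c})$ with $c'$ lying in the interior of $\mbr_{\geq 0}B$ — hence in no facet — the element $c'$, represented by the single coordinate $t$, fulfils the requirements of Lemma~\ref{lem:bounddiv}(2) for $B'$. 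Therefore the boundary divisor of $B'$ may be taken to be $div(t)$, its complement in $W\times\mbc_t$ is $W\times\mbc^*_t$, and the closed embedding $k_{B'}$ of Lemma~\ref{lem:SWclosedembed} equals $i^*_f\circ k_B$; in particular $i^*_{f+}k_{B+}\mco_T=k_{B'+}\mco_T$ is a single $\mcd$-module.

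With this in hand, Lemma~\ref{lem:directMapk} applied to $B'$ would yield $i^*_{f+}k_{B+}\mco_T\simeq\mcd_{W\times\mbc^*_t}/\mci^\circ$, where $\mci^\circ$ is generated by the $\check\Box_{\underline m}$ for $\underline m\in\mbl_{B'}$ together with the Euler operators $\sum_{j=1}^s b_{kj}\p_{w_j}w_j+c_k\p_t t+\beta_k=\check{E}'_k+\beta_k$ (the last column of $B'$ being $c'$, the coordinate $w_{s+1}$ being $t$, and $c_k$ the $k$-th component of $c'$), and where the Hodge filtration shifted by $s-r+1$ equals the induced order filtration — which is the assertion. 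I do not expect a genuine obstacle: the argument is combinatorial bookkeeping that reduces everything to the already-established Lemma~\ref{lem:directMapk}. The one point that needs care is the identification of the boundary divisor of $B'$ with $\{t=0\}$ — equivalently, verifying that $c'$ sits in the interior of the cone $\mbr_{\geq 0}B$, so that $c'$ is an admissible choice in Lemma~\ref{lem:bounddiv}(2). (A small bookkeeping remark: the resonance parameter produced by Lemma~\ref{lem:directMapk} for $B'$ differs from that for $B$ by the integer vector $c$, but since the auxiliary vector $\alpha\in\mbz^r$ there is free, $\beta$ still ranges over all of $\mbz^r$.)
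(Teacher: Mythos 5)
Your argument is correct, but it takes a different route from the paper's own proof, which never invokes Lemma~\ref{lem:directMapk} for an augmented matrix: instead it factors $i^*_f=l_3\circ l_2\circ l_1$ through $\widetilde{W}=(W^*\times\mbc_{\tilde t})\setminus\{\tilde t+f(\underline w)=0\}$, namely the inclusion of the hyperplane $\{\tilde t=0\}$ (which produces the extra shift by $1$ via Saito's formula for smooth closed embeddings), the coordinate change $\tilde t\mapsto \tilde t+f(\underline w)$ (which produces the relation $t-w_{i_1}\cdots w_{i_l}$ and hence the generators $\check E'_k$ and the $B'$-box operators), and the trivial extension along $W^*\times\mbc^*_t\hookrightarrow W\times\mbc^*_t$ using that the support is closed there. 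You instead recognize $i^*_f\circ k_B$ as the closed torus embedding $k_{B'}$ attached to $B'=(\underline b_1,\ldots,\underline b_s,c')$ with boundary divisor $\{t=0\}$, and quote Lemma~\ref{lem:directMapk} for the $r\times(s+1)$ matrix $B'$; the points that genuinely need checking in your reduction are exactly the ones you address: $\mbz B'=\mbz^r$, $\mbn B'=\mbn B$ saturated and Gorenstein with the same cone, $c'$ interior to $\mbr_{\geq 0}B$ so that the single monomial $t$ is an admissible representative in the sense of Lemma~\ref{lem:bounddiv}(2) and Lemma~\ref{lem:SWclosedembed} makes $k_{B'}$ closed into $W\times\mbc^*_t$ (one should note here that Lemma~\ref{lem:directMapk} only uses these properties of the chosen divisor, not the particular $c'$ constructed in the proof of Lemma~\ref{lem:bounddiv}), and the freedom in $\beta\in\mbz^r$ coming from the arbitrary twist $\alpha$. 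Your route is shorter and makes the shift transparent as the codimension count $(s+1)-r$, with the generators $\check E'_k$ appearing as the Euler operators of $B'$; the paper's route is more self-contained in that it re-derives the filtration step by step and exhibits the coordinate change explicitly, which is then reused in the proof of Theorem~\ref{thm:hpot} when undoing the graph embedding via $l_f^{-1}$.
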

\begin{proof}
We define
\[
\widetilde{W} := (W^* \times \mbc_{\tilde{t}}) \setminus \{ \tilde{t} + g(\underline{w}) = 0\}
\]
and factor the map $i^\circ_g$ in the following way.
Set
\begin{align}
l_1: W^* &\lra \widetilde{W} \notag \\
\underline{w} &\mapsto (\underline{w},0) \notag\\
l_2: \widetilde{W} &\lra W^* \times \mbc^*_t \notag \\
(\underline{w},\tilde{t}) &\mapsto (\underline{w}, \tilde{t}+g(\underline{w})) \notag
\end{align}
and let $l_3: W^*\times \mbc_t^* \ra W \times \mbc_t^*$ be the canonical inclusion. We have $i^\circ_g = l_3 \circ l_2 \circ l_1$.
For the convenience of the reader, let us summarize these maps in the following diagram
$$
\begin{tikzcd}
W^* \ar{rr}[swap]{l_A}{  \renewcommand{\arraystretch}{0.5}\begin{array}{l}\textup{\tiny open embedding }\\ \textup{\tiny complement of NCD}\end{array}}  \ar{dd}{l_1}[swap]{\textup{\tiny closed}} \ar{rrrrdd}{i_g^\circ} & &  W \ar{rr}{i_g} && W\times \dC_t \\ \\
\widetilde{W} \ar{rr}{l_2}[swap]{\simeq} & & W^*\times \dC_t^* \ar{rr}{l_3}[swap]{\renewcommand{\arraystretch}{0.5}\begin{array}{l}\textup{\tiny closed on}\\ \textup{\tiny support}\end{array}} & & W\times \dC_t^* \ar[swap]{uu}[swap]{j_t}{\renewcommand{\arraystretch}{0.5}\begin{array}{l}\textup{\tiny open embedding }\\ \textup{\tiny complement of smooth divisor}\end{array}}
\end{tikzcd}
$$
Notice again that all spaces involved are affine, hence we will work with the modules of global sections.
 Since $l_1$ is just the inclusion of a coordinate hyperplane we have
 \[
 \Gamma(\widetilde{W}, \mch^0 l_{1+} k_{A+} \mco_T^\beta) \simeq \Gamma(W^*, \mch^0 k_{A+} \mco_T^\beta)[\p_{\tilde{t}}]\, .
 \]
The Hodge filtration is given by
 \begin{equation}\label{eq:graphHshift}
 \Gamma(\widetilde{W}, F^H_{p+1}(\mch^0 l_{1+} k_{A+} \mco_T^\beta)) \simeq \sum_{p_1+p_2 = p} \Gamma(W^*, F^H_{p_1} \mch^0 k_{A+} \mco_T^\beta)\otimes \p_{\tilde{t}}^{p_2}\, .
 \end{equation}
Notice that $\Gamma(\widetilde{W}, \mch^0 l_{1+} k_{A+} \mco_T^\beta) \simeq D_{\widetilde{W}} / I_1^\circ$ where $I_1^\circ$ is the left ideal generated by $(\check{E}_k+ \beta_k)_{k=1,\ldots,d}$, $(\check{\Box}_{\underline{m}})_{\underline{m} \in \mbl_{A}}$ and $\tilde{t}$.\\

Under this isomorphism the Hodge filtration on $\Gamma(\widetilde{W}, \mch^0 l_{1+} k_{A+} \mco_T^\beta)$ shifted by $(n-d)+1$ is equal to the order filtration by Lemma \ref{lem:directMapk} and \eqref{eq:graphHshift}. The map $l_2$ is just a change of coordinates, hence under the substitutions $\tilde{t} \mapsto t= \tilde{t} +g(\underline{w})$, $w_i \p_{w_i} \mapsto w_i \p_{w_i} + g(\underline{w}) \p_t \equiv w_i \p_{w_i} + \p_t t$ for $i=1,\ldots, n$
and by using the presentation of $\mch^0(k_{A+}\mco_T^\beta)$ as acyclic $\mcd$-module, we get that
\begin{align}
\Gamma(W^* \times \mbc^*_t, \mch^0 l_{2+}l_{1+}k_{A+}\mco_T^\beta)&\simeq D_{W^* \times \mbc_t^*} / \left((\check{E}'_k+\beta_k)_{k=1,\ldots ,d} + (\check{\Box}_{\underline{m}})_{\underline{m} \in \mbl_{A}} + (t - w_1 \cdot\ldots\cdot w_n)\right) \notag \\
&\simeq D_{W^* \times \mbc_t^*} / \left((\check{E}'_k+\beta_k)_{k=1,\ldots ,d} + (\check{\Box}_{\underline{m}})_{\underline{m} \in \mbl_{A'}}\right)\, , \label{eq:basechanget}
\end{align}
where $\check{E}'_k$ was defined in formula \eqref{eq:Eprime} and $A'$ is the matrix defined just before that formula.
Notice that the Hodge filtration shifted by $(n-d)+1$ is again equal to the order filtration.\\

Since the support of $\mch^0 l_{2+}l_{1+}k_{A+} \mco_T^\beta$ lies in the subvariety $\{t = g(\underline{w})\}$, the closure of the support in $W \times \mbc^*_t$ does not meet $D\times \mbc^*_t$. We conclude that
\begin{align}
\Gamma(W \times \mbc^*_t, \mch^0 i^\circ_{g + } k_{A+} \mco_T^\beta)  &\simeq \Gamma(W \times \mbc^*_t, \mch^0 l_{3 +} l_{2 +} l_{1 +} k_{A+} \mco_T^\beta) \notag \\
&\simeq \Gamma(W^* \times \mbc^*_t, \mch^0 l_{2 + }l_{1+} k_{A+} \mco_T^\beta) \notag \\
&\simeq D_{W^* \times \mbc_t^*} / \left((\check{E}'_k+\beta_k)_{k=1,\ldots ,d} + (\check{\Box}_{\underline{m}})_{\underline{m} \in \mbl_{A'}}\right) \notag\\
&\simeq D_{W \times \mbc_t^*} / \left((E'_k +\beta_k)_{k=1,\ldots ,d} + (\check{\Box}_{\underline{m}})_{\underline{m} \in \mbl_{A'}}\right)\, . \notag
\end{align}
The Hodge filtration is then simply extended by using the following formula
\[
F_p^H \mch^0 i^\circ_{g+}  k_{A+} \mco_T^\beta \simeq F_p^H \mch^0 l_{3 +} l_{2 +} l_{1 +} k_{A+} \mco_T^\beta \simeq l_{3 *} F^H_p \mch^0 l_{2 +} l_{1 +} k_{A+} \mco_T^\beta\, .
\]
\end{proof}

\begin{proposition}\label{Prop:ExtensionGraphSmoothDivisor}
Let $\beta \in \mbr^d \setminus sRes(A)$. The direct image $\mch^0 j_{t+} i^\circ_{g+} k_{A+} \mco_T^\beta$ is isomorphic to the quotient $\check{\mcm}_{A'}^\beta = \mcd_{W \times \mbc_t} /\mci'$, where $\mci'$ is the left ideal generated by $(\check{E}'_k+\beta_k)_{k=1,\ldots,d}$ and $(\check{\Box}_{\underline{m}})_{\underline{m} \in \mbl_{A'}}$. Furthermore, if $\beta\in \mathfrak{A}_A$, then the Hodge filtration on $\mcd_{W \times \mbc_t}/\mci'$ shifted by $(n-d)+1$ is equal to the induced order filtration, that is,
$$
F^H_{p} \mch^0 j_{t+} i^\circ_{g+} k_{A+} \mco_T^\beta = F^{ord}_{p-(n-d+1)}\cD_{W\times \dC_t}/\cI'\, .
$$
\end{proposition}
\begin{proof}
First recall that we  have an isomorphism $\mch^0 j_{t+} i^\circ_{g+} k_{A+} \mco_T^\beta \simeq \mch^0 i_{g+} h_{A+} \mco^\beta_T$. The composed map $i_g \circ h_A$ is a torus embedding given by the matrix $A'$. Hence, we have an isomorphism $\mch^0 j_{t+} i^\circ_{g+} k_{A+} \mco_T^\beta \simeq \check{\mcm}_{A'}^{\beta}$ for $\beta \in \mbr^{d}$ and $\beta \notin sRes(A')=sRes(A)$. This shows the first claim.\\

For the second statement, suppose that $\beta \in \mathfrak{A}_A$. The formula for extending the Hodge filtration over the smooth divisor $\{t=0\}$ is
\begin{equation}\label{eq:Hodgeext}
F_p^H \check{\mcm}^\beta_{A'} = \sum_{i \geq 0} \p_t^i(V^0 \check{\mcm}^\beta_{A'} \cap j_{t *}j^{-1}_t\, F_{p-i}^H\,\check{\mcm}^\beta_{A'} )\, .
\end{equation}
At the level of global sections the adjunction morphism $\check{\mcm}^\beta_{A'} \lra j_{t+}j_t^+\check{\mcm}^\beta_{A'}$ is given by the inclusion
$\check{M}_{A'}^\beta \ra {^*\!}\check{M}_{A'}^\beta$, where ${^*\!}\check{M}_{A'}$ is the $D_{W \times \mbc^*_t}$-module from Lemma \ref{lem:resgraphembed} seen as a $D_{W \times \mbc_t}$-module. Hence, at the level of global sections, formula \eqref{eq:Hodgeext} becomes
\[
F_p^H \check{M}^\beta_{A'} = \sum_{i \geq 0} \p_t^i(V^0 \check{M}^\beta_{A'} \cap F^H_{p-i} {^*\!}\check{M}^\beta_{A'})\, .
\]
Since we have  $F_{p+(n-d+1)}^H {^*}\check{M}^\beta_{A'} = F_p^{ord} {^*\!}\check{M}^\beta_{A'}$ by the same lemma, we conclude that $F_{n-d}^H \check{M}^\beta_{A'} = 0$. The element $1 \in \check{M}^\beta_{A'}$ is in $V^0 \check{M}^\beta_{A'}$ by Proposition \ref{prop:Vfilt} and $1 \in F_{(n-d)+1}^H  {^*\!}\check{M}^\beta_{A'} = F_0^{ord} {^*\!}\check{M}^\beta_{A'}$ and therefore $1\in F^H_{(n-d)+1} \check{M}^\beta_{A'}$. Notice that both $(\check{M}^\beta_{A'}, F^H_\bullet)$ and $(\check{M}^\beta_{A'}, F^{ord}_{\bullet})$ are cyclic, well-filtered $D_{W \times \mbc_t}$-modules (see e.g. \cite[Section 2.1.1]{Saito1}, therefore we can conclude
\[
F^{ord}_p \check{M}^\beta_{A'} \subset F^H_{p+(n-d+1)} \check{M}^\beta_{A'}\, .
\]
In order to show the reverse inclusion, we have to show
\[
F_p^{ord} \check{M}^\beta_{A'} \supset F_{p+(n-d+1)}^H \check{M}^\beta_{A'} = \sum_{i \geq 0} \p_t^i(V^0 \check{M}^\beta_{A'} \cap F^H_{p+(n-d+1)-i} {^*\!}\check{M}^\beta_{A'}) = \sum_{i \geq 0} \p_t^i(V^0_{ind} \check{M}^\beta_{A'} \cap F^{ord}_{p-i} {^*\!}\check{M}^\beta_{A'})
\]
for all $p \geq 0$, where the last equality follows from Proposition \ref{prop:Vfilt} and Lemma \ref{lem:resgraphembed}. Since we have
\[
F^{ord}_{p} \check{M}^\beta_{A'}  \supset \p_t^i F^{ord}_{p-i} \check{M}^\beta_{A'} \; \text{for}\;\; p \geq 0\;\; \text{and} \;\; 0 \leq i \leq p
\]
it remains to show
\[
F_{p-i}^{ord} \check{M}^\beta_{A'} \supset V^{0}_{ind} \check{M}^\beta_{A'} \cap F^{ord}_{p-i} {^*\!}\check{M}^\beta_{A'} \; \text{for}\;\; p \geq 0 \;\; \text{and} \;\; 0 \leq i \leq p
\]
resp.
\[
F_{p}^{ord} \check{M}^\beta_{A'} \supset V^{0}_{ind} \check{M}^\beta_{A'} \cap F^{ord}_{p} {^*\!}\check{M}^\beta_{A'} \;\; \text{for} \;\; p \geq 0\, .
\]
Now let $[P] \in V^{0}_{ind} \check{M}^\beta_{A'} \cap F^{ord}_{p} {^*\!}\check{M}^\beta_{A'}$, then $P \in D_{W \times \mbc_t}$ can be written as
\[
P = t^{-k}P_k + t^{-k+1} P_{k-1} + \ldots
\]
with $P_i \in \mbc[w_1,\ldots,w_n]\langle \p_t, \p_{w_1}, \ldots , \p_{w_{n}} \rangle$ and $P_k \neq 0$. Since $t^k \cdot [P] \in V^k_{ind} \check{M}^\beta_{A'} \cap F_p^{ord} \check{M}^\beta_{A'}$ it is enough to prove
\[
t^k F_{p}^{ord} \check{M}^\beta_{A'} \supset V^{k}_{ind} \check{M}^\beta_{A'} \cap F^{ord}_{p} \check{M}^\beta_{A'} \;\; \text{for} \;\; p \geq 0\, .
\]
Given an element $[Q] \in  V^{k}_{ind} \check{M}^\beta_{A'} \cap F^{ord}_{p} \check{M}^\beta_{A'}$ we can find, using Proposition \ref{prop:CompFiltOnSWGarbe},  a $Q' \in V^k D_{W \times \mbc_t} \cap  F_p D_{W \times \mbc_t}$ with $[Q] = [Q']$. But this element $Q'$ can be written as a linear combination of monomials $t^{l_0}w_1^{l_1}\ldots w_{n}^{l_{n}} \p_t^{p_0} \p_{w_1}^{p_1}\ldots \p_{w_{n}}^{p_{n}}$ with $p_0 + \ldots p_{n} \leq p$ and $l_0 - p_0 \geq k$, hence $[Q'] \in t^k F^{ord}_p \check{M}_{A'}^\beta$. This shows the statement in the case $\mbn A \neq \mbz^d$ (recall that $\dN A \neq \dZ^d$ was a condition used in Proposition \ref{prop:CompFiltOnSWGarbe}).

In the case $\mbn A = \mbz^d$ the support of $\check{\mcm}^\beta_{A'}$ is disjoint from the divisor $\{t =0\}$, hence the extension of the Hodge filtration is simply given by $F_p^H \check{\mcm}^\beta_{A'} = j_{t *} F_p^H {^*\!}\check{\mcm}^\beta_{A'}$. Since $j_t$ is an open embedding we have $\check{M}^\beta_{A'} = {^*\!}\check{M}^\beta_{A'}$ and therefore also $F_p^H \check{M}^\beta_{A'} = F_p^H{^*\!}\check{M}^\beta_{A'}$. This shows the claim.

\end{proof}

Now we want to deduce the Hodge filtration on $h_{A+} \mco_T^\beta$ from the proposition above.

\begin{theorem}\label{thm:hpot}
Let $\mbn A = \mbz^d \cap \mbr_{\geq 0} A$ and $\beta \in \mbr^d \setminus sRes(A)$. The direct image $h_{A+} \mco_T^\beta$ is isomorphic to the cyclic $D_W$-module $\check{\mcm}^\beta_A:= \mcd_W/\check{\mci}$, where $\check{\mci}$ is the left ideal generated by $(\check{E}_k+\beta_k)_{k=1,\ldots, d}$ and $(\check{\Box}_{\underline{m}})_{\underline{m} \in \mbl_A}$. For $\beta \in \mathfrak{A}_A$ the Hodge filtration on $\check{\mcm}^\beta_A$ is equal to the order filtration shifted by $n-d$, i.e.
\[
F_{p+(n-d)}^H \check{\mcm}^\beta_A = F_{p}^{ord} \check{\mcm}^\beta_A\, .
\]
\end{theorem}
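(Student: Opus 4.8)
The plan is to reduce Theorem \ref{thm:hpot} to the Proposition immediately preceding it by restricting the mixed Hodge module $\mch^0 j_{t+} i^*_{f+} k_{B+}\mco_T$ back along the smooth divisor $\{t = f(\underline w)\} = \{t = w_{i_1}\cdots w_{i_l}\}$ inside $W \times \mbc_t$, which is precisely the image of the graph embedding $i_f$. Recall from the earlier discussion of the graph construction that extending a mixed Hodge module on $X \setminus Y$ across a singular hypersurface $Y = \{f = 0\}$ proceeds by first extending over $\{t=0\}$ in $X \times \mbc_t$ (done in the Proposition) and then restricting via $i_f^{\dagger}$ to the smooth divisor $\{t = f\}$; so all that remains is to carry out this last restriction and track the Hodge filtration.

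First I would identify, on the level of $\mcd$-modules, $\mch^0 i_f^{\dagger}\big(\mcd_{W\times\mbc_t}/\mci'\big)$ with $\check{\mcm}_B = \mcd_W/\check{\mci}$. Concretely, restriction to $\{t = f\}$ corresponds to the substitution $t \mapsto w_{i_1}\cdots w_{i_l}$, under which the operators $\check E'_k = \sum_i b_{ki}\p_{w_i}w_i + c_k\p_t t$ specialize to $\check E_k = \sum_i b_{ki}\p_{w_i}w_i$ (using $\p_t t \equiv \p_{w_{i_1}}w_{i_1}$-type identities modulo the box operators, exactly as in the change-of-coordinates map $l_2$ of Lemma \ref{lem:resgraphembed} read backwards) and the box operators $\check\Box_{\underline m}$, $\underline m \in \mbl_{B'}$, restrict to $\check\Box_{\underline m}$, $\underline m \in \mbl_B$, since the extra column $c'$ of $B'$ is set equal to the monomial $w_{i_1}\cdots w_{i_l}$. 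This recovers the stated presentation $h_+\mco_T \simeq \check{\mcm}_B$, which is also consistent with Proposition \ref{prop:SWgeneral}.

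Second, for the Hodge filtration, since $i_f$ is a closed embedding into a smooth divisor, Saito's formula for the restriction functor $i_f^{\dagger}$ (equivalently, Kashiwara's equivalence: $i_f^{\dagger} = i_f^{\natural}[-1]$ on the relevant cohomology, with the Hodge filtration transported via $F_p(i_f^{\natural}\mcn) = i_f^*(F_p\mcn)$ after a Tate twist) shows that the Hodge filtration on $\check{\mcm}_B$ is the one induced by the substitution $t \mapsto w_{i_1}\cdots w_{i_l}$ from the Hodge filtration on $\mcd_{W\times\mbc_t}/\mci'$, with a shift by $1$ coming from the codimension/normalization convention in $i_f^{\dagger}$. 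Combining with the Proposition, which gives $F^H_{p+(s-r+1)}\big(\mcd_{W\times\mbc_t}/\mci'\big) = F^{ord}_p$, and observing that the order filtration is compatible with the substitution $t \mapsto w_{i_1}\cdots w_{i_l}$ (a differential operator of order $\le p$ on $W\times\mbc_t$ restricts, after eliminating $t$ and $\p_t$, to one of order $\le p$ on $W$ — here one uses that on $\check{\mcm}_B$ multiplication by each $w_i$ is invertible, as in Theorem \ref{thm:GKZisoSW}(3), so $\p_t$ can be traded for $\p_{w_{i_1}}w_{i_1}w_{i_1}^{-1}$-type expressions without raising order), yields $F^H_{p+(s-r)}\check{\mcm}_B = F^{ord}_p\check{\mcm}_B$, where one unit of the shift $s-r+1$ is absorbed by the codimension-one restriction.

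The main obstacle I anticipate is bookkeeping the shift and the Tate twist in the restriction functor $i_f^{\dagger}$ correctly, and — more substantively — verifying that the \emph{induced} order filtration under the substitution $t\mapsto w_{i_1}\cdots w_{i_l}$ really coincides with the intrinsic order filtration $F^{ord}_\bullet\check{\mcm}_B$ on the presentation $\mcd_W/\check{\mci}$, rather than merely being contained in it. This is the analogue of Lemma \ref{lem:compVF} in the present setting: one needs that the ideal $\mci'$ (generated by pure-order elements) is well-behaved with respect to eliminating the variable $t$, so that lifting an order-$\le p$ class in $\check{\mcm}_B$ to an order-$\le p$ operator compatible with the $\{t=f\}$-restriction is always possible. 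I would handle this by the same Gröbner-basis/pure-order argument already deployed in Lemma \ref{lem:compVF} and the subsequent Proposition, applied now to the elimination of $t$ instead of to the intersection $V^k \cap F_p$.
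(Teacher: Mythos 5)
Your proposal follows essentially the same route as the paper's proof: you undo the graph embedding by a filtered codimension-one restriction along the smooth divisor $\{t=f(\underline{w})\}$, carried out via the coordinate change $t=\widetilde{t}+f(\underline{w})$ (the paper's factorization $i_f=l_f\circ i_0$), so that the shift $s-r+1$ from the preceding Proposition drops to $s-r$ by Saito's filtered version of Kashiwara's equivalence. The obstacle you anticipate in the last paragraph in fact dissolves once the coordinate change is made, which is exactly how the paper proceeds: after transforming by $l_f^{-1}$ the ideal contains the eliminated variable $\widetilde{t}$ itself, so the presentation is literally $\check{M}_B[\p_{\widetilde{t}}]$ with $F^{ord}_p=\sum_{i\geq 0}F^{ord}_{p-i}\check{M}_B\otimes\p_{\widetilde{t}}^{\,i}$, and neither a Gr\"obner/pure-order argument nor the invertibility of the $w_i$ is needed at this step.
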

\begin{proof}
Recall that we have $j_{t} \circ i^\circ_g \circ k_A = i_g \circ h_A$ where $i_f$ is the graph embedding from $\eqref{eq:graphem}$. The map $i_f$ can be factored by
\begin{align}
i_0 : W &\lra W \times \mbc_{\widetilde{t}} \notag \\
\underline{w} &\mapsto (\underline{w},0) \notag \\
l_g: W \times \mbc_{\widetilde{t}} &\lra W \times \mbc_{t} \notag \\
(\underline{w},\widetilde{t}) &\mapsto (\underline{w}, \widetilde{t} + g(\underline{w}))\, . \notag
\end{align}
Once again, we summarize the relevant maps in the following diagram.
$$
\begin{tikzcd}
{ } && W\times \dC_{\widetilde{t}} \ar{ddrr}[swap]{l_g}{\simeq}\\ \\
W^* \ar{rr}{l} \ar{dd}{l_1}[swap]{\textup{\tiny closed}} \ar{rrrrdd}{i^\circ_g} & &  W \ar{rr}{i_g}[swap]{\textup{\tiny closed}} \ar{uu}[swap]{i_0}{\textup{\tiny closed}} && W\times \dC_t \\ \\
\widetilde{W} \ar{rr}{l_2}[swap]{\simeq} & & W^*\times \dC_t^* \ar{rr}{l_3}[swap]{\renewcommand{\arraystretch}{0.5}\begin{array}{c}\textup{\tiny closed on}\\ \textup{\tiny support}\end{array}} & & W\times \dC_t^* \ar[swap]{uu}[swap]{j_t}{\renewcommand{\arraystretch}{0.5}\begin{array}{l}\textup{\tiny open embedding }\\ \textup{\tiny complement of smooth divisor}\end{array}}
\end{tikzcd}
$$
We first compute $\mch^0(l_g^{-1})_+ \check{\mcm}^\beta_{A'}$ with its corresponding Hodge filtration. Since  $(l_g)^{-1}$ is just a coordinate change we get similarly to formula \eqref{eq:basechanget}
\begin{equation}\label{eq:lback}
\Gamma(W \times \mbc_{\widetilde{t}},\mch^0(l_g^{-1})_+ \check{\mcm}_{A'}^\beta) \simeq D_{W \times \mbc_{\widetilde{t}}} / \left( (\check{E}_k+\beta_k)_{k=1, \ldots, d}, (\check{\Box}_{\underline{m}})_{\underline{m} \in \mbl_A}, (\widetilde{t}) \right)\, ,
\end{equation}
where the Hodge filtration on the right hand side is the induced order filtration shifted by $(n-d+1)$. Notice that the right hand side of \eqref{eq:lback} is simply $\check{M}_A^\beta[\p_{\widetilde{t}}]$, hence the Hodge filtration on
\[
\check{M}^\beta_A = \Gamma(W, \check{\mcm}^\beta_A) = \Gamma(W \times \mbc_{\widetilde{t}}, \mch^0 i_0^+ (l_g^{-1})_+ \check{\mcm}^\beta_{A'})
\]
is simply the order filtration shifted by $(n-d)$ by \cite[Proposition 3.2.2 (iii)]{Saito1}.
\end{proof}
\begin{remark}\label{rem:pdi1}
Let $\mco^\beta_{\mbc^*} = \mcd_{\mbc^*}/(\p_t t + \beta)$ and $j_0: \mbc^* \lra \mbc$ be the inclusion. If $\beta \not \in \{-1,-2, -3 ,\ldots\}$ then
\[
j_{0+} \mco_{\mbc^*}^\beta \simeq \mcd_{\mbc}/(\p_t t + \beta)
\]
as well as
\[
j_{0 \dag} \mco^{-\beta-1}_{\mbc^*} \simeq \mbd j_{0 +} \mbd \mco^{\beta}_{\mbc^*} \simeq \mbd j_{0+} \mcd_{\mbc^*} / ( \p_t t +\beta) \simeq \mbd \, \mcd_{\mbc}/( \p_t t + \beta) \simeq \mcd_{\mbc}/(t \p_t - \beta)
\]
If, additionally, $\beta \in (-1,0]$ holds then by Theorem \ref{thm:hpot}
\[
 (j_{0+} \mco_{\mbc^*}^\beta, F^H_{\bullet}) \simeq  (\mcd_{\mbc}/ (\p_t t + \beta), F^{ord}_\bullet)
\]
In order to compute the Hodge filtration on $j_{0 \dag} \mco^{-\beta-1}_{\mbc^*}$ we use that $(\mbd \mco^{-\beta-1}_{\mbc^*}, F^H_p) = (\mcd_{\mbc^*}/(\p_t t + \beta), F^{ord}_{\bullet-1})$ and therefore
$(j_{0+}\mbd \mco^{-\beta-1}_{\mbc^*}, F^H_p) = (\mcd_{\mbc}/(\p_t t + \beta), F^{ord}_{\bullet-1})$ holds, since we assumed $\beta \in (-1,0]$.
We use the filtered resolution
\[
\xymatrix{(\mcd_{\mbc^*}, F^{ord}_{\bullet-2}) \ar[rr]^{\p_t t + \beta} && (\mcd_{\mbc^*}, F^{ord}_{\bullet-1}})
\]
to resolve $(\mcd_{\mbc}/(\p_t t + \beta), F^{ord}_{\bullet-1})$ and apply $\mch om(-,(\mcd_{\mbc}, F^{ord}_{\bullet-2}) \otimes \omega^\vee_X)$ to compute the dual of  $(\mcd_{\mbc}/(\p_t t + \beta), F^{ord}_{\bullet-1})$ (cf. \cite[page 55]{SaitoonMHM} for the choice of filtration on $\mcd_\mbc \otimes \omega_X^\vee$). This gives
\[
(j_{0 \dag} \mco^{-\beta-1}_{\mbc^*}, F^H_p) \simeq (\mcd_{\mbc}/(t \p_t - \beta),F^{ord}_p)
\]
for $\beta \in (-1,0]$.
\end{remark}

\section{Integral transforms of torus embeddings}
\label{sec:Radon}

In this section we investigate how the Hodge filtration of $\mbc_T^{H,\beta}$ behaves under a certain integral transformation, which depends on a matrix $\widetilde{A}$ and a parameter $\beta_0$. We show that the outcome of this transform is isomorphic to the GKZ-system $\mcm^{(\beta_0,\beta)}_{\widetilde{A}}$ (cf. Proposition \ref{prop:GKzeqtwRad}). In the special case $\beta_0 \in \mbz$ we will show that the integral transformation mentioned above is isomorphic to the  Radon transform of a torus embedding.

The Radon transformation has been extensively used in the
previous papers \cite{Reich2} and \cite{ReiSe2} in order to study hyperplane sections of toric varieties,
more precisely, fibres of Laurent polynomials and their compactifications. \\

In the first subsection below, we give a brief reminder of how certain GKZ-systems  can be constructed using the Radon transformation. The second subsection introduces an integral transform that  is able to produce all GKZ-systems $\mcm^{\widetilde{\beta}}_{\widetilde{A}}$ with $\widetilde{\beta} \not \in sRes(\widetilde{A})$ and homogeneous $\widetilde{A}$.

The next subsections (until \ref{subsec:MainThm}) constitute the main part of this
section, where we study in detail how the various functors entering in the definition of this integral transformation
act on the twisted structure sheaf. One can roughly divide the construction in two parts:
In subsection \ref{subsec:CalculationInCharts} we calculate the push-forward  of a tensor product between the twisted structure sheaf and a kernel to a partial compactification.
Then one has to study the projection to the parameter space (i.e., the space on which the GKZ-system is defined).
The calculation of
the behavior of the filtration steps is non-trivial, as the higher direct images of these filtration steps, being coherent $\cO$-modules,
do not, a priori, vanish. However, we can show that this is actually the case in the current situation. We formulate this result
in the language of $\msr$-modules (i.e., using the Rees construction for filtered $\cD$-modules), and make extensive use of (variants of) the Euler-Koszul complex of hypergeometric
modules. All these intermediate steps are contained in the subsections \ref{subsec:Koszul} until \ref{subsec:MainThm}.
A very important technical result is the calculation of some local cohomology groups of a  certain semi-group ring,
contained in subsection \ref{subsec:LocalCohom}. The culminating point is then Theorem \ref{thm:HodgeGKZ}
which gives a precise description of the Hodge filtration on certain GKZ-systems.

\subsection{Hypergeometric modules, Gau\ss-Manin systems and the Radon transformation}
\label{subsec:Reminder}

Here we give a brief reminder on the relationship between GKZ-hypergeometric systems, Gau\ss-Manin systems of families of Laurent polynomials as developed in \cite{Reich2}.

As in \cite{ReiSe,ReiSe2}, we will consider a homogenization of the above systems. Namely,
given the matrix $A=(a_{ki})$, we consider the system $\cM_{\widetilde{A}}^{\widetilde{\beta}}$, where
$\widetilde{A}$ is the $(d+1) \times (n+1)$ integer matrix
\begin{equation}\label{def:Atilde}
\widetilde{A} := (\widetilde{\underline{a}}_0, \ldots, \widetilde{\underline{a}}_n) :=\left(\begin{matrix}
1 & 1 & \ldots & 1 \\ 0 & a_{11} & \dots & a_{1n} \\ \vdots & \vdots & &\vdots \\ 0 & a_{d1} & \dots &a_{dn}
\end{matrix} \right)
\end{equation}
and $\widetilde{\beta}\in \dC^{d+1}$.\\

In order to show that such a homogenized GKZ-system comes from geometry we have to review briefly the so-called Radon transformation for $\mcd$-modules which was introduced by Brylinski \cite{Brylinski} and variants were later added by d'Agnolo and Eastwood \cite{AE}.\\

Let $W$ be the dual vector space of $V$ with coordinates $w_0, \ldots, w_n$, and let $\lambda_0, \ldots , \lambda_n$ be coordinates for $V$. We will denote by $Z \subset \mbp(W) \times V$ the universal hyperplane given by $Z := \{\sum_{i=0}^{n} \lambda_i w_i = 0 \}$ and denote its complement by $U := (\mbp(W) \times V) \setminus Z$. Consider the following diagram
\begin{equation}\label{eq:universaldiag}
\begin{tikzcd}
{}&& U \ar{drr}{\pi_2^U} \ar{dll}[swap]{\pi_1^U} \ar[hook]{d}{j_U} \\
\mbp(W) && \mbp(W) \times V \ar{ll}[swap]{\pi_1} \ar{rr}{\pi_2} && V  \\
&& Z \ar{ull}{\pi_1^Z} \ar[hook]{u}{i_Z} \ar{rru}[swap]{\pi_2^Z}
\end{tikzcd}
 \end{equation}
We will use in the sequel several variants of the so-called Radon transformation in the derived category of mixed Hodge modules. These are functors from $D^b \MHM(\mbp(W))$ to $D^b\MHM(\mcd_V)$ given by
\begin{align}
{^*}\mcr(M) &:= \pi_{2*}^Z (\pi_1^Z)^*M \simeq \pi_{2 *} i_{Z *} i_Z^* \pi_1^*M\, , \notag \\
{^!}\mcr(M) &:= \mbd \circ {^*}\mcr \circ \mbd \, (M) \simeq \pi_{2*}^Z (\pi_1^Z)^! M \simeq \pi_{2 *} i_{Z *} i_Z^! \pi_1^! M \notag \\
{^*}\mcr_{cst}(M) &:= \pi_{2*} \pi_1^*M\, . \notag \\
{^!}\mcr_{cst}(M) &:= \mbd \circ {^*}\mcr_{cst} \circ \mbd\,(M)\simeq  \pi_{2*} \pi_1^! M\, . \notag \\
{^*}\mcr^\circ_c(M) &:= \pi_{2 !}^U (\pi_1^U)^*(M) \simeq \pi_{2 *} j_{U !} j_U^* \pi_1^* (M) \notag \\
{^!}\mcr^\circ(M)&:= \mbd \circ {^*}\mcr^\circ_c \circ \mbd\, (M) \simeq  \pi_{2 *}^U (\pi_1^U)^!(M) \simeq \pi_{2 *} j_{U *} j_U^! \pi_1^! (M)\, . \notag
\end{align}

The adjunction triangle corresponding to the open embedding $j_U$ and the closed embedding $i_Z$ gives rise to the following triangles of Radon transformations
\begin{align}
{^!}\mcr(M) \lra {^!}\mcr_{cst}(M) \lra {^!}\mcr^\circ(M) \overset{+1}{\lra}\, , \label{eq:Radontri1} \\
{^*}\mcr^\circ_{c}(M) \lra {^*}\mcr_{cst}(M) \lra {^*}\mcr(M) \overset{+1}{\lra}\, , \label{eq:Radontri2}
\end{align}

where the second triangle is dual to the first.\\

We now introduce a family of Laurent polynomials defined on $T \times \Lambda := (\mbc^*)^d \times \mbc^n$ using the columns of the matrix $A$, more precisely, we put
\begin{align}\label{eq:FamLaurent}
\varphi_A: T \times \Lambda &\lra V=\mbc_{\lambda_0} \times \Lambda\, , \\
(t_1, \ldots , t_d, \lambda_1, \ldots , \lambda_n) &\mapsto (- \sum_{i=1}^n \lambda_i \underline{t}^{\underline{a}_i}, \lambda_1, \ldots, \lambda_n)\, .  \notag
\end{align}

The following theorem of \cite{Reich2} constructs a morphism between the Gau\ss-Manin system $\cH^0(\varphi_{A,+}\cO_{T\times \Lambda})$ resp.
its proper version $\cH^0(\varphi_{A,\dag}\cO_{T\times \Lambda})$ and certain GKZ-hypergeometric systems and identify both with a corresponding Radon transform.

For this we apply the triangle \eqref{eq:Radontri1}  to $M = g_!\, \mbd\, {^p}\mbq^H_T$ and the triangle \eqref{eq:Radontri2} to $M = g_* {^p}\mbq^H_T$, where the map $g$ was defined by
\begin{align}
g: T &\lra \mbp(W) \notag \\
(t_1, \ldots ,t_d) &\mapsto (1:\underline{t}^{\underline{a}_1}: \ldots : \underline{t}^{\underline{a}_n})\, . \notag
\end{align}

\begin{theorem}\cite[Lemma 1.11, Proposition 3.4]{Reich2}\label{thm:4termseq}
Let $A=(\underline{a}_1,\ldots, \underline{a}_n)\in M(d\times n, \dZ)$
and $\widetilde{A}=(\widetilde{\underline{a}}_0,\widetilde{\underline{a}}_1,\ldots,\widetilde{\underline{a}}_n)\in M((d+1)\times (n+1),\dZ)$ be as above and
assume that $\widetilde{A}$ satisfies
\begin{enumerate}
\item $\mbz \widetilde{A} = \mbz^{d+1}$
\item $\mbn \widetilde{A} = \mbr_{\geq 0}\widetilde{A} \cap \mbz^{d+1}$
\end{enumerate}

Then for every $\widetilde{\beta} \in \mbn\widetilde{A}$ and every $\widetilde{\beta}' \in int(\mbn \widetilde{A})$, we have that
\[
\mcm^{\widetilde{\beta}}_{\widetilde{A}} \simeq \textup{DMod}\left(\mch^{n+1}({^*}\mcr^\circ_c(g_* {^p}\mbq^H_T))\right)  \quad \text{and} \quad \mcm^{-\widetilde{\beta}'}_{\widetilde{A}} \simeq \textup{DMod}\left(\mch^{-n-1}({^!}\mcr^\circ(g_! \mbd\, {^p}\mbq^H_T))\right)
\]
If we define
\[
{^H\!\!}\mcm^{\widetilde{\beta}}_{\widetilde{A}} := \mch^{n+1}({^*}\mcr^\circ_c(g_* {^p}\mbq^H_T))  \quad \text{and} \quad {^H\!\!}\mcm^{-\widetilde{\beta}'}_{\widetilde{A}} := \mch^{-n-1}({^!}\mcr^\circ(g_! \mbd\, {^p}\mbq^H_T))
\]

the following sequences
of mixed Hodge modules are exact and dual to each other:
$$
\xymatrix@C=8pt{ & H^{d-1}(T,\mbc)\otimes {^p}\mbq^H_V & \mch^0(\varphi_{A*} {^p}\mbq^H_{T \times \Lambda}) & {^H\!\!}\mcm_{\widetilde{A}}^{\widetilde{\beta}} & H^{d}(T, \mbc)\otimes {^p}\mbq^H_V & \\
0 \ar[r] & \cH^{n}({^*}\mcr_{cst}(g_* {^p}\mbq^H_T)) \ar[u]_\simeq \ar[r] & \mch^n({^*}\mcr(g_* {^p}\mbq^H_T)) \ar[u]_\simeq \ar[r] & \mch^{n+1}({^*}\mcr^\circ_c(g_* {^p}\mbq^H_T)) \ar[u]_\simeq \ar[r] &
\mch^{n+1}({^*}\mcr_{cst}(g_* {^p}\mbq^H_T)) \ar[u]_\simeq \ar[r] & 0\\
0 & \mch^{-n}({^!}\mcr_{cst}(g_! \mbd\,{^p}\mbq^H_T)) \ar[l] \ar[d]^\simeq  & \mch^{-n}({^!}\mcr(g_! \mbd\,{^p}\mbq^H_T)) \ar[l] \ar[d]^\simeq & \mch^{-n-1}({^!}\mcr^\circ(g_! \mbd\, {^p}\mbq^H_T)) \ar[l]\ar[d]^\simeq & \mch^{-n-1}({^!}\mcr_{cst}(g_! {^p}\mbq^H_T)) \ar[l] \ar[d]^\simeq & 0 \, .\ar[l]\\
& H_{d+1}(T,\mbc)\otimes \mbd\,{^p}\mbq^H_V & \mch^0(\varphi_{A!} \mbd\,{^p}\mbq^H_{T \times \Lambda}) & {^H\!\!}\mcm_{\widetilde{A}}^{-\widetilde{\beta}'} & H_{d}(T, \mbc)\otimes \mbd\,{^p}\mbq^H_V\\ &}
$$
\end{theorem}

\begin{proposition}\label{prop:morphdualGKZ}
 Let $\widetilde{\beta} \in \mbn \widetilde{A}$ and $\widetilde{\beta}' \in int(\mbn \widetilde{A})$. There exists a natural morphism of mixed Hodge modules between
 ${^H\!\!}\mcm_{\widetilde{A}}^{-\widetilde{\beta}'}(-d-n)$ and ${^H\!\!}\mcm^{\widetilde{\beta}}_{\widetilde{A}}$, which is (up to multiplication with a non-zero constant) given on the underlying $\mcd_V$-modules by
\begin{align}
\mcm_{\widetilde{A}}^{-\widetilde{\beta}'} &\lra \mcm^{\widetilde{\beta}}_{\widetilde{A}} \notag \\
P &\mapsto P \cdot \p^{\widetilde{\beta} + \widetilde{\beta}'}\, , \notag
\end{align}
where $\p^{\widetilde{\beta} + \widetilde{\beta}'} := \prod_{i=0}^n \p_{\lambda_i}^{k_i}$ for any $\underline{k} = (k_0, \ldots ,k_n)$ with $\widetilde{A} \cdot \underline{k} = \widetilde{\beta} + \widetilde{\beta}'$.
\end{proposition}
\begin{proof}
First notice that there is a natural morphism of mixed Hodge modules
\[
\mch^0(\varphi_{A!} \mbd\,{^p}\mbq^H_{T \times \Lambda})(-d-n) \lra \mch^0(\varphi_{A*} {^p}\mbq^H_{T \times \Lambda})
\]
which is induced by the morphism $\mbd\,{^p}\mbq^H_{T \times \Lambda}(-d-n) \ra {^p}\mbq^H_{T \times \Lambda}$. Using the isomorphisms in the second column, this gives a morphism
\[
\mch^{-n}({^!}\mcr(g_! \mbd\,{^p}\mbq^H_T)) (-n-d) \lra \mch^n({^*}\mcr(g_* {^p}\mbq^H_T))\, .
\]
Now we can concatenate this with the following morphisms
\[
\begin{tikzcd}
 \mch^n({^*}\mcr(g_* {^p}\mbq^H_T))  \ar{r} & \mch^{n+1}({^*}\mcr^\circ_c(g_* {^p}\mbq^H_T)) \\
 \mch^{-n}({^!}\mcr(g_! \mbd\,{^p}\mbq^H_T))(-n-d)\ar{u} & \mch^{-n-1}({^!}\mcr^\circ(g_! \mbd\, {^p}\mbq^H_T))(-n-d)\, . \ar{l} \ar[dotted]{u}\end{tikzcd}
\]
This gives the desired morphism of mixed Hodge modules between ${^H\!\!}\mcm_{\widetilde{A}}^{-\widetilde{\beta}'}(-d-n)$ and ${^H\!\!}\mcm^{\widetilde{\beta}}_{\widetilde{A}}$.
Then it follows from \cite[Lemma 2.12]{ReiSe2} that the corresponding morphism of the underlying $\cD_V$-modules is (up to multiplication by a non-zero constant) right multiplication with $\partial^{\widetilde{\beta}+\widetilde{\beta}'}$.
\end{proof}
We will now prove a partial generalization of Theorem \ref{thm:4termseq} for non-integer $\beta$.
\begin{proposition}\label{prop:Radonhalfgeneral}
With the notation as above, let $\widetilde{\beta} = (\beta_0, \beta) \in (\mbz \times \mbr^{d}) \setminus \textup{sRes}(\widetilde{A})$, then we have the following isomorphism
\[
\textup{DMod}( \mch^{n+1}({^*}\mcr^\circ_c(g_* {^p}\mbc^{\beta,H}_T)) \simeq \mcm^{\widetilde{\beta}}_{\widetilde{A}}
\]
This induces the structure of a complex mixed Hodge module on $\mcm^{\widetilde{\beta}}_{\widetilde{A}}$ which we call ${^H\!\!}\mcm^{\widetilde{\beta}}_{\widetilde{A}}$.
\end{proposition}
\begin{proof}
Consider the following commutative diagram with cartesian square
\[
\xymatrix{ & W \\ \mbc^* \times T \ar[ur]^{h_{\widetilde{A}}} \ar[r]^{\tilde{h}} \ar[d]_{\pi_T} & W \setminus \{0\} \ar[u]_{\bar{j}} \ar[d]^\pi \\ T \ar[r]^g & \mbp(W)}
\]
where $\pi_T$ is the projection to the first factor.
We have
\[
\bar{j}_+ \pi^+ g_+ \mco_T^{\beta} \simeq  \bar{j}_+ \widetilde{h}_+ \pi_T^+ \mco_T^{\beta} \simeq h_{\widetilde{A}+} \mco_{\mbc^* \times T}^{(0,\beta)}[1]\simeq h_+ \mco_{\mbc^* \times T}^{(\beta_0,\beta)}[1]
\]
for every $\beta_0 \in \mbz$. Let ${^*}\mcr^\circ_c: D^b_{rh}(\mcd_X) \ra D^b_{rh}(\mcd_X)$ be the corresponding functor for $\mcd$-modules which is given by $M \mapsto \pi_{2+} j_{U \dag} j_U^\dag \pi_1^\dag M$. We have the following isomorphism
\[
{^*}\mcr^\circ_c(g_+ \mco^\beta_T)[-n-1] \simeq \FL(\bar{j}_+ \pi^+ g_+ \mco^\beta_T[-1]) \simeq \FL(h_{\widetilde{A}+} \mco_{\mbc^* \times T}^{(\beta_0,\beta)}) \simeq \mcm^{\widetilde{\beta}}_{\widetilde{A}}
\]
where the first isomorphism follows from \cite[Proposition 1]{AE}. Notice that the various shifts, occurring in the formulas above, stem from a different (shifted) definition of the (exceptional) inverse image for $\mcd$-modules in loc. cit. .
\end{proof}

\subsection{Integral transforms of twisted structure sheaves}\label{subsec:TwistedIntTrafo}
Unfortunately, the Radon transformation produces only GKZ-systems with $\beta_0 \in \mbz$, as we can see in Proposition \ref{prop:Radonhalfgeneral}. To remedy this fact, we introduce a integral transformation which takes care of that by twisting with a kernel which depends on $\beta_0$.\\

Let $T=(\mbc^*)^d$ resp.  $\widetilde{T}:= (\mbc^*)^{d+1}$ be tori with coordinates $t_1,\ldots, t_d$ resp. $t_0,\ldots, t_d$, $W= V= \mbc^{n+1}$ with coordinates $w_0,\ldots, w_n$ resp. $\lambda_0, \ldots, \lambda_n$ and consider the torus embedding with respect to the matrix  $\widetilde{A}$
\begin{align*}
h:=h_{\widetilde{A}}: \widetilde{T} &\lra W \\
(t_0,\ldots, t_d) &\mapsto (t_0, t_0 \underline{t}^{\underline{a}_1},\ldots, t_0 \underline{t}^{\underline{a}_n})
\end{align*}
If $\widetilde{\beta} \not \in sRes(\widetilde{A})$ the GKZ-system $\mcm^{\widetilde{\beta}}_{\widetilde{A}}$ is given by $\FL(h_+ \mco_{\widetilde{T}}^{\widetilde{\beta}})$. Consider the maps
\[
\xymatrix{\mbc^* \ar[r]^{j} &\mbc  & \ar[l]_-F T \times V \ar[r]^{p}  \ar[d]^{q} & V \\ & & T }
\]
where $F$ is given by $(t_1,\ldots,t_d, \lambda_0,\ldots, \lambda_n) \mapsto \lambda_0 + \sum_{i=1}^n \lambda_i \underline{t}^{\underline{a}_i}$, where $p_1$ resp. $p_2$  is the projection to the first resp. second factor and where $j$ is the inclusion.\\

Proposition \ref{prop:Radonhalfgeneral} showed that a GKZ-system with integer $\beta_0$ can be expressed by a Radon transformation, generalizing a result in \cite{Reich2}. The Radon transformation ${^*}\mcr^\circ_c$ can be seen as an integral transform from $\mbp^n$ to $\mbc^{n+1}$ with kernel $j_{U!}{^p}\mbc^{H}_T$. Now, in order to construct GKZ-systems with general $\beta_0$ we could twist the kernel $j_{U!}{^p}\mbc^{H}_U$ which means instead of using the constant module ${^p}\mbc^{H}_U$ on $U$ we could use a rank one local system on $U$ with monodormy $e^{2 \pi i \beta_0}$  (notice that $U$ has fundamental group isomorphic to $\mbz$). However, due to computational reason we use a slightly different approach. Instead of embedding the torus $T$ in $\mbp^n$ and considering an integral transform from $\mbp^n$ to $\mbc^{n+1}$ we directly define an integral transformation from $T$ to $V$ with a kernel depending on $\beta_0$ and $A$ (the matrix $A$ is encoded in the map $F$). We prove in Proposition \ref{prop:GKzeqtwRad} that the outcome of this integral transformation applied to the $\mcd_T$-module $\mco^\beta_T$ is indeed the GKZ-system $\mcm^{\widetilde{\beta}}_{\widetilde{A}}$. Finally, we prove in Proposition \ref{prop:compRadtwRad} that this approach is compatible with the original approach using the Radon transform from Proposition \ref{prop:Radonhalfgeneral}.

The following proposition is a variant of a theorem of d'Agnolo and Eastwood \cite{AE}.  It compares the Fourier-Laplace transform of the twisted structure sheaf under a torus embedding with an integral transform of the twisted structure on a smaller torus. The latter description is favorable since it naturally equips the GKZ-system with the structure of a mixed Hodge module.
\begin{proposition}\label{prop:GKzeqtwRad}
Let $\widetilde{\beta} = (\beta_0,\beta) \not \in sRes(\widetilde{A})$ then
\[
\mcm^{\widetilde{\beta}}_{\widetilde{A}}\simeq \FL(h_+ \mco_{\widetilde{T}}^{\widetilde{\beta}}) \simeq \mch^{2n+d+1}(p_{+} (q^\dag \mco_T^\beta \otimes_\mco   F^\dag (j_{\dag} \mco_{\mbc^*}^{-\beta_0-1})))
\]
\end{proposition}
\begin{proof}
Notice that the morphism $h: \widetilde{T} \lra \mbc^{n+1}$ factors as
\begin{equation}\label{eq:factorh}
\widetilde{T} \overset{\widetilde{j}}\lra  \mbc \times T \overset{k}\lra W
\end{equation}
where  $\widetilde{j}$ is the canonical embedding and $k$ is given by $(t_0,\ldots, t_d) \mapsto (t_0, t_0 \underline{t}^{\underline{a}_1},\ldots, t_0 \underline{t}^{\underline{a}_n})$.
Consider the diagram
\[
\xymatrix@C=4pc@R=3pc{ \mbc & \mbc \times T  \ar[l]_{l}
  \ar[r]^{k} & W & \mbc \times T  \ar[dr]^{f} & \\
  \mbc \times \mbc \ar[u]_{p_1} \ar[d]_{p_2} & \mbc \times T \times
  V  \ar[u]_{p_{12}} \ar[l]_{id_{\mbc} \times F}
  \ar[d]^{p_{13}}  \ar[r]^{k\times id_{\mbc^{n+1}}} & W
  \times V  \ar[u]_{q_{1}} \ar[d]^{q_{2}} & \mbc \times T
  \times V \ar[u]_{p_{12}} \ar[d]^{p_{13}} \ar[r]^{g} & T\\
  \mbc & T \times V \ar[l]_F  \ar[r]^p& V & T
   \times V \ar[ur]^{q} &}\qquad
\]
where $p_{ij}$ are the projections to the factors $i$ and $j$, the maps $l,q,p_1,q_1$ are the projections to the first and the maps $f,g,p,p_2,q_2,$ are the projection to the second factor.\\

We have that $\widetilde{j}_+ \mco_{\widetilde{T}}^{\widetilde{\beta}} \simeq j_{+} \mco_{\mbc^*}^{\beta_0} \boxtimes  \mco_{T}^{\beta}  \simeq l^+ j_{+} \mco^{\beta_0}_{\mbc^*}[-d] \otimes f^+ \mco_T^{\beta}[-1]$ hence we get the following isomorphisms
\begin{align*}
&\FL(h_+ \mco^{\widetilde{\beta}}_{\widetilde{T}})\\
&\simeq \FL(k_+ \widetilde{j}_+ \mco^{\widetilde{\beta}}_{\widetilde{T}}),  &&\text{factorization of }h \; \eqref{eq:factorh} \\
&\simeq q_{2,+} ( q_1^+ k_+ \widetilde{j}_{+} \mco_{\widetilde{T}}^{\widetilde{\beta}}  \otimes \mcl)[-n-1], \\
&\simeq q_{2,+} ( q_1^+ k_+ (l^+ j_{+} \mco_{\mbc^*}^{\beta_0} \otimes f^+ \mco_T^{\beta})  \otimes \mcl)[-n-d-2], &&\text{use } \mco_{\widetilde{T}}^{\widetilde{\beta}} \simeq \mco_{\mbc^*}^{\beta_0} \boxtimes \mco_T^\beta \\
&\simeq q_{2,+} ((k \times id)_+ p_{12}^+ (l^+ j_{+} \mco_{\mbc^*}^{\beta_0} \otimes f^+ \mco_T^{\beta})  \otimes \mcl)[-n-d-2], &&\text{base change}\\
&\simeq q_{2,+}(k \times id)_+ ( p_{12}^+ (l^+ j_{+} \mco_{\mbc^*}^{\beta_0} \otimes f^+ \mco_T^{\beta})  \otimes (k \times id)^+\mcl)[-2d-2],  &&\text{projection formula}\\
&\simeq q_{2,+}(k \times id)_+ ( (id \times F)^+ p_1^+ j_{+} \mco_{\mbc^*}^{\beta_0} \otimes g^+ \mco_T^{\beta}  \otimes (k \times id)^+\mcl)[-n - 2d-3], &&g= f \circ p_{12},\\ & &&l \circ p_{12} = p_1 \circ (id \times F)\\
&\simeq p_{+}p_{13,+} ( (id \times F)^+ p_1^+ j_{+} \mco_{\mbc^*}^{\beta_0} \otimes g^+ \mco_T^{\beta}  \otimes (k \times id)^+\mcl)[-n-2d-3], &&p \circ p_{13} = q_2 \circ(k \times id)\\
&\simeq p_{+}p_{13,+} ( (id \times F)^+ p_1^+ j_{+} \mco_{\mbc^*}^{\beta_0} \otimes g^+ \mco_T^{\beta}  \otimes (id \times F)^+\mcl_1)[-3n-2d-3], \\
&\simeq p_{+}p_{13,+} ( g^+ \mco_T^{\beta} \otimes (id \times F)^+ (p_1^+ j_{+} \mco_{\mbc^*}^{\beta_0} \otimes \mcl_1) )[-2n-d-3], \\
&\simeq p_{+}p_{13,+} ( p_{13}^+q^+ \mco_T^{\beta} \otimes (id \times F)^+ (p_1^+ j_{+} \mco_{\mbc^*}^{\beta_0} \otimes \mcl_1) )[-2n-d-3], &&g = q \circ p_{13} \\
&\simeq p_{+} (q^+ \mco_T^{\beta} \otimes  p_{13,+}(id \times F)^+ (p_1^+ j_{+} \mco_{\mbc^*}^{\beta_0} \otimes \mcl_1) )[-2n-d-2],  &&\text{projection formula}\\
&\simeq p_{+} ( q^+ \mco_T^{\beta} \otimes F^+ p_{2,+} (p_1^+ j_{+} \mco_{\mbc^*}^{\beta_0} \otimes \mcl_1) )[-2n-d-2], &&\text{base change}\\
&\simeq p_{+} ( q^+ \mco_T^{\beta} \otimes  F^+  j_{\dag} \mco_{\mbc^*}^{-\beta_0-1}  )[-2n-d-1], &&\FL(j_+ \mco_{\mbc^*}^{\beta_0}) \simeq j_\dag \mco_{\mbc^*}^{-\beta_0-1}\\
&\simeq p_{+} (  q^\dag \mco_T^{\beta} \otimes F^\dag  j_{\dag} \mco_{\mbc^*}^{-\beta_0-1})[2n+d+1], &&q^+\simeq q^\dag[2n+2],\\
&&& F^+ \simeq F^\dag[2n+2d]
\end{align*}
\end{proof}

The isomorphism $\mcm^{\widetilde{\beta}}_{\widetilde{A}} \simeq \mch^{2n+d+1} (p_+(q^\dag \mco^\beta_T \otimes F^\dag j_{\dag} \mco^{-\beta_0-1}_{\mbc^*} ))$ which holds for $\widetilde{\beta} \in \mbr^{d+1} \setminus sRes(\widetilde{A})$ endows the GKZ-system with the structure of a complex mixed Hodge module. We define the mixed Hodge module structure  by
\begin{equation}\label{eq:defHodgeGKZgen}
{^H\!\!}\mcm^{\widetilde{\beta}}_{\widetilde{A}}:= \mch^{2n+d+1} (p_*( q^* {^p}\mbc_T^\beta \otimes F^* j_{!} {^p}\mbc^{-\beta_0-1}_{\mbc^*} ))
\end{equation}
We now check that the Hodge module structure on $\mcm_{\widetilde{A}}^{\widetilde{\beta}}$ induced by the definition \eqref{eq:defHodgeGKZgen} coincides with the one of Proposition \ref{prop:Radonhalfgeneral} in the case $\beta_0 \in \mbz$.

\begin{proposition}\label{prop:compRadtwRad}
If $\beta_0 \in \mbz$ and $(\beta_0,\beta) \not \in sRes(\widetilde{A)}$ then there is an isomorphism
\[
{\!^*}\mcr^\circ_c(g_*{^p}\mbc^{H,\beta}_T)[n+1] \simeq p_*(q^* {^p}\mbc_T^\beta \otimes F^* j_{!}{^p} \mbc^{H,-\beta_0-1}_{\mbc^*})[2n+d+1]
\]
\end{proposition}
\begin{proof}
Consider the following commutative diagram whose squares are cartesian
\[
\xymatrix{ &\mbc^* \ar[d]^j & U_0  \ar[l]_{\dot{F}} \ar[r] \ar[d]^{k_U} & U \ar[d]^{j_U} \\
& \mbc   & W_0 \times V \ar[d]_{\widetilde{q}} \ar[l]_{\widetilde{F}} \ar[r]^{j_0 \times id} & \mbp(W) \times V \ar[d]^{\pi_1} \ar[r]^-{\pi_2}& V \\
& T\times V \ar[u]^F  \ar[d]_{q}\ar[ur]^{g_0 \times id} & W_0 \ar[r]^{j_0} & \mbp(W) \\
& T \ar[ur]^{g_0} \ar@/_1em/[urr]_g}
\]
where $W_0:= \mbp(W) \setminus\{w_0 \neq 0\} =\mbc^n$ with coordinates $w_1,\ldots,w_n$ and $U_0 = U \cap W_0$. We denote by $p,p_0,\pi_1$  the projections to the first factor and by $\widetilde{F}$ the map $ (w_1,\ldots,w_n,\lambda_0,\lambda_n) \mapsto \lambda_0 + \sum_{i=1}^n\lambda_i w_i$. We consider the coordinate change $\phi_0$ defined by
\[
\widetilde{t}_k = t_k\; \text{for}\; k=1,\ldots,d, \qquad \widetilde{\lambda}_0 = \lambda_0 + \sum_{i=1}^n \lambda_i \underline{t}^{\underline{a}_i} = F  \quad \text{and} \quad  \widetilde{\lambda}_i = \lambda_i \; \text{for}\, i=1,\ldots, n
\]
on $T \times V$ and the coordinate change $\psi_0$ defined by
\[
\widetilde{w}_j = w_j\; \text{for}\; j=1,\ldots,n, \qquad \widetilde{\lambda}_0 = \lambda_0 + \sum_{i=1}^n \lambda_i w_j =\widetilde{F}  \quad \text{and} \quad  \widetilde{\lambda}_i = \lambda_i \; \text{for}\, i=1,\ldots, n
\]
on $W_0 \times V$.
Notice that with respect to these coordinates the maps $F$ and $\widetilde{F}$ are given by the coordinate function $\widetilde{\lambda}_0$. Let $pr: V \ra \mbc$ be the projection $(\widetilde{\lambda}_0,\ldots , \widetilde{\lambda}_n) \mapsto \widetilde{\lambda}_0$. We also have $\psi_0\circ (g_0 \times id)\circ \varphi^{-1}_0 = g_0 \times id$ and the map $q$ factors as $\pi_2 \circ j_0 \circ (g\times id)$.  Hence we get
\begin{align*}
&\quad p_*(q^* {^p}\mbc_T^{H,\beta} \otimes  F^* j_{!} {^p}\mbc^{H,-\beta_0-1}_{\mbc^*} )[2n+d+1] \\
& \simeq p_*( q^* {^p}\mbc_T^{H,\beta} \otimes F^* j_{!} {^p}\mbc^H_{\mbc^*})[2n+d+1] && \text{use}\, \beta_0 \in \mbz\\
&\simeq p_*( {^p}\mbc_T^{H,\beta} \boxtimes pr^*j_!{^p}\mbc^H_{\mbc^*})[n+1] && F \text{ is a projection after coordinate change}\\
&\simeq (\pi_2 \circ (j_0\times id) \circ (g_0\times id))_*( {^p}\mbc_T^{H,\beta} \boxtimes pr^*j_!{^p}\mbc^H_{\mbc^*})[n+1] && p= \pi_2 \circ (j_0 \times id) \circ (g_0 \times id)\\
&\simeq (\pi_2 \circ (j_0\times id) )_*( g_{0*}{^p}\mbc_T^{H,\beta} \boxtimes pr^*j_!{^p}\mbc^H_{\mbc^*})[n+1] \\
&\simeq (\pi_2 \circ (j_0\times id) )_*( \widetilde{q}^* g_{0*}{^p}\mbc_T^\beta \otimes \widetilde{F}^*j_!{^p}\mbc^H_{\mbc^*})[n+1] && \widetilde{F} \text{ is a projection after coordinate change}\\
&\simeq (\pi_2 \circ (j_0 \times id))_*(\widetilde{q}^* g_{0*}{^p}\mbc_T^{H,\beta}  \otimes k_{U!}\dot{F}^* {^p}\mbc^H_{\mbc^*} )[n+1] && \text{base change,  } \dot{F} \text{ and } \widetilde{F} \text{ smooth}\\
&\simeq (\pi_2 \circ (j_0 \times id))_*(\widetilde{q}^* g_{0*}{^p}\mbc_T^{H,\beta}  \otimes  k_{U!} {^p}\mbc^H_{U_0})[n+1] \\
&\simeq (\pi_2 \circ (j_0 \times id))_*((j_0 \times id)^!\pi_1^* g_*\, {^p}\mbc^{H,\beta}_T \otimes k_{U!} {^p}\mbc^H_{U_0} )[n+1] && g= j_0 \circ g_0\\
&\simeq \pi_{2*} (\pi_1^* g_*\, {^p}\mbc^{H,\beta}_T \otimes (j_0 \times id)_* k_{U!} {^p}\mbc^H_{U_0} )[n+1] && \text{projection formula}\\
&\overset{(*)}\simeq \pi_{2*} (\pi_1^* g_*\, {^p}\mbc^{H,\beta}_T \otimes j_{U!} {^p}\mbc^H_{U} )[n+1] \\
&\simeq \pi_{2*} j_{U!}j_U^* \pi_1^*g_* {^p}\mbc^{H,\beta}_T[n+1] && \text{projection formula}\\
&\simeq {\!^*}\mcr^\circ_c(g_*{^p}\mbc^{H,\beta}_T)[n+1]
\end{align*}
where the isomorphism $(*)$ follows from the fact that $\pi_1^*g_*  {^p}\mbc^\beta_T$ is localized along the divisor $\mbp(W)\setminus W_0$.
\end{proof}

\subsection{Calculation in charts}\label{subsec:CalculationInCharts}

We saw Subsection \ref{subsec:TwistedIntTrafo} that the GKZ-system $\mcm^{\widetilde{\beta}}_{\widetilde{A}}$ can be expressed by an integral transformation from $T$ to $V$ with kernel $F^* j_! ^{p} \mbc_{\mbc^*}^{-\beta_0-1}$. As a first step we compute the Hodge filtration of an intermediate step in this integral transformation, namely $q^*\, {^p}\mbc^{H,\beta^u}_T \otimes F_u^* j_{!} {^p}\mbc^{H,-\beta_0-1}_{\mbc^*}[2n+d+1]$ (cf. Lemma \ref{lem:compFuCT}). We do this by giving different presentations of the kernel (cf. Lemma \ref{lem:kerneldiffpres}) and by using several adapted coordinate systems on $T \times V$ indexed by a variable $u$ which goes from $0$ to $n$. The reason for using these adapted coordinate systems (and not just one) is the fact that we can rewrite the intermediate step as a direct product whose factors are easy to compute. Since the projection $p:T \times V \ra V$ is not proper,  we have to extend the intermediate step to a partial compactification. Concretely we are using the factorization
\[
\xymatrix{T \times V \ar[r]^-{g \times id} & \mbp(W) \times V \ar[r]^-{\pi_2} & V}
\]
of the projection $q$.
Our goal in this section is to compute the underlying $\mcd$-module $\mcn$ of
\begin{equation}\label{eq:SheafN}
{^H\!}\mcn := \mch^{2n+d+1}( g \times id)_* (q^* {^p}\mbc_T^{H,\beta} \otimes F^* j_{!} {^p}\mbc^{H,-\beta_0-1}_{\mbc^*})
\end{equation}

 together with its Hodge filtration on affine charts $W_u$ of $\mbp(W) \times V$. The different adapted coordinate systems are now used to compute the direct image of the intermediate step under the embedding $T \times V \ra W_u \times V$ which is simply the restriction of ${^H}\mcn$ to the affine chart $W_u$. It turns out that the underlying $\mcd$-module of this direct image is a direct product of a torus embedding with respect to a matrix $A_u$ and another rather simple module (cf. equation \eqref{eq:presWu}). We use Theorem \ref{thm:hpot} to compute the Hodge filtration on the first factor in Proposition \ref{prop:Mbetau}, the Hodge filtration on the second factor was computed in Remark \ref{rem:pdi1} .\\

We define the map
\begin{align*}
F_u : T \times V &\lra \mbc \\
(t_1,\ldots, t_d, \lambda_0, \ldots, \lambda_n) &\mapsto \lambda_u + \sum_{i=0 \atop i \neq u}\lambda_i \underline{t}^{\underline{a}_i - \underline{a}_u} = \left(\lambda_0 + \sum_{i=1}\lambda_i \underline{t}^{\underline{a}_i}\right) \cdot \underline{t}^{-\underline{a}_u}
\end{align*}
(notice that $F_0 = F$). We need the following result
\begin{lemma}\label{lem:kerneldiffpres} There is an isomorphism
\[
F^* j_{!} {^p}\mbc_{\mbc^*}^{H,-\beta_0-1} \simeq F_u^* j_{!} {^p}\mbc_{\mbc^*}^{H,-\beta_0-1}
\]
for $u = 0, \ldots,n$.
\end{lemma}
\begin{proof}
For $u \in \{0,\ldots,n\}$ and $G := (\mbc^*)^d$ consider the action
\begin{align*}
\mu_u: G \times T \times V &\lra T \times V \\
(g_1,\ldots,g_d,t_1,\ldots,t_d,\lambda_0,\ldots,\lambda_n) &\mapsto (t_1,\ldots,t_d, \lambda_0 \underline{g}^{-\underline{a}_u},\ldots,\lambda_n \underline{g}^{-\underline{a}_u})
\end{align*}
and the action $G \times \mbc \ra \mbc$ given by $(g_1,\ldots,g_d,t) \mapsto \underline{g}^{-\underline{a}_u} \cdot t$. It is easy to see that the map $F: T \times V \ra \mbc$ is equivariant with respect to this action. Let $i: T  \ra G \times T $ the embedding $(t_1,\ldots,t_d) \mapsto (t_1,\ldots,t_d,t_1,\ldots,t_d)$. Since $j_{!}{^p}\mbc^{H,-\beta_0-1}_{\mbc^*}$ is a $G$-equivariant mixed Hodge module, the module $F^! j_{!}{^p}\mbc_{\mbc^*}^{H;-\beta_0-1}$ is also $G$-equivariant. Let $p: G\times T \times V \ra T \times V$ the projection. We have isomorphisms
\[
F^* j_{!}{^p}\mbc_{\mbc^*}^{H,-\beta_0-1} \simeq i^* p_2^* F^* j_{!}{^p}\mbc_{\mbc^*}^{H,-\beta_0-1} \simeq i^* \mu_u^* F^* j_{!}{^p}\mbc_{\mbc^*}^{H,-\beta_0-1} \simeq F_u^* j_{!}{^p}\mbc_{\mbc^*}^{H,-\beta_0-1}
\]
where the second isomorphism follows from the $G$-equivariance of  $F^* j_{!}{^p}\mbc_T^{H,-\beta_0-1}$.
\end{proof}

We define a coordinate change $\phi_u$ by
\begin{align*}
\widetilde{t}_k = t_k,\quad (\widetilde{\lambda}_i)_{i \neq u} = (\lambda_i)_{i \neq u}\quad \text{and} \quad \widetilde{\lambda}_u = \lambda_u + \sum_{i \neq u} \lambda_i \underline{t}^{\underline{a}_i - \underline{a}_u}
\end{align*}

Denote by $C_{u} \in GL(d+1,\mbz)$ the matrix
\[
C_u := \left(\begin{matrix} 1 & \\ -a_{1u} & 1 \\ \vdots & & \ddots \\ -a_{du} & & &1\end{matrix}\right)\, .
\]
and define for $\widetilde{\beta} =(\beta_0,\beta) \in \mbz^{d+1}$:
\[
\widetilde{\beta}^u := (\beta^u_0,\beta^u) := C_u \cdot \widetilde{\beta}
\]
Notice that $\widetilde{\beta}^0 = \widetilde{\beta}$ since $\underline{a}_0 = 0$.
\begin{lemma}\label{lem:compFuCT}
\begin{enumerate} $ $\\[-1em]
\item
With respect to the coordinates defined by $\phi_u$ the complex $q^*\, {^p}\mbc^{H,\beta^u}_T \otimes F_u^* j_{!} {^p}\mbc^{H,-\beta_0-1}_{\mbc^*}[2n+d+1]$ is isomorphic to
\[
{^p}\mbc^{H,\beta^u}_T \boxtimes pr_u^*j_!{^p}\mbc_{\mbc^*}^{H,-\beta_0-1}[n]
\]
where $pr_u:V \ra \mbc$ is the projection $(\widetilde{\lambda}_0,\ldots, \widetilde{\lambda}_n) \mapsto \widetilde{\lambda}_u$. In particular we have
\[
\mch^{k}\left(q^*\, {^p}\mbc^{H,\beta^u}_T \otimes F_u^* j_{!} {^p}\mbc^{H,-\beta_0-1}_{\mbc^*} \right) = 0 \quad \text{for} \; k \neq 2n+d+1
\]
and the underlying $\mcd$-module of $\mch^{2n+d+1}\left(q^*\, {^p}\mbc^{H,\beta^u}_T \otimes F_u^* j_{!} {^p}\mbc^{H,-\beta_0-1}_{\mbc^*} \right)$ is given by the exterior product
\[
\mcd_T/ \left( (\p_{\widetilde{t}_k} \widetilde{t}_k + \beta^u_k)_{k=1,\ldots,d} \right) \boxtimes \mcd_V/\left( (\p_{\widetilde{\lambda}_i})_{i \neq u}, \widetilde{\lambda}_u \p_{\widetilde{\lambda}_u}  - \beta_0\right) \, .
\]
\item For $\alpha \in \mbz^d$ and $u_1,u_2 \in \{0,\ldots,n\}$ the map
\[
\mch^{2n+d+1}\left(q^*\, {^p}\mbc^{H,\beta^{u_1}}_T \otimes F_{u_1}^* j_{!} {^p}\mbc^{H,-\beta_0-1}_{\mbc^*} \right) \lra \mch^{2n+d+1}\left(q^*\, {^p}\mbc^{H,\beta^{u_2}+\alpha}_T \otimes F_{u_2}^* j_{!} {^p}\mbc^{H,-\beta_0-1}_{\mbc^*} \right)\, ,
\]
given by right multiplication with $\underline{t}^\alpha$ at the level of $\mcd_{V \times  T}$-modules, is an isomorphism.
 \end{enumerate}
\end{lemma}
\begin{proof}
Notice that the map $F_u$ is just the projection $((\widetilde{t}_k)_{k=1,\ldots,s}, (\widetilde{\lambda}_i)_{i \neq u}, \widetilde{\lambda}_u) \mapsto \widetilde{\lambda}_u$ with respect to the new coordinates. This gives
\[
q^*\, {^p}\mbc^{H,\beta^u}_T \otimes F_u^* j_{!} {^p}\mbc^{H,-\beta_0-1}_{\mbc^*}[2n+d+1] \simeq {^p}\mbc^{H,\beta^u}_T \boxtimes pr_u^*j_!{^p}\mbc_{\mbc^*}^{H,-\beta_0-1}[n]
\]
(the shifts can be seen by noticing that $q^*[n+1], F^*[n+d]$ and $pr_u^*[n]$ are exact. The rest is clear.\\
For the second point we define coordinates
Let $((\widetilde{t}_k)_{k=1,\ldots,d}, (\widetilde{\lambda}_i)_{i=0,\ldots,n})$ and $((\bar{t}_k)_{k=1,\ldots,d}, (\bar{\lambda}_i)_{i=0,\ldots,n})$ correspond to the maps $\phi_{u_1}$ and $\phi_{u_2}$, respectively. The coordinate change $\phi_{u_2} \circ \phi_{u_1}^{-1}$ is given by
\[
\bar{t}_k = \widetilde{t}_k, \quad  \qquad \bar{\lambda}_{u_1} =   \widetilde{\lambda}_{u_1} - \sum_{i \neq u_1} \widetilde{\lambda}_i \widetilde{\underline{t}}^{\,\underline{a}_i - \underline{a}_{u_1}}, \qquad \bar{\lambda}_{u_2} = \widetilde{\lambda}_{u_1} \widetilde{\underline{t}}^{\,\underline{a}_{u_1}- \underline{a}_{u_2}}, \quad \bar{\lambda}_i = \widetilde{\lambda}_i
\]
for  $k=1,\ldots, d$ and $i \neq u_1,u_2$. We get the following transformations:
\begin{align*}
\p_{\widetilde{\lambda}_i} \mapsto & \;\p_{\bar{\lambda}_i} - \bar{\underline{t}}^{\,\underline{a}_i - \underline{a}_{u_1}} \p_{\bar{\lambda}_{u_1}} \equiv \p_{\bar{\lambda}_i} \\
\widetilde{\lambda}_{u_1} \p_{\widetilde{\lambda}_{u_1}} -\beta_0  \mapsto & \; \bar{\lambda}_{u_2} \bar{\underline{t}}^{\,\underline{a}_{u_2} - \underline{a}_{u_1}}\p_{\bar{\lambda}_{u_1}} +\bar{\lambda}_{u_2} \p_{\bar{\lambda}_{u_2}} -\beta_0 \equiv \bar{\lambda}_{u_2} \p_{\bar{\lambda}_{u_2}} -\beta_0\\
\p_{\widetilde{\lambda}_{u_2}}\mapsto & \; - \bar{\underline{t}}^{\underline{a}_{u_2} - \underline{a}_{u_1}} \p_{\bar{\lambda}_{u_1}} \\
 \p_{\widetilde{t}_k} \widetilde{t}_k + \beta_k^{u_1} \mapsto  & \; \p_{\bar{t}_k} \bar{t}_k -\sum_{i \neq u_1} (a_{ki} -a_{k u_1}) \widetilde{\lambda}_i \bar{\underline{t}}^{\, \underline{a}_i - \underline{a}_{u_1}}\p_{\bar{\lambda}_{u_1}} + (a_{k u_1}  - a_{ku_2})\bar{\lambda}_{u_2}\p_{\bar{\lambda}_{u_2}} + \beta_k^{u_1} \\
 &  \equiv \p_{\bar{t}_k} \bar{t}_k + (a_{k u_1}  - a_{ku_2})\bar{\lambda}_{u_2}\p_{\bar{\lambda}_{u_2}} + \beta_k^{u_1} \\
  &  \equiv \p_{\bar{t}_k} \bar{t}_k + (a_{k u_1}  - a_{ku_2})\beta_0 + \beta_k^{u_1}\\
& = \p_{\bar{t}_k} \bar{t}_k + \beta_k^{u_2}
\end{align*}
where $\equiv$ means equality modulo the ideal generated by the operators on the left hand side. This shows that
\[
\mcd_V/\left( (\p_{\widetilde{\lambda}_i})_{i \neq u_1}, \widetilde{\lambda}_{u_1} \p_{\widetilde{\lambda}_{u_1}}  - \beta_0\right) \boxtimes \mcd_T/ \left( (\p_{\widetilde{t}_k} \widetilde{t}_k + \beta^{u_1}_k)_{k=1,\ldots,d} \right)\, .
\]
is actually  equal to
\[
\mcd_V/\left( (\p_{\bar{\lambda}_i})_{i \neq u_2}, \bar{\lambda}_{u_2} \p_{\bar{\lambda}_{u_2}}  - \beta_0\right) \boxtimes \mcd_T/ \left( (\p_{\bar{t}_k} \bar{t}_k + \beta^{u_2}_k)_{k=1,\ldots,d} \right)\, .
\]
after the change of coordinates $\phi_{u_2} \circ \phi^{-1}_{u_1}$.
It is then easy to see that the map
\[
\mcd_{V\times T}/\!\left( (\p_{\widetilde{\lambda}_i})_{i \neq u_2}, \widetilde{\lambda}_{u_2} \p_{\widetilde{\lambda}_{u_2}}\!\!\!  - \beta_0, (\p_{\widetilde{t}_k} \widetilde{t}_k + \beta^{u_2}_k)_{k=1,\ldots,d} \right) \ra
\mcd_V/\!\left( (\p_{\bar{\lambda}_i})_{i \neq u_2}, \bar{\lambda}_{u_2} \p_{\bar{\lambda}_{u_2}}\!\!\!  - \beta_0, (\p_{\bar{t}_k} \bar{t}_k + \beta^{u_2}_k+\alpha_k)_{k=1,\ldots,d} \right)\, .
\]
is given by right multiplication with $\underline{\bar{t}}^{\,\alpha}$. This shows the second claim.
\end{proof}

Let $(w_0: \ldots : w_n)$ be the homogeneous coordinates on $\mbp(W)$ and denote by $j_u : W_u \hookrightarrow \mbp(W)$ the chart $w_u \neq 0$ with coordinates $w_{iu} := \frac{w_i}{w_u}$ for $i \neq u$. The map $g$ factors over the chart $W_u$ and gives rise to the map
\begin{align}
g_u : T &\lra W_u \notag \\
(t_1, \ldots , t_n) & \mapsto (\underline{t}^{\underline{a}_0 - \underline{a}_u}, \ldots , \underline{t}^{\underline{a}_n - \underline{a}_u})\, . \notag
\end{align}

We define the maps
\begin{align*}
\widetilde{F}_u: W_u \times V &\lra \mbc \\
(w_{iu})_{i \neq u} &\mapsto \lambda_u + \sum_{i=0 \atop i \neq u}^n \lambda_i w_{iu}
\end{align*}

As mentioned above we would like to compute the restriction of $\mcn$ to the affine chart $W_u \times V$. For $u = 0,\ldots,n$ we set
\begin{align}
{^H\!}\mcn_u := {^H\!}\mcn_{\mid W_u \times V}
& \simeq \mch^{2n+d+1}(g_{u}\times id)_* (q^*\, {^p}\mbc^{H,\beta^u}_T \otimes F_u^* j_{!} {^p}\mbc^{H,-\beta_0-1}_{\mbc^*} ) \notag\\
&\simeq \mch^n (g_u \times id)_* \left({^p}\mbc^{H,\beta^u}_T \boxtimes pr_u^*j_!{^p}\mbc_{\mbc^*}^{H,-\beta_0-1}\right) \notag  \\
&\simeq \mch^n  \left({^p}g_{u*}\mbc^{H,\beta^u}_T \boxtimes pr_u^*j_!{^p}\mbc_{\mbc^*}^{H,-\beta_0-1}\right) \notag \\
&\simeq  \mch^0 ({^p}g_{u*}\mbc^{H,\beta^u}_T) \boxtimes \mch^ n( pr_u^*j_!{^p}\mbc_{\mbc^*}^{H,-\beta_0-1}) \label{eq:presWu}
\end{align}

We now apply the main result of section \ref{sec:HodgeOnTorusEmbed} in order to compute the module $g_{u +} \mco_T^{\beta^u}$ together with its corresponding Hodge filtration.

Notice that the embedding $g_u$ is given by the $d \times n$-matrix  $A_u=(a^u_{ki})$ with columns $(\underline{a}_i-\underline{a}_u)$ for $i \in \{0, \ldots , n\} \setminus \{u\}$. We need to check whether the matrices $A_u$ satisfy the conditions in Theorem \ref{thm:hpot}. Recall that  $\widetilde{\beta}^u := (\beta_0^u,\beta^u):= C_u \cdot  \widetilde{\beta}$.

\begin{lemma}\label{lem:MatrixAu}
Assume that  $\mbz \widetilde{A} = \mbz^{d+1}$ and $\mbn\widetilde{A} = \mbz^{d+1} \cap \mbr_{\geq 0} \widetilde{A}$,
then the matrices $A_u$ satisfy the conditions,
\begin{enumerate}
\item $\mbz A_u = \mbz^d$
\item $\mbn A_u = \mbz^d \cap \mbr_{\geq 0}A_u$
\item if $\widetilde{\beta} \in \mathfrak{A}_{\widetilde{A}}$ then $\beta^u \in \mathfrak{A}_{A_u}$
\end{enumerate}
\end{lemma}
\begin{proof}
Denote by $\widetilde{A}_u$ the $(d+1) \times (n+1)$-matrix with columns $(1,\underline{a}_i - \underline{a}_u)$ for $ i \in \{0, \ldots , n\}$.    We will first show the two properties for the matrix $\widetilde{A}_u$.
Notice that we have $C_u \cdot \widetilde{A} = C_u \cdot \widetilde{A}_0 = \widetilde{A}_u$.
Since $C_u$ is a linear, invertible map
we get $C_u (\mbz \widetilde{A}) = \widetilde{A}_u$, $C_u(\mbn \widetilde{A}) =\widetilde{A}_u$ and $C_u(\mbr_{\geq 0} \widetilde{A}) = \mbr_{\geq 0} \widetilde{A}_u$. Therefore the two properties hold for $\widetilde{A}_u$ if and only if they hold for $\widetilde{A}$. \\

Denote by $p: \mbz^{d+1} \ra \mbz^d$ the projection to the last $d$-coordinates. Since $p$ maps $(1, \underline{a}_i - \underline{a}_u)$ to $\underline{a}_i - \underline{a}_u$ it is easy to see that the first two properties also hold for $A_u$.\\

It follows easily from the definition that $\widetilde{\beta} \in \mathfrak{A}_{\widetilde{A}}$ if and only if $\widetilde{\beta}^u \in \mathfrak{A}_{\widetilde{A}_u}$. Hence it is enough to show that $\widetilde{\beta}=(\beta_0,\beta) \in \mathfrak{A}_{\widetilde{A}}$ implies $\beta \in \mathfrak{A}_A$. We notice first that there is a 1-1 correspondence between facets of $\mbr_{\geq 0} A$ and facets of $\mbr_{\geq 0} \widetilde{A}$ containing $\widetilde{\underline{a}}_0 =(1,0,\ldots,0)$ given by
\[
F  \leftrightarrow \widetilde{F}= F + \mbr_{\geq 0} \cdot (1,0,\ldots,0)
\]
If $n_F$ is a primitive, inward-pointing normal vector of a facet $F$ of $\mbr_{\geq 0}A$, the vector $n_{\widetilde{F}} := (0,n_F)$ is a primitive, inward-pointing normal vector of the corresponding facet $\widetilde{F}$ of $\mbr_{\geq 0} \widetilde{A}$. Since $\widetilde{\underline{c}}:= \sum_{i=0}^n \widetilde{\underline{a}}_i = (n+1, \underline{c})$, we have $e_{\widetilde{F}} = \langle n_{\widetilde{F}}, \widetilde{\underline{c}} \rangle = \langle (0, n_F), (n+1,\underline{c}\rangle = \langle n_F, \underline{c} \rangle = e_F$. We get by definition \ref{eq:defAdm}
\[
\widetilde{\beta} \in \mathfrak{A}_{\widetilde{A}} =  \bigcap_{\widetilde{F}\, \text{facet}} \{\mbr \cdot \widetilde{F} - [0,e_{\widetilde{F}}) \cdot \widetilde{\underline{c}}\} \subset \bigcap_{\widetilde{F}\, \text{facet}\atop \widetilde{\underline{a}}_0 \in \widetilde{F}} \{\mbr \cdot \widetilde{F} - [0,e_{\widetilde{F}}) \cdot \widetilde{\underline{c}}\} \Rightarrow \beta \in \bigcap_{F \,\text{facet}} \{\mbr \cdot F - [0,e_F) \cdot \underline{c} \} = \mathfrak{A}_A\, .
\]

\end{proof}

Denote by $\mbl_{A_u}$ the $\mbz$-module of relations among the columns of $A_u$. In order to calculate the direct image of $\mco_T^\beta$ under the map $g_u$,
we use Theorem \ref{thm:hpot} where $A_u$ takes the role of the matrix $B$ in loc.cit.

\begin{proposition}\label{prop:Mbetau}
Consider the $\mcd_{W_u}$-module $\check{\mcm}^{\beta^u}_{A_u}$ as defined in Definition \ref{def:SWGarbe}, that is,
$\check{\mcm}^{\beta^u}_{A_u}=\mcd_{W_u} / \check{\mci}^{\beta^u}_{A_u}$ where the left ideal $\check{\mci}^{\beta^u}_{A_u}$ is generated by
\[
\check{\Box}_{\underline{m} \in \mbl_{A_u}} = \prod_{i\neq u:m_i >0} w_{iu}^{m_i} - \prod_{i\neq u: m_i <0} w_{iu}^{-m_i}
\]
and the Euler vector fields:
\begin{align}
\check{E}^u_k+\beta_k^u &= \sum_{i\neq u} a_{ki}^u \p_{w_{iu}} w_{iu} + \beta_k^u \notag \\
&= \sum_{i \neq u} (a_{ki} - a_{ku})  \p_{w_{iu}} w_{iu}+ \beta_k^u\, . \notag
\end{align}
Then the direct image $g_{u+} \mco^{\beta_u}_T$ is isomorphic to $\check{\mcm}^{\beta^u}_{A_u}$. Moreover, the Hodge filtration on $\check{\mcm}^{\beta^u}_{A_u}$ is the order filtration shifted by $(n-d)$, i.e.
\[
F^H_{p+(n-d)} \check{\mcm}^{\beta^u}_{A_u} = F^{ord}_p \check{\mcm}^{\beta^u}_{A_u}\, .
\]
\end{proposition}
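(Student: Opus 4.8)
The plan is to deduce this directly from Theorem~\ref{thm:hpot}, applied with $B = A_u$, $r = d$ and $s = n$. The first step is to observe that the monomial map $g_u\colon T\to W_u$ is, in the notation of Section~2, exactly the map $h_B$ attached to $B = A_u$: the affine chart $W_u \cong \mbc^n$ has coordinates $w_{iu}$ indexed by $i\in\{0,\dots,n\}\setminus\{u\}$, these correspond to the columns $\underline a_i-\underline a_u$ of $A_u$, and $g_u(\underline t) = (\underline t^{\underline a_i-\underline a_u})_{i\neq u}$. Since $g = j_u\circ g_u$ is a locally closed embedding and $j_u$ is an open immersion, $g_u$ is itself a locally closed affine embedding, so $g_{u+}$ is exact, $g_{u+}\mco_T = \mch^0 g_{u+}\mco_T$, and $\mch^0(g_{u*}\,{^p}\mbq^H_T)$ is the mixed Hodge module whose underlying filtered $\mcd_{W_u}$-module is the object to be described.

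The second step is to verify that $A_u$ satisfies the standing hypotheses under which Theorem~\ref{thm:hpot} is proved, namely $\mbz A_u = \mbz^d$ together with the normality condition \eqref{eq:Bnormal}, $\mbn A_u = \mbz^d\cap\mbr_{\geq 0}A_u$, and the Gorenstein condition \eqref{eq:intc}, $int(\mbn A_u) = \mbn A_u + c$ for some $c\in\mbn A_u$. These three statements are precisely the content of Lemma~\ref{lem:MatrixAu}. In particular $0\in\mbn A_u$, hence $0\notin sRes(A_u)$, so that Proposition~\ref{prop:SWgeneral} applies and produces the cyclic presentation $\check{\mcm}_{A_u} = \mcd_{W_u}/\check{\mci}_{A_u}$ with $\check{\mci}_{A_u}$ generated by the box operators $\check{\Box}_{\underline m}$, $\underline m\in\mbl_{A_u}$, and the Euler operators $\check E^u_k = \sum_{i\neq u}(a_{ki}-a_{ku})\p_{w_{iu}}w_{iu}$; this is exactly the ideal $\check{\mci}$ of Theorem~\ref{thm:hpot} after the relabelling $b_{ki}\leftrightarrow a_{ki}-a_{ku}$.

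With these identifications in place, Theorem~\ref{thm:hpot} yields both assertions simultaneously: an isomorphism $g_{u+}\mco_T\simeq\check{\mcm}_{A_u}$ of $\mcd_{W_u}$-modules, and the equality of the Hodge filtration on $\check{\mcm}_{A_u}$ with the order filtration shifted by $s-r = n-d$, that is $F^H_{p+(n-d)}\check{\mcm}_{A_u} = F^{ord}_p\check{\mcm}_{A_u}$. I expect no genuine obstacle here beyond bookkeeping: the only point deserving care is that the roles of $r$ and $s$, the indexing of the coordinates omitting $u$, and the descent of the Gorenstein property from $\widetilde A_u$ to $A_u$ all match the conventions of Section~2 — and this is exactly what Lemma~\ref{lem:MatrixAu} arranges.
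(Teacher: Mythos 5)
Your proposal is correct and coincides with the paper's own argument: the proof given there consists precisely of citing Theorem \ref{thm:hpot} (applied with $B=A_u$, $r=d$, $s=n$) together with Lemma \ref{lem:MatrixAu}, which is exactly the reduction you carry out. Your additional bookkeeping (identifying $g_u$ with $h_{A_u}$ and noting $0\notin sRes(A_u)$ via Proposition \ref{prop:SWgeneral}) only spells out details the paper leaves implicit.
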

\begin{proof}
The statement follows from Theorem \ref{thm:hpot} and Lemma \ref{lem:MatrixAu}.

\end{proof}
We now want to compute how the $\mcd$-modules $g_{u+}\mco_T^{\beta^u}$ glue on their common domain of definition. Let $u_1, u_2 \in \{0, \ldots , n\}$ and denote by $W_{u_1 u_2}$ the intersection $W_{u_1} \cap W_{u_2}$. We fix $u_1, u_2 \in \{0, \ldots , n\}$ with $u_1 < u_2$. We have the following change of coordinates between the charts $W_{u_1}$ and $W_{u_2}$
\[
w_{i u_1} = w_{i u_2} w_{u_1 u_2}^{-1} \quad \text{for} \; i \neq u_2 \quad \text{and} \quad w_{u_2 u_1} = w_{u_1 u_2}^{-1}
\]
which gives the following transformation rules for vector fields:
\begin{equation}\label{eq:transfcoord}
w_{i u_1} \p_{w_{i u_1}}=  w_{i u_2} \p_{w_{i u_2}} \quad \text{for}\ i \neq u_2 \quad w_{u_2 u_1} \p_{w_{u_2 u_1}} = - \sum_{i \neq u_2} w_{i u_2} \p_{w_{i u_2}}\, .
\end{equation}

These transformation rules define an algebra isomorphism
$$
\iota_{u_1u_2}: D_{W_{u_1}}[w_{u_2 u_1}^{-1}] \longrightarrow D_{W_{u_2}}[w_{u_1 u_2}^{-1}].
$$

The module of global sections $\Gamma(W_{u_1u_2}, g_{u_1 +} \mco_T^{\beta^{u_1}})$
can be expressed as the quotient $ D_{W_{u_1}}[w_{u_2u_1}^{-1}]/ \dot{I}^{\beta^{u_1}}_{A_{u_1}}$, where $ \dot{I}^{\beta^{u_1}}_{A_{u_1}} \subset D_{W_{u_1}}[w_{u_2u_1}^{-1}] := \mbc[(w_{i u_1})_{i \neq u_1}][w_{u_2 u_1}^{-1}] \otimes_{\mbc[(w_{i u_1})_{i \neq u_1}]} D_{W_{u_1}}$ is the left ideal generated by
\begin{align}
&1.\quad \check{E}^{u_1}_k +\beta_k^{u_1} =\sum_{i=0 \atop i \neq u_1}^{n} a_{ki}^{u_1}  \p_{w_{iu_1}}w_{iu_1} +\beta_k^{u_1} = \sum_{i=0 \atop i \neq u_1}^{n} a_{ki}^{u_1} w_{iu_1} \p_{w_{iu_1}}+ \sum_{i=0 \atop i \neq u_1}^{n} a_{ki}^{u_1} +\beta_k^{u_1}  && k= 1, \ldots ,d \notag \\
&2.\quad \check{\Box}_{\underline{m}} = \prod_{m_i >0 \atop i \neq u_1 } w_{iu_1}^{m_i} - \prod_{m_i <0 \atop i \neq u_1 } w_{iu_1}^{-m_i} &&\underline{m} \in \mbl_{A_{u_1}}\, . \notag
\end{align}

Let $\gamma^{u_1} := \sum_{i \neq u_1} a_{ki}^{u_1}$, then
$\dot{I}_{A_{u_1}}^{\beta^{u_1}-\gamma^{u-1}} \subset D_{W_{u_1}}[w_{u_2u_1}^{-1}]$ is the left ideal generated by
\begin{align}
&1.\sum_{i=0 \atop i \neq u_1}^{n} a_{ki}^{u_1} w_{iu_1} \p_{w_{iu_1}} + \beta^{u_1}_k  && k= 1, \ldots ,d \notag \\
&2.\quad \Box_l = \prod_{l_i >0 \atop i \neq u_1 } w_{iu_1}^{l_i} - \prod_{l_i <0 \atop i \neq u_1 } w_{iu_1}^{-l_i} &&l \in \mbl_{A_{u_1}} \notag
\end{align}
We get the following isomorphism of $D_{W_{u_1}}[w_{u_2u_1}^{-1}]$-modules
\begin{align}
D_{W_{u_1}}[w_{u_2u_1}^{-1}] / \dot{I}^{\beta^{u_1}}_{A_{u_1}} &\lra D_{W_{u_1}}[w_{u_2u_1}^{-1}] /\dot{I}^{\beta^{u_1}-\gamma^{u_1}}_{A_{u_1}} \label{eq:opchange} \\
1 &\mapsto \prod_{i \neq u_1} w_{i u_1}^{-1} \notag
\end{align}
which is the image of the isomorphism
\begin{align*}
\mco_T^{\gamma^{u_1} + \beta_k^{u_1} -1} &\lra \mco_T^{  \beta_k^{u_1} -1} \\
1 &\mapsto \underline{t}^{-\gamma^{u_1}} = \underline{t}^{-\sum_{i \neq u_1}(\underline{a}_i - \underline{a}_{u_1})}
\end{align*}
under the functor $g_{u_1 +}$ (cf. equation \eqref{eq:tTransform}).
One obtains the same results for the chart $W_{u_2}$ by exchanging $u_1$ and $u_2$ above.
Using the transformation rules \eqref{eq:transfcoord}, we can identify $D_{W_{u_1}}[w_{u_2u_1}^{-1}]$ with $D_{W_{u_2}}[w_{u_1u_2}^{-1}]$, which gives a well-defined map
\begin{align}
\iota_{u_1 u_2}: D_{W_{u_1}}[w_{u_2u_1}^{-1}]/\dot{I}_{A_{u_1}}^{\beta^{u_1}-\gamma^{u_1}} &\lra D_{W_{u_2}}[w_{u_1u_2}^{-1}]/\dot{I}_{A_{u_2}}^{\beta^{u_2}-\gamma^{u_2}} \notag.
\end{align}
We can now give an explicit expression for the gluing map between the various charts of the module $g_{+}\mco^\beta_T$.
\begin{lemma}\label{lem:gluemapSW}
The isomorphism between $g_{u_1 + } \mco^{\beta^{u_1}}_T$ and $g_{u_2 +} \mco^{\beta^{u_2}}_T$ on their common domain of definition $W_{u_1 u_2} = W_{u_1} \cap W_{u_2}$ is given by
\begin{align}
\Gamma(W_{u_1 u_2}, g_{u_1+} \mco_T) \simeq
D_{W_{u_1}}[w_{u_2 u_1}^{-1}]/\dot{I}^{\beta^{u_1}}_{A_{u_1}} & \longrightarrow D_{W_{u_2}}[w_{u_1 u_2}^{-1}]/\dot{I}^{\beta^{u_2}}_{A_{u_2}} \notag
\simeq \Gamma(W_{u_1 u_2}, g_{u_2 + } \mco_T) \notag \\
P &\longmapsto \iota_{u_1 u_2}(P) w_{u_1 u_2}^{n+1}\, . \notag
\end{align}
\end{lemma}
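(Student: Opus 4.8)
The gluing map is the transition isomorphism which expresses that $\mch^0 g_+\mco_T$ is a single $\mcd_{\mbp(W)}$-module: restricting it along the open embeddings $j_{u_i}\colon W_{u_i}\hookrightarrow\mbp(W)$ reproduces $g_{u_i+}\mco_T$, since $g=j_{u_i}\circ g_{u_i}$. The plan is therefore to restrict everything to $W_{u_1u_2}=W_{u_1}\cap W_{u_2}$, where $\mch^0 g_+\mco_T$ acquires the two cyclic presentations $D_{W_{u_1}}[w_{u_2u_1}^{-1}]/\check I'_{A_{u_1}}$ and $D_{W_{u_2}}[w_{u_1u_2}^{-1}]/\check I'_{A_{u_2}}$, and to identify the canonical isomorphism $\theta$ between them. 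Since $\theta$ is semilinear over the coordinate-change isomorphism $\iota_{u_1u_2}$, it is determined by the image $\theta(e_{u_1})$ of the cyclic vector $e_{u_1}=[1]$, so the statement is equivalent to $\theta(e_{u_1})=w_{u_1u_2}^{n+1}e_{u_2}$; concretely I must show that $P\mapsto\iota_{u_1u_2}(P)\,w_{u_1u_2}^{n+1}$ is well defined on the quotients, is an isomorphism, and equals $\theta$.

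The computational heart is the behaviour of the defining operators under $\iota_{u_1u_2}$, read off from the transformation rules \eqref{eq:transfcoord}. Replacing $w_{u_2u_1}\p_{w_{u_2u_1}}$ by $-\sum_{i\neq u_2}w_{iu_2}\p_{w_{iu_2}}$ and collecting the constants that appear, a direct calculation gives
\[
\iota_{u_1u_2}(\check E^{u_1}_k)=\check E^{u_2}_k+(n+1)(a_{k,u_2}-a_{k,u_1})\, ,
\]
while, as the $\check\Box$'s are polynomials in the $w$'s only, $\iota_{u_1u_2}(\check\Box^{u_1}_{\underline m})=\pm w_{u_1u_2}^{e(\underline m)}\,\check\Box^{u_2}_{\underline m'}$ for a suitable integer $e(\underline m)$ and the relation $\underline m'\in\mbl_{A_{u_2}}$ obtained from $\underline m$ by passing through the common relation lattice of the homogenizations (recall $\widetilde A_u=C_u\widetilde A$). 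Combining the first identity with $\check E^{u_2}_k\,w_{u_1u_2}^{n+1}=w_{u_1u_2}^{n+1}\big(\check E^{u_2}_k-(n+1)(a_{k,u_2}-a_{k,u_1})\big)$ shows that the twist $w_{u_1u_2}^{n+1}$ exactly cancels the constant, so $\iota_{u_1u_2}(\check E^{u_1}_k)\,w_{u_1u_2}^{n+1}=w_{u_1u_2}^{n+1}\check E^{u_2}_k\in\check I'_{A_{u_2}}$; and since the $\check\Box$'s commute with $w_{u_1u_2}$, also $\iota_{u_1u_2}(\check\Box^{u_1}_{\underline m})\,w_{u_1u_2}^{n+1}\in\check I'_{A_{u_2}}$. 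Hence $\iota_{u_1u_2}(\check I'_{A_{u_1}})\,w_{u_1u_2}^{n+1}\subset\check I'_{A_{u_2}}$, so the assignment $P\mapsto\iota_{u_1u_2}(P)\,w_{u_1u_2}^{n+1}$ descends to a semilinear map of the cyclic modules; the same computation with $u_1$ and $u_2$ interchanged (using $\iota_{u_2u_1}=\iota_{u_1u_2}^{-1}$, $w_{u_2u_1}=w_{u_1u_2}^{-1}$) produces a two-sided inverse, so it is an isomorphism.

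It remains to show this isomorphism is the canonical $\theta$, i.e. $\theta(e_{u_1})=w_{u_1u_2}^{n+1}e_{u_2}$. I would retrace the construction of the cyclic vector in Theorem \ref{thm:hpot}: the chain of isomorphisms there identifies $e_u=[1]$ with the section $\underline t^{-\alpha^{(u)}}$ of $\mco_T$, where $\alpha^{(u)}_k=\sum_{i\neq u}(a_{ki}-a_{k,u})=\sum_{i} a_{ki}-(n+1)a_{k,u}$ is the value of $\alpha$ forced by the normalisation $\beta=0$ in Lemma \ref{lem:directMapk}. On the overlap one has $w_{u_1u_2}=\underline t^{\,\underline a_{u_1}-\underline a_{u_2}}$ along $g(T)$, hence $\underline t^{-\alpha^{(u_1)}}=w_{u_1u_2}^{n+1}\,\underline t^{-\alpha^{(u_2)}}$ there; since the transition isomorphism for $g_+\mco_T$ is, by functoriality of $g_+$, the identity on push-forwards of sections of $\mco_T$ up to the coordinate change, this gives $\theta(e_{u_1})=w_{u_1u_2}^{n+1}e_{u_2}$. (Equivalently: $\theta(e_{u_1})$ and $w_{u_1u_2}^{n+1}e_{u_2}$ are both cyclic generators of the target annihilated by $\iota_{u_1u_2}(\check I'_{A_{u_1}})$, hence proportional because $\textup{End}_{\mcd}\big(g_{u_2+}\mco_T|_{W_{u_1u_2}}\big)=\mbc$, and one fixes the scalar by the torus computation.) The main obstacle is precisely this last normalisation — showing the twist is exactly $w_{u_1u_2}^{n+1}$ and not merely some unit times it: conceptually it is the transition cocycle of $\omega_{\mbp(W)}$ entering through the transfer bimodule $\mcd_{\mbp(W)\leftarrow T}$, and matching this against the explicit $\beta=0$ presentation of Theorem \ref{thm:hpot} is the delicate point; everything else is routine bookkeeping with \eqref{eq:transfcoord}.
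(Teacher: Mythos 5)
Your proposal is correct and its computational core is exactly the paper's proof: the paper establishes the lemma by precisely the well-definedness calculation you give, namely $\iota_{u_1u_2}(\check{E}^{u_1}_k)\,w_{u_1u_2}^{n+1}=w_{u_1u_2}^{n+1}\check{E}^{u_2}_k$ (the $(n+1)$-fold commutator constant cancelling the constant produced by the coordinate change) together with $\iota_{u_1u_2}(\check{\Box}_{\underline{m}})\,w_{u_1u_2}^{n+1}=w_{u_1u_2}^{n+1}\check{\Box}_{\underline{m}'}$ for a relation $\underline{m}'\in\mbl_{A_{u_2}}$. The additional normalisation step you worry about (that the twist is exactly $w_{u_1u_2}^{n+1}$ and not a unit multiple) is not treated in the paper either and is in any case harmless: since $\textup{End}_{\mcd}\bigl(g_{u+}\mco_T|_{W_{u_1u_2}}\bigr)=\mbc$, any nonzero well-defined $\iota_{u_1u_2}$-semilinear map between the two presentations agrees with the canonical gluing up to a nonzero scalar, and such a scalar can be absorbed into the choice of cyclic generators without affecting the later use of the lemma.
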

\begin{proof}
This follows easily from the discussion above by concatenating the three maps
\[
D_{W_{u_1}}[w_{u_2u_1}^{-1}] / \dot{I}_{A_{u_1}}^{\beta^{u_1}} \lra D_{W_{u_1}}[w_{u_2u_1}^{-1}] /\dot{I}_{A_{u_1}}^{\beta^{u_1}-\gamma^{u_1}} \overset{\iota}\lra D_{W_{u_2}}[w_{u_1u_2}^{-1}]/\dot{I}_{A_{u_2}}^{\beta^{u_2}-\gamma^{u_2}} \lra D_{W_{u_2}}[w_{u_1u_2}^{-1}] / \dot{I}_{A_{u_2}}^{\beta^{u_2}}
\]
and by using the simple computation
\[
\left(\prod_{i \neq u_2} w_{i u_2}\right) \cdot \iota\left(\prod_{i \neq u_1} w_{i u_1}^{-1}\right) =w_{u_1 u_2}^{n+1}
\]
\end{proof}

Consider the following change of coordinates $\theta_u$ on $W_u \times V$.
\begin{equation}\label{eq:coordchange}
\widetilde{\lambda}_u = \lambda_u + \sum_{j=0 \atop j \neq u}^{n} \lambda_j w_{ju} \quad , \quad \widetilde{\lambda}_i = \lambda_i \quad \text{and} \quad \widetilde{w}_{iu} = w_{iu}
\end{equation}
for $i=0, \ldots , n$ and $i \neq u$. Notice that $\theta^{-1}_u \circ (g_u \times id)\circ \phi_u^ = g_u \times id$ and $\widetilde{F}_u$ is just the projection $((\widetilde{w}_{iu})_{i \neq u}, \widetilde{\lambda}_0,\ldots,\widetilde{\lambda}_n) \mapsto \widetilde{\lambda}_u$.

\begin{proposition}\label{prop:OriginalCoord}
Consider the original coordinates $\left((w_{iu})_{i\neq u}, (\lambda_0,\ldots,\lambda_n)\right)$ of $W_u\times V$.
Then there is an isomorphism of $\mcd_{W_u\times V}$-modules
$\mcn_u \simeq \mcd_{W_u\times V}/\mck_{A_u}^{\widetilde{\beta}^u}$, where $\mck_{A_u}^{\widetilde{\beta}^u}$ is the left $\mcd_{W_u\times V}$-ideal generated by the following classes of
operators
\begin{align}
&1.\quad \sum_{i=0 \atop i \neq u}^{n} a_{ki}^u  \p_{w_{iu}}w_{iu} -\sum_{i=1}^{n}a_{ki} \lambda_i \p_{\lambda_i} + \beta_k \notag \\
&2.\quad \check{\Box}_{\underline{m}} = \prod_{m_i >0 \atop i \neq u } w_{iu}^{m_i} - \prod_{m_i <0 \atop i \neq u } w_{iu}^{-m_i} &&\underline{m} \in \mbl_{A_u} \notag \\
&3. \quad \p_{\lambda_i} - w_{iu} \p_{\lambda_u} &&\text{for} \quad i = 0, \ldots ,n\; \text{and}\; i \neq u \notag \\
&4. \quad \sum_{j=0}^n \lambda_j\p_{\lambda_j}- \beta_0\, . \notag
\end{align}
Moreover, for $\widetilde{\beta}=(\beta_0,\beta) \in \mathfrak{A}_{\widetilde{A}}$ and $\beta_0 \in (-1,0]$ we have
$$
F^H_{p+(n-d)} \, \mcn_u \simeq F^{ord}_p \, \mcd_{W_u\times V}/\mck^{\widetilde{\beta}_u}_{A_u}\, .
$$
\end{proposition}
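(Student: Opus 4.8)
The plan is to deduce both assertions directly from Lemma \ref{lem:PresentationKTilde} by transporting its conclusion through the change of coordinates \eqref{eq:coordchange}. That lemma already gives $\cN_u\simeq\cD_{W_u\times V}/\widetilde{\cK}_u$, where $\widetilde{\cK}_u$ is written in the coordinates $\left((w_{iu})_{i\neq u},(\widetilde\lambda_0,\ldots,\widetilde\lambda_n)\right)$ on $W_u\times V$ adapted to the product decomposition $U_u\simeq W_u\times\mbc^n\times\mbc^*$ of \eqref{eq:coordchange}; in particular the generator $\check E^u_k$ occurring there reads $\sum_{i\neq u}a_{ki}^u\p_{\widetilde w_{iu}}\widetilde w_{iu}$, where $\widetilde w_{iu}=w_{iu}$ but $\p_{\widetilde w_{iu}}$ denotes the derivative along the $W_u$-factor, that is, holding the $\widetilde\lambda_j$ fixed. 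The same lemma shows that in these coordinates the Hodge filtration is the order filtration shifted by $n-d$. Since \eqref{eq:coordchange} is a polynomial automorphism of $W_u\times V$, it suffices to rewrite the four families of generators of $\widetilde{\cK}_u$ in the original coordinates $\left((w_{iu})_{i\neq u},(\lambda_0,\ldots,\lambda_n)\right)$ and to check that they generate $\cK_u$.

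First I would record the transformation rules for the vector fields under \eqref{eq:coordchange}. Differentiating $\widetilde\lambda_u=\lambda_u+\sum_{i\neq u}\lambda_iw_{iu}$, $\widetilde\lambda_i=\lambda_i$ and $\widetilde w_{iu}=w_{iu}$ (throughout, $i$ runs over $\{0,\ldots,n\}\setminus\{u\}$ and $\underline a_0=0$), the chain rule gives $\p_{\widetilde\lambda_u}=\p_{\lambda_u}$, $\p_{\widetilde\lambda_i}=\p_{\lambda_i}-w_{iu}\p_{\lambda_u}$ and $\p_{\widetilde w_{iu}}=\p_{w_{iu}}-\lambda_i\p_{\lambda_u}$. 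Consequently: the operators $(\p_{\widetilde\lambda_i})_{i\neq u}$ become $(\p_{\lambda_i}-w_{iu}\p_{\lambda_u})_{i\neq u}$, which is exactly the third family of generators of $\cK_u$; the operators $\check{\Box}_{\underline{m}}$, $\underline m\in\mbl_{A_u}$, involve only the unchanged variables $w_{iu}$ and stay the second family; the operator $\widetilde\lambda_u\p_{\widetilde\lambda_u}=\bigl(\lambda_u+\sum_{i\neq u}\lambda_iw_{iu}\bigr)\p_{\lambda_u}$ becomes, after replacing each $w_{iu}\p_{\lambda_u}$ by $\p_{\lambda_i}$ modulo the left ideal generated by the third family, $\sum_{i=0}^n\lambda_i\p_{\lambda_i}$, which is the fourth family; finally, inserting $\p_{\widetilde w_{iu}}=\p_{w_{iu}}-\lambda_i\p_{\lambda_u}$ into $\check E^u_k=\sum_{i\neq u}a_{ki}^u\p_{\widetilde w_{iu}}\widetilde w_{iu}$ and again using $w_{iu}\p_{\lambda_u}\equiv\p_{\lambda_i}$ modulo the third family gives $\check E^u_k\equiv\sum_{i\neq u}a_{ki}^u\p_{w_{iu}}w_{iu}-\sum_{i\neq u}a_{ki}^u\lambda_i\p_{\lambda_i}$, and since $a_{ki}^u=a_{ki}-a_{ku}$ and $a_{k0}=0$ one checks that $\sum_{i=1}^n a_{ki}\lambda_i\p_{\lambda_i}-\sum_{i\neq u}a_{ki}^u\lambda_i\p_{\lambda_i}=a_{ku}\sum_{i=0}^n\lambda_i\p_{\lambda_i}$, a scalar multiple of the fourth family, so that $\check E^u_k$ is congruent modulo $\cK_u$ to the first family. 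Since the auxiliary elements used in these reductions lie in the transformed ideal, one concludes that \eqref{eq:coordchange} carries $\widetilde{\cK}_u$ onto $\cK_u$, which yields the presentation $\cN_u\simeq\cD_{W_u\times V}/\cK_u$.

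For the Hodge filtration I would use that the order filtration $F_\bullet\cD_{W_u\times V}$ is intrinsic, that is, invariant under any change of coordinates, so that the automorphism of $\cD_{W_u\times V}$ induced by \eqref{eq:coordchange} is strictly filtered. Hence the induced order filtration $F^{ord}_\bullet$ on $\cD_{W_u\times V}/\cK_u$ is carried to the one on $\cD_{W_u\times V}/\widetilde{\cK}_u$ under the isomorphism above, and the equality $F^H_{p+(n-d)}\cN_u\simeq F^{ord}_p\,\cD_{W_u\times V}/\cK_u$ follows at once from the analogous equality of Lemma \ref{lem:PresentationKTilde}.

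The argument is a routine but fiddly rewriting, and I do not expect a real obstacle. The step that needs genuine care is the transformation of $\check E^u_k$: one must keep straight that $\p_{\widetilde w_{iu}}$ is not $\p_{w_{iu}}$, track the two different summation ranges (the $w$-sum over $\{0,\dots,n\}\setminus\{u\}$ and the $\lambda$-sum over $\{1,\dots,n\}$ in the first family), use $\underline a_0=0$, and absorb the correction term $a_{ku}\sum_{i=0}^n\lambda_i\p_{\lambda_i}$ into the fourth family. One must also verify, when reducing modulo the third and fourth families, that the elements one multiplies and adds genuinely belong to the transformed ideal, so that the two ideals are actually equal and not merely one contained in the other.
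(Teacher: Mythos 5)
Your proposal is correct and takes essentially the same route as the paper's own proof: both transport the presentation and filtration statement of Lemma \ref{lem:PresentationKTilde} through the coordinate change \eqref{eq:coordchange}, rewrite $\widetilde{\lambda}_u\p_{\widetilde{\lambda}_u}$ and the Euler operators modulo the relations $\p_{\lambda_i}-w_{iu}\p_{\lambda_u}$, and absorb the correction term $a_{ku}\sum_{i=0}^n\lambda_i\p_{\lambda_i}$ into the fourth generator, checking equality (not just containment) of the two ideals. The filtration claim then follows, exactly as you argue, from Lemma \ref{lem:PresentationKTilde} together with the coordinate invariance of the order filtration.
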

\begin{proof}
Recall that $\mcn_u= \check{\mcm}^{\beta^u}_{A_u} \boxtimes  \mcd_V/\left( (\p_{\widetilde{\lambda}_i})_{i \neq u}, \widetilde{\lambda}_{u} \p_{\widetilde{\lambda}_{u}}  - \beta_0\right) = \mcd_{W_u\times V}/\widetilde{\mck}_{A_u}^{\widetilde{\beta}^u}$,
where
\[
\widetilde{\mck}_{A_u}^{\widetilde{\beta}^u}= \left(
(\check{E}^u_k +\beta^u_k)_{k=1,\ldots ,d}\, ,\;(\check{\Box}_{\underline{m}})_{\underline{m} \in \mbl_{A_u}}\,, \;(\p_{\widetilde{\lambda}_j})_{j \neq u}\, , \;(\widetilde{\lambda}_u \p_{\widetilde{\lambda}_u}-\beta_0) \right).
\]
Using the coordinate transformation \eqref{eq:coordchange} we see that $\widetilde{\mck}_{A_u}^{\widetilde{\beta}_u}$ is transformed into the ideal $\mck_{A_u}^{\widetilde{\beta}}$ generated by
the operators
\begin{align}
&\sum_{i=0 \atop i \neq u}^{n} a_{ki}^u ( \p_{w_{iu}} - \lambda_i \p_{\lambda_u})w_{iu} +\beta^u_k && k= 1, \ldots ,d \notag \\
&\check{\Box}_{\underline{m}} = \prod_{m_i >0 \atop i \neq u } w_{iu}^{m_i} - \prod_{m_i <0 \atop i \neq u } w_{iu}^{-m_i} &&\underline{m} \in \mbl_{A_u} \notag \\
&\p_{\lambda_i} - w_{iu} \p_{\lambda_u} &&\text{for} \quad i = 0, \ldots ,n\; \text{and}\; i \neq u \notag \\
&(\lambda_u + \sum_{j= 0 \atop j \neq u}^{n} \lambda_j w_{ju})\p_{\lambda_u} - \beta_0\, . \notag
\end{align}
The last operator can be rewritten (using the relations $\p_{\lambda_i} - w_{iu} \p_{\lambda_u}$, i.e. the third class of operators) as
\[
\qquad \sum_{j=0}^n \lambda_j \p_{\lambda_j} - \beta_0 \equiv (\lambda_u + \sum_{j= 0 \atop j \neq u}^{n} \lambda_j w_{ju})\p_{\lambda_u}-\beta_0\, .
\]

The Euler-type operators $\sum\limits_{i=0 \atop i \neq u}^{n} a_{ki}^u ( \p_{w_{iu}} - \lambda_i \p_{\lambda_u})w_{iu}$ can be further simplified by writing
\begin{align}
\qquad\qquad\sum_{i=0 \atop i \neq u}^{n} a_{ki}^u ( \p_{w_{iu}} - \lambda_i \p_{\lambda_u})w_{iu} + \beta^u_k &\equiv
\sum_{i=0 \atop i \neq u}^{n} a_{ki}^u ( \p_{w_{iu}}w_{iu} - \lambda_i \p_{\lambda_i})+\beta^u_k \notag \\
\qquad\qquad\qquad\qquad\qquad\qquad&=\sum_{i=0 \atop i \neq u}^{n} a_{ki}^u  \p_{w_{iu}}w_{iu} -\sum_{i=1}^{n}a_{ki} \lambda_i \p_{\lambda_i} + a_{ku} \sum_{i=0}^{n} \lambda_i \p_{\lambda_i} + \beta^u_k \notag \\
 \qquad\qquad\qquad\qquad\qquad\qquad&\equiv \sum_{i=0 \atop i \neq u}^{n} a_{ki}^u  \p_{w_{iu}}w_{iu} -\sum_{i=1}^{n}a_{ki} \lambda_i \p_{\lambda_i} + a_{ku} \beta_0 + \beta^u_k\, , \notag \\
\qquad\qquad\qquad\qquad\qquad\qquad&= \sum_{i=0 \atop i \neq u}^{n} a_{ki}^u  \p_{w_{iu}}w_{iu} -\sum_{i=1}^{n}a_{ki} \lambda_i \p_{\lambda_i} + \beta_k\, , \notag
\end{align}
where the first equivalence follows by using the relation $\sum_{j=0}^n \lambda_j \p_{\lambda_j} \equiv (\lambda_u + \sum_{j= 0 \atop j \neq u}^{n} \lambda_j w_{ju})\p_{\lambda_u}$ from above. Hence we obtain the presentation $\mcn_u \simeq \mcd_{W_u\times V}/\mck_{A_u}^{\widetilde{\beta}}$, and the statement on the Hodge filtration follows directly from  Proposition \ref{prop:Mbetau}.
\end{proof}

\subsection{A Koszul complex}
\label{subsec:Koszul}

In this subsection, we will construct a strict resolution of the filtered module $(\mcn_u, F^H)$. For this purpose,
we first describe an alternative presentation of the ideal $\mck_{A_u}^{\widetilde{\beta}} \subset \mcd_{W_u\times V}$.
Let $A_u^s$ be the $(d+1)\times(2n+1)$-matrix with columns $ (0,\underline{a}_0 - \underline{a}_u), \ldots , \widehat{(0,\underline{a}_u - \underline{a}_u)},\ldots , (0, \underline{a}_n - \underline{a}_u),(1, \underline{a}_0), \ldots,(1, \underline{a}_n)$ (here the symbol $\widehat{\quad}$ means that the zero column $(0,\underline{a}_u - \underline{a}_u)$ is omitted).  In other words, we have

\vspace*{0.2cm}

$$
A^s_u=\left(
\begin{array}{ccc|c|ccc}
0 &          \ldots         & 0  & 1 & 1&\ldots & 1 \\ \hline
  &                         &   & 0 &  &                     \\
  &   \textup{\Large $A_u$} &   & \vdots && \textup{\Large $A$} \\
  &                         &   & 0
\end{array}
\right)\, .
$$

\vspace*{0.2cm}

We prove in  Lemma \ref{lem:descrNu} that the $\mcd$-module underlying $\mcn_u$ is isomorphic to a  a partial Fourier-Laplace transformed GKZ-system with respect to the matrix $A^s_u$ and parameter $\widetilde{\beta}_u$. With the help of the results in section \ref{subsec:GKZ} we construct a $\mcd$-free strictly filtered resolution of the filtered module $(\mcn_u,F^H_\bullet)$ in Proposition \ref{prop:resofNu}.

As a first step we prove some properties of the matrices $A^s_u$.
\begin{lemma}\label{lem:AsuNormal}
If, as before, $\mbz \widetilde{A} = \mbz^{d+1}$ and $\mbn\widetilde{A} = \mbz^{d+1} \cap \mbr_{\geq 0} \widetilde{A}$ hold,
then we have $\dZ A^s_u=\dZ^{d+1}$ and $\mbn A^s_u = \mbz^{d+1} \cap \mbr_{\geq 0} A^s_u$.
\end{lemma}
\begin{proof}
From $\dZ \widetilde{A}=\dZ^{d+1}$ we conclude $\dZ A^s_u=\dZ^{d+1}$  since evidently $\dZ\widetilde{A} \subset \dZ A^s_u$. Hence it remains
to show that the semi-group $\dN A^s_u$ is normal. We have
$$
C_u\cdot A^s_u =
\left(
\begin{array}{ccc|c|ccc}
0 &          \ldots         & 0  & 1 & 1&\ldots & 1 \\ \hline
  &                         &   & 0 &  &                     \\
  &   \textup{\Large $A_u$} &   & \vdots && \textup{\Large $A_u$} \\
  &                         &   & 0
\end{array}
\right)=:\left(\underline{\overline{a}}^u_1, \ldots, \underline{\overline{a}}^u_n, \underline{\widetilde{a}}^u_0,
\underline{\widetilde{a}}^u_1,\ldots,\underline{\widetilde{a}}^u_n\right)\in M((d+1)\times (2n+1),\dZ),
$$
where $C_u\in GL(d+1,\dZ)$
is the matrix already used in Lemma \ref{lem:MatrixAu}. It suffices to show the normality
property for the semi-group $\dN(C_u \cdot A^s_u)$ since $C_u$ is an invertible linear mapping, hence a homeomorphism.
Suppose that we are given a linear combination
$$
\underline{v}=\sum_{i=1}^n \lambda_i \underline{\overline{a}}^u_i + \sum_{j=0}^n \mu_j \underline{\widetilde{a}}^u_j \in \dZ^{d+1},
$$
where $\lambda_i,\mu_j\in\dR_{\geq 0}$. Then $v=\sum_{i=1}^n (\lambda_i+\mu_i) \underline{\overline{a}}^u_i + \underline{\widetilde{a}}^u_0\cdot \left(\sum_{j=0}^n \mu_j\right)$. Clearly, $\sum_{j=0}^n \mu_j\in \dN$, and moreover, the vector $\sum_{i=1}^n (\lambda_i+\mu_i) \underline{\overline{a}}^u_i$ lies
in $\dR_{\geq 0} A_u$, but the latter semi-group is normal according to Lemma \ref{lem:MatrixAu}. Hence
we have $\sum_{i=1}^n (\lambda_i+\mu_i) \underline{\overline{a}}^u_i \in \dN A_u$, and therefore
$v\in \dN \widetilde{A}_u \subset \dN (C_u \cdot A^s_u)$, as required.
\end{proof}

We now show that $\mcn_u$ can be interpreted as a partial Fourier-Laplace transformed GKZ-system. For this we consider the GKZ system $\mcm_{A^s_u}$ on $\hat{W}_u \times V$ with coordinates $(\hat{w}_{iu})_{i \neq u}, \lambda_0, \ldots , \lambda_n$. Let $\FL_{\hat{W}_u}$ be the partial Fourier-Laplace transformation which interchanges $\p_{\hat{w}_{iu}}$ with $(w_{iu})_{i\neq u}$ and $\hat{w}_{iu}$ with $- \p_{w_{iu}}$.

\begin{lemma}\label{lem:descrNu}
Let $\iasu$ be the left $\cD_{W_u\times V}$ ideal generated by the operators
\[
\ck{\Box}_{(\underline{m},\underline{l})} := \prod_{m_i > 0 \atop i \neq u} w_{iu}^{m_i} \prod_{l_i > 0 \atop 0 \leq i \leq n} \p_{\lambda_i}^{l_i} - \prod_{m_i < 0 \atop i \neq u} w_{iu}^{-m_i} \prod_{l_i > 0 \atop 0 \leq i \leq n} \p_{\lambda_i}^{-l_i} ,
\]
where $(\underline{m},\underline{l})=((m_i)_{i \neq u},l_0, \ldots,l_n) \in \mbl_{A^s_u}$,
\[
\ck{E^u_k}-\beta_k := -\sum^n_{i = 0 \atop i \neq u} a^u_{ki} \p_{w_{iu}} w_{iu} + \sum_{i=1}^n a_{ki}  \lambda_i \p_{\lambda_i} -\beta_k \qquad \text{for}\quad k=1, \ldots ,d
\]
(notice that the operators $\ck{E^u_k}$ are the same operators as inProposition \ref{prop:OriginalCoord} 1. above, but multiplied
with $-1$, which is useful for a Fourier-Laplace transformation that will be performed below)
and
\[
\ck{E^u_0} -\beta_0 := \sum_{i=0}^n \lambda_i \p_{\lambda_i}-\beta_0\, .
\]
Then we have $\iasu=\mck^{\widetilde{\beta_u}}_{A_u}$, and hence the $\mcd_{W_u \times V}$-module $\mcn_u$ is isomorphic to $\mcd_{W_u \times V} / \siasu$\;\;.
In other words, we have an isomorphism
$$
\cN_u \simeq \FL_{\hat{W}_u} \mcm^{\widetilde{\beta}}_{A^s_u}.
$$
\end{lemma}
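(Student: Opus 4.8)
The plan is to reduce everything to the ideal identity $\ck{\cI_u}=\cK_u$ inside $\cD_{W_u\times V}$, where $\cK_u$ is the ideal produced by the previous proposition. Indeed, applying the partial Fourier-Laplace transform $FL_{\hat W_u}$ to the defining operators of the GKZ-system $\cM_{A^s_u}$ --- it interchanges $\p_{\hat w_{iu}}$ with $w_{iu}$ and $\hat w_{iu}$ with $-\p_{w_{iu}}$, and is the identity on the $\lambda$-variables --- sends the box operators $\Box_{(\underline m,\underline l)}$ to $-\ck{\Box}_{(\underline m,\underline l)}$ and the Euler operators attached to the rows of $A^s_u$ to $\ck{E^u_k}$ for $k=1,\dots,d$ and to $\ck{E^u_0}$; here one uses that the zeroth column of $A$ is $\underline a_0=0$, so that the $\lambda_0\p_{\lambda_0}$-term does not occur in $\ck{E^u_k}$ for $k\geq 1$. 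Thus $FL_{\hat W_u}\cM_{A^s_u}=\cD_{W_u\times V}/\ck{\cI_u}$, and since $\cN_u\cong\cD_{W_u\times V}/\cK_u$ by the previous proposition, the assertion follows once $\ck{\cI_u}=\cK_u$ is proved.

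For the inclusion $\cK_u\subseteq\ck{\cI_u}$, I would match each of the four families of generators of $\cK_u$ with a specific element of $\ck{\cI_u}$. The two Euler generators are, up to sign, $\ck{E^u_k}$ and $\ck{E^u_0}$. A box generator $\check\Box_{\underline n}$ with $\underline n\in\mbl_{A_u}$ is nothing but $\ck{\Box}_{(\underline n,0)}$, attached to the relation $(\underline n,0)\in\mbl_{A^s_u}$ among the first $n$ columns of $A^s_u$. Finally, for $i\neq u$ the operator $\p_{\lambda_i}-w_{iu}\p_{\lambda_u}$ equals $-\ck{\Box}_{(\underline m^{(i)},\underline l^{(i)})}$, where $\underline m^{(i)}$ has a single entry $1$ in position $i$ and $\underline l^{(i)}$ has entries $-1$ in position $i$ and $+1$ in position $u$ (all other entries $0$); this records the column identity $(0,\underline a_i-\underline a_u)=(1,\underline a_i)-(1,\underline a_u)$ of $A^s_u$, and one checks directly that $(\underline m^{(i)},\underline l^{(i)})\in\mbl_{A^s_u}$.

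The substance is the reverse inclusion $\ck{\cI_u}\subseteq\cK_u$, for which it is enough to show $\ck{\Box}_{(\underline m,\underline l)}\in\cK_u$ for every $(\underline m,\underline l)\in\mbl_{A^s_u}$. The idea is to normalize modulo $\cK_u$ by means of the congruences $\p_{\lambda_i}\equiv w_{iu}\p_{\lambda_u}\pmod{\cK_u}$ for $i\neq u$, which come from the third family of generators. From the telescoping identity
\[
\p_{\lambda_i}^{a}-w_{iu}^{a}\p_{\lambda_u}^{a}=\sum_{j=1}^{a}w_{iu}^{j-1}\p_{\lambda_u}^{j-1}\bigl(\p_{\lambda_i}-w_{iu}\p_{\lambda_u}\bigr)\p_{\lambda_i}^{a-j}
\]
and the fact that $\p_{\lambda_i}-w_{iu}\p_{\lambda_u}$ commutes with every $\p_{\lambda_j}$, one obtains $\p_{\lambda_i}^{a}-w_{iu}^{a}\p_{\lambda_u}^{a}\in\cK_u$, and then, substituting one factor at a time, $\prod_{l_i>0}\p_{\lambda_i}^{l_i}\equiv\bigl(\prod_{i\neq u,\,l_i>0}w_{iu}^{l_i}\bigr)\p_{\lambda_u}^{L^{+}}\pmod{\cK_u}$ with $L^{+}=\sum_{l_i>0}l_i$, and likewise for the negative part with exponent $L^{-}=\sum_{l_i<0}(-l_i)$. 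The zeroth row of the relation $(\underline m,\underline l)$ reads $\sum_{i=0}^{n}l_i=0$, so $L^{+}=L^{-}=:L$; left-multiplying by the $w$-monomials of $\ck{\Box}_{(\underline m,\underline l)}$ then gives $\ck{\Box}_{(\underline m,\underline l)}\equiv(w^{\underline p}-w^{\underline q})\,\p_{\lambda_u}^{L}\pmod{\cK_u}$, with $p_i=(m_i)_{+}+(l_i)_{+}$ and $q_i=(m_i)_{-}+(l_i)_{-}$ for $i\neq u$. A short computation with the remaining rows of the relation (using again $\sum_{i=0}^{n}l_i=0$) shows $\underline p-\underline q\in\mbl_{A_u}$; setting $\underline r=\min(\underline p,\underline q)$ componentwise we get $w^{\underline p}-w^{\underline q}=w^{\underline r}\,\check\Box_{\underline p-\underline q}$, whence $\ck{\Box}_{(\underline m,\underline l)}\equiv(w^{\underline r}\p_{\lambda_u}^{L})\,\check\Box_{\underline p-\underline q}\in\cK_u$ by the second family of generators.

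I expect the only real subtlety to be the left-ideal bookkeeping in this last step: whenever a generator of $\cK_u$ occurs as an interior factor of a monomial, one must verify that it can be pushed to the right past the surrounding factors, so that the term acquires the form (element of $\cD_{W_u\times V}$)$\cdot$(generator of $\cK_u$) and hence lies in the \emph{left} ideal $\cK_u$; this is exactly where it matters that $\p_{\lambda_i}-w_{iu}\p_{\lambda_u}$ and $\check\Box_{\underline p-\underline q}$ commute with all the $\p_{\lambda_j}$, $w_{ju}$ and $\p_{\lambda_u}$. The two combinatorial checks --- that the linking relations lie in $\mbl_{A^s_u}$, and that $\underline p-\underline q\in\mbl_{A_u}$ --- are routine but must be carried out with care; the rest is formal, and once $\ck{\cI_u}=\cK_u$ is established the isomorphisms $\cN_u\cong\cD_{W_u\times V}/\ck{\cI_u}\cong FL_{\hat W_u}\cM_{A^s_u}$ follow at once.
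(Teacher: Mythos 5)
Your proposal is correct and follows essentially the same route as the paper: identify the Euler operators and the operators $\ck{\Box}_{(\underline{m},0)}$, $\p_{\lambda_i}-w_{iu}\p_{\lambda_u}$ with generators of $\cK_u$, reduce general $\ck{\Box}_{(\underline{m},\underline{l})}$ modulo the relations $\p_{\lambda_i}\equiv w_{iu}\p_{\lambda_u}$, and obtain the final isomorphism by applying $FL_{\hat{W}_u}$ to the generators. The paper merely gestures at the substitution argument, whereas you carry out the two inclusions (including the left-ideal commutation bookkeeping and the check that $\underline{p}-\underline{q}\in\mbl_{A_u}$) in full, which is a faithful elaboration rather than a different method.
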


\begin{proof}
For the first statement, notice that $\ck{\Box}_{(\underline{m},0)}$ equals the operator $\check{\Box}_{\underline{m}}$ from the definition
of the ideal $\cK_u$. On the other hand, one can obtain all operators $\ck{\Box}_{(\underline{m},\underline{l})}$ from the operators $\check{\Box}_{(\underline{m}',0)}$
using the relations $\p_{\lambda_i} - w_{iu} \p_{\lambda_u}$.
The last statement follows
by interchanging $\p_{w_{iu}}$ with $-\hat{w}_{iu}$ and $w_{iu}$ with $ \p_{\hat{w}_{iu}}$ in the classes of operators of type 1., 2., 3. and 4. in the definition of the ideal $\mck^{\widetilde{\beta_u}}_{A_u}$.
\end{proof}

In order to construct a strictly filtered resolution of $\cN_u$, we use the theory of Euler-Koszul complexes, which we explained in section \ref{subsec:GKZ}. It will be be applied to the $\mcd_{\hat{W}_u\times V}$-module $\mcm^{\widetilde{\beta_u}}_{A^s_u}$. As before, we work at the level of global sections.\\

Let $F^{\hat{\omega}}_\bullet D_{\hat{W}_u \times V}$ the filtration on $D_{\hat{W}_u \times V}$ corresponding to the weight vector
\begin{align}
\hat{\omega} &= \left((\textup{weight}(\hat{w}_{iu}))_{i \neq u}, (\textup{weight}(\p_{\hat{w}_{iu}}))_{i \neq u}, \textup{weight}(\lambda_0), \ldots , \textup{weight}(\lambda_n),\textup{weight}(\p_{\lambda_0}), \ldots , \textup{weight}(\p_{\lambda_n})\right)\notag \\
&:= (\underbrace{1,\ldots,1}_{n-\textup{times}},\qquad\qquad\quad\underbrace{0,\ldots,0}_{n-\textup{times}},\qquad\qquad\qquad\underbrace{0,\ldots,0}_{(n+1)-\textup{times}},\qquad\qquad\qquad\quad\underbrace{1,\ldots,1}_{(n+1)-\textup{times}})\, . \notag
\end{align}
Notice that this filtration corresponds to the order filtration $F^{ord}_\bullet D_{W_u \times V}$ under the Fourier-Laplace transformation functor $\FL_{\hat{W}_u}$.
We obtain a filtered resolution $((K_u^\bullet,d),F^{\hat{\omega}})$ of $M_{A^s_u}^{\widetilde{\beta}_u}$. Using Remark \ref{rem:strictresprop} we show that resolution is strict.

\begin{lemma}\label{lem:filtResEulerKoszul}
The Euler-Koszul complex $(K_u^\bullet, F^{\hat{\omega}}_\bullet)$ is a resolution of $(M^{\widetilde{\beta}}_{A^s_u}, F^{\hat{\omega}}_\bullet)$ in the category of filtered $D_{\hat{W}_u\times V}$-modules (with respect to the filtration $F_\bullet^{\hat{\omega}} D_{\hat{W}_u\times V}$),
i.e., we have a quasi-isomorphism $K_u^\bullet \twoheadrightarrow M_{A^s_u}$ and the complex $K_u^\bullet$ is strictly filtered.\end{lemma}
\begin{proof}
By Remark \ref{rem:strictresprop} above it is enough to show that $H^{-i} (\gr^{F^{\hat{\omega}}}_\bullet K^\bullet_u) = 0$ for $i \geq 1$ and $H^0( \gr^{F^{\hat{\omega}}}_\bullet K^\bullet_u) \simeq \gr^{F^{\hat{\omega}}}_{\bullet} M_{A^s_u}$. Denote by $\textit{GD}_{\hat{W}_u \times V} = \gr^{\hat{\omega}}_\bullet D_{\hat{W}_u \times V}$ the associated graded object of $D_{\hat{W}_u \times V}$, by $(\hat{v}_{iu})_{i\neq u}$ the symbol of $(\p_{\hat{w}_{iu}})_{i \neq u}$ and by $\mu_j$ the symbol of $\p_{\lambda_j}$ in $\textit{GD}_{\hat{W}_u\times V}$. Since $\Box_{(\underline{m},\underline{l})}$ is homogeneous in $(\p_{\lambda_j})$ and $ord_{\hat{\omega}}(\p_{\hat{w}_{iu}}) = 0$ for all $i \neq u$ we have
\[
\gr^{\hat{\omega}}(D_{\hat{W}_u \times V} / D_{\hat{W}_u \times V} J_{A^s_u}) = \textit{GD}_{\hat{W}_u\times V} / J^g_{A^s_u}
\]
where $J^g_{A^s_u}$ is generated by
\[
^{g}\Box_{(\underline{m},\underline{l})} := \prod_{m_i > 0 \atop i \neq u} \hat{v}_{iu}^{m_i} \prod_{l_i > 0 \atop 0 \leq i \leq n} \mu_i^{l_i} - \prod_{m_i < 0 \atop i \neq u} \hat{v}_{iu}^{-k_i} \prod_{l_i > 0 \atop 0 \leq i \leq n} \mu_i^{-l_i}
\]
Notice that
\[
\textit{GD}_{\hat{W}_u\times V}/ J^g_{A^s_u} \simeq \mbc[(\hat{w}_{iu})_{i \neq u}, \lambda_0, \ldots, \lambda_n]\otimes_\mbc \mbc[\mbn A^s_u]
\]
The associated graded complex $\gr^{{\hat{\omega}}} K^\bullet_u$ is isomorphic to a Koszul complex
\[
\gr^{\hat{\omega}} K^\bullet_u \simeq \textup{Kos}(\textit{GD}_{\hat{W}_u\times V}/ \hat{J}^g_{A^s_u}, (^{g\!}E^u_k)_{k=0,\ldots,d})
\]
where $^{g\!}E^u_k$ is defined by
\[
^{g\!}\hat{E}^u_k := \sum_{i \neq u}a_{ki}^u \hat{w}_{iu} \hat{v}_{iu} + \sum_{i=1}^n a_{ki} \lambda_i \mu_i \qquad \text{for} \quad k= 1, \ldots ,d
\]
and
\[
^{g\!}E^u_0 := \sum_{i=0}^{n} \lambda_i \mu_i
\]
It is shown in \cite[Theorem 1.2]{Berk} that the $^{g\!}E^u_k$ are part of a system of parameters. Since $\mbn A^s_u$ is a normal semi-group (see Lemma \ref{lem:AsuNormal} above), the ring $GD_{\hat{W}_u\times V}/ J^g_{A^s_u}$ is Cohen-Macaulay. Hence $(^{g\!}E^u_k)_{k=0,\ldots,d}$ is a regular sequence in $\textit{GD}_{\hat{W}_u\times V}/ J^g_{A^s_u}$. This shows that $H^{-i}(\gr^{{\hat{\omega}}}_\bullet K^\bullet_u) = 0$ for $i\geq 1$.  On the other hand, it follows from \cite[Theorem 4.3.5]{SST} that $H^0(\gr^{{\hat{\omega}}}_\bullet K^\bullet_u) = \textit{GD}_{\hat{W}_u\times V}/ (J^g_{A^s_u} + (^{g\!}E^u_k)_{k=0,\ldots,d}) \simeq \gr_\bullet^{{\hat{\omega}}} M^{\widetilde{\beta}}_{A^s_u}$, as required.
\end{proof}

As a consequence, we obtain the filtered resolution of $\cN_u$ we are looking for.
Let $\ck{J}_{\!\!A^s_u}$ be the ideal in $D_{W_u \times V}$ generated by the box operators $\ck{\Box}_{(\underline{m},\underline{l})}$ for $(\underline{m},\underline{l}) \in \mbl_{A^s_u}$. Put
\[
\ck{K}{}^{-l}_u = \bigoplus_{0 \leq i_1 < \ldots < i_l \leq d} D_{W_u\times V} / \ck{J}_{\!\!A^s_u}e_{i_1\ldots,i_l}\,
\]
and define
\[
\ck{K}{}^\bullet_u := \textup{Kos}( D_{W_u\times V}/\ck{J}_{\!\!A^s_u}, (\ck{E}{}^u_k-\beta_k)_{k = 0 , \ldots ,d})\, ,
\]
where the $\ck{E}{}^u_k$ denote the (pairwise commuting) endomorphisms of $D_{W_u\times V}/\ck{J}_{\!\!A^s_u}$ induced
from right multiplication by $\ck{E}{}^u_k$ on $D_{W_u\times V}$.
Define a filtration $\{ F_{\bullet} \ck{K}{}_u^\bullet\}$ on $\ck{K}{}^\bullet_u$ by
\[
F_p \ck{K}{}_u^l := \bigoplus_{0 \leq i_1 < \ldots < i_p \leq d} F_{p-l+(n-d)}^{ord}D_{W_u\times V} / \ck{J}_{\!\!A^s_u}.
\]
Then we have
\begin{proposition}\label{prop:resofNu}
We have a filtered quasi-isomorphism $(\ck{K}{}_u^\bullet, F_\bullet) \simeq (N_u, F^{ord}_\bullet) \simeq (N_u, F^H_{\bullet+(n-d)})$, i.e.
the complex $(\ck{K}{}_u^\bullet, F_\bullet)$ is a resolution of $(N_u, F^H_{\bullet+n-d})$ in the category of filtered $D_{W_u\times V}$-modules.
\end{proposition}
\begin{proof}
The filtered quasi-isomorphism $(\ck{K}{}_u^\bullet, F_\bullet) \simeq (N_u, F^{ord}_\bullet)$ is obtained by applying the Fourier-Laplace functor $\FL_{\hat{W}_u}$ to the (filtered) Euler-Koszul complex $(K^\bullet_u, F_\bullet^{\hat{\omega}})$ from above (using Lemma \ref{lem:filtResEulerKoszul}).
The second filtered (quasi-)isomorphism $(N_u, F^{ord}_\bullet) \simeq (N_u, F^H_{\bullet+(n-d)})$ is just the content of Proposition \ref{prop:OriginalCoord}.
\end{proof}

\subsection{$\msr$-modules}

In Subsection \ref{subsec:Koszul} we explicitly computed the filtered $\mcd$-module $(\mcn, F^H_\bullet)$ in the charts $W_u \times V$. Since the direct image of $\mcn$ under $\pi_2 : \mbp(W) \times V \ra V$ is the GKZ-system we are looking for we would just need to compute a filtered version of $R\pi_{2*} \DR_{\mbp(W) \times V /V}(\mcn)$ using a \v{C}ech argument.  It turns out that the theory of $\msr$-modules is most suitable for this task. Hence here we lift the results of the Subsection \ref{subsec:Koszul} to the level of $\msr$-modules. Following \cite{Sa6}, we first recall very briefly the basic notion of $\msr$-modules and the Rees construction which provides a functor from the category of filtered $\mcd$-modules to that of $\msr$-modules. In Proposition \ref{prop:NuRMod} we compute the corresponding Rees object of $\mcn_u$ and of its resolution  \ref{lem:ReesKu}. We glue these  resolutions in order to obtain a global resolution of the Rees object of $\mcn$ in Proposition \ref{Prop:Glueing}.\\

 Let $X$ be a smooth variety of dimension $n$. The order filtration of $\mcd_X$ gives rise to the Rees ring $R_F \mcd_X$. Given a filtered $\mcd_X$-module $(\mcm, F_\bullet \mcm)$ we construct the corresponding graded $R_F \mcd_X$-module $R_F \mcm := \bigoplus_{k \in \mbz} F_k \mcm z^k$. In local coordinates the sheaf of rings $R_F \mcd_X$ is given by
\[
R_F \mcd_X = \mco_X[z]\langle z \p_{x_1}, \ldots , z \p_{x_n} \rangle
\]
Denote by $\msx$ the product $X \times \mbc$.  We will consider the sheaf
\[
\msr_\msx = \mco_\msx \otimes_{\mco_X[z]} R_F \mcd_X
\]
and its ring of global sections
\[
\textup{R}_\msx:= \Gamma(\msx, \msr_\msx) = \mco_X(X)[z]\langle z \p_{x_1}, \ldots , z \p_{x_n} \rangle
\]
Given an $R_F \mcd_X$-module $R_F \mcm$ the corresponding $\msr_\msx$-module is
\[
\msm := \mco_\msx  \otimes_{\mco_X[z]} R_F \mcm
\]
This gives an exact functor $\mst$ from the category of filtered $\mcd_X$-modules $MF(\mcd_X)$ to the category of $\msr_\msx$-modules $Mod(\msr_\msx)$
\begin{align}
\mst: MF(\mcd_X) &\lra Mod(\msr_\msx) \notag \\
 (\mcm, F_\bullet \mcm) &\mapsto \msm \notag
\end{align}

We denote by $Mod_{qc}(\msr_\msx)$ the category of $\msr_\msx$-modules which are quasi-coherent $\mco_\msx$-modules. We denote by $\Omega^1_\msx = z^{-1} \Omega^1_{X \times \mbc / \mbc}$ the sheaf of algebraic $1$-forms on $\msx$ relative to the projection $\msx \ra \mbc$ having at most a pole of order one along $z=0$. If we put $\Omega^k_\msx = \wedge^k \Omega^1_\msx$, we get a deRham complex
\[
0 \lra \mco_\msx \overset{d}{\lra} \Omega^1_\msx \overset{d}{\lra} \ldots \overset{d}{\lra} \Omega^n_\msx \lra 0
\]
where the differential $d$ is induced by the relative differential $d_{X \times \mbc / \mbc}$.
If $X$ is a smooth affine variety we get the following equivalence of categories.

\begin{lemma}\label{lem:equivcatD-R}
Let $X$ be a smooth affine variety. The functor
\[
\Gamma(\msx, \bullet) : Mod_{qc}(\msr_\msx) \lra Mod(R_\msx)
\]
is exact and gives an equivalence of categories.
\end{lemma}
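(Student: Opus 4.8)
The plan is to deduce this from the classical affine case of the Serre correspondence, applied to the (non-commutative) sheaf of rings $\msr_\msx$. First I would record the relevant bookkeeping. Since $X$ is smooth affine, so is $\msx = X \times \mbc$; write $S := \mco_\msx(\msx) = \mco_X(X)[z]$ for its coordinate ring, so that $R_\msx$ is an $S$-algebra. The sheaf $\msr_\msx = \mco_\msx \otimes_{\mco_X[z]} R_F\mcd_X$ is \emph{quasi-coherent as an $\mco_\msx$-module}: each $F_k\mcd_X$ is a coherent $\mco_X$-module, so $R_F\mcd_X = \bigoplus_{k\in\mbz} F_k\mcd_X\, z^k$ is quasi-coherent over $\mco_X$, and pullback along the affine morphism $\msx \to X$ preserves quasi-coherence; its global sections are $\Gamma(\msx,\msr_\msx) = R_\msx$. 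I would also recall the classical fact that for the affine variety $\msx$ the functor $\Gamma(\msx,-)$ is an exact equivalence between the category $Mod_{qc}(\mco_\msx)$ of quasi-coherent $\mco_\msx$-modules and $Mod(S)$, with quasi-inverse $N \mapsto \widetilde N$, and that this equivalence is symmetric monoidal, i.e.\ $\Gamma(\msx,\mcf \otimes_{\mco_\msx}\mcg) \simeq \Gamma(\msx,\mcf)\otimes_S\Gamma(\msx,\mcg)$ for quasi-coherent $\mcf,\mcg$ (equivalently $\widetilde M \otimes_{\mco_\msx}\widetilde N \simeq \widetilde{M\otimes_S N}$).

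With this in place the argument becomes essentially formal. An object of $Mod_{qc}(\msr_\msx)$ is a quasi-coherent $\mco_\msx$-module $\mcm$ equipped with an associative, unital action $a\colon \msr_\msx\otimes_{\mco_\msx}\mcm \to \mcm$. Applying $\Gamma(\msx,-)$ and using monoidality turns $a$ into a morphism $R_\msx \otimes_S \Gamma(\msx,\mcm) \to \Gamma(\msx,\mcm)$, visibly associative and unital, so $\Gamma(\msx,\mcm)$ is an $R_\msx$-module, functorially in $\msr_\msx$-linear maps. Conversely, given an $R_\msx$-module $M$, its associated quasi-coherent sheaf $\widetilde M \in Mod_{qc}(\mco_\msx)$ carries an action $\msr_\msx \otimes_{\mco_\msx}\widetilde M = \widetilde{R_\msx}\otimes_{\mco_\msx}\widetilde M \simeq \widetilde{R_\msx\otimes_S M} \to \widetilde M$, where I use that $\widetilde{R_\msx} = \msr_\msx$ as $\mco_\msx$-algebras (because $\msr_\msx$ is quasi-coherent with global sections $R_\msx$); thus $\widetilde M$ lies in $Mod_{qc}(\msr_\msx)$. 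Since the underlying Serre correspondence $\mcm \leftrightarrow \Gamma(\msx,\mcm)$, $M \leftrightarrow \widetilde M$ is an equivalence and the two module structures correspond by construction, $\Gamma(\msx,-)$ and $\widetilde{(-)}$ are mutually quasi-inverse equivalences $Mod_{qc}(\msr_\msx) \simeq Mod(R_\msx)$.

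For exactness, I would note that a sequence in $Mod_{qc}(\msr_\msx)$ is exact if and only if it is exact after the exact, conservative forgetful functor to $Mod_{qc}(\mco_\msx)$, on which $\Gamma(\msx,-)$ is exact because $\msx$ is affine; likewise a sequence in $Mod(R_\msx)$ is exact iff exact in $Mod(S)$, and $\widetilde{(-)}$ is exact. Hence $\Gamma(\msx,-)\colon Mod_{qc}(\msr_\msx) \to Mod(R_\msx)$ is exact, as is its quasi-inverse.

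The only genuinely non-formal input is the symmetric monoidality of $\Gamma(\msx,-)$ on quasi-coherent sheaves over the affine variety $\msx$ — equivalently, the compatibility of global sections with the action map $a$ — which is exactly where quasi-coherence of $\mcm$ (and of $\msr_\msx$) is indispensable: for a general $\mco_\msx$-module carrying an $\msr_\msx$-action this passage can fail. I therefore expect this to be the one place to handle with care, though it is standard (the affine case of the quasi-coherent Serre correspondence, applied to the quasi-coherent $\mco_\msx$-algebra $\msr_\msx$), so no real obstacle arises.
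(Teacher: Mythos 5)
Your proof is correct and follows essentially the same route as the paper, which simply remarks that the argument is ``completely parallel to the $\mcd$-module case'' and cites \cite[Proposition 1.4.4]{Hotta}; that standard proof is precisely your argument, namely the affine Serre correspondence applied to the quasi-coherent $\mco_\msx$-algebra $\msr_\msx$, with exactness and the module structures checked on the underlying quasi-coherent $\mco_\msx$-modules.
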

\begin{proof}
The proof is completely parallel to the $\mcd$-module case (see e.g. \cite[Proposition 1.4.4]{Hotta}).
\end{proof}
One can also define a notion of direct image in the category of $\msr$-modules. Since we only need the case of a projection, we will restrict ourselves to this special situation. Let $X,Y$ smooth algebraic varieties and $f:X \times Y \ra Y$ be the projection to the second factor. Similarly as above we have a relative de Rham complex $\Omega^\bullet_{\msx\times \msy /\msy} = z^{-1} \Omega^\bullet_{X \times Y \times \mbc / Y \times \mbc}$. If $\msm$ is an $\msr_{\msx \times \msy}$-module the relative de Rham complex $\DR_{\msx \times \msy / \msy}(\msm)$  is locally given by
\[
d(\omega \otimes m) = d \omega \otimes m + \sum_{i=1}^n (\frac{dx_i}{z} \wedge \omega) \otimes z \p_{x_i} m
\]
where $(x_i)_{1 \leq i \leq n}$ is a local coordinate of $X$. The direct image with respect to $f$ is then defined as
\[
f_+ \msm := Rf_* \DR_{\msx \times \msy / \msy}(\msm)[n]
\]

Recall that for a filtered $\mcd$-module $(\mcm,F_{\bullet} \mcm)$ the direct image under $f$ is given by
\[
f_+ \mcm = Rf_* \left( 0 \ra \mcm \ra \Omega^1_{X \times Y / Y} \otimes \mcm \ra \ldots  \ra \Omega^n_{X \times Y / Y} \otimes \mcm \ra 0 \right)[n]
\]
together with its filtration
\[
F_p f_+ \mcm = Rf_* \left( 0 \ra F_p \mcm \ra \Omega^1_{X \times Y / Y} \otimes F_{p+1} \mcm \ra \ldots  \ra \Omega^n_{X \times Y / Y} \otimes F_{p+n} \mcm \ra 0 \right)[n]
\]
Notice, however, that if $(\mathcal{M},
F_\bullet\mathcal{M})$ underlies a mixed Hodge module on $X\times
Y$, then the Hodge filtration on the cohomology modules of the
direct image complex is not, in general, given by this definition,
unless $X$ is projective. It is a straightforward exercise to check that the functor $\mst$ commutes with the direct image functor $f_+$.\\

We will apply this to the filtered $\mcd$-module $(\mcn, F^H_\bullet)$ as defined in equation \eqref{eq:SheafN} in order to compute $\mch^0\pi_{2 +} \mcn \simeq \mch^{2n+d+1}(p_{+} (  q^\dag \mco_T^{\beta} \otimes F^\dag  j_{\dag} \mco_{\mbc^*}^{-\beta_0-1}))$ together with its corresponding Hodge filtration. We will denote by $\msp \times \msv$ the space $\mbp(W) \times V \times \mbc$. The corresponding $\msr$-module is
\[
\msn := \mst(\mcn) = \mco_{\msp \times \msv} \otimes_{\mco_{\mbp(W) \times V}[z]} R_{F^H}\mcn
\]
The direct image with respect to $\pi_2$ is then given by
\begin{equation}\label{eq:pi2Rmoddirectim}
\pi_{2+} \msn \simeq R\pi_{2 *} \left(0 \ra \msn \ra \Omega^1_{\msp \times \msv / \msv} \otimes \msn \ra \ldots \ra \Omega^n_{\msp \times \msv / \msv} \otimes \msn \ra 0  \right)[n]
\end{equation}
Since this is rather hard to compute, we will replace the complex
\[
0 \ra \msn \ra \Omega^1_{\msp \times \msv / \msv} \otimes \msn \ra \ldots \ra \Omega^n_{\msp \times \msv / \msv} \otimes \msn \ra 0
\]
by a quasi-isomorphic one. For this we will construct a resolution of $\msn$.
Let $\msw_u \times \msv := W_u \times V \times \mbc$ and denote by $\msn_u$ the restriction of $\msn$ to $\msw_u \times \msv$. We write $\textup{R}_{\msw_u \times \msv} = \Gamma(\msw_u \times \msv, \msr_{\msw_u \times \msv})$, then the module of global sections of $\msn_u$ is the $\textup{R}_{\msw_u \times \msv}$-module
\[
\textup{N}_u:= \Gamma(\msw_u \times \msv, \msn_u)\, .
\]

\begin{proposition}\label{prop:NuRMod}
The $\textup{R}_{\msw_u \times \msv}$-module $\textup{N}_u$ is isomorphic to
\[
z^{n-d} \cdot \textup{R}_{\msw_u \times \msv} / \tI_{A_u^s}
\]
where $\tI_{A_u^s}$ is generated by
\[
\overline{\Box}_{(m,l)} := \prod_{m_i > 0 \atop i \neq u} w_{iu}^{m_i} \prod_{l_i > 0 \atop 0 \leq i \leq n} (z\p_{\lambda_i})^{l_i} - \prod_{m_i < 0 \atop i \neq u} w_{iu}^{-m_i} \prod_{l_i < 0 \atop 0 \leq i \leq n} (z\p_{\lambda_i})^{-l_i} ,
\]
where $(m,l)=((m_i)_{i \neq u},l_0, \ldots,l_n) \in \mbl_{A^s_u}$,
\[
\overline{E}^u_k-\beta_k := -\sum^n_{i = 0 \atop i \neq u} a^u_{ki} z\p_{w_{iu}} w_{iu} + \sum_{i=1}^n a_{ki}  \lambda_i z\p_{\lambda_i}  -\beta_k \qquad \text{for}\quad k=1, \ldots ,d
\]
and
\[
\overline{E}^u_0-\beta_0 := \sum_{i=0}^n \lambda_i z\p_{\lambda_i} -\beta_0
\]
\end{proposition}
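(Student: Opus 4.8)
The plan is to use the equivalence of categories from Lemma \ref{lem:equivcatD-R}: since $\msw_u\times\msv$ is affine, it suffices to identify the $\textup{R}_{\msw_u\times\msv}$-module of global sections $\textup{N}_u$, and by construction of the functor $\mst$ this is the Rees module $R_{F^H}N_u=\bigoplus_k F^H_k N_u\cdot z^k$ of $(N_u,F^H)$, where $N_u=\Gamma(W_u\times V,\cN_u)$. By Lemma \ref{lem:descrNu} we may write $\cN_u=\cD_{W_u\times V}/\cK_u$ with $\cK_u=\ck{\cI_u}$, and the previous proposition gives $F^H_{p+(n-d)}\cN_u=F^{ord}_p\,\cD_{W_u\times V}/\cK_u$. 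Reindexing this identity produces
\[
\textup{N}_u=R_{F^H}N_u=z^{n-d}\cdot R_{F^{ord}}\!\left(\cD_{W_u\times V}/\cK_u\right),
\]
the factor $z^{n-d}$ being merely a shift of the $z$-grading (legitimate because Rees modules of exhaustive separated filtrations are $z$-torsion free). So the statement reduces to producing an isomorphism of graded $\textup{R}_{\msw_u\times\msv}$-modules $R_{F^{ord}}(\cD_{W_u\times V}/\cK_u)\simeq \textup{R}_{\msw_u\times\msv}/\tI_u$.

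Now $R_{F^{ord}}(\cD_{W_u\times V}/\cK_u)=\textup{R}_{\msw_u\times\msv}/J$, where $J=\bigoplus_p(F_p\cD_{W_u\times V}\cap\cK_u)z^p$ is the Rees ideal of $\cK_u$. The inclusion $\tI_u\subseteq J$ is immediate: each generator $\overline{\Box}_{(\underline{m},\underline{l})},\overline{E}^u_k$ of $\tI_u$ is obtained from the corresponding generator $\ck{\Box}_{(\underline{m},\underline{l})},\ck{E^u_k}$ of $\cK_u$ by substituting $z\p$ for $\p$, and each of these operators is homogeneous for the order filtration --- the box operators because the top row of $A^s_u$ forces $\sum_i l_i=0$, so the two monomials have the same $\p_\lambda$-order, and the Euler operators because they are of pure order one --- so the $\overline{\Box}_{(\underline{m},\underline{l})}$ and $\overline{E}^u_k$ are homogeneous elements of $\textup{R}_{\msw_u\times\msv}$ lying in $J$. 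For the reverse inclusion I would reduce modulo $z$: the surjection $\textup{R}_{\msw_u\times\msv}/\tI_u\twoheadrightarrow \textup{R}_{\msw_u\times\msv}/J$ becomes, after setting $z=0$, a surjection $gr^{F^{ord}}\cD_{W_u\times V}/\sigma(\tI_u)\twoheadrightarrow gr^{F^{ord}}(\cD_{W_u\times V}/\cK_u)=gr^{F^{ord}}\cD_{W_u\times V}/\sigma(\cK_u)$, where $\sigma(-)$ denotes principal symbols and $\sigma(\tI_u)$ is generated by the symbols of the $\ck{\Box}_{(\underline{m},\underline{l})},\ck{E^u_k}$. Proposition \ref{prop:resofNu} says precisely that $\sigma(\cK_u)$ is generated by those same symbols: the strict filtered Koszul resolution $(K^\bullet_u,F_\bullet)\twoheadrightarrow(N_u,F^H)$ has $gr^F K^\bullet_u$ a resolution of $gr^{F^H}N_u$, and the identification there of $H_0(gr^F\hat K_u)$ with $gr^{F^{\hat\omega}}M_{A^s_u}$, transported back through the partial Fourier--Laplace transform $\FL_{W_u}$, identifies $gr^{F^{ord}}(\cD_{W_u\times V}/\cK_u)$ with $gr^{F^{ord}}\cD_{W_u\times V}$ modulo the symbol ideal of the explicit generators. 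Hence the map is an isomorphism modulo $z$; since $\textup{R}_{\msw_u\times\msv}$ is non-negatively graded by $z$-degree and $R_{F^{ord}}(\cD_{W_u\times V}/\cK_u)$ is $z$-torsion free, the kernel of the surjection is divisible by $z$ and therefore vanishes, giving $\tI_u=J$ and hence the desired isomorphism.

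The only genuinely non-formal ingredient is the computation of the characteristic ideal $\sigma(\cK_u)$ --- that is, that the associated graded of $\cD_{W_u\times V}/\cK_u$ for the order filtration is cut out by the symbols of the listed box and Euler operators --- but this has already been established in Proposition \ref{prop:resofNu} via the normality (hence Cohen--Macaulayness) of $\mbc[\mbn A^s_u]$ and Adolphson's result that the symbols of the Euler operators form part of a system of parameters. Everything else --- the reindexing that produces the factor $z^{n-d}$, the homogeneity of the generators, and the $z$-torsion-free bookkeeping --- is routine; the one place requiring care is tracking the Fourier--Laplace transform $\FL_{W_u}$ when transferring the strictness statement of Proposition \ref{prop:resofNu} (phrased on the hatted side in terms of $F^{\hat\omega}$) into the assertion about $\sigma(\cK_u)$ on $\msw_u\times\msv$.
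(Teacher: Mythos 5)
Your argument is correct and follows essentially the paper's own route: the paper's proof is a one-liner invoking Lemma \ref{lem:descrNu} and Lemma \ref{lem:equivcatD-R} together with the preceding identification $F^H_{p+(n-d)}=F^{ord}_p$ on $\cN_u$, and your reduction to global sections, the reindexing producing $z^{n-d}$, and the passage to the Rees module of the order filtration are exactly that. The additional point you prove — that the Rees (homogenized) ideal of $\cK_u$ coincides with the ideal generated by the naively $z$-homogenized generators, via the mod-$z$ associated-graded computation extracted from the proof of Proposition \ref{prop:resofNu} and a graded $z$-torsion-freeness argument — is a sound justification of the step the paper treats as immediate, and introduces no circularity since Proposition \ref{prop:resofNu} precedes this statement.
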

\begin{proof}
This follows easily from Lemma \ref{lem:descrNu} and Lemma  \ref{lem:equivcatD-R}.
\end{proof}

We will now define a Koszul complex $\tK^\bullet_u$ in the category of $\tR_u$-modules which corresponds to the Koszul complex $\ck{K}{}^\bullet_u$ alluded to above.
Write $\tJ_{A^s_u}$ for the left ideal in $\tR_{\msw_u \times \msv}$ generated by all operators $\overline{\Box}_{(m,l)}$ for $(m,l)\in \mbl_{A^s_u}$, then a computation similar to formula \eqref{eq:EulerFieldsWellDefined} shows that the maps
\begin{align}
\textup{R}_{\msw_u \times \msv} / \tJ_{A^s_u} &\lra \textup{R}_{\msw_u \times \msv} / \tJ_{A^s_u} \notag \\
P &\mapsto P \cdot (\overline{E}^u_k -\beta_k) \qquad \text{for} \quad k= 0, \ldots ,d
\end{align}
are well-defined. Since $[\overline{E}^u_{k_1}-\beta_{k_1}, \overline{E}^u_{k_2}-\beta_{k_2}] = 0$ for $k_1, k_2 \in \{0, \ldots ,d \}$ we can built a Koszul complex
\[
\tK^\bullet_u := Kos\left(z^{n-d} \cdot\tR_{\msw_u \times \msv} / \tJ_{A^s_u} , (\,\cdot\, \overline{E}^u_k-\beta_k)_{k = 0 , \ldots ,d}\right)
\]
whose terms are given by
\[
z^{n-2d-1} \cdot \tR_{\msw_u \times \msv} / \tJ_{A^s_u} \ra \ldots \ra \bigoplus_{i=1}^n z^{n-d-1} \cdot \tR_{\msw_u \times \msv} / \tJ_{A^s_u} \;  e_1 \wedge \widehat{e}_i \wedge \ldots \wedge e_d\ra z^{n-d} \cdot \tR_{\msw_u \times \msv} / \tJ_{A^s_u} \; e_1 \wedge \ldots \wedge e_d
\]
\begin{lemma}\label{lem:ReesKu}
The Koszul complex $\tK_u^\bullet$ is a resolution of $\tN_u$.
\end{lemma}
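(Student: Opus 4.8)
The plan is to obtain this lemma as the $\msr$-module incarnation of Proposition~\ref{prop:resofNu}, by running the filtered Koszul resolution constructed there through the Rees functor $\mst$. Recall that $\mst$ is exact on filtered $\mcd$-modules, and that for affine $X$ the global sections functor is an exact equivalence between $\msr_\msx$-modules that are quasi-coherent over $\mco_\msx$ and $\tR_\msx$-modules by Lemma~\ref{lem:equivcatD-R}; all the spaces in play (the charts $W_u\times V$, their products with $\mbc$) are affine, so it is harmless to work with global sections throughout. It then suffices to check two things: first, that $\mst$ sends the filtered Koszul complex $(K^\bullet_u,F_\bullet K^\bullet_u)$ defined before Proposition~\ref{prop:resofNu} to the complex $\tK^\bullet_u$ and sends $(\mcn_u,F^H_\bullet)$ to the $\msr$-module whose global sections are $\tN_u$; second, that $\mst$ of a \emph{strict} filtered resolution is again a resolution.

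For the first point, the term $F_pK^l_u=\bigoplus_{0\le i_1<\dots<i_l\le d}F^{ord}_{p-l+(n-d)}\big(D_{W_u\times V}/J_{A^s_u}\big)$ has image under $\mst$ a direct sum of copies of $\tR_{\msw_u\times\msv}/\tJ_u$ with a shift in $z$ governed by the filtration shift $-l+(n-d)$; a short computation identifies these powers of $z$ with the ones $z^{n-d},z^{n-d-1},\dots,z^{n-2d-1}$ appearing in the definition of $\tK^\bullet_u$. Here one uses Lemma~\ref{lem:descrNu} to identify $J_{A^s_u}$, after the relevant coordinate rewriting, with the box part $\tJ_u$ of $\tI_u$. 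The Koszul differentials of $K^\bullet_u$ are right multiplication by the operators $\check{E}^u_k$ (which lie in $F^{ord}_1$), and their $\mst$-images are exactly the operators $\overline{E}^u_k$, which are homogeneous of $z$-degree one; this matches the $z$-shifts between consecutive terms. Finally, $\mst(\mcn_u)$ has global sections $z^{n-d}\tR_{\msw_u\times\msv}/\tI_u\cong\tN_u$, which is the content of the Proposition immediately preceding this lemma. Hence, on global sections, $\mst(K^\bullet_u)=\tK^\bullet_u$ and $\mst(\mcn_u)$ has global sections $\tN_u$.

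For the second point, Proposition~\ref{prop:resofNu} asserts that $(K^\bullet_u,F_\bullet)$ is a strict resolution of $(\mcn_u,F^H_\bullet)$: the underlying complex is a resolution of $\mcn_u$ and every differential is strict. Splitting it into short exact sequences $0\to Z^l\to K^l_u\to B^{l+1}\to 0$ of filtered modules, strictness of the differentials says precisely that the induced filtrations on the $Z^l$ and $B^l$ make these sequences strict; since the Rees functor takes a strict short exact sequence of filtered modules to a short exact sequence of $\msr$-modules, applying $\mst$ and recomposing yields $H^i(\tK^\bullet_u)\cong\mst\big(H^i(K^\bullet_u)\big)$, that is, $H^i(\tK^\bullet_u)=0$ for $i\neq0$ and $H^0(\tK^\bullet_u)=\tN_u$, which is the claim. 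Alternatively, since every $F_pK^l_u$ is a submodule of $K^l_u$, the complex $\tK^\bullet_u$ is termwise flat over $\mbc[z]$, with $\tK^\bullet_u/z\tK^\bullet_u\simeq gr^F K^\bullet_u$ and $\tK^\bullet_u[z^{-1}]\simeq K^\bullet_u\otimes_\mbc\mbc[z,z^{-1}]$; the identity $H^k(gr^F K^\bullet_u)\simeq gr^F H^k(K^\bullet_u)$ coming from strictness, cf.\ Lemma~\ref{lem:equivChar}, together with a standard $z$-torsion argument, then forces the vanishing of the higher cohomology and determines $H^0$.

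There is essentially no new mathematical input beyond Proposition~\ref{prop:resofNu}: the lemma is just ``apply the exact functor $\mst$ to that strict resolution''. The only step where I would be careful is the bookkeeping in the second paragraph — matching the filtration shift $-l+(n-d)$ in the definition of $F_\bullet K^\bullet_u$ with the explicit powers of $z$ in $\tK^\bullet_u$, and keeping the Fourier-Laplace conventions relating $\check{E}^u_k$ and $\overline{E}^u_k$ consistent. That is conceptually routine but error-prone, so it is where I expect the real (if minor) obstacle to lie.
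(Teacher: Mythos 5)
Your proposal is correct and follows exactly the paper's argument: the paper proves this lemma in one line by applying the exact Rees functor $\mst$ to the strict filtered Koszul resolution $(K^\bullet_u,F_\bullet)$ of $(N_u,F^H_\bullet)$ from Proposition \ref{prop:resofNu}. Your additional bookkeeping (matching the filtration shifts with the powers of $z$ and justifying that strictness is what makes $\mst$ preserve exactness) is a correct elaboration of the same approach.
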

\begin{proof}
In order to prove the lemma it is enough to apply the exact Rees functor $\mst$ to the Koszul complex $\ck{K}{}^\bullet_u$ which is a strict resolution of $N_u$ in the category of filtered $D_{W_u \times V}$-modules by Proposition \ref{prop:resofNu}.

\end{proof}

We denote by $\msk_u^\bullet$ the corresponding resolution of $\msn_u = \msn_{|W_u \times V}$. We are now able to construct a resolution of $\msn$.

\begin{proposition}\label{Prop:Glueing}
There exists a resolution $\msk^\bullet$ of $\msn$ in the category of $\msr_{\msp \times \msv}$-modules which is locally given by
\[
\Gamma(\msw_u \times \msv, \msk^\bullet) = \tK^\bullet_u
\]
\end{proposition}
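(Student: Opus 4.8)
The plan is to glue the local complexes $\msk^\bullet_u$ along the transition isomorphisms of $\msn$, and then to check that the glued complex still resolves $\msn$; equivalently, one may glue the filtered $\mcd_{\mbp(W)\times V}$-module resolutions $K^\bullet_u$ of Proposition \ref{prop:resofNu} and apply the exact Rees functor $\mst$. First I would make the gluing data explicit. Since $\mcn$ (see \eqref{eq:SheafN}) is a genuine $\mcd_{\mbp(W)\times V}$-module, its restrictions $\mcn|_{W_u\times V}=\mcd_{W_u\times V}/\cK_u$ are equipped with transition isomorphisms $\phi_{u_1u_2}$ over the overlaps $W_{u_1u_2}\times V$ satisfying the cocycle condition. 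These are inherited from the transition maps of $g_+\mco_T$ of Lemma \ref{lem:gluemapSW}: the functors $\pi_1^\dag$, $j_U^\dag$, $(j_U)_\dag$ entering the definition of $\mcn$ only act in the $V$-direction (pullback along $\pi_1$, restriction to the open $U$, pushforward from $U$) and leave the transition data on the $\mbp(W)$-factor unchanged, so in the presentations $\mcn_u\simeq\mcd_{W_u\times V}/\cK_u$ they are again of the form $P\mapsto\iota_{u_1u_2}(P)\,w_{u_1u_2}^{n+1}$. Passing to global sections and applying $\mst$ and Lemma \ref{lem:equivcatD-R} on each affine chart, the same formula defines isomorphisms of $\msr$-modules $\phi_{u_1u_2}\colon\tR_{\msw_{u_1}\times\msv}[w_{u_2u_1}^{-1}]/\tI_{u_1}\xrightarrow{\ \sim\ }\tR_{\msw_{u_2}\times\msv}[w_{u_1u_2}^{-1}]/\tI_{u_2}$.

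Next I would lift $\phi_{u_1u_2}$ to an isomorphism of Koszul complexes. Since $\iota_{u_1u_2}$ is a ring isomorphism fixing $z$, $\lambda_j$ and $z\p_{\lambda_j}$, the computation \eqref{eq:Eulertrafo} carries over and yields $\iota_{u_1u_2}(\overline{E}^{u_1}_k)\,w_{u_1u_2}^{n+1}\equiv w_{u_1u_2}^{n+1}\,\overline{E}^{u_2}_k$ modulo $\tJ_{u_2}$ for $k=1,\dots,d$; the case $k=0$ is trivial, as $\overline{E}^u_0=\sum_i\lambda_i z\p_{\lambda_i}$ is independent of $u$ and commutes with $w_{u_1u_2}$. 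The box-operator computation in the proof of Lemma \ref{lem:gluemapSW} (with the $\p_{\lambda_i}$-factors of $\overline{\Box}_{(\underline{m},\underline{l})}$ fixed by $\iota_{u_1u_2}$) likewise gives $\phi_{u_1u_2}(\tJ_{u_1})=\tJ_{u_2}$, so $\phi_{u_1u_2}$ descends modulo the box ideals; since moreover $\phi_{u_1u_2}(QP)=\iota_{u_1u_2}(Q)\,\phi_{u_1u_2}(P)$, one obtains $\phi_{u_1u_2}(P\cdot\overline{E}^{u_1}_k)=\phi_{u_1u_2}(P)\cdot\overline{E}^{u_2}_k$, and hence $\phi_{u_1u_2}$ induces degreewise an isomorphism of complexes $\tK^\bullet_{u_1}\xrightarrow{\ \sim\ }\tK^\bullet_{u_2}$ commuting with the differentials. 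The cocycle identity for these reduces to that of the $\phi_{u_1u_2}$, which holds because $\iota_{u_2u_3}\circ\iota_{u_1u_2}=\iota_{u_1u_3}$ and $\iota_{u_2u_3}(w_{u_1u_2}^{n+1})=w_{u_1u_3}^{n+1}w_{u_2u_3}^{-(n+1)}$, so that the twists multiply to $w_{u_1u_3}^{n+1}$. Hence the sheaves $\msk^\bullet_u$ glue, along this descent datum, to a complex $\msk^\bullet$ of $\msr_{\msp\times\msv}$-modules with $\Gamma(\msw_u\times\msv,\msk^\bullet)=\tK^\bullet_u$.

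Finally I would observe that $\msk^\bullet$ resolves $\msn$: the augmentations $z^{n-d}\tR_{\msw_u\times\msv}/\tJ_u\twoheadrightarrow\tN_u$ are the quotient maps and hence compatible with the $\phi_{u_1u_2}$, so they glue to a morphism of complexes $\msk^\bullet\to\msn$ which over each chart $\msw_u\times\msv$ is the quasi-isomorphism $\tK^\bullet_u\to\tN_u$ of the preceding Lemma (itself obtained by applying $\mst$ to Proposition \ref{prop:resofNu}); quasi-isomorphism being a local property, $\msk^\bullet\to\msn$ is a quasi-isomorphism. The hard part is not the gluing, which is a routine descent argument, but the bookkeeping it rests on: verifying that $\pi_1^\dag$, $j_U^\dag$, $(j_U)_\dag$ genuinely leave the $\mbp(W)$-transition data untouched, and that \eqref{eq:Eulertrafo} and the box-operator computation remain valid for the operators $\overline{E}^u_k$, $\overline{\Box}_{(\underline{m},\underline{l})}$ after Rees-ification.
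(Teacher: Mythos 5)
Your proposal is correct and follows essentially the same route as the paper: one glues the chart-wise Koszul complexes via the maps $P\mapsto \iota_{u_1u_2}(P)\,w_{u_1u_2}^{n+1}$, checks (as in Lemma \ref{lem:gluemapSW}, adapted to the Rees-level operators $\overline{E}^u_k$ and $\overline{\Box}_{(\underline{m},\underline{l})}$) that these are well defined, compatible with the transition maps of $\msn$, and commute with the Koszul differentials. Your explicit verification of the cocycle identity and of the local quasi-isomorphism property only makes explicit what the paper leaves implicit.
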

\begin{proof}
The resolution $\msk^\bullet$ is constructed by providing glueing maps between the $\tR_{\msw_{u_1u_2} \times \msv}$-modules
\[
\Gamma(\msw_{u_1 u_2} \times \msv, \msk^\bullet_{u_1}) \simeq  \tK_{u_1}^\bullet[w_{u2u1}^{-1}] \lra \Gamma(\msw_{u_1 u_2} \times \msv, \msk^\bullet_{u_2}) \simeq  \tK_{u_2}^\bullet[w_{u1u2}^{-1}]
\]
which are compatible with the glueing maps on
\[
\Gamma(\msw_{u_1 u_2} \times \msv, \msn_{u_1}) \simeq  \tN_{u_1}[w_{u2u1}^{-1}] \lra \Gamma(\msw_{u_1 u_2} \times \msv, \msn_{u_2}) \simeq  \tN_{u_2}[w_{u1u2}^{-1}]
\]
Notice that the latter maps are given by
\begin{align}
\tN_{u_1}[w_{u2u1}^{-1}] &\lra  \tN_{u_2}[w_{u1u2}^{-1}] \notag \\
P &\mapsto \iota_{u_1 u_2}(P) w_{u_1 u_2}^{n+1} \notag
\end{align}
which follows from Lemma \ref{lem:gluemapSW} and by tracing back the functors applied to $g_{u +} \mco_T$. Using the same argument as in Lemma \ref{lem:gluemapSW} shows that the maps
\begin{align}
\tK_{u_1}^\bullet[w_{u2u1}^{-1}] &\lra \tK_{u_2}^\bullet[w_{u1u2}^{-1}] \notag \\
P &\mapsto \iota_{u_1 u_2}(P) w_{u_1 u_2}^{n+1} \notag
\end{align}
are well defined. We have to check that they give rise to a morphism of complexes. But this follows from the commutativity of the diagram
\[
\begin{tikzcd}
P \ar{rr} && \iota_{u_1 u_2}(P) w_{u_1u_2}^{n+1} \\
\tR_{\msw_{u_1} \times \msv} / \tJ_{u_1} \ar{rr} & &
\tR_{\msw_{u_2} \times \msv} / \tJ_{u_2}\\
&  \\ \\
\tR_{\msw_{u_1} \times \msv} / \tJ_{u_1}
 \ar{uuu}{\cdot \overline{E}^{u_1}_k-\beta_k}  \ar{rr} &&
 \tR_{\msw_{u_2} \times \msv} / \tJ_{u_2} \ar{uuu}{\cdot \overline{E}^{u_2}_k-\beta_k} \\
 P \ar{rr} && \iota_{u_1 u_2}(P) w_{u_1u_2}^{n+1}
\end{tikzcd}
\]
\end{proof}
\subsection{A quasi-isomorphism}

In order to compute the direct image of the $\msr$-module $\msn$  under $\pi_2$ we have to deal with with the relative de Rham complex $\DR_{\msp \times \msv / \msv}(\msn)$ (cf. formula \eqref{eq:pi2Rmoddirectim}). In this subsection we show in Proposition \ref{prop:quasiIsos} that this complex is quasi-isomorphic to a complex $\msl^\bullet$ which is the top cohomology (with respect to the deRham differential) of the double complex $\DR_{\msp \times \msv / \msv}(\msk^\bullet)$. In Proposition \ref{prop:LocalDescriptionComplL} we give a local description of this complex $\mcl^\bullet$ on the charts $\msw_u \times \msv$.\\

As announced above we now apply the relative de Rham functor $\DR_{\msp \times \msv / \msv}$ to the resolution $\msk^\bullet$ and get a double complex $\Omega^{\bullet+n}_{\msp \times \msv / \msv} \otimes \msk^\bullet$:
\[
\begin{tikzcd}
\hdots \ar{r} & \Omega^{n-1}_{\msp \times \msv / \msv} \otimes \msk^0 \ar{r}{{_{II}}d^{n,0}} & \Omega^n_{\msp \times \msv / \msv} \otimes \msk^0 \\ \hdots \ar{r} & \Omega^{n-1}_{\msp \times \msv / \msv} \otimes \msk^{-1} \ar{r}{{_{II}}d^{n,-1}} \ar{u}{{_I}d^{n-1,0}} & \Omega^n_{\msp \times \msv / \msv} \otimes \msk^{-1}  \ar{u}{{_I}d^{n,0}} \\ & \ar{u} \vdots& \ar{u} \vdots
\end{tikzcd}
\]
The corresponding total complex is denoted by  $Tot\left(\Omega^{\bullet+n}_{\msp \times \msv / \msv} \otimes \msk^\bullet\right)$.
\begin{proposition}\label{prop:quasiIsos}
The following natural morphisms of complexes
\[
\begin{tikzcd}
\Omega^{\bullet+n}_{\msp \times \msv/\msv} \otimes \msn & Tot\left(\Omega^{\bullet+n}_{\msp \times \msv / \msv} \otimes \msk^\bullet\right) \ar{l} \ar{r} & \Omega^n_{\msp \times \msv / \msv} \otimes \msk^\bullet / {_{II}}d\left(\Omega^{n-1}_{\msp \times \msv / \msv} \otimes \msk^\bullet \right)
 =: \msl^\bullet
\end{tikzcd}
\]
are quasi-isomorphisms.
\end{proposition}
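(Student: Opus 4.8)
The plan is to extract both quasi-isomorphisms from the two spectral sequences attached to the double complex $\cC^{\bullet,\bullet}:=\Omega^{\bullet+n}_{\msp\times\msv/\msv}\otimes\msk^\bullet$, filtered once by the de Rham degree and once by the Koszul degree. Both filtrations are finite (the de Rham degree ranges over $0,\dots,n$, the Koszul degree over $-d,\dots,0$), so both spectral sequences converge; in each case the morphism in the statement is the natural comparison morphism, and it will suffice to identify the $E_1$-page and check that it degenerates there.

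For the first morphism $\textup{Tot}(\cC^{\bullet,\bullet})\to\Omega^{\bullet+n}_{\msp\times\msv/\msv}\otimes\msn$, I would filter by the de Rham degree, so that $E_1$ is the cohomology of the columns $\Omega^{a+n}_{\msp\times\msv/\msv}\otimes\msk^\bullet$ with respect to the Koszul differential ${_I}d$. Since $\msk^\bullet$ is a resolution of $\msn$ and each $\Omega^{a+n}_{\msp\times\msv/\msv}$ is a locally free, hence flat, $\mco_{\msp\times\msv}$-module, the complex $\Omega^{a+n}_{\msp\times\msv/\msv}\otimes\msk^\bullet$ is a resolution of $\Omega^{a+n}_{\msp\times\msv/\msv}\otimes\msn$. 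Thus $E_1$ is concentrated in Koszul-degree $0$, where it equals the relative de Rham complex $\Omega^{\bullet+n}_{\msp\times\msv/\msv}\otimes\msn$ with its differential ${_{II}}d$; the spectral sequence degenerates at $E_2$, and the morphism induced by the augmentation $\msk^\bullet\to\msn$ is therefore a quasi-isomorphism.

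For the second morphism $\textup{Tot}(\cC^{\bullet,\bullet})\to\msl^\bullet$, I would instead filter by the Koszul degree, so that $E_1$ is the cohomology of the rows $\Omega^{\bullet+n}_{\msp\times\msv/\msv}\otimes\msk^b$ with respect to ${_{II}}d$, i.e. the relative de Rham complexes of the $\msk^b$ along the fibres of $\pi_2$. The main obstacle is to show that each of these is concentrated in top degree (the $\Omega^n$-term): granting this, $E_1$ is concentrated there and equals $\Omega^n_{\msp\times\msv/\msv}\otimes\msk^\bullet/{_{II}}d(\Omega^{n-1}_{\msp\times\msv/\msv}\otimes\msk^\bullet)=\msl^\bullet$ with differential ${_I}d$, the spectral sequence again degenerates, and the quotient map $\textup{Tot}(\cC^{\bullet,\bullet})\to\msl^\bullet$ is a quasi-isomorphism.

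To establish this vanishing I would argue locally on $\mbp(W)$: on a chart $\msw_u\times\msv$, the module $\msk^b$ has global sections $\tK^b_u$, a finite direct sum of twists $z^{\bullet}\cdot\tR_{\msw_u\times\msv}/\tJ_u$, and since $W_u\simeq\mbc^n$ its relative de Rham complex is the Koszul complex on the pairwise commuting operators $(z\p_{w_{iu}})_{i\neq u}$ acting by left multiplication. Applying the partial Fourier--Laplace transform $\FL_{W_u}$ (which interchanges $w_{iu}$ with $z\p_{\hat{w}_{iu}}$ and $z\p_{w_{iu}}$ with $-\hat{w}_{iu}$) turns $\tR_{\msw_u\times\msv}/\tJ_u$ into $\hat{\tR}_{\hat{\msw}_u\times\msv}/\hat{\tJ}_u$, where now $\hat{\tJ}_u$ is generated by polynomials in the commuting derivations $z\p_{\hat w_{iu}},z\p_{\lambda_j}$ alone; consequently $\hat{\tR}_{\hat{\msw}_u\times\msv}/\hat{\tJ}_u\cong\mbc[(\hat{w}_{iu})_{i\neq u},\lambda_0,\dots,\lambda_n]\otimes_{\mbc}\bigl(\mbc[z,(z\p_{\hat w_{iu}})_{i\neq u},(z\p_{\lambda_j})_j]/\hat{J}^{\textup{comm}}\bigr)$, with the operators $\hat{w}_{iu}$ acting only on the first, polynomial tensor factor. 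Hence, under $\FL_{W_u}$, the relative de Rham complex becomes $\textup{Kos}\bigl(\mbc[(\hat{w}_{iu})_{i\neq u},\lambda_0,\dots,\lambda_n];(\hat{w}_{iu})_{i\neq u}\bigr)\otimes_{\mbc}(-)$; since $(\hat{w}_{iu})_{i\neq u}$ is part of a system of coordinates, hence a regular sequence in the polynomial ring, this Koszul complex is concentrated in homological degree $0$, so the original relative de Rham complex of $\msk^b$ is concentrated in top cohomological degree $n$. This gives the required vanishing, and with it the proposition.
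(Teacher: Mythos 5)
Your proposal is correct and follows essentially the same route as the paper: both quasi-isomorphisms are extracted from the two spectral sequences of the bounded double complex, the first via flatness of $\Omega^{a+n}_{\msp\times\msv/\msv}$ and the resolution property of $\msk^\bullet$, the second by showing the rows have cohomology only in the top de Rham degree. The only (cosmetic) difference is in that last step: you pass through the Rees-level Fourier--Laplace transform to turn left multiplication by $z\p_{w_{iu}}$ into multiplication by the coordinates $\hat{w}_{iu}$ and invoke regularity of that sequence, whereas the paper argues directly that $\tR_{\msw_u\times\msv}/\tJ_{A^s_u}$ is free over $\mbc[z,(z\p_{w_{iu}})_{i\neq u}]$ because the box operators involve only $w_{iu}$ and $z\p_{\lambda_j}$ --- the same underlying computation.
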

\begin{proof}
Since the double complex $\Omega^{\bullet+n}_{\msp \times \msv / \msv} \otimes \msk^\bullet$ is bounded we can associate with it two spectral sequences which both converge. The first one is given by taking cohomology in the vertical direction which gives the ${_I}E_1$-page of the spectral sequence. Since $\msk^\bullet$ is a resolution of $\msn$ and $\Omega^l_{\msp \times \msv/\msv}$ is a locally free (i.e. flat) $\mco_{\msp \times \msv}$-module for every $l = 1, \ldots, n$, the  only terms which are non-zero are the ${_I}E^{0,q}_1$-terms which are isomorphic to $\Omega^{q+n}_{\msp \times \msv/\msv} \otimes \msn$. Hence the first spectral sequence degenerates at the second page which shows that $\Omega^{\bullet+n}_{\msp \times \msv/\msv} \otimes \msn \leftarrow Tot\left(\Omega^{\bullet+n}_{\msp \times \msv / \msv} \otimes \msk^\bullet\right)$ is a quasi-isomorphism.\\

We now look at the second spectral sequence which is given by taking cohomology in the horizontal direction. We claim that ${_{II}}E^{p,q}_1 = 0$ for $q \neq 0$. It is enough to check this locally on the charts $\msw_u \times \msv$ using, moreover,  Lemma \ref{lem:equivcatD-R} at the level of global sections. Notice that the complex
\[
\Gamma(\msw_u \times \msv, \Omega^{\bullet+n}_{\msp \times \msv/ \msv} \otimes \msk^l)
\]
is isomorphic to a direct sum of Koszul complexes $Kos^{\bullet}( z^{-d-l}\tR_{\msw_u\times \msv} / \tJ_{A^s_u}, \frac{1}{z}(z\p_{w_{iu}} \cdot )_{i \neq u}) )$ where each summand is given by
\[
z^{n-d-l} \tR_{\msw_u \times \msv}/\tJ_{A^s_u} \lra \ldots \lra z^{-d-l}\tR_{\msw_u \times \msv}/\tJ_{A^s_u} e_1 \wedge \ldots \wedge e_n \, .
\]
The quotient $\tR_{\msw_u\times \msv} /\tJ_{A^s_u}$ can be written as
\[
\mbc[z,(z \p_{w_{iu}})_{i \neq u}] \otimes_{\mbc[z]} \left(\mbc[z, \lambda_0,\ldots, \lambda_n, (w_{iu})_{i \neq u}]\langle z \p_{\lambda_0},\ldots,z \p_{\lambda_n}\rangle / \left(\left( \overline{\Box}_{(m,l)}\right)_{(m,l)\in \mbl_{A^s_u}} \right)  \right)
\]
Since the operators $z \p_{w_{iu}} \cdot $ act only on the first term in the tensor product, we immediately see that ${_{II}}E^{p,q}_1 = 0$ for $ q \neq 0$.

The fact that $ Tot\left(\Omega^{\bullet+n}_{\msp \times \msv / \msv} \otimes \msk^\bullet\right) \ra  \msl^\bullet$ is a quasi-isomorphism follows from the fact that ${_{II}}E^{p,q} = 0 $ for $q \neq 0$, i.e. the second spectral sequence degenerates at the second page.
\end{proof}
The next result is an explicit local description of the complex $\msl^\bullet$.
\begin{proposition}\label{prop:LocalDescriptionComplL}
For any $u\in\{0,\ldots,n\}$ define the ring
\[
\tS_{\msw_u\times \msv}:= \mbc[z, \lambda_0,\ldots, \lambda_n, (w_{iu})_{i \neq u}]\langle z \p_{\lambda_0},\ldots,z \p_{\lambda_n}\rangle
\]
and denote by $\mss$ the sheaf of rings on $\msp\times \msv$ which is locally given by
\[
\Gamma(\msw_u \times \msv, \mss) = \tS_{\msw_u\times \msv}
\]
with glueing maps
\begin{align}
\tS_{\msw_{u_1}\times \msv}[w_{u_2 u_1}^{-1}] &\lra \tS_{\msw_{u_2}\times \msv}[w_{u_1 u_2}^{-1}] \notag \\
P &\mapsto \iota_{u_1 u_2} (P) \notag
\end{align}
Denote by $\tJ_{A^s_u}$ the left $\tS_{\msw_u\times \msv}$-ideal generated by the Box operators $\overline{\Box}_{(m,l)}$ for $(m,l) \in \mbl_{A^s_u}$.
Note that this is a slight abuse of notation, as the ideal generated by the same set of operators in the ring $\tR_{\msw_u\times\msv}$
was also denoted by $\tJ_{A^s_u}$, but which is justified by the fact that these generators do not contain the variables $z\partial_{w_{iu}}$.
Then the complex $\msl^\bullet$ is given locally by
\begin{equation}\label{eq:localL}
\Gamma(\msw_u \times \msv, \msl^\bullet) \simeq Kos^\bullet(z^{-d}\tS_{\msw_u \times \msv} / \tJ_{A^s_u}, (\tilde{E}_k-\beta_k)_{k=0,\ldots ,d} )
\end{equation}
whose terms are given by
\[
z^{-2d-1}\tS_{\msw_u \times \msv} / \tJ_{A^s_u} \lra \ldots \lra z^{-d}\tS_{\msw_u \times \msv} / \tJ_{A^s_u} e_1 \wedge \ldots \wedge e_d
\]
where
\begin{align}
&\tilde{E}_k -\beta_k := \sum_{i=1}^n a_{ki}  \lambda_i z\p_{\lambda_i}  -\beta_k \qquad \text{for}\quad k=1, \ldots ,d \notag \\
&\tilde{E}_0 - \beta_0 := \sum_{i=0}^n \lambda_i z\p_{\lambda_i} - \beta_0 \notag
\end{align}
\end{proposition}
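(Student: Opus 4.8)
Here is the plan I would follow to prove Proposition \ref{prop:LocalDescriptionComplL}.

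The plan is to argue chart by chart on $\msw_u\times\msv$ and to unwind the definition of $\msl^\bullet$ term by term, building on the explicit local form of the resolution $\msk^\bullet$ and on the relative de Rham computation already performed in the proof of Proposition \ref{prop:quasiIsos}. First I would recall that on $\msw_u\times\msv$ one has $\msk^\bullet\simeq\tK^\bullet_u=Kos\bigl(z^{n-d}\tR_{\msw_u\times\msv}/\tJ_u,(\,\cdot\,\overline E^u_k)_{k=0,\ldots,d}\bigr)$, whose term in homological degree $l$ is a direct sum of $\binom{d+1}{l}$ copies of $z^{n-d-l}\tR_{\msw_u\times\msv}/\tJ_u$. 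By definition, $\msl^l=\Omega^n_{\msp\times\msv/\msv}\otimes\msk^l/{_{II}}d(\Omega^{n-1}_{\msp\times\msv/\msv}\otimes\msk^l)$ is the top relative de Rham cohomology of the $l$-th term. I would then invoke the factorization
\[
\tR_{\msw_u\times\msv}/\tJ_u\;\simeq\;\mbc[z,(z\p_{w_{iu}})_{i\neq u}]\otimes_{\mbc[z]}\bigl(\tS_{\msw_u\times\msv}/\tJ_{A^s_u}\bigr)
\]
established in the proof of Proposition \ref{prop:quasiIsos} — in which the relative de Rham differential is the Koszul differential for the commuting operators $(z\p_{w_{iu}})_{i\neq u}$, acting only on the first tensor factor — to conclude that the relative de Rham complex of each summand is acyclic outside top degree $n$, where it equals $z^{n-d-l}\cdot z^{-n}\cdot\tS_{\msw_u\times\msv}/\tJ_{A^s_u}=z^{-d-l}\tS_{\msw_u\times\msv}/\tJ_{A^s_u}$ (the extra $z^{-n}$ coming from $\Omega^n_{\msp\times\msv/\msv}=z^{-n}\Omega^n_{\mbp(W)\times V\times\mbc/V\times\mbc}$). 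This already matches the terms of $\msl^\bullet$ with those of the claimed Koszul complex.

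The heart of the argument is then to identify the induced differential. Since ${_I}d$ and ${_{II}}d$ commute in the double complex, the differential on $\msl^\bullet$ is induced on top relative de Rham cohomology by the Koszul differential ${_I}d$, that is, by right multiplication $\,\cdot\,\overline E^u_k$. Writing $\overline E^u_k=\tilde E_k-\sum_{i\neq u}a^u_{ki}\,z\p_{w_{iu}}w_{iu}$, I want to show that the second summand acts trivially after passing to top cohomology, so that only the $\lambda$-part $\tilde E_k$ survives. The clean way to do this, in the spirit of the proof of Proposition \ref{prop:quasiIsos}, is via Fourier–Laplace: $\FL_{W_u}$ turns the relative de Rham complex in the $w_{iu}$-directions into the Koszul complex for the multiplication operators $(\hat w_{iu}\,\cdot\,)_{i\neq u}$ and turns $\overline E^u_k$ into the genuine Euler operator $\hat E^u_k=\sum_{i\neq u}a^u_{ki}\hat w_{iu}\p_{\hat w_{iu}}+\sum_{i=1}^n a_{ki}\lambda_i\p_{\lambda_i}$ of $A^s_u$; taking the cokernel of the top $\hat w_{iu}$-Koszul differential amounts to setting $\hat w_{iu}=0$, which degenerates $A^s_u$ to $\widetilde A$ and degenerates $\hat E^u_k$ to $\tilde E_k$ (and $\hat E^u_0$ to $\tilde E_0$). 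Transporting back through $\FL_{W_u}$ and its inverse and through the equivalence of Lemma \ref{lem:equivcatD-R} then identifies $\msl^\bullet|_{\msw_u\times\msv}$ with $Kos^\bullet(z^{-d}\tS_{\msw_u\times\msv}/\tJ_{A^s_u},(\tilde E_k)_{k=0,\ldots,d})$ as complexes; this also explains the abuse of notation flagged in the statement, since the $\tilde E_k$ no longer involve any $z\p_{w_{iu}}$ and the ambient ring is $\tS_{\msw_u\times\msv}$ rather than $\tR_{\msw_u\times\msv}$.

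Finally I would check compatibility with the glueing: the transition maps of $\msk^\bullet$ between the charts $\msw_{u_1}\times\msv$ and $\msw_{u_2}\times\msv$ are $P\mapsto\iota_{u_1u_2}(P)\,w_{u_1u_2}^{n+1}$ (Lemma \ref{lem:gluemapSW}, carried through the Rees functor), while the extra factor $w_{u_1u_2}^{n+1}$ is exactly cancelled by the transition functions of the relative canonical sheaf $\Omega^n_{\mbp(W)\times V\times\mbc/V\times\mbc}$; hence the induced transition maps on $\msl^\bullet$ reduce to $\iota_{u_1u_2}$, which is precisely the glueing defining the sheaf $\mss$ and the modules $z^{-d-l}\tS_{\msw_u\times\msv}/\tJ_{A^s_u}$ occurring in \eqref{eq:localL}. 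The main obstacle will be the third step: correctly bookkeeping how right multiplication by the full $\overline E^u_k$ descends to the top relative de Rham cohomology, since this quotient involves a left-to-right module conversion and the ideal $\tJ_{A^s_u}$ is not stable under $\p_{w_{iu}}$, so that the vanishing of the $w$-part of $\overline E^u_k$ genuinely holds only after the quotient and is most transparent on the Fourier–Laplace/Euler–Koszul side.
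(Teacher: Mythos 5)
Your plan follows essentially the same route as the paper: the printed proof consists exactly of your first two steps, namely quoting from the proof of Proposition \ref{prop:quasiIsos} that on each chart the rows of the double complex are Koszul complexes for the left action of the operators $z\p_{w_{iu}}$ on $\tR_{\msw_u\times\msv}/\tJ_{A^s_u}\simeq \mbc[z,(z\p_{w_{iu}})_{i\neq u}]\otimes_{\mbc[z]}\bigl(\tS_{\msw_u\times\msv}/\tJ_{A^s_u}\bigr)$, and identifying the cokernel of that action with $z^{-d}\tS_{\msw_u\times\msv}/\tJ_{A^s_u}$; the paper says nothing in this proof about the induced differential or about glueing (the chart compatibility is treated only afterwards, in the globalization to $\mss/\msj$), so your proposal is, if anything, more detailed than the paper's own argument.

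One caveat concerning the step you yourself flag as the main obstacle: the Fourier--Laplace heuristic ``taking the cokernel amounts to setting $\hat w_{iu}=0$, which degenerates $\hat E^u_k$ to $\tilde E_k$'' does not literally hold, and carrying it out as stated would not close the point. On the cokernel of the left action of $z\p_{w_{iu}}$ (equivalently of $\hat w_{iu}$ on the transformed side), right multiplication by $z\p_{w_{iu}}w_{iu}$ is not zero: commuting the derivative to the left past a representative $P\in\tS_{\msw_u\times\msv}$ produces the term $z\,w_{iu}\p_{w_{iu}}P$, so under the natural identification the induced differentials are the Euler--Koszul-type operators $P\mapsto P\cdot\tilde E_k+z\sum_{i\neq u}a^u_{ki}w_{iu}\p_{w_{iu}}P$ rather than plain right multiplication by $\tilde E_k$; in fact plain $\,\cdot\,\tilde E_k$ is not even well defined modulo $\tJ_{A^s_u}$, since the operators $\overline{\Box}_{(m,l)}$ are not homogeneous for the $\lambda$-degrees alone, and it is precisely the $w$-degree twist that restores well-definedness (exactly as in the twisted actions of \cite{MillerWaltherMat}). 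The paper's proof is silent on this point as well, and the discrepancy is harmless for the sequel: the twist terms glue to a global vector field on $\mbp(W)$ and annihilate the $w$-free representatives of global sections of $\mss/\msj$, which is all that is used after taking $R\Gamma$. But in your write-up you should record the twisted form of the induced differential (or restrict the clean identification to where it is actually needed) instead of asserting that the $w$-part of $\overline E^u_k$ dies outright on the quotient.
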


\begin{proof}
It follows from Proposition \ref{prop:quasiIsos} that the $0$-th cohomology of the complex
$\left(\Omega^{\bullet+n}_{\msp\times\msv/\msv}\otimes \msk^p, { _{\mathit{II}}}d^{\bullet,p}\right)$ is a direct sum of terms
of the form $H^0(z^{-d}Kos^\bullet( \tR_{\msw_u\times \msv} / \tJ_{A^s_u}, \frac{1}{z}(z \p_{w_{iu}} \cdot)_{i \neq u}))$.
Taking the cokernel of left multiplication on $\tR_{\msw_u\times \msv} / \tJ_{A^s_u}$ by $z \p_{w_{iu}}$ shows that we have
an isomorphism of $\tS_{\msw_u\times\msv}$-modules
\[
H^0(z^{-d}Kos^\bullet( \tR_{\msw_u\times \msv} / \tJ_{A^s_u}, (z \p_{w_{iu}} \cdot)_{i \neq u})) \simeq z^{-d}\tS_{\msw_u\times \msv}/\tJ_{A^s_u}.
\]
Hence equation \eqref{eq:localL} follows.
\end{proof}

The ideals $\tJ_{A^s_u}$ glue to an ideal $\msj \subset \mss$. Notice that the Euler vector fields $(\tilde{E}_k-\beta_k)_{k=0,\ldots ,d}$ are global sections of $\mss$.
We recall from Proposition \ref{Prop:Glueing} that the glueing maps for $\Gamma(\msw_u \times \msv, \Omega^n_{\msp \times \msv} \otimes \msk^p)$ are given by:

\[
\bigwedge_{i=0 \atop i \neq u_1}^{n} dw_{iu_1} \otimes P \longmapsto \bigwedge_{i=0 \atop i \neq u_2}^{n} dw_{iu_2} \cdot (w_{u_1 u_2})^{-n-1} \otimes \iota_{u_1 u_2}(P)w_{u_1 u_2}^{n+1}  \]
Since both powers of $w_{u_1 u_2}$ on the right hand side cancel  when considering the quotient $\msl^p$, we see that
\[
\msl^\bullet \simeq Kos^\bullet(z^{-d}\mss / \msj, (\tilde{E}_k-\beta_k)_{k=0,\ldots,d})
\]
Summarizing, Proposition \ref{prop:quasiIsos} and Proposition \ref{prop:LocalDescriptionComplL} show that instead of computing the direct image \eqref{eq:pi2Rmoddirectim} we can compute
\[
R\pi_{2 *}( \msl^\bullet ) \simeq R\pi_{2*}(z^{-d}Kos^\bullet(\mss / \msj, (\tilde{E}_k-\beta_k)_{k=0,\ldots,d})
\]

\subsection{Computation of the direct image}

In this subsection we continue our computation of $\pi_{2+}\msn \simeq R\pi_{2*}(\msl^\bullet)$. Because $\msv$ is affine  it is enough to work at the level of global sections (cf. Lemma \ref{lem:equivcatD-R} ). We get:
\begin{equation}\label{eq:GLqisGKos}
\Gamma R\pi_{2 *}( \msl^\bullet) \simeq R\Gamma R\pi_{2 *}(\msl^\bullet) \simeq R\Gamma(\msl^\bullet) \simeq R\Gamma(Kos^\bullet(z^{-d}\mss / \msj, (\tilde{E}-\beta_k)_{k=0,\ldots,d}))
\end{equation}
where the first isomorphism follows from the exactness of $\Gamma(\msv, \bullet)$. Hence we have to consider the hypercohomology of a Koszul complex on $\mss/ \msj$.\\

We will show that each term of this Koszul complex  is $\Gamma$-acyclic. For this it is enough to show that $\mss / \msj$ is $\Gamma$-acyclic.  In Proposition \ref{prop:GrLocCoh} we show that $\mss / \msj$ is acyclic and its corresponding associated graded module has an easy description in terms of the "semi" R-module $S/J_{A^s}$ on $\mbc_z \times W \times V$ if all local cohomologies of $S/J_{A^s}$ with respect to the ideal $(w_0,\ldots,w_n)$ vanish. Finally we show in Lemma \ref{lem:tensorprod} that the computation of these local cohomologies can be reduced to the computation of local cohomology groups of semigroup rings, which is a problem in commutative algebra and which we tackle in section \ref{subsec:LocalCohom}.\\

Recall that $\msp \times \msv = \mbc_z \times \mbp(W) \times V$. We denote by $\msw \times \msv$ the space $\mbc_z \times W \times V$. Let
\[
\tS:= \mbc[z,w_0, \ldots, w_n, \lambda_0, \ldots, \lambda_n]\langle z \p_{\lambda_0}, \ldots ,  z\p_{\lambda_n}\rangle
\]
and consider the $\tS$-module
\[
\tS / \tJ_{A^s}
\]
where the left ideal $\tJ_{A^s}$ is generated by
\[
\Box_{(m,l)} = \prod_{m_i > 0} w_i^{m_i} \prod_{l_i > 0} (z \p_{\lambda_i})^{l_i} - \prod_{m_i < 0} w_i^{-m_i} \prod_{l_i < 0} (z \p_{\lambda_i})^{-l_i}\qquad \text{for} \quad (m,l) \in \mbl_{A^s},
\]
the matrix $A^s$ is given by
\[
A^s :=(\underline{a}^s_0,\ldots,\underline{a}^s_n,\underline{b}^s_{0},\ldots,\underline{b}^s_{n}) := \left(\begin{matrix}1 & 1 & \dots & 1 & 0 & 0 &\dots & 0 \\ 0 & 0 & \dots & 0 & 1 & 1 & \dots & 1 \\ 0 & a_{11} & \dots & a_{1n} & 0 & a_{11} & \dots & a_{1n}\\ \vdots & \vdots & &\vdots & \vdots & \vdots & & \vdots \\ 0 & a_{d1} & \dots & a_{dn} & 0 & a_{d1} & \dots & a_{dn}\end{matrix}\right)
\]

and $\mbl_{A^s}$ is the $\mbz$-module of relations among the columns of $A^s$.
Notice that $\tS/\tJ_{A^s}$ is $\mbz$-graded by the degree of the $w_i$. Denote by $\tS_{w_u}$ the localization of $\tS$ with respect to $w_u$, then one easily sees that the degree zero part $[\tS_{w_u} / \tJ_{A^s}]_0$ of $ \tS_{w_u} / \tJ_{A^s}$ is equal to $\Gamma(\msw_u \times \msv, \mss / \msj) \simeq \tS_{\msw_u\times\msv} / \tJ_{A^s_u}$ if we identify $w_i / w_u$ with $w_{iu}$. Let $\widetilde{\tS / \tJ_{A^s}}$ be the associated sheaf on $\msp \times \msv$ having global sections $[\tS / \tJ_{A^s}]_0$, then we obviously
have
\[
\widetilde{\tS / \tJ_{A^s}} \simeq \mss / \msj
\]
Define
\[
\Gamma_\ast(\mss / \msj) := \bigoplus_{a \in \mbz} \Gamma(\msp \times \msv, (\mss / \msj)(a))
\]
We want to use the following result applied to the graded module $\tS/\tJ_{A^s}$
\begin{proposition}\cite[Proposition 2.1.3]{EGA31}\label{prop:GrLocCoh}
There is the following exact sequence of $\mbz$-graded $\tS$-modules
\[
0 \lra H^0_{(\underline{w})}(\tS / \tJ_{A^s}) \lra \tS / \tJ_{A^s} \lra \Gamma_*(\mss / \msj) \lra H^1_{(\underline{w})}(\tS / \tJ_{A^s}) \lra 0
\]
and for each $ i \geq 1$, there is the following isomorphism
\begin{equation}\label{eq:RGammavsLoc}
\bigoplus_{a \in \mbz} H^i(\msp \times \msv, (\mss / \msj)(a)) \simeq H^{i+1}_{(\underline{w})}(\tS/\tJ_{A^s})
\end{equation}
where $(\underline{w})$ is the ideal in $\mbc[z,\lambda_0,\ldots, \lambda_n,w_0,\ldots,w_n]$ generated by $w_0, \ldots ,w_n$.
\end{proposition}
\begin{proof}
In the category of $\mbc[z, \lambda_0, \ldots, \lambda_n, w_0, \ldots w_n]$-modules, the statement follows from \cite[Proposition 2.1.3]{EGA31}.
The statement in the category of $\tS$-modules follows from the proof given there.
\end{proof}

In order to compute the local cohomology of  $S/J_{A^s}$ we introduce a variant of the Ishida complex (see e.g. \cite[Theorem 6.2.5]{HerzBr}). Let $T:= \mbc[w_0,\ldots,w_n, z\p_{\lambda_0}, \ldots , z\p_{\lambda_n}] \subset S$ be a commutative subring and let $\mbc[\mbn A^s]$ be the affine semigroup algebra of $A^s$, i.e.
\[
\mbc[\mbn A^s] = \{ y^{\underline{c}} \in \mbc[y_0^\pm, \ldots,y_{n+1}^\pm] \mid \; \underline{c} \in \mbn A^s \subset \mbz^{d+2}\}
\]
We have a map
\begin{align}
\Phi_{A^s}: T &\lra \mbc[\mbn A^s] \notag \\
w_i &\mapsto y^{\underline{a}_i^s} \notag \\
z\p_{\lambda_i} &\mapsto  y^{\underline{b}_i^s} \notag
\end{align}
Notice that the kernel $K_{A^s}$ of $\Phi_{A^s}$ is equal to the ideal in $T$ generated by the elements $\Box_{(k,l)}$, hence
\[
T/K_ {A^s} \simeq \mbc[\mbn A^s]
\]
\begin{remark}
The $\mbz$-grading of $T$ by the degree of the $w_i$ induces a $\mbz$-grading on $\mbc[\mbn A^s]$ since the operators $\Box_{(k,l)}$ are homogeneous. The semi-group ring $\mbc[\mbn A^s]\subset \mbc[\mbz^{d+2}]$ carries also a natural $\mbz^{d+2}$-grading. Looking at the matrix $A^s$ one sees that the $\mbz$-grading coming from $T$ is the first component of this $\mbz^{d+2}$-grading.
\end{remark}
 We regard $\mbc[\mbn A^s]$ as a $T$-module using the map $\Phi_{A^s}$, which gives the isomorphisms
\[
S/ J_{A^s} \simeq S \otimes_T T / K_{A^s} \simeq  S \otimes_T \mbc[\mbn A^s]
\]
We want to express the local cohomology of $S /J_{A^s}$ by the local cohomology of the commutative ring $\mbc[\mbn A^s]$. For this, let $I$ be the ideal in $\mbc[\mbn A^s]$ generated by $y^{\underline{a}_0^s}, \ldots y^{\underline{a}_n^s}$, then we have the following  change of rings formula:
\begin{lemma}\label{lem:tensorprod}
There is the following isomorphism of $\mbz$-graded $S$-modules:
\[
H^k_{(\underline{w})}(S / J_{A^s}) \simeq S \otimes_T H^k_I(\mbc[\mbn A^s])
\]
\end{lemma}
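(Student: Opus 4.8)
The plan is to reduce to flat base change for local cohomology along the isomorphism $S/J_{A^s} \simeq S \otimes_T \mbc[\mbn A^s]$ established just above. The first point is that $w_0, \ldots, w_n$ are central in $S$ (they commute with $z$, with the $\lambda_j$ and with the $z\partial_{\lambda_j}$), so localizing an $S$-module at a product of the $w_i$ again produces an $S$-module, and $H^\bullet_{(\underline{w})}$ of any $S$-module is computed by the \v{C}ech complex on the sequence $w_0, \ldots, w_n$. Hence it is enough to understand this \v{C}ech complex applied to $S/J_{A^s} \simeq S \otimes_T \mbc[\mbn A^s]$.

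Two observations do the work. First, $S$ is free — hence flat — as a right $T$-module: using the only nontrivial commutation relation $[z\partial_{\lambda_i}, \lambda_i] = z$ among the generators, every element of $S$ has a unique normal form $\sum_{a,b,c,d} c_{abcd}\, z^a w^c \lambda^b (z\partial_\lambda)^d$ with the operators $z\partial_\lambda$ collected on the right, and since the $w$'s are central this displays $\{ z^a \lambda^b : a \in \mbn,\ b \in \mbn^{n+1} \}$ as a free basis of $S$ as a right $T$-module; in particular $S \otimes_T(-)$ is exact. Second, because the $w_i$ lie in the commutative subring $T$ and are central in $S$, localizing commutes with $S \otimes_T(-)$: for $w := w_{i_0}\cdots w_{i_j}$ one has $(S \otimes_T N)[w^{-1}] = (S\otimes_T N)\otimes_T T[w^{-1}] = S \otimes_T (N[w^{-1}])$, where on the right the $w_i$ act on $N = \mbc[\mbn A^s]$ through $\Phi_{A^s}$, that is, as the monomials $y^{\underline{a}^s_i}$. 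Combining the two, the \v{C}ech complex of $S \otimes_T \mbc[\mbn A^s]$ on $w_0, \ldots, w_n$ is $S \otimes_T \check{C}^\bullet(y^{\underline{a}^s_0}, \ldots, y^{\underline{a}^s_n}; \mbc[\mbn A^s])$, and passing to cohomology (exactness of $S \otimes_T(-)$) gives \[ H^k_{(\underline{w})}(S/J_{A^s}) \simeq S \otimes_T H^k\!\left( \check{C}^\bullet(y^{\underline{a}^s_0}, \ldots, y^{\underline{a}^s_n}; \mbc[\mbn A^s]) \right) \simeq S \otimes_T H^k_I(\mbc[\mbn A^s]), \] where the last step holds because $y^{\underline{a}^s_0}, \ldots, y^{\underline{a}^s_n}$ generate $I$ by definition. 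All of the above is compatible with the $\mbz$-grading by $w$-degree, so the isomorphism is one of $\mbz$-graded $S$-modules.

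The only genuinely nonformal step is the freeness of $S$ over $T$, together with keeping the left/right module conventions straight (the tensor product uses $S$ as a right $T$-module while $\mbc[\mbn A^s]$ is a left $T$-module) and being careful that localization at the central $w_i$ behaves well inside the noncommutative ring $S$. Once these are settled the rest is formal, following from flat base change for \v{C}ech complexes and local cohomology.
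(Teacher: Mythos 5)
Your argument is correct, and it takes a genuinely different route from the paper. The paper works with injective resolutions: it first shows that $H^k_{(\underline{w})}$ can be computed after restricting to the commutative subring $T$ (because $S$ is free, hence flat, over $T$, an injective $S$-module restricts to an injective $T$-module), and then identifies $H^k_{I'}(S/J_{A^s})$ with $S\otimes_T H^k_I(\mbc[\mbn A^s])$ by applying $S\otimes_T-$ to a $T$-injective resolution $J^\bullet$ of $T/K_{A^s}$, using that $S\otimes_T J^\bullet$ is again a complex of injective $T$-modules (direct sums of injectives over the Noetherian ring $T$). You instead compute with the \v{C}ech (stable Koszul) complex on the central elements $w_0,\ldots,w_n$, use the explicit PBW-type basis $\{z^a\lambda^b\}$ to get freeness of $S$ as a right $T$-module, commute localization at the central $w_i$ with $S\otimes_T-$, and then pass \v{C}ech cohomology through the flat functor $S\otimes_T-$; this is more concrete, makes the $S$-linearity and $\mbz$-graded compatibility of the isomorphism visible at the level of complexes, and avoids any discussion of injectives. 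The one point you should make explicit is the very first assertion, that the \v{C}ech complex computes $H^\bullet_{(\underline{w})}$ of an $S$-module in the sense used in the paper, namely the derived functor of the torsion functor $\Gamma_{(\underline{w})}$ on the category of $S$-modules (this is how the paper's own proof begins, with an injective resolution of $S$-modules). Centrality of the $w_i$ alone does not formally give this; the clean justification is exactly the freeness you already have: restriction of an injective $S$-module to $T$ is injective, so the derived functors of $\Gamma_{(\underline{w})}$ over $S$ and over the Noetherian commutative ring $T$ coincide, and over $T$ the identification with \v{C}ech cohomology is the standard commutative fact. With that one-line patch (or if one takes the commutative-ring definition of $H^k_{(\underline{w})}$ suggested by the preceding EGA-type proposition, in which case it is immediate), your proof is complete and yields the same graded isomorphism as the paper.
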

\begin{proof}
Notice that if $S$ was commutative this would be a standard property of the local cohomology groups. Here we have to adapt the proof slightly. First notice that it is enough  to compute $H^k_{(\underline{w})}(S/J_{A^s})$ with an injective resolution of $T$-modules. To see why, let $I^\bullet$ be an injective resolution (in the category of $S$-modules) of $S/ J_{A^s}$. Since $S$ is a free, hence flat, $T$-module, it follows from $Hom_S( S \otimes_T M, I) \simeq S \otimes_T Hom_T( M,I)$, that an injective $S$-module is also an injective $T$-module. Therefore we have
\[
H^k_{(\underline{w})}(S/J_{A^s}) \simeq H^k \Gamma_{(\underline{w})}(I^\bullet) = H^k \Gamma_{I'}(I^\bullet) \simeq H^k_{I'}(S/J_{A^s})
\]
where $I'$ is the ideal in $T$ generated by $w_0, \ldots, w_n$ and the second isomorphism follows from the equality
\[
\Gamma_{(\underline{w})}(I^k) = \{x \in I^k \mid \forall i\, \exists k_i\, \text{such that} \; w_i^{k_i} x = 0 \} = \Gamma_{I'}(I^ k)
\]
Let $J^\bullet$ be an injective resolution of $T/K_{A^s}$.
In order to show the claim consider the following isomorphisms
\begin{align}
S \otimes_T H^K_I(\mbc[\mbn A^s]) &\simeq S \otimes_T H^k_{I'}(T/K_{A^s})\notag \\
 &\simeq S \otimes_T H^k \Gamma_{I'} (J^\bullet) \notag \\
 &\simeq H^k(S \otimes_T \Gamma_{I'} (J^\bullet))\notag \\
 &\simeq H^k \Gamma_{I'} ( S \otimes_T J^\bullet)\notag \\   &\simeq H^k_{I'} ( S /J_{A^s}) \notag
\end{align}
where the third isomorphism follows from the fact that $S$ is a flat $T$-module and the fifth isomorphism follows from the fact that $S \otimes_T J^\bullet$ is a $T$-injective resolution of $S/J_{A^s} \simeq S \otimes_T T/K_{A^s}$.
\end{proof}

\subsection{Local cohomology of semi-group rings}
\label{subsec:LocalCohom}

In this subsection we compute local cohomology groups of some special semigroup rings associated with the semigroups $\mbn A^s$. These local cohomology groups turned up in Lemma \ref{lem:tensorprod}. We will use their vanishing in Subsection
\ref{subsec:MainThm} (specifically in  Corollary \ref{cor:SJGacyclic}) to prove $\Gamma$-acyclicity of a Koszul complex.

We show that these local cohomology groups can be expressed as the cohomology of the Ishida complex of $\mbc[\mbn A^s]$ (cf. Proposition \ref{prop:LocCohIshida}). Since $A^s$ is a $(d+2)\times (2n+2)$ integer matrix the Ishida complex carries a natural $\mbz^{d+2}$-grading. The vanishing of certain graded pieces of the Ishida complex resp. of its cohomology depends on the position of the degree as seen as an element in $\mbr^{d+2}$ in relation with the cone spanned by the columns of $A^s$(cf. Lemma \ref{lem:vis} and Proposition \ref{prop:Ishidadeg}).\\

Let $\mcf$ be the face lattice of $\mbr_{\geq 0} A^s$ and denote by $\mcf_\sigma$ the sublattice of faces which lie in the face $\sigma$ spanned  by $\underline{a}_0^s, \ldots , \underline{a}^s_n$. For a face $\sigma$ of $\mbr_{\geq 0} A^s$ consider the multiplicatvely closed set
\[
U_\sigma := \{y^{\underline{c}} \mid \underline{c} \in \mbn(A^s \cap \sigma)\} \]
and denote by $\mbc[\mbn A^s]_\sigma = \mbc[\mbn A^s + \mbz(A^s \cap \sigma)]$ the localization with respect to $U_\sigma$.   We put
\[
L^k_\sigma  = \bigoplus_{\tau \in \mcf_\sigma \atop \dim \tau = k} \mbc[\mbn A^s]_\tau
\]
and define maps $f^k : L^k_\sigma \ra L^{k+1}_\sigma$ by specifying their components
\[
f^k_{\tau',\tau}: \mbc[\mbn A^s]_{\tau'} \ra \mbc[\mbn A^s]_\tau \quad \text{to be} \quad \begin{cases} 0 & \text{if}  \; \tau' \not\subset \tau \\ \epsilon(\tau',\tau)nat &\text{if}\; \tau' \subset \tau \end{cases}
\]
where $\epsilon$ is a suitable incidence function on $\mcf_\sigma$ and $nat$ is the natural localization morphism. The Ishida complex with respect to the face $\sigma$ is
\[
L^\bullet_\sigma: 0 \ra L^0_\sigma \ra L^1_\sigma \ra \ldots \ra L^{d+1}_\sigma \ra 0
\]
The Ishida complex with respect to the face $\sigma$ can be used to calculate local cohomology groups of $\mbc[\mbn A^s]$.
\begin{proposition}\label{prop:LocCohIshida}
As above, denote by $I\subset \mbc[\mbn A^s]$ the ideal generated by the elements $\Phi_{A^s}(w_i) = y^{\underline{a}^s_i}$. Then for all $k$ we have the isomorphism
\[
H^k_{I}(\mbc[\mbn A^s]) \simeq H^k(L^\bullet_\sigma)
\]
\end{proposition}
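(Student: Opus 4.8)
The plan is to compute $H^k_I(\mbc[\mbn A^s])$ by a \v{C}ech complex built from the monomial generators $y^{\underline{a}^s_0},\ldots,y^{\underline{a}^s_n}$ of $I$ and then to match it, degree by degree in the natural $\mbz^{d+2}$-grading, with the Ishida complex $L^\bullet_\sigma$. First I would note that, since $I$ is generated by $y^{\underline{a}^s_0},\ldots,y^{\underline{a}^s_n}$, the local cohomology $H^\bullet_I(\mbc[\mbn A^s])$ is the cohomology of the \v{C}ech complex
\[
0 \lra \mbc[\mbn A^s] \lra \bigoplus_{i} \left(\mbc[\mbn A^s]\right)_{y^{\underline{a}^s_i}} \lra \bigoplus_{i<j} \left(\mbc[\mbn A^s]\right)_{y^{\underline{a}^s_i}y^{\underline{a}^s_j}} \lra \cdots
\]
with $\mbc[\mbn A^s]$ in cohomological degree $0$, and that for $S\subseteq\{0,\ldots,n\}$ one has $\left(\mbc[\mbn A^s]\right)_{\prod_{i\in S} y^{\underline{a}^s_i}}=\mbc[\mbn A^s+\sum_{i\in S}\mbz\underline{a}^s_i]$. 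Both this complex and $L^\bullet_\sigma$ are $\mbz^{d+2}$-graded, so it is enough to establish the isomorphism in each graded piece.

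Next I would fix $\underline{c}\in\mbz^{d+2}$ and determine which summands are nonzero in degree $\underline{c}$. Using that $\mbc[\mbn A^s]$ is normal -- so that membership of $\underline{c}$ in a localization can be read off on the cone $\mbr_{\geq 0}A^s$ -- together with the standard fact that the localization of the normal semigroup $\mbn A^s$ at a face $\tau$ is $(\mbr_{\geq 0}A^s+\mbr\tau)\cap\mbz^{d+2}=\mbn A^s+\mbz(A^s\cap\tau)$, one sees that $\left(\mbc[\mbn A^s]\right)_{\prod_{i\in S}y^{\underline{a}^s_i}}$ is nonzero in degree $\underline{c}$ if and only if $\underline{c}\in\mbn A^s+\mbz(A^s\cap\tau_S)$, where $\tau_S\in\mcf_\sigma$ is the face spanned by $\{\underline{a}^s_i : i\in S\}$, and likewise $\left(\mbc[\mbn A^s]\right)_\tau$ is nonzero in degree $\underline{c}$ iff $\underline{c}\in\mbn A^s+\mbz(A^s\cap\tau)$. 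Consequently the degree-$\underline{c}$ part of the \v{C}ech complex is the augmented simplicial cochain complex, with coefficients in $\mbc$, of the simplicial complex $\Delta_{\underline{c}}:=\{\,S\subseteq\{0,\ldots,n\}\mid\underline{c}\in\mbn A^s+\mbz(A^s\cap\tau_S)\,\}$, while the degree-$\underline{c}$ part of $L^\bullet_\sigma$ is the cochain complex, indexed by dimension, of the subposet $P_{\underline{c}}:=\{\,\tau\in\mcf_\sigma\mid\underline{c}\in\mbn A^s+\mbz(A^s\cap\tau)\,\}$ of the face lattice.

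The last step is to compare these two models. The assignment $S\mapsto\tau_S$ gives an order-preserving surjection $\pi\colon\Delta_{\underline{c}}\to P_{\underline{c}}$; surjectivity uses that each face of $\sigma$ is spanned by the generators $\underline{a}^s_i$ it contains, which is again a consequence of the normality of $\mbc[\mbn A^s]$. I would then show that $\pi$ induces an isomorphism on cohomology by an acyclic-carrier/spectral-sequence argument: filtering by the value $\tau\in P_{\underline{c}}$, for each such $\tau$ the relevant fibre $\{\,S\in\Delta_{\underline{c}} : \tau_S\subseteq\tau\,\}$ is the full simplex on the vertex set $\{\,i : \underline{a}^s_i\in\tau\,\}$ and hence acyclic, and the resulting bookkeeping matches $\widetilde H^{k-1}(\Delta_{\underline{c}};\mbc)$ with the appropriately shifted cohomology of $P_{\underline{c}}$. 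Combining the three steps yields $H^k_I(\mbc[\mbn A^s])_{\underline{c}}\cong H^k(L^\bullet_\sigma)_{\underline{c}}$ for every $\underline{c}$, and hence the claimed isomorphism.

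The main obstacle I anticipate is this last comparison: reconciling the ``all subsets of the $\underline{a}^s_i$'' indexing of the \v{C}ech complex with the ``faces of $\sigma$'' indexing of the Ishida complex, while keeping track of cohomological degree -- a simplex $S$ and the face $\tau_S$ it spans generally have different dimensions -- requires a careful nerve-type argument, and it is exactly here that normality of $\mbc[\mbn A^s]$ and the description of its faces enter. The other steps are essentially formal once these combinatorial inputs are in place.
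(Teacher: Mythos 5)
Your overall strategy (compute $H^\bullet_I$ by the \v{C}ech complex on the generators $y^{\underline{a}^s_0},\ldots,y^{\underline{a}^s_n}$ and compare with $L^\bullet_\sigma$ degree by degree in the $\mbz^{d+2}$-grading) is legitimate, and your identification of the graded pieces is correct: normality of $\mbn A^s$ does give $(\mbc[\mbn A^s])_{\prod_{i\in S}y^{\underline{a}^s_i}}\simeq \mbc[\mbn A^s]_{\tau_S}$, so nonvanishing in degree $\underline{c}$ is governed by $\underline{c}\in\mbn A^s+\mbz(A^s\cap\tau_S)$. The gap is in the step that carries the whole proof, namely the comparison of the two degree-$\underline{c}$ complexes. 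First, a small point: $\Delta_{\underline{c}}$ is upward-closed (inverting more elements only enlarges the module), so it is not a simplicial complex, and the degree-$\underline{c}$ piece of the \v{C}ech complex is a quotient of the cochain complex of the full simplex, not the augmented cochain complex of $\Delta_{\underline{c}}$. More seriously, the asserted acyclicity of the fibres fails as stated: the set $\{S\in\Delta_{\underline{c}}:\tau_S\subseteq\tau\}$ is in general a proper subfamily of the full simplex on $\{i:\underline{a}^s_i\in\tau\}$, because a subset $S$ of the vertices of $\tau$ may span a strictly smaller face $\tau_S$ with $\underline{c}\notin\mbn A^s+\mbz(A^s\cap\tau_S)$ even though $\underline{c}\in\mbn A^s+\mbz(A^s\cap\tau)$ (already $S=\emptyset$ is a counterexample whenever $\underline{c}\notin\mbn A^s$). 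What is true, and what a repaired argument would use, is that this subfamily is the degree-$\underline{c}$ part of the \v{C}ech complex of the localized ring $\mbc[\mbn A^s]_\tau$ with respect to elements that are units there, hence an exact complex; but even granting this, the actual comparison (cardinality of $S$ versus dimension of $\tau_S$, the augmentation terms, the incidence signs $\epsilon$) still has to be assembled into an isomorphism via an acyclic-carrier or double-complex argument, and this is precisely the part you leave as ``bookkeeping''. As written, the central identification is asserted rather than proved.

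For comparison, the paper takes a much shorter route that avoids this combinatorics entirely: following the proof of \cite[Theorem 6.2.5]{HerzBr}, one shows that $N\mapsto H^k(L^\bullet_\sigma\otimes_T N)$ is a universal $\delta$-functor (the terms of $L^\bullet_\sigma$ are flat $T$-modules, and the higher functors are effaceable), so by \cite[Corollary III.1.4]{Harts} it suffices to check agreement with $H^0_I$ in degree zero. That degree-zero check reduces to the observation that the ideal generated by the $y^{\underline{a}^s_i}$ spanning the one-dimensional faces of $\sigma$ has radical equal to $I$, since these rays span the same cone as all of the $\underline{a}^s_i$. If you want to keep your graded \v{C}ech approach you must carry out the nerve-type comparison honestly, with the corrected fibre acyclicity above; otherwise the $\delta$-functor argument is the efficient way to close the proof.
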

\begin{proof}
The proof can be easily adapted from \cite[Theorem 6.2.5]{HerzBr}. For the convenience of the reader we sketch it here together with the necessary modifications . In order to show the claim we have to prove that the functors $N \mapsto H^k(L^\bullet_\sigma \otimes N)$ form a universal $\delta$-functor (see e.g. \cite{Harts}). If we can additionally show that
\begin{equation}\label{eq:H0loc}
H^0_{I}(\mbc[\mbn A^s]) \simeq H^0(L^\bullet_\sigma)
\end{equation}
the claim follows by \cite[Corollary III.1.4]{Harts}. Let $\mcf_\sigma(1)$ be the set of one-dimensional faces in $\mcf_\sigma$ and notice that
\[
H^0_{I'}(\mbc[\mbn A^s]) \simeq \ker\left(\mbc[\mbn A^s] \lra  \bigoplus_{\tau \in  \mcf_\sigma(1)} \mbc[\mbn A^s]_\tau\right) \simeq H^0(L^\bullet_\sigma \otimes_T M)
\]
where $I' \subset \mbc[\mbn A^s]$ is the ideal generated by $\{y^{\underline{a}^s_i} \mid \mbr_{\geq 0} \underline{a}^s_i \in \mcf_\sigma(1)\}$. In order to show \eqref{eq:H0loc} we have to show that $rad\, I' =  I$ since obviously
$H^0_{I'}(\dC[\dN A^s])=H^0_{rad I'}(\dC[\dN A^s])$.
Since $I' \subset I$ and $I = rad\, I$ ($I$ is a prime ideal corresponding to the face spanned by $\underline{a}_0^s, \ldots , \underline{a}_n^s$), it is enough to check that a multiple of  every $y^{\underline{c}} \in I$ lies in $I'$. But this follows easily from the fact that the elements $\{\underline{a}^s_i \mid \mbr_{\geq 0}\underline{a}_i^s \in \mcf_\sigma(1)\}$ span the same cone over $\mbq$ as the elements $\{\underline{a}^s_0, \ldots , \underline{a}^s_n\}$.

The proof  that $N \mapsto H^k(L^\bullet_\sigma \otimes_T N)$ is a $\delta$-functor is completely parallel to the proof in \cite{HerzBr}.
\end{proof}

Notice that the complex $L^\bullet_\sigma$ is $\mbz^{d+2}$-graded since $\mbc[\mbn A^s]$ is $\mbz^{d+2}$-graded. In order to analyze the cohomology of $L^\bullet_\sigma$ we look at its $\mbz^{d+2}$-graded parts.
For this we have to determine when $(\mbc[\mbn A^s]_\tau)_x \neq 0$ (and therefore $(\mbc[\mbn A^s]_\tau)_x \simeq \mbc$) for $x \in \mbz^{d+2}$.\\

We are following \cite[Chapter 6.3]{HerzBr}. Denote by $C_{A^s}$ the cone $\mbr_{\geq 0} A^s \subset \mbr^{d+2}$. Let $x,y \in \mbr^{d+2}$. We say that $y$ is visible from $x$ if $y \neq x$ and the line segment $[x,y]$ does not contain a point $y' \in C_{A^s}$ with $y' \neq y$. A subset $S$ is visible from $X$ if each $v \in S$ is visible from $x$.\\

Recall that     the cone $C_{A^s}$ is given by the intersection of finitely many half-spaces
\[
H^+_\tau := \{ x \in \mbr^{d+2} \mid \langle n_\tau,x \rangle \geq 0 \} \qquad \tau \in \mcf(d+1)
\]
where $\mcf(d+1)$ is the set of $d+1$-dimensional faces (facets) of $C_{A^s}$. We set
\[
x^0 =\{ \tau \mid \langle n_\tau,x \rangle =0 \}, \qquad x^+ =\{ \tau \mid \langle n_\tau,x \rangle  > 0 \}, \qquad x^- =\{ \tau \mid \langle n_\tau,x \rangle  < 0 \}
\]

\begin{lemma}\cite[Lemma 6.3.2, 6.3.3]{HerzBr}\label{lem:vis}
\begin{enumerate}
\item A point $y \in C_{A^s}$ is visible from $x \in \mbr^{d+2} \setminus C_{A^s}$ if and only if $y^0 \cap x^ - \neq \emptyset$.
\item Let $x \in \mbz^{d+2}$ and let $\tau$ be a face of $C_{A^s}$. The $\mbc$-vector space $(\mbc[\mbn A^s]_\tau)_x$ is non-zero if and only if $\tau$ is not visible from $x$.
\end{enumerate}
\end{lemma}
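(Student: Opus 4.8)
The plan is to handle the two parts of Lemma~\ref{lem:vis} in turn, deriving part~(2) from part~(1) together with a translation of the non-vanishing of a graded piece of a localized semigroup ring into a system of linear inequalities; everything reduces to elementary convex geometry of the cone $C_{A^s}$ and to the normality of $\mbn A^s$.

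For part~(1), I would fix $y\in C_{A^s}$ and $x\in\mbr^{d+2}\setminus C_{A^s}$ and parametrize the segment by $z(t):=(1-t)x+ty$, $t\in[0,1]$, so that $z(0)=x\notin C_{A^s}$, $z(1)=y\in C_{A^s}$, and (by convexity, since $y\in C_{A^s}$) $y$ is visible from $x$ if and only if $z(t)\notin C_{A^s}$ for all $t\in[0,1)$. For each facet $a$ one has $\langle a,z(t)\rangle=(1-t)\langle a,x\rangle+t\langle a,y\rangle$ with $\langle a,y\rangle\geq 0$. If some facet $a$ lies in $y^0\cap x^-$, then $\langle a,z(t)\rangle=(1-t)\langle a,x\rangle<0$ for $t<1$, so $z(t)\notin C_{A^s}$ and $y$ is visible. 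Conversely, if $y^0\cap x^-=\emptyset$, then for every $a\in x^-$ one has $\langle a,y\rangle>0$, so $t\mapsto\langle a,z(t)\rangle$ is strictly increasing, negative at $0$ and nonnegative at $1$, hence vanishes at a unique $t_a\in(0,1)$ and is $\geq 0$ on $[t_a,1]$. As $x\notin C_{A^s}$ the finite set $x^-$ is nonempty, so $t_0:=\max_{a\in x^-}t_a<1$; a short case check over $a\in x^-$, $a\in x^0$, $a\in x^+$ shows $\langle a,z(t_0)\rangle\geq 0$ for all facets, i.e.\ $z(t_0)\in C_{A^s}$, and $z(t_0)\neq y$ because $t_0<1$, so $y$ is not visible from $x$.

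For part~(2), I would first use that $\mbc[\mbn A^s]$ is normal, so $\mbn A^s=\mbz^{d+2}\cap C_{A^s}$ and a lattice point $p$ lies in $\mbn A^s$ iff $\langle a,p\rangle\geq 0$ for every facet $a$. Fix a face $\tau$, set $c:=\sum_{v\in A^s\cap\tau}v\in\mbn(A^s\cap\tau)$, which lies in the relative interior of $\tau$, and observe that $(\mbc[\mbn A^s]_\tau)_x\neq 0$ iff $x\in\mbn A^s+\mbz(A^s\cap\tau)$ iff $x+Nc\in\mbn A^s$ for $N\gg 0$ (the last equivalence again using normality, applied to the face $\tau$). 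Since $\langle a,c\rangle=0$ for facets $a\supseteq\tau$ and $\langle a,c\rangle>0$ for facets $a\not\supseteq\tau$, letting $N\to\infty$ yields
\[
(\mbc[\mbn A^s]_\tau)_x\neq 0\quad\Longleftrightarrow\quad\langle a,x\rangle\geq 0\ \text{ for every facet }a\text{ with }\tau\subseteq a.
\]
It then remains to match the right-hand side with ``$\tau$ is not visible from $x$''. If $x\in C_{A^s}$, both conditions hold: the inequalities are trivial, and $0\in\tau$ is not visible from $x$ since $[x,0]\subseteq C_{A^s}$ (or $0=x$ when $x=0$). If $x\notin C_{A^s}$, I apply part~(1): a point $v$ in the relative interior of $\tau$ has $v^0=\{\,a\text{ facet}\mid\tau\subseteq a\,\}$, so $v$ is visible from $x$ iff some facet $a\supseteq\tau$ has $\langle a,x\rangle<0$; and since $v^0\subseteq w^0$ for every $w\in\tau$, this one condition forces every point of $\tau$ to be visible, while its failure leaves $v\in\tau$ not visible. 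Hence $\tau$ is visible from $x$ iff some facet $a\supseteq\tau$ has $\langle a,x\rangle<0$, exactly the negation of the displayed inequalities.

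The content is essentially that of \cite[Lemma 6.3.2, 6.3.3]{HerzBr}, so I do not expect a genuine obstacle; the points needing care are all bookkeeping in part~(2): keeping the distinction between a face and its relative interior, treating the vertex $0$ (which lies in every face) and the case $x\in C_{A^s}$ separately, and invoking normality of $\mbn A^s$ and of its faces at the two places where semigroup membership is read off as ``lattice points in a cone'' — all of which is licensed by the standing hypotheses on $A$, hence on $A^s$.
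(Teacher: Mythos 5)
Your argument is correct. Note, however, that the paper contains no proof of this statement at all: it is imported verbatim from Bruns--Herzog (Lemmas 6.3.2 and 6.3.3), so there is nothing internal to compare with; your write-up essentially reconstructs the standard argument from that reference, and both parts check out -- the segment parametrization and the case analysis over $x^-,x^0,x^+$ in part (1), and in part (2) the reduction of $(\mbc[\mbn A^s]_\tau)_x\neq 0$ to the inequalities $\langle a_F,x\rangle\geq 0$ for the facets $F\supseteq\tau$ via the interior point $c=\sum_{v\in A^s\cap\tau}v$, followed by the matching with visibility through part (1) applied to a relative interior point of $\tau$ (including the degenerate cases $\tau=\{0\}$ and $x\in C_{A^s}$).

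Two small remarks. First, the only substantive hypothesis you use is normality of $\mbn A^s$, which you invoke as ``licensed by the standing hypotheses'' without verification; it does hold, and the one-line reason is worth recording: if $(x_1,x_2,\underline{x})\in\mbz^{d+2}\cap\mbr_{\geq 0}A^s$ then $x_1,x_2\in\mbz_{\geq 0}$ and $(x_1+x_2,\underline{x})\in\mbz^{d+1}\cap\mbr_{\geq 0}\widetilde{A}=\mbn\widetilde{A}$, and any representation $\sum_i n_i(1,\underline{a}_i)$ with $\sum_i n_i=x_1+x_2$ can be split as $n_i=p_i+q_i$ with $\sum p_i=x_1$, $\sum q_i=x_2$, giving $(x_1,x_2,\underline{x})\in\mbn A^s$. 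Second, the equivalence ``$x\in\mbn A^s+\mbz(A^s\cap\tau)$ iff $x+Nc\in\mbn A^s$ for $N\gg 0$'' does not actually need normality (it is immediate from $q+Nc\in\mbn(A^s\cap\tau)$ for $N$ large); normality is only needed at the next step, when membership of $x+Nc$ in $\mbn A^s$ is read off from the facet inequalities. Neither point affects the validity of your proof.
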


Recall that the facet $\sigma \in \mcf(d+1)$ is spanned by $\underline{a}_0^s , \ldots, \underline{a}_n^s$.  It is the unique maximal element in the face lattice $\mcf_\sigma \subset \mcf$. Denote by $H_\sigma$ its supporting hyperplane (i.e. $\sigma = C_{A^s} \cap H_\sigma$ ) which is given by
\[
H_\sigma = \{ x \in \mbr^{d+2} \mid \langle n_\sigma, x \rangle = 0\}
\]
where $n_\sigma = (0,1,0,\ldots ,0)$. Let $\tau \in \mcf_\sigma$ be a $k$-dimensional face contained in $\sigma$ and set $I_\tau := \{i \mid \underline{a}_i^s \in \tau\}$. Notice that the vectors $\{\underline{a}_i^s \mid i \in I_\tau\}$ span the face $\tau$. This face $\tau$ gives rise to two other faces, namely its ''shadow'' $\tau^s$ which is spanned by the vectors $\{\underline{b}^s_i \mid \underline{a}^s_i \in \tau\}$ and the unique $k+1$-dimensional face $\tau^c$ which contains both $\tau$ and $\tau^s$. Let $\{\tau_1, \ldots, \tau_m\} = \mcf_\sigma(d)$ be the faces of dimension $d$ contained in $\sigma$, which give rise to the facets $\tau_1^c, \ldots, \tau_m^ c$.\\

\begin{example}\label{ex:locCohP1s}
Consider the matrix
\[
A^s = \left(\begin{matrix}1 & 1 & \phantom{-}1 & 0 & 0 & \phantom{-}0 \\ 0 & 0 & \phantom{-}0 & 1 & 1 & \phantom{-}1\\ 0 & 1 & -1 & 0 & 1 & -1 \end{matrix} \right)
\]
where the face $\sigma$ is generated by $(1,0,0),(1,0,1),(1,0,-1)$ and its shadow $\sigma^s$ is generated by $(0,1,0),(0,1,1),(0,1,-1)$. The facet $\tau^c$ is generated by $\tau$ and its shadow $\tau^s$.
\begin{center}
\tikzset{
	MyPersp/.style={scale=2.8,rotate around z=90, rotate around x=255},
		}
\ifpdf
\begin{tikzpicture}[MyPersp]
\coordinate (A) at (0,0,0);
\coordinate (B) at (1.5,0,0);
\coordinate (C) at (1.5,0,1.5);
\coordinate (D) at (1.5,0,-1.5);
\coordinate (E) at (0,1.5,0);
\coordinate (F) at (0,1.5,1.5);
\coordinate (G) at (0,1.2,-1.2);
\draw (A)--(C);
\draw (A)--(D);
\draw (A)--(G);
\draw[->] (A)--(1,0,1);
\node[] at (.95,0,1.35) {$(1,0,-1)$};
\draw[->] (A)--(1,0,-1);
\node[] at (1.1,0,-.8) {$(1,0,1)$};
\draw[->] (A)--(0,1,-1);
\node[] at (-.2,1,-1) {$(0,1,1)$};
\node[opacity=.4] at (0,1,0.5) {\rotslant{-13}{30}{$\sigma^s$}};
\filldraw[black,opacity=.30] (A) -- (C) -- (D) -- (A);
\filldraw[black,opacity=.06] (A) -- (F) -- (G) -- (A);
\filldraw[black,opacity=.06] (A) -- (C) -- (F) -- (A);
\filldraw[black,opacity=.15] (A) -- (D) -- (G) -- (A);
\node[] at (1,0,0) {\rotslant{-6}{-12}{$\sigma$}};

\node[] at (1.6,0,-1.6){$\tau$};
\node[] at (-.03,1.3,-1.25){$\tau^s$};
\node[] at (0.55,0.55,-1.1){\rotslant{4}{30}{$\tau^c$}};
\end{tikzpicture}
\fi
\end{center}
\end{example}

First notice that by Lemma \ref{lem:vis}.1 the facet $\sigma$ is visible from a point $ x\in \mbr^{d+2}$ if and only if $\langle n_\sigma, x \rangle < 0$. If $\langle n_\sigma ,x \rangle \geq 0$ holds, it follows from Lemma \ref{lem:vis} 1. that a face $\tau_i\subset \sigma$ is visible from $x$ if and only if the facet $\tau_i^c$ is visible from $x$, i.e. $\langle n_{\tau_i^c}, x \rangle < 0$.\\

We define
\[
S := \mbz^{d+2} \cap \left(\mbr(\underline{a}_0^s , \ldots , \underline{a}_n^s) +  \mbr_{\geq 0}(\underline{b}_0^s , \ldots , \underline{b}_n^s) \right),
\]
this is the set of $\mbz^{d+2}$-degrees occurring in $\mbc[\mbn A^s]_{\sigma}$. Notice also that we have
$$
H_\sigma = \mbr(\underline{a}_0^s , \ldots , \underline{a}_n^s)
\quad\textup{ and } \quad
H^+_\sigma = \mbr(\underline{a}_0^s , \ldots , \underline{a}_n^s) +  \mbr_{\geq 0}(\underline{b}_0^s , \ldots , \underline{b}_n^s).
$$

Given a point $x \in S$ with $\langle n_\sigma ,x \rangle \geq 0$ we will construct a point $y_x \in \mbz^{d+2}$ which lies in $H_\sigma$ such that  $\tau_i$ is visible from $x$ if and only if it is visible from $y_x$ for all $i=1,\ldots, m$. Denote by $z_x$ the projection of $x$ to the sub-vector space generated by $\underline{b}_0^s , \ldots , \underline{b}_n^s$. Since the semi-group generated by these vectors is saturated, we can express $z_x$ as a linear combination with positive integers
\[
z_x = \sum_{i=0}^n r^x_i \underline{b}_i^s \qquad \text{with} \quad r^x_i \in \mbn
\]
Since we have $0 = \langle n_{\tau_i^c}, \underline{a}_j^s - \underline{b}_j^s\rangle = \langle n_{\tau_i^c}, (1,-1,0,\ldots,0)\rangle$ for any $\underline{a}_j^s, \underline{b}_j^s \in \tau_i^c$ the first two components of the vector $n_{\tau_i^c}$ are equal. Hence, if we set
\[
y_x := x + \sum_{j=0}^n r_j^x \underline{a}_j^s - \sum_{i=0}^n r_j^x \underline{b}_j^s
\]
we easily see that
\begin{equation}\label{eq:xprojection}
\langle n_{\tau_i^c}, x \rangle = \langle n_{\tau_i^c}, y_x\rangle \qquad \text{for} \quad i=1, \ldots ,m.
\end{equation}
It follows that $\tau_i$ is visible from any point $x\in S$ if and only if it is visible from $y_x$, as required.
Let us remark that the vectors $x$ and $y_x$ differ only in the first two components, because the same is true
for the pair of vectors $(\underline{a}_i^s,\underline{b}_i^s)$ for all $i\in\{0,\ldots, n\}$.
\begin{lemma}\label{lem:CohomIshidaDoesNotChange}
In the above situation, let $x\in S$. Then $y_x \in S\cap H_\sigma$ and we have
$$
(L_\sigma^\bullet)_x = (L_\sigma^\bullet)_{y_x}
$$
\end{lemma}
\begin{proof}
For the first point, notice that the vector $x-z_x$ is precisely the projection of $x$ to $H_\sigma$.
On the other hand, we have $y_x=x-z_x+\sum_{i=0}^n n_i^x \underline{a}_i^s$, and
 $\sum_{i=0}^n n_i^x \underline{a}_i^s$ is an element of $H_\sigma$ anyhow.

The second statement is an easy consequence of Lemma \ref{lem:vis}.2. More precisely,
Equation \eqref{eq:xprojection} shows that the visibility of some facet $\tau^c_i$ is the same
from $x$ and from $y_x$. Moreover, $\sigma$ is not visible from both $x$ and $y_x$ (i.e.,
$\langle n_\sigma,x\rangle \geq 0$, $\langle n_\sigma,y_x\rangle \geq 0$), hence, also the
visibility of $\tau_i$ is the same from $x$ and from $y_x$. We conclude that
any localization $\dC[\dN A^s]_\tau$ (for any face $\tau\subset \sigma$) vanishes in degree $x$ if and only if
it vanishes in degree $y_x$. This yields the desired equality $(L_\sigma^\bullet)_x = (L_\sigma^\bullet)_{y_x}$.
\end{proof}

We are now able to compute the cohomology of the Ishida complex with respect to the face $\sigma$. Set
\[
H^-_{\tau^c_i} := \{ x \in \mbr^{d+2} \mid \langle n_{\tau^c_i},x\rangle < 0\} \qquad \text{for } i =1,\ldots,m
\]
and define
\[
S^- := \mbz^{d+2} \cap H^+_\sigma \cap \bigcap_{i=1}^m H^-_{\tau^c_i}.
\]
Notice that $S= \dZ^{d+2} \cap H^+_\sigma$, hence we have a natural inclusion
$S^- \subset S$.
\begin{example}
We consider again the matrix
\[
A^s = \left(\begin{matrix}1 & 1 & \phantom{-}1 & 0 & 0 & \phantom{-}0 \\ 0 & 0 & \phantom{-}0 & 1 & 1 & \phantom{-}1\\ 0 & 1 & -1 & 0 & 1 & -1 \end{matrix} \right)
\]
and take the point $x=(-1,1,1)$. Its projection to $\mbr(\underline{b}_0^s, \ldots, \underline{b}^s_n)$ is $(0,1,1)$, hence we get
\[
y_x = x + (1,0,1)-(0,1,1) = (0,0,1) \in H^\sigma.
\]

\ifpdf
\begin{center}
\tikzset{
	MyPersp/.style={scale=2.8,rotate around z=90, rotate around x=255},
		}
\begin{tikzpicture}[MyPersp]
\coordinate (A) at (0,0,0);
\coordinate (B) at (1.5,0,0);
\coordinate (C) at (1.5,0,1.5);
\coordinate (D) at (1.5,0,-1.5);
\coordinate (E) at (0,1.5,0);
\coordinate (F) at (0,1.5,1.5);
\coordinate (G) at (0,1.2,-1.2);
\draw (A)--(C);
\draw (A)--(D);
\draw (A)--(G);
\draw[->] (A)--(1,0,1);
\node[] at (.95,0,1.35) {$(1,0,-1)$};
\draw[->] (A)--(1,0,-1);
\node[] at (1.1,0,-.8) {$(1,0,1)$};
\draw[->] (A)--(0,1,-1);
\node[] at (-.2,1,-1) {$(0,1,1)$};
\node[] at (-1,1,-1) {$\bullet$};
\node[] at(-1.05,1.1,-1.04) {$x$};
\node[] at (0,0,-1) {$\bullet$};
\node[] at (0.1,-0.2,-1) {$y_x$};
\draw[->] (-0.9,0.9,-1) --(-0.07,0.07,-1);
\filldraw[black,opacity=.05] (1.5,0,1.5) --(1.5,0,-1.5) -- (-1.5,0,-1.5) -- (-1.5,0,1.5) -- cycle;
\draw[dashed] (0,0,0) -- (-1.5,0,-1.5);
\draw[dashed] (0,0,0) -- (-1.5,0,1.5);
\node[opacity=.4] at (0,1,0.5) {\rotslant{-13}{30}{$\sigma^s$}};
\filldraw[black,opacity=.30] (A) -- (C) -- (D) -- (A);
\filldraw[black,opacity=.06] (A) -- (F) -- (G) -- (A);
\filldraw[black,opacity=.06] (A) -- (C) -- (F) -- (A);
\filldraw[black,opacity=.15] (A) -- (D) -- (G) -- (A);
\node[] at (1,0,0) {\rotslant{-6}{-12}{$\sigma$}};
\node[] at (0,0,1) {\rotslant{-6}{-12}{$ H^\sigma$}};

\node[] at (1.6,0,-1.6){$\tau$};
\node[] at (-.03,1.3,-1.25){$\tau^s$};
\node[] at (0.55,0.55,-1.1){\rotslant{4}{30}{$\tau^c$}};
\end{tikzpicture}
\end{center}
\fi
\end{example}

\begin{proposition}\label{prop:Ishidadeg}
Let $A^s$ as above. Take any $x\in \dZ^{d+2}$ and denote by $L^\bullet_\sigma$ the Ishida complex
with respect to the face $\sigma$ generated by $\underline{a}^s_0, \underline{a}^s_1, \ldots, \underline{a}^s_n$.
\begin{enumerate}
\item If $x \notin S$, then $(L^\bullet_\sigma)_x = 0$.
\item If $x \in S \setminus S^-$, then $H^ i(L^\bullet_\sigma)_x = 0$ for all $i$
\item If $x \in S^-$, then $H^i(L^\bullet_\sigma)_x=0$ for $i \neq d+1$ and $H^{d+1}(L^\bullet_\sigma)_x \simeq \dC$.
\end{enumerate}
\end{proposition}
\begin{proof}
The first point follows from the fact that we have $(\mbc[\mbn A^s]_\sigma)_x = 0$ for $x \notin S$, hence $(L^i_\sigma)_x = 0 $ for all $i$.
For the proof of the second and third point, it is sufficient to consider the case where $x\in H_\sigma$: Namely, in both cases
we have $x\in S$ so that Lemma \ref{lem:CohomIshidaDoesNotChange} apply. We can thus replace $x$ by $y_x$, i.e., $(L^\bullet_\sigma)_x=
(L^\bullet_\sigma)_{y_x}$. Moreover, $x \in S^-$ if and only if $y_x \in S^- \cap H_\sigma$ by formula \eqref{eq:xprojection}. Hence we will suppose
in the remainder of this proof that $x\in S\cap H_\sigma$.

We will reduce statements 2. and 3. for $x\in S\cap H_\sigma$ to the computation of the
local cohomology of a semi-group ring with respect to a maximal ideal via the Ishida complex
as done in \cite[Theorem 6.3.4]{HerzBr}.
For this, we will use the matrix $\widetilde{A}=\left(\widetilde{\underline{a}}_0,\widetilde{\underline{a}}_1,\ldots,\widetilde{\underline{a}}_n\right)$,
which can be seen as the matrix of the first $n+1$ columns of $A^s$, with the second row deleted.
The semigroup $\mbn \widetilde{A}$ (resp. the cone $C_{\widetilde{A}}$) embeds into $\mbn A^s$ (resp. into $C_{A^s}$) via the map
$\widetilde{\underline{a}}_i \mapsto \underline{a}^s_i$, and these embeddings are compatible
with the embeddings $\dR^{r+1} \hookrightarrow \dR^{r+2}$ (resp. $\dZ^{r+1} \hookrightarrow \dZ^{r+2}$)
given by
$$
(x_1,x_3,x_4\ldots,x_{d+2}) \longmapsto (x_1,0,x_3,x_4,\ldots,x_{d+2}).
$$
The following equality of semi-groups holds true:
\begin{equation}\label{eq:Sminus}
S^-\cap H_\sigma = \dZ^{d+1}\cap \textup{Int}\left(-C_{\widetilde{A}}\right),
\end{equation}
where both intersections are taken in $\dZ^{d+2}$. To show this, notice
that $C_{\widetilde{A}}=\dR_{\geq 0}(\underline{a}_0^s, \ldots, \underline{a}_n^s) \cap H_\sigma$, that $\tau_i^c \cap H_\sigma = \tau_i$ and hence
$$
\bigcap_{i=1}^m \left(H_{\tau_i^c}^- \cap H_\sigma \right) = \textup{Int}(-C_{\widetilde{A}})
$$

Consider the projection map
\begin{align}
p: \mbr^{d+2} &\lra \mbr^{d+1} \notag \\
 (x_1,x_2,x_3,\ldots,x_{d+2}) &\mapsto (x_1,x_3,\ldots,x_{d+2}) \notag
\end{align}
which forgets the second component, then for all $\tau\subset \sigma$ and all elements $x\in S\cap H_\sigma$ we have that
\[
\left(\mbc[\mbn A^s]_\tau\right)_x \simeq \left(\mbc[\mbn \widetilde{A}]_{p(\tau)}\right)_{p(x)}
\]
Under this isomorphism the $\mbz^{d+2}$-graded part $(L^\bullet_\sigma)_x$ of the Ishida complex with respect to the face $\sigma$  goes over to the $\mbz^{d+1}$-graded part $(L^\bullet)_{p(x)}$ of the Ishida complex considered in \cite{HerzBr} (i.e., the Ishida complex
of the semi-group $\dC[\dN \widetilde{A}]$ with respect to the maximal ideal generated by
$(w_0,\ldots, w_n)$). Using formula \eqref{eq:Sminus}, the proposition follows now from Theorem 6.3.4 in loc.cit.

\end{proof}
We finish this subsection by the following easy consequence, which will be crucial in the proof of the main result (Theorem \ref{thm:HodgeGKZ} below).
\begin{corollary}\label{cor:IshidaNegative}
In the above situation, we have $H^i(L^\bullet_\sigma)=0$ for
all $i\neq d+1$, $H^{d+1}(L^\bullet_\sigma)_x=0$ for all $x\in \dZ^{d+2}\backslash S^-$ and $\deg_\dZ(H^{d+1}(L^\bullet_\sigma)_x)< 0$
for $x\in S^-$, where $\deg_\dZ(-)$ refers to the $\dZ$-grading of $H^i(L^\bullet_\sigma)$ corresponding to the first row of $A^s$.

In other words, the cohomology groups of the Ishida complex (with respect to the face $\sigma$) are concentrated in negative degrees.
\end{corollary}
\begin{proof}
The first two statements are precisely those from Proposition \ref{prop:Ishidadeg}, points 1. and 2.
In order to show the third one, notice that for any $x\in S$, we have
$\deg_\mbz(x) \leq \deg_\mbz(y_x)$ (this follows from the very definition of the vector $y_x$).
Now let $x\in S^-$, and suppose that $H^{d+1}(L^\bullet_\sigma)_x \neq 0$.
From Lemma \ref{lem:CohomIshidaDoesNotChange} we deduce that
$$
H^{d+1}(L^\bullet_\sigma)_x  = H^{d+1}(L^\bullet_\sigma)_{y_x},
$$
and as already remarked above, $y_x\in S^-\cap H_\sigma$ because $x\in S^-$. However,
we deduce from formula \eqref{eq:Sminus} that $\deg_\dZ(y_x)<0$ if $y_x\in S^-\cap H_\sigma$,
so that we obtain $\deg_\dZ(H^{d+1}(L^\bullet_\sigma)_x)<0$, as required.
\end{proof}

\subsection{Statement and proof of the main theorem}\label{subsec:MainThm}

In  this subsection we finally finish the computation of the direct image $\pi_{2+} \msn$. The $\Gamma$-cyclicity of $\mss/\msj$  (cf. \ref{cor:SJGacyclic}) which follows directly from the results of the previous two subsections enables us to compute the global sections of $\pi_{2+} \msn$ as the cohomology of a Koszul complex (cf. Proposition \ref{prop:directImKos}). Finally we are able to compute the Hodge filtration on the GKZ-system in Theorem \ref{thm:HodgeGKZ}.

\begin{corollary}\label{cor:SJGacyclic}
The  $\mss$-modules $\mss/\msj$ are $\Gamma$-acyclic.
\end{corollary}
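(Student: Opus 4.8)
The plan is to deduce the corollary directly from the preceding corollary together with the Serre-type comparison between sheaf cohomology on $\msp \times \msv$ and local cohomology of the graded ring $\tS$. First I would recall that $\msp \times \msv = \mbc_z \times \mbp(W) \times V$ is, with respect to the $w$-grading, the $\mathrm{Proj}$ of $\tS$, and that $\mss / \msj$ is by construction the quasi-coherent sheaf $\widetilde{\tS / \tJ_{A^s}}$ associated to the $\mbz$-graded $\tS$-module $\tS / \tJ_{A^s}$. Applying the version of \cite[Proposition 2.1.3]{EGA31} stated above — valid, as noted there, for modules over the non-commutative ring $\tS$ — gives, for every $i \geq 1$, an isomorphism of $\mbz$-graded $\tS$-modules
\[
\bigoplus_{a \in \mbz} H^i\big(\msp \times \msv, (\mss / \msj)(a)\big) \simeq H^{i+1}_{(\underline{w})}(\tS / \tJ_{A^s})\, .
\]
Since $(\mss / \msj)(0) = \mss / \msj$, this identifies $H^i(\msp \times \msv, \mss / \msj)$ with the $w$-degree-zero graded piece of $H^{i+1}_{(\underline{w})}(\tS / \tJ_{A^s})$.

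The key input is then the preceding corollary, which asserts that each $\mbz$-graded local cohomology module $H^{j}_{(\underline{w})}(\tS / \tJ_{A^s})$ lives in strictly negative $\mbz$-degree; applied with $j = i + 1 \geq 2$, this forces the degree-zero part to vanish, so that $H^i(\msp \times \msv, \mss / \msj) = 0$ for all $i \geq 1$, which is precisely $\Gamma$-acyclicity of $\mss / \msj$. (Internally, that corollary rests on the chain $H^{j}_{(\underline{w})}(\tS / \tJ_{A^s}) \simeq \tS \otimes_T H^j_I(\mbc[\mbn A^s]) \simeq \tS \otimes_T H^j(L^\bullet_\sigma)$ coming from Lemma \ref{lem:tensorprod} and Proposition \ref{prop:LocCohIshida}, together with Proposition \ref{prop:Ishidadeg}, which gives $H^j(L^\bullet_\sigma) = 0$ for $j \neq d+1$ and shows that $H^{d+1}(L^\bullet_\sigma)$ is supported on $S^- \subseteq \mbr_{<0}(\underline{a}^s_0, \ldots, \underline{a}^s_n)$, while $\tS$ is non-negatively graded over $T$.)

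Since the genuinely substantive work — the Ishida-complex computation of Proposition \ref{prop:Ishidadeg}, the change-of-rings Lemma \ref{lem:tensorprod}, and the extension of Serre's correspondence to the ring $\tS$ — has already been carried out, the remaining steps here are bookkeeping. The only points I would be careful about are that the isomorphism above is genuinely graded (so that ``extracting the $w$-degree-zero part'' makes sense and matches $\Gamma$ of the untwisted sheaf $\mss / \msj$), and that $\Gamma$-acyclicity of $\mss / \msj$ really does transfer to the shifted sheaves $z^{\bullet}\, \mss / \msj$ appearing as the terms of the complexes $\msl^\bullet$ and $\msk^\bullet$, as already indicated before the statement. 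I do not anticipate a real obstacle; the difficulty of the section is concentrated in the results quoted above rather than in this corollary itself.
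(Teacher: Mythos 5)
Your argument is correct and is exactly the paper's proof: one takes the degree-zero part of the graded isomorphism \eqref{eq:RGammavsLoc} and uses the preceding corollary that the local cohomology modules $H^{\ast}_{(\underline{w})}(\tS/\tJ_{A^s})$ are concentrated in strictly negative $\mbz$-degree to conclude $H^i(\msp\times\msv,\mss/\msj)=0$ for $i\geq 1$. Your cautionary remarks about gradedness and the twists $z^{\bullet}$ are fine but not needed beyond what the paper already records.
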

\begin{proof}
If we consider the degree zero part of formula \eqref{eq:RGammavsLoc}, then it suffices to show that
the $\mbz$-graded local cohomology $S$-modules $H_{(w)}^*(S/J_{A_s})$ are concentrated in negative degrees.
By Proposition \ref{prop:LocCohIshida} and Lemma \ref{lem:tensorprod}, these local cohomology groups
are calculate by the Ishida complex $L^\bullet_\sigma$, i.e., we have isomorphisms
$$
H_{(w)}^k(S/J_{A_s}) \simeq S \otimes_T H^k_I(\mbc[\mbn A^s]) \simeq S \otimes_T H^k(L^\bullet_\sigma).
$$
The cohomology groups $H^k(L^\bullet_\sigma)$ are concentrated in negative degrees by Corollary \ref{cor:IshidaNegative}
(and tensoring with $S$ does not change the $\dZ$-degree which is counted with respect to the degree of the variables $w_0,\ldots,w_n$).
Hence the result follows.
\end{proof}
\begin{proposition}\label{prop:directImKos}
There is the following isomorphism in $D^b(\msr_V)$:
\[
\Gamma \pi_{2+} \msn \simeq \Gamma (Kos^{\bullet}(z^{-d} \mss / \msj, (\widetilde{E}-\beta_k)_{k=0,\ldots,d})
\]
\end{proposition}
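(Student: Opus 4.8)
The plan is to assemble the statements proved in the preceding subsections, so that only a short acyclicity argument remains. First I would record that, by the de Rham description \eqref{eq:pi2Rmoddirectim} of the direct image in the category of $\msr$-modules together with Proposition \ref{prop:quasiIsos}, there is an isomorphism $\pi_{2+}\msn\simeq R\pi_{2*}(\msl^\bullet)$, and that by Proposition \ref{prop:LocalDescriptionComplL} and the glueing discussion following it the complex $\msl^\bullet$ is globally isomorphic to $Kos^\bullet(z^{-d}\mss/\msj,(\widetilde{E})_{k=0,\ldots,d})$. Applying $\Gamma$ and invoking Lemma \ref{lem:equivcatD-R}, the proposition reduces to the identification $\Gamma R\pi_{2*}(\msl^\bullet)\simeq\Gamma(\msl^\bullet)$, i.e., to showing that no higher cohomology enters.

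Next I would spell out the chain of isomorphisms \eqref{eq:GLqisGKos}: since $\msv=V\times\mbc_z$ is affine, $\Gamma(\msv,-)$ is exact on quasi-coherent sheaves and hence agrees with $R\Gamma(\msv,-)$; composing derived functors, $R\Gamma(\msv,-)\circ R\pi_{2*}\simeq R\Gamma(\msp\times\msv,-)$, so that $\Gamma R\pi_{2*}(\msl^\bullet)\simeq R\Gamma(\msp\times\msv,\msl^\bullet)$, which by Lemma \ref{lem:equivcatD-R} may be computed on the level of global sections of $\msr$-modules. It therefore suffices to prove that $R\Gamma(\msp\times\msv,\msl^\bullet)$ is computed by $\Gamma$ degreewise, i.e., that each term of $\msl^\bullet$ is $\Gamma$-acyclic.

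This is exactly the content that has been prepared: each term of $Kos^\bullet(z^{-d}\mss/\msj,(\widetilde{E})_{k=0,\ldots,d})$ is a finite direct sum of copies of $z^{-d-l}\mss/\msj$ for $l=0,\ldots,d$, and since $z$ is merely a polynomial coordinate on the affine factor $\mbc_z$ of $\msp\times\msv=\mbc_z\times\mbp(W)\times V$ (so that the cohomology along $\mbp(W)$ is unaffected by the $z$-twist), the $\Gamma$-acyclicity of every such term follows from that of $\mss/\msj$, which is Corollary \ref{cor:SJGacyclic}. Hence the hypercohomology spectral sequence of $\msl^\bullet$ degenerates, $R\Gamma(\msp\times\msv,\msl^\bullet)\simeq\Gamma(\msl^\bullet)=\Gamma(Kos^\bullet(z^{-d}\mss/\msj,(\widetilde{E})_{k=0,\ldots,d}))$, and combining this with the reductions above gives $\Gamma\pi_{2+}\msn\simeq\Gamma(Kos^\bullet(z^{-d}\mss/\msj,(\widetilde{E})_{k=0,\ldots,d}))$, as claimed. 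At this stage the argument is essentially bookkeeping; the genuine obstacle, namely establishing the $\Gamma$-acyclicity of $\mss/\msj$ through the variant Ishida complex and the concentration of the local cohomology modules $H^i_{(\underline{w})}(\tS/\tJ_{A^s})$ in strictly negative degrees, has already been overcome in the run-up to this proposition.
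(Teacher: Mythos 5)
Your proposal is correct and follows essentially the same route as the paper: it combines the de Rham description \eqref{eq:pi2Rmoddirectim}, Proposition \ref{prop:quasiIsos} and Proposition \ref{prop:LocalDescriptionComplL} to reduce to $R\Gamma(\msl^\bullet)$, and then uses the exactness of $\Gamma$ on the affine base together with the $\Gamma$-acyclicity of $\mss/\msj$ (Corollary \ref{cor:SJGacyclic}) to replace $R\Gamma$ by $\Gamma$, exactly as in the chain \eqref{eq:GLqisGKos}. Your remark that the $z$-twists $z^{-d-l}$ do not affect acyclicity is a correct (and slightly more explicit) version of the paper's implicit reduction of each Koszul term to $\mss/\msj$.
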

\begin{proof}
By formula \eqref{eq:pi2Rmoddirectim}, Proposition \ref{prop:quasiIsos} and Proposition \ref{prop:LocalDescriptionComplL} we have the isomorphisms
\begin{align}
\Gamma \pi_{2+} \msn &\simeq \Gamma R \pi_{2*} ( \Omega^{\bullet+n}_{\msp \times \msv / \msv} \otimes \msn)\notag \\ &\simeq R\Gamma R\pi_{2*}( \Omega^{\bullet+n}_{\msp \times \msv / \msv} \otimes \msn) \notag \\
&\simeq R\Gamma ( \Omega^{\bullet+n}_{\msp \times \msv / \msv} \otimes \msn)\notag \\
&\simeq R\Gamma(\msl^\bullet)
\end{align}
Using the last isomorphism in \eqref{eq:GLqisGKos} and Corollary \ref{cor:SJGacyclic} we get
\[
R\Gamma(\msl^\bullet) \simeq R\Gamma(Kos^\bullet(z^{-d}\mss / \msj, (\tilde{E})_{k=0,\ldots,d})) \simeq \Gamma(Kos^\bullet(z^{-d}\mss / \msj, (\tilde{E})_{k=0,\ldots,d}))
\]
\end{proof}

Denote by $\textup{R}_\msv$ the ring
\[
\textup{R}_\msv = \mbc[z, \lambda_0, \ldots , \lambda_n]\langle z \p_{\lambda_0}, \ldots , z \p_{\lambda_n} \rangle \, ,
\]
let $J^\lambda_{\widetilde{A}} \subset \textup{R}_\msv$ be the left ideal generated by
\[
\Box^{\lambda}_{\underline{l}} = \prod_{l_i > 0} (z\p_{\lambda_i})^{l_i} - \prod_{l_i < 0} (z\p_{\lambda_i})^{- l_i} \qquad \text{for} \quad \underline{l} \in \mbl_{\widetilde{A}}
\]
and let $I^\lambda_{\widetilde{A}} \subset \textup{R}_\msv
$ be the left ideal generated by $J^\lambda_{\widetilde{A}}$ and the operators
\begin{align}
&\tilde{E}_k -\beta_k := \sum_{i=1}^n a_{ki}  \lambda_i z\p_{\lambda_i}-\beta_k  \qquad \text{for}\quad k=1, \ldots ,d \notag \\
&\tilde{E}_0-\beta_0 := \sum_{i=0}^n \lambda_i z\p_{\lambda_i}-\beta_0 \notag
\end{align}
\begin{lemma}
There is the following isomorphism of $\textup{R}_\msv$-modules
\[
\Gamma\, \mch^0 (\pi_{2+} \msn ) \simeq \mch^0( \Gamma \pi_{2 + } \msn) \simeq z^{-d}\textup{R}_\msv / I^\lambda_{\widetilde{A}}
\]
\end{lemma}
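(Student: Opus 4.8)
The plan is to reduce the statement to an explicit computation with the Koszul complex produced in Proposition~\ref{prop:directImKos}. The first isomorphism $\Gamma\,\mch^0(\pi_{2+}\msn)\simeq\mch^0(\Gamma\,\pi_{2+}\msn)$ is formal: since $V$ is affine, Lemma~\ref{lem:equivcatD-R} makes $\Gamma(\msv,-)$ an exact functor on quasi-coherent $\msr_V$-modules, and $\pi_{2+}\msn$ is a complex in $D^b(\msr_V)$ with quasi-coherent cohomology, so $\Gamma$ commutes with passing to the cohomology $\mch^0$. Thus it suffices to identify $\mch^0(\Gamma\,\pi_{2+}\msn)$.

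By Proposition~\ref{prop:directImKos} we have $\Gamma\,\pi_{2+}\msn\simeq\Gamma\bigl(Kos^\bullet(z^{-d}\mss/\msj,(\tilde E_k)_{k=0,\ldots,d})\bigr)$, and the right hand side is an honest complex because each of its terms is a finite direct sum of copies of $\mss/\msj$ (up to powers of $z$), which is $\Gamma$-acyclic by Corollary~\ref{cor:SJGacyclic}. The operators $\tilde E_0,\ldots,\tilde E_d$ are global sections of $\mss$ that do not involve the chart variables $w_{iu}$, so taking global sections again yields a Koszul complex, namely $Kos^\bullet\bigl(z^{-d}\Gamma(\msp\times\msv,\mss/\msj),(\,\cdot\,\tilde E_k)_{k=0,\ldots,d}\bigr)$ on the commuting right-multiplication maps $\cdot\,\tilde E_k$ acting on the left module $\Gamma(\msp\times\msv,\mss/\msj)$. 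Its top cohomology is the cokernel of the last differential, and since for any ring $\Lambda$, left ideal $K\subset\Lambda$ and elements $E_0,\ldots,E_d$ normalizing $K$ one has, inside $\Lambda/K$, the equality $\sum_k\{[sE_k]:s\in\Lambda\}=\bigl(K+\textstyle\sum_k\Lambda E_k\bigr)/K$, this cokernel is $z^{-d}$ times the quotient of $\Gamma(\msp\times\msv,\mss/\msj)$ by the \emph{left} ideal generated by $\tilde E_0,\ldots,\tilde E_d$.

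It then remains to identify $\Gamma(\msp\times\msv,\mss/\msj)$ with $S_\lambda/J^\lambda_{\widetilde A}$. By construction $\mss/\msj$ is the sheaf on $\msp\times\msv$ associated with the $\mbz$-graded $\tS$-module $\tS/\tJ_{A^s}$, the grading being by total degree in the $w_i$; moreover the degree-zero parts of $H^0_{(\underline w)}(\tS/\tJ_{A^s})$ and $H^1_{(\underline w)}(\tS/\tJ_{A^s})$ vanish, since these local cohomology modules are concentrated in strictly negative degrees. Taking the degree-zero component of the exact sequence \cite[Proposition~2.1.3]{EGA31} thus gives $\Gamma(\msp\times\msv,\mss/\msj)\simeq[\tS/\tJ_{A^s}]_0$. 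Now $\tS$ is non-negatively graded with $\deg w_i=1$ and $\deg z=\deg\lambda_i=\deg(z\p_{\lambda_i})=0$, so $[\tS]_0=S_\lambda$; and each generator $\Box_{(k,l)}$ of the graded ideal $\tJ_{A^s}$ is homogeneous of degree $\sum_{k_i>0}k_i\ge 0$, which is $0$ exactly when $k=0$. Because $\tS$ has no elements of negative degree, the degree-zero component of $\tJ_{A^s}$ is therefore the left $S_\lambda$-ideal generated by the operators $\Box_{(0,\underline l)}$ with $\underline l\in\mbl_{\widetilde A}$, each of which equals the box operator $\Box^\lambda_{\underline l}$ of $S_\lambda$; that ideal is $J^\lambda_{\widetilde A}$. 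Hence $[\tS/\tJ_{A^s}]_0\simeq S_\lambda/J^\lambda_{\widetilde A}$, the left ideal generated there by $\tilde E_0,\ldots,\tilde E_d$ lifts to $I^\lambda_{\widetilde A}$, and we obtain $\mch^0(\Gamma\,\pi_{2+}\msn)\simeq z^{-d}S_\lambda/I^\lambda_{\widetilde A}$, which completes the proof.

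The Koszul and sheaf-theoretic bookkeeping is routine; the step I expect to need the most care is the last one, i.e. showing that the graded ideal $\tJ_{A^s}$ has no ``accidental'' degree-zero elements besides those built from the box operators $\Box^\lambda_{\underline l}$ with $\underline l\in\mbl_{\widetilde A}$. This is precisely where one exploits that $\tS$ is non-negatively graded and that the $\Box_{(k,l)}$ are homogeneous, and, together with the $\Gamma$-acyclicity of $\mss/\msj$ (Corollary~\ref{cor:SJGacyclic}) and the degree-zero vanishing of the $(\underline w)$-local cohomology, it is what licenses the repeated interchange of $\Gamma$ with $\mch^0$ and with the formation of (co)kernels.
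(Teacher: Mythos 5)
Your proof is correct and follows essentially the same route as the paper: the first isomorphism via the exactness of $\Gamma$ from Lemma \ref{lem:equivcatD-R}, the second via Proposition \ref{prop:directImKos}, the identification $\Gamma(\mss/\msj)\simeq S^\lambda/J^\lambda_{\widetilde{A}}$, and the computation of $H^0$ of the Koszul complex as the quotient by the left ideal generated by the $\tilde{E}_k$. You merely spell out details the paper leaves implicit (the degree-zero part of the EGA sequence together with the negative-degree concentration of local cohomology, and the homogeneity argument showing $[\tJ_{A^s}]_0=J^\lambda_{\widetilde{A}}$, where one also uses $\sum_i k_i=0$ for $(\underline{k},\underline{l})\in\mbl_{A^s}$), which is consistent with the paper's setup.
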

\begin{proof}
The first isomorphism follows from Lemma \ref{lem:equivcatD-R}. The second isomorphism follows from Proposition \ref{prop:directImKos}, the isomorphism
\[
\Gamma(\mss/\msj) \simeq  \textup{R}_\msv / J^\lambda_{\widetilde{A}}
\]
and the isomorphism
\[
z^{-d}\textup{R}_\msv  / I^\lambda_{\widetilde{A}} \simeq H^0\left(Kos^\bullet\left(z^{-d}\textup{R}_\msv / J^\lambda_{\widetilde{A}}, (\tilde{E}_k - \beta_k)_{k=0,\ldots,d}\right)\right)
\]
\end{proof}
We are now able to prove the main theorem of this paper. Let $\widetilde{A}$ be the $(d+1)\times(n+1)$ integer matrix
\[
\widetilde{A} = \left(\begin{matrix}
1 & 1 & \ldots & 1 \\ 0 & a_{11} & \dots & a_{1n} \\ \vdots & \vdots & &\vdots \\ 0 & a_{d1} & \dots &a_{dn}
\end{matrix} \right)
\]
given by a matrix $A = (a_{jk})$ such that $\mbz \widetilde{A} = \mbz^{d+1}$ and such that $\mbn \widetilde{A} = \mbz^{d+1} \cap \mbr_{\geq 0} \widetilde{A}$.
\begin{theorem}\label{thm:HodgeGKZ}
Let $\widetilde{A}$ be an integer matrix as above, $\widetilde{\beta} \in \mathfrak{A}_{\widetilde{A}}$ and $\beta_0 \in (-1,0]$. The GKZ-system $\mcm_{\widetilde{A}}^{\widetilde{\beta}}$ carries the structure of a mixed Hodge module whose Hodge filtration is given by the shifted order filtration, i.e.
\[
(\mcm^{\widetilde{\beta}}_{\widetilde{A}}, F^H_\bullet) \simeq (\mcm^{\widetilde{\beta}}_{\widetilde{A}},F^{ord}_{\bullet+d})\, .
\]
\end{theorem}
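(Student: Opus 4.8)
The plan is to assemble the ingredients built up in Sections~2 and 3: the four-term exact sequence of Theorem~\ref{thm:4termseq}, the chart-wise identification of the Hodge filtration on the sheaf $\mcn$ of \eqref{eq:SheafN}, and the $\msr$-module computation of the direct image $\pi_{2+}\mcn$.

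First I would invoke Theorem~\ref{thm:4termseq} with $\beta = 0 \in \mbn\widetilde{A}$. This equips $\mcm^0_{\widetilde{A}}$ with a mixed Hodge module structure and yields an isomorphism of mixed Hodge modules $\mcm^0_{\widetilde{A}} \simeq \mch^{n+1}({^*}\mcr^\circ_c(g_*\,{^p}\mbq^H_T))$. Passing to the underlying filtered $\mcd_V$-module, this presents $(\mcm^0_{\widetilde{A}}, F^H_\bullet)$ as $\mch^0\pi_{2+}(\mcn, F^H_\bullet)$, where $\mcn$ is as in \eqref{eq:SheafN}; by the chart computations of Section~3 (starting from Lemma~\ref{lem:RestrictionDMod}) the Hodge filtration on $\mcn$ restricts on each chart $W_u\times V$ to the order filtration shifted by $n-d$, i.e.\ $F^H_{p+(n-d)}\mcn_{|W_u\times V} = F^{ord}_p\,\mcd_{W_u\times V}/\cK_u$.

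Next I would move to the Rees picture. Applying the exact Rees functor $\mst$ produces the $\msr_{\msp\times\msv}$-module $\msn = \mst(\mcn)$; since $\mst$ commutes with $\pi_{2+}$ and with cohomology, $\mch^0\pi_{2+}\msn \simeq \mst(\mcm^0_{\widetilde{A}}, F^H_\bullet)$. On the other hand, the filtered Koszul resolution of $\mcn$ (Proposition~\ref{prop:resofNu}) and its $\msr$-module incarnation $\msk^\bullet$, the quasi-isomorphisms of Proposition~\ref{prop:quasiIsos}, the local description of Proposition~\ref{prop:LocalDescriptionComplL}, the $\Gamma$-acyclicity of $\mss/\msj$ (Corollary~\ref{cor:SJGacyclic}) and Proposition~\ref{prop:directImKos} combine, exactly as in the last lemma preceding this theorem, into an isomorphism of $\msr_V$-modules
\[
\Gamma\,\mch^0\pi_{2+}\msn \;\simeq\; z^{-d}\,S^\lambda/I^\lambda_{\widetilde{A}}\,.
\]
Here $S^\lambda/I^\lambda_{\widetilde{A}}$ is precisely the Rees module of $\Gamma(V,\mcm^0_{\widetilde{A}})$ for the order filtration, since the operators $\Box^\lambda_{\underline l}$ and $\tilde E_k$ are the Rees lifts of the GKZ operators defining $\mcm^0_{\widetilde{A}}$; thus $z^{-d}S^\lambda/I^\lambda_{\widetilde{A}} = R_{F^{ord}_{\bullet+d}}\Gamma(V,\mcm^0_{\widetilde{A}})$, the factor $z^{-d}$ recording the shift by $d$. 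Comparing the two expressions for $\Gamma\,\mch^0\pi_{2+}\msn$ and applying the quasi-inverse of $\mst$ (Lemma~\ref{lem:equivcatD-R}), which matches the distinguished cyclic generators on the two sides, gives the filtered isomorphism $(\mcm^0_{\widetilde{A}}, F^H_\bullet) \simeq (\mcm^0_{\widetilde{A}}, F^{ord}_{\bullet+d})$.

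Almost all the work lies in the preceding two sections; the step here that calls for the most care is the bookkeeping of the various shifts — the $n-d$ shift on $\mcn$, the shift $[n]$ together with the pole-order-one twist of $\Omega^\bullet_{\msp\times\msv/\msv}$ in the $\msr$-module direct image, and the grading convention in the Rees construction — so that the total shift is exactly $d$ and one lands on $F^{ord}_{\bullet+d}$ rather than on a neighbouring step. One should also record that $\mch^i\pi_{2+}\mcn$ does not interfere in the relevant degree for $i\neq 0$, which follows from the exactness of the four-term sequence of Theorem~\ref{thm:4termseq}.
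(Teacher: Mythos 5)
Your proposal is correct and follows essentially the same route as the paper: identify $\mcm^0_{\widetilde{A}}\simeq \mch^0(\pi_{2+}\mcn)$ via Theorem \ref{thm:4termseq}, transfer to the Rees picture with the functor $\mst$, and read off the filtration from $\Gamma\,\mch^0(\pi_{2+}\msn)\simeq z^{-d}S^\lambda/I^\lambda_{\widetilde{A}}$, which is the Rees module of the order filtration shifted by $d$. The shift bookkeeping and the remark on the vanishing of the other cohomology degrees that you spell out are exactly the points the paper's (very brief) proof leaves implicit.
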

\begin{proof}
Recall from Proposition \ref{prop:GKzeqtwRad} that we have the isomorphism
\[
{^H\!\!}\mcm^{\widetilde{\beta}}_{\widetilde{A}} \simeq \mch^{2n+d+1} (p_*( q^* {^p}\mbc_T^\beta \otimes F^* j_{!} {^p}\mbc^{-\beta_0-1}_{\mbc^*}) \in \MHM(V).
\] The underlying $\cD_V$-module of this mixed Hodge module is
$$
\mch^{2n+d+1}(p_{+} (q^\dag \mco_T^\beta \otimes_\mco   F^\dag (j_{\dag} \mco_{\mbc^*}^{-\beta_0-1}))) \simeq \mch^{0} (\pi_{2+} \mcn).
$$
We have already computed the Hodge filtration of $\mcn$
(more precisely, we have computed it on the restrictions $\mcn_u$ of $\cN$ to each chart $W_u \times V$ in Proposition \ref{prop:OriginalCoord}). In order to compute the Hodge filtration under the direct image of $\pi_2$, we will use the results obtained above and read off the Hodge filtration from the corresponding $\msr_V$-module $\mst(\mcm^{\widetilde{\beta}}_{\widetilde{A}},F^{H})$. We have the following isomorphisms
\[
\Gamma \mst(\mcm^{\widetilde{\beta}}_{\widetilde{A}},F^{H})\simeq  \Gamma \mst(\mch^0(\pi_{2+} \mcn, F^H))\simeq \Gamma\, \mch^0( \pi_{2+} \msn) \simeq z^{-d}\textup{R}_\msv / I^\lambda_{\widetilde{A}}
\]
Using these isomorphisms the claim follows easily.
\end{proof}

\subsection{Duality}\label{sec:Duality}

For applications like the one presented in the next section, it will be useful to extend the
computation of the Hodge filtration on  $\cM_{\widetilde{A}}^\beta$
to the dual Hodge module $\bD \cM_{\widetilde{A}}^\beta$. This is possible under the assumption made in the above main theorem (Theorem \ref{thm:HodgeGKZ})
plus the extra requirement that the semi-group ring $\dC[\dN \widetilde{A}]$ is Gorenstein.
More precisely, it follows from \cite{Walther1}, that under these assumptions, the $\cD_V$-module
$\bD \cM_{\widetilde{A}}^\beta$ is still a GKZ-system. Hence it is reasonable to expect that
its Hodge filtration will also be the order filtration up to a suitable shift.

The Gorenstein condition for normal semi-group rings has a well-known combinatorial expression (see \cite[Corollary 6.3.8]{HerzBr}), namely,
$\dC[\dN \widetilde{A}]$ is Gorenstein if and only if there is a vector $\widetilde{c}$ such that the set of interior points
$int(\dN \widetilde{A})$ (i.e., the intersection of $int\left(\dR_{\geq 0} A\right) \cap \dZ^{d+1}$
is given by $\widetilde{c}+\dN \widetilde{A}$.

\begin{theorem}\label{thm:Duality}
Suppose that $\widetilde{A}\in M(d+1 \times n+1,\dZ)$ is such that
$\dZ \widetilde{A} =\dZ^{d+1}$,  $\dN \widetilde{A} = \dZ^{d+1} \cap \dR_{\geq 0} \widetilde{A}$
and such that $int(\dN \widetilde{A}) = \widetilde{c}+\dN \widetilde{A}$ for some $\widetilde{c} =(c_0,c)\in \dZ^{d+1}$ and $\widetilde{\beta} \in \mathfrak{A}_A$.
Then we have
$$
\bD \cM_{\widetilde{A}}^{\widetilde{\beta}} \simeq \cM_{\widetilde{A}}^{-\widetilde{\beta}-\widetilde{c}},
$$
and the Hodge filtration on $\bD \cM_{\widetilde{A}}^{\widetilde{\beta}}$ is the order filtration,
shifted by $n+c_0$, i.e., we have
$$
F^H_p \bD \cM_{\widetilde{A}}^{\widetilde{\beta}} \simeq F^{ord}_{p-n-c_0} \cM_{\widetilde{A}}^{-\widetilde{\beta}-\widetilde{c}}\, .
$$
\end{theorem}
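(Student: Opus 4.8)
The statement has two parts: identifying the holonomic dual of $\cM^0_{\widetilde A}$ as a $\cD_V$-module, and then pinning down its Hodge filtration. For the first part the plan is to exploit the self-duality already built into Theorem~\ref{thm:4termseq}. The two four-term exact sequences there are interchanged by $\bD$: indeed $\bD$ sends $g_*{^p}\mbq^H_T$ to $g_!\bD{^p}\mbq^H_T$, and by definition ${^!}\mcr^\circ = \bD\circ{^*}\mcr^\circ_c\circ\bD$, so applying $\bD$ to the identification $\cM^0_{\widetilde A}\simeq\mch^{n+1}({^*}\mcr^\circ_c(g_*{^p}\mbq^H_T))$ (the case $\beta=0$ of the top sequence) yields $\bD\cM^0_{\widetilde A}\simeq\mch^{-n-1}({^!}\mcr^\circ(g_!\bD{^p}\mbq^H_T))$, and the latter is identified with $\cM^{-\widetilde c}_{\widetilde A}$ by the case $\beta'=\widetilde c\in int(\mbn\widetilde A)$ of the bottom sequence. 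This gives $\bD\cM^0_{\widetilde A}\simeq\cM^{-\widetilde c}_{\widetilde A}$ as mixed Hodge modules; that the underlying $\cD_V$-module is again a GKZ-system, necessarily of this form, is also the content of \cite{Walther1}.

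For the Hodge filtration I would pass to $\msr$-modules and dualize the explicit filtered resolution used to prove Theorem~\ref{thm:HodgeGKZ}. By that proof $\mst(\cM^0_{\widetilde A},F^H)$ is the graded $\msr_V$-module $z^{-d}S_\lambda/I^\lambda_{\widetilde A}$, and it carries the strict filtered resolution obtained as the Rees image of the Koszul complex of Proposition~\ref{prop:resofNu}, namely $Kos^\bullet(z^{-d}S_\lambda/J^\lambda_{\widetilde A},(\widetilde E_k)_{k=0,\dots,d})$; its exactness and strictness rest on the normality, hence Cohen--Macaulayness, of $\mbc[\mbn\widetilde A]$, so that the $\widetilde E_k$ form a regular sequence on $S_\lambda/J^\lambda_{\widetilde A}\simeq\mbc[z,\lambda_0,\dots,\lambda_n]\otimes_{\mbc[z\p_\lambda]}\mbc[\mbn\widetilde A]$. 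Since $\mst$ is compatible with holonomic duality (checked just as for direct images, cf.\ \cite{Sa6}), $\mst(\bD(\cM^0_{\widetilde A},F^H))$ is obtained from this complex by applying $R\mathcal{H}om_{S_\lambda}(-,S_\lambda)$, a side-changing transpose, the shift $[\dim V]$ and a Tate twist. Both ingredients dualize in a controlled way: a Koszul complex on $d+1$ operators is self-dual up to the shift $[d+1]$ and a reversal, and $\mbc[\mbn\widetilde A]$, being Gorenstein by the hypothesis $int(\mbn\widetilde A)=\widetilde c+\mbn\widetilde A$, has $\textrm{Ext}$-dual over the polynomial subring $\mbc[z\p_{\lambda_0},\dots,z\p_{\lambda_n}]$ isomorphic to itself shifted by the degree $\widetilde c$ of its canonical module and placed in cohomological degree equal to its codimension $n-d$ (cf.\ \cite[Ch.~6]{HerzBr} and \cite{MillerWaltherMat} for the corresponding statement about Euler--Koszul complexes). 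Combining these, the dual complex becomes quasi-isomorphic to the Koszul complex on the shifted Euler operators defining $\cM^{-\widetilde c}_{\widetilde A}$ over $z^{-(n+c_0)}S_\lambda/J^\lambda_{\widetilde A}$, whose zeroth cohomology is the Rees module of $(\cM^{-\widetilde c}_{\widetilde A},F^{ord})$ placed in $z$-degree $-(n+c_0)$; reading this off gives $F^H_p\bD\cM^0_{\widetilde A}=F^{ord}_{p-n-c_0}\cM^{-(c_0,c)}_{\widetilde A}$.

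The hard part of the argument is exactly this last piece of bookkeeping: one has to see that the $z$-shift equals $-(n+c_0)$ and not a nearby value. Morally the $-c_0$ is the first coordinate of the canonical-module twist $\widetilde c$ under the Rees grading $\deg(z\p_{\lambda_i})=1$ (equivalently, $\p^{\widetilde c}$ has order $c_0$, since the first row of $\widetilde A$ is $(1,\dots,1)$), while the $-n$ collects the $-d$ coming from the Hodge shift on $\cM^0_{\widetilde A}$, the reversal of the length-$(d+1)$ Koszul complex, and the $\dim V=n+1$ entering holonomic duality over $S_\lambda$; but making this precise requires juggling simultaneously Saito's sign conventions for the Hodge filtration and the Tate twist under $\bD$, the exact $\mbz^{d+1}$-degree of the canonical module of $\mbc[\mbn\widetilde A]$, and the compatibility of $\mst$ with $\bD$ — each routine in isolation. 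As a consistency check, the composite of the resulting filtered isomorphism with the morphism of Proposition~\ref{prop:morphdualGKZ} (right multiplication by $\p^{\widetilde c}$) should be a morphism of mixed Hodge modules and should be compatible with Theorem~\ref{thm:HodgeGKZ}.
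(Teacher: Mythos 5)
Your identification $\bD\cM_{\widetilde{A}}^0\simeq\cM_{\widetilde{A}}^{-\widetilde{c}}$ via the dual four-term sequences of Theorem \ref{thm:4termseq} is a legitimate shortcut (the paper instead gets it from the $\textit{Ext}$-computation of \cite{Walther1}), and your overall strategy for the filtration --- Euler--Koszul complex plus Gorenstein duality $\omega_P\cong P(\widetilde c)$ for $P=\dC[\dN\widetilde A]$, then bookkeeping of shifts --- is the same as the paper's. But there are two genuine gaps. First, the complex you propose to dualize, $Kos^\bullet\bigl(z^{-d}S_\lambda/J^\lambda_{\widetilde A},(\widetilde E_k)_k\bigr)$, has terms which are not free (nor induced) over $S_\lambda$ resp.\ $D$, so you cannot compute $\bD$, or $R\mathcal{H}\mathit{om}_{S_\lambda}(-,S_\lambda)$, by dualizing it termwise; saying ``a Koszul complex on $d+1$ operators is self-dual up to shift'' only treats the Koszul direction and leaves the non-free coefficient module $D\otimes_{\dC[\underline{\partial}]}P$ untouched. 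What is needed --- and what the paper supplies --- is an honest strictly filtered \emph{free} resolution: one takes a $\dZ^{d+1}$-graded $\dC[\underline{\partial}]$-free resolution $T_\bullet\twoheadrightarrow P$ and sets $L_\bullet=\textup{Tot}\bigl(K_\bullet(E,D\otimes_{\dC[\underline{\partial}]}T_\bullet)\bigr)$, then proves (via Lemma \ref{lem:equivChar}, Cohen--Macaulayness from normality, and homogeneity of the maps in $T_\bullet$) that the induced filtration is strict, so that Saito's duality formula applies. Your appeal to ``$\mst$ is compatible with holonomic duality'' quietly presupposes exactly this strictness, which is part of what has to be proved; without the free resolution the assertion that the dual complex ``becomes quasi-isomorphic to the Koszul complex on the shifted Euler operators over $z^{-(n+c_0)}S_\lambda/J^\lambda_{\widetilde A}$'' is a restatement of the theorem, not a proof of it.

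Second, the quantitative content of the theorem --- that the shift is exactly $n+c_0$ --- is never actually derived. You yourself flag this as ``the hard part'' and offer only a heuristic decomposition ($-d$ from the Hodge shift, $d+1$ from the Koszul length, $n+1$ from $\dim V$, $c_0$ from $\widetilde c$), which as stated does not even visibly sum to $n+c_0$ and does not engage Saito's convention $F_{\bullet-2(n+1)}$ on $(D\otimes\Omega_V^{n+1})^\vee$. In the paper this is a concrete computation: the only nonzero $\textit{Ext}^{n-d}_{\dC[\underline{\partial}]}(P,\omega_{\dC[\underline{\partial}]})\simeq P(\widetilde c)$ sits in $\dZ$-degree $n+1-c_0$ (degree of $\omega_{\dC[\underline{\partial}]}$ minus $c_0$), so $F_pL_{-(n+1)}=F^{ord}_{p+d-(d+1)-(n+1-c_0)}D$, and only after inserting this into Saito's formula does one land on $F^H_p\bD M^0_{\widetilde A}=F^{ord}_{p-n-c_0}M_{\widetilde A}^{-(c_0,c)}$. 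Until that bookkeeping is carried out on an actual strictly filtered free resolution, the proof of the filtration statement is incomplete.
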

\begin{proof}
The proof is very much parallel to \cite[Proposition 2.19]{ReiSe} resp. \cite[Theorem 5.4]{ReiSe2}, we will
give the main ideas here once again for the convenience of the reader. We again work with the modules of global sections,
and write $D_V:=\dC[\lambda_0,\ldots,\lambda_n]\langle \partial_{\lambda_0},\ldots,\partial_{\lambda_n} \rangle$ and
$S_{\widetilde{A}}$ for the commutative ring $\dC[ \partial_{\lambda_0},\ldots,\partial_{\lambda_n}]/(\Box_{\underline{l}})_{\underline{l}\in\dL_{\widetilde{A}}}$
These rings are $\dZ^{d+1}$-graded by $\deg(\lambda_i)=-\widetilde{a}_i$, $\deg(\partial_{\lambda_i})=\widetilde{a}_i$.

In order to calculate $\bD M_{\widetilde{A}}^{\widetilde{A}}$ together
with its Hodge filtration, we need to find a strictly filtered free resolution
$(L_\bullet, F_\bullet) \stackrel{\simeq}{\twoheadrightarrow} (M_{\widetilde{A}}^{\widetilde{\beta}},F^H_\bullet) = (M_{\widetilde{A}}^{\widetilde{\beta}},F^{ord}_{\bullet+d}) $.
We have already used in the previous sections of this paper resolutions of ``Koszul''-type for various (filtered)
$\cD$-modules. Here we consider the Euler-Koszul complex
$$
K^\bullet:=\textup{Kos}\left(D_V\otimes_{\dC[ \underline{\partial}_{\lambda}]} S_{\widetilde{A}}, (E_k-\beta_k)_{k=0,\ldots,d}\right),
$$
as defined in section \ref{subsec:GKZ} and a generalization to $\mbz^{d+1}$-graded $\mbc[\underline{\p}_{\lambda}]$-modules (for details see \cite{MillerWaltherMat}).

A free resolution of $M_{\widetilde{A}}^{\widetilde{\beta}}$ is constructed as follows:
Take a $\dC[\underline{\partial}]$-free graded resolution of $T^\bullet \twoheadrightarrow S_{\widetilde{A}}$,
and define $L^\bullet$ to be the total
complex $\textup{Tot}\left(K^\bullet(E-\beta, D_V\otimes_{\dC[ \underline{\partial}_{\lambda}]} T^\bullet) \right)$. Notice
that the double complex $K^\bullet(E- \beta, D_V\otimes_{\dC[ \underline{\partial}_{\lambda}]} T^\bullet)$ exists since
$K^\bullet(E-\beta, D_V\otimes_{\dC[\underline{\partial}]} - ) $ is a functor from the category of $\dZ^{d+1}$-graded $\dC[\underline{\partial}]$-modules
to the category of (bounded complexes of) $\dZ^{d+1}$-graded $D_V$-modules. Then we have $L^{-k}=0$ for all
$k>n+1$ (notice that the length of the Euler-Koszul complexes is $d+1$, and the length of the resolution $T_\bullet \twoheadrightarrow P$ is
$n-d+1$, hence the total complex has length $(d+1)+(n-d+1)-1=n+1$). Moreover, the last term $L^{-n-1}$ of this complex is simply equal to $D_V$
(and so is the first one $L^0$).

As we have $int(\dN \widetilde{A}) = \widetilde{c}+\dN \widetilde{A}$, the ring $\dC[\dN \widetilde{A}]\simeq
S_{\widetilde{A}}$ is Gorenstein, more precisely, we have $\omega_{S_{\widetilde{A}}} \simeq S_{\widetilde{A}}(\widetilde{c})$, where $\omega_{S_{\widetilde{A}}}$ is the canonical module of $S_{\widetilde{A}}$.
Then a spectral sequence argument (see also \cite[Proposition 4.1]{Walther1}), using
$$
\textit{Ext}_{\dC[\underline{\partial}]}^i\left(S_{\widetilde{A}},\omega_{\dC[\underline{\partial}]}\right) \simeq
\left\{
\begin{array}{rcl}
0 & \textup{ if } i< n-d \\
S_{\widetilde{A}}\left(\widetilde{c}\right) & \textup{ if } i=n-d
\end{array}
\right.
$$
shows that
$$
\bD M_{\widetilde{A}}^{\widetilde{\beta}} \cong M_{\widetilde{A}}^{-\widetilde{\beta}-\widetilde{c}}.
$$

In order to calculate the Hodge filtration on $M^{-\widetilde{c}}_{\widetilde{A}}$, we remark that the
Euler-Koszul complex is naturally filtered by putting
\[
F_p K^{-l} := \bigoplus_{0 \leq i_1 < \ldots < i_l \leq l} F^{ord}_{p+d-l}\left(D_V\otimes_{\dC[\underline{\partial}]} S_{\widetilde{A}} \right)e_{i_1 \ldots i_l}.
\]

Notice that $D_V\otimes_{\dC[\underline{\partial}]} S_{\widetilde{A}} \simeq D_V/(\Box_{\underline{l}})_{\underline{l}\in\dL_{\widetilde{A}}}$, so that this $D_V$-module
has an order filtration induced from $F^{ord}_\bullet D_V$. In order to show that $(K^\bullet, F_\bullet) \twoheadrightarrow (M_{\widetilde{A}}^{\widetilde{\beta}}, F^H)$ is
a filtered quasi-isomorphism, it suffices (by Lemma \ref{lem:equivChar}) to show that $\gr^F_\bullet K^\bullet \twoheadrightarrow \gr^{F^H}_\bullet M_{\widetilde{A}}^{\widetilde{\beta}}$ is a quasi-isomorphism. This follows from \cite[Formula 4.32, Lemma 4.3.7]{SST}, as $\dC[\dN \widetilde{A}]$ is Cohen-Macaulay
due to the normality assumption on $\widetilde{A}$.
The final step is to endow the free resolution $L^\bullet=\textup{Tot}\left(K^\bullet(E-\beta, D_V\otimes_{\dC[ \underline{\partial}_{\lambda}]} T^\bullet) \right)$ with a strict filtration $F_\bullet$ and to show that $(L_\bullet, F_\bullet) \stackrel{\simeq}{\twoheadrightarrow} (M_{\widetilde{A}}^{\widetilde{\beta}}, F^H_\bullet)$.
As the resolution $T_\bullet \twoheadrightarrow S_{\widetilde{A}}$ is taken in the category of $\dZ^{d+1}$-graded $\dC[\underline{\partial}]$-modules, the morphisms
of this resolution are homogenous for the ($\dZ$-)grading $\deg(\lambda_i)=-1$ and $\deg(\partial_{\lambda_i})=1$ (notice that this
is the grading  given by the first component of the $\dZ^{d+1}$-grading of the ring $D_V\otimes_{\dC[\underline{\partial}]} S_{\widetilde{A}}$)
. Hence these morphisms are naturally filtered for the order filtration $F^{ord}_\bullet (D_V\otimes_{\dC[\underline{\partial}]} S_{\widetilde{A}})$ and they are even strict: for a map given by homogenous operators from $\dC[\partial]$ taking the symbols has simply no effect, so that
$\gr^F_\bullet (D_V\otimes_{\dC[\underline{\partial}]} T_\bullet) \twoheadrightarrow \gr^{F^{ord}}_\bullet(D_V\otimes_{\dC[\underline{\partial}]} S_{\widetilde{A}})$ is a filtered quasi-isomorphism (and similarly for the
sums occuring in the terms $K^{-l}$). However, we have to determine the $\dZ$-degree (for the grading $\deg(\partial_{\lambda_i})=1)$
of the highest (actually, the only nonzero) cohomology module $Ext^{n-d}_{\dC[\underline{\partial}]}(S_{\widetilde{A}},\omega_{\dC[\underline{\partial}]})$:
it is the first component of the difference of the degree of $\omega_{\dC[\underline{\partial}]}$ (i.e, the first component of the sum of the columns
of $\widetilde{A}$), which is $n+1$, and the first component of the degree of $\omega_{S_{\widetilde{A}}}$, which is $c_0$. Now the shift of the filtration between $M_{\widetilde{A}}^{\widetilde{\beta}}$ and the dual module $M_{\widetilde{A}}^{-\widetilde{\beta}-(c_0,c)}$ is the sum of the length of the complex $K^\bullet(E-\beta,D_V\otimes_{\dC[\underline{\partial}]} S_{\widetilde{A}})$, i.e., $d+1$, and the above $\dZ$-degree of $Ext^{n-d}_{\dC[\underline{\partial}]}(S_{\widetilde{A}},\omega_{\dC[\underline{\partial}]})$, i.e. $n+1-c_0$. Hence the filtration $F_\bullet L^{-n-1}$
is again the shifted order filtration, more precisely, we have
$$
F_p L^{-n-1} = F^{ord}_{p+d-(d+1)-(n+1-c_0)} D_V = F^{ord}_{p-n-2+c_0} D_V\, .
$$
Now it follows from \cite[page 55]{SaitoonMHM} that
$$
\bD(M_{\widetilde{A}}^{\widetilde{\beta}}, F^H) \simeq
\mathit{Hom}_{D_V}\left(
\left(L^\bullet, F_\bullet\right),
\left((D_V\otimes \Omega^{n+1}_V)^\vee,F_{\bullet-2(n+1)}D_V\otimes (\Omega_V^{n+1})^\vee \right)
\right)
$$
so that we finally obtain
$$
F_p^H \bD M_{\widetilde{A}}^{\widetilde{\beta}} = F^{ord}_{p-n-c_0} M_{\widetilde{A}}^{-\widetilde{\beta} - (c_0,c)}.
$$
\end{proof}
We now consider the special case $\beta=0$. From Proposition \ref{prop:morphdualGKZ}, we know that up to multiplication by a non-zero constant, we have the morphism
\begin{align}
\phi:F^{ord}_{p+d-c_0}M^{-(c_0,c)}_{\widetilde{A}}=F_{p+n+d}^H \mbd(M^0_{\widetilde{A}}) = F_p^H\mbd(M^0_{\widetilde{A}})(-n-d) &\lra F_p^H M^0_{\widetilde{A}} = F_{p+d}^{ord}M^0_{\widetilde{A}} \notag \\
P &\mapsto P \cdot \p^{(c_0,c)} \notag
\end{align}
where $\p^{(c_0,c)} := \prod_{i=0}^n \p_{\lambda_i}^{k_i}$  for any $\underline{k} = (k_0, \ldots , k_n)$ with $\widetilde{A} \cdot \underline{k} = (c_0,c)$. Since $\widetilde{A}$ is homogeneous we have $\sum k_i = c_0$.
As a consequence, we obtain the following result.
\begin{corollary}\label{cor:StrictlyFiltered}
Under the above assumptions on $\widetilde{A}$, the morphism
$$
\begin{array}{rcl}
\phi:   (M^{-(c_0,c)}_{\widetilde{A}},F^{ord}_{\bullet-c_0}) & \longrightarrow & (M^0_{\widetilde{A}} , F_\bullet^{ord}) \\ \\
P &\longmapsto & P \cdot \p^{(c_0,c)}
\end{array}
$$
(where $\partial^{(c_0,c)}$ is as above) is strictly filtered.
\end{corollary}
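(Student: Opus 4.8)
The plan is to obtain Corollary~\ref{cor:StrictlyFiltered} as a formal consequence of Proposition~\ref{prop:morphdualGKZ}, Theorem~\ref{thm:HodgeGKZ}, Theorem~\ref{thm:Duality}, and the general fact that morphisms in $\textit{MHM}(V)$ are strictly compatible with the Hodge filtration.

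First I would apply Proposition~\ref{prop:morphdualGKZ} with $\beta = 0 \in \mbn\widetilde{A}$ and $\beta' = \widetilde{c} = (c_0,c) \in int(\mbn\widetilde{A})$. This produces a morphism of mixed Hodge modules
\[
\phi^H \colon \cM^{-(c_0,c)}_{\widetilde{A}}(-d-n) \lra \cM^0_{\widetilde{A}},
\]
and, reading off its action from the explicit formula $P\mapsto P\cdot\p^{\beta+\beta'}$, one sees that the underlying morphism of $\cD_V$-modules (equivalently, on global sections, since $V$ is affine) is exactly the map $\phi$ of the corollary; the Tate twist $(-d-n)$ changes only the Hodge filtration and the weight, not the underlying module or the map. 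Next I would invoke the structural fact (see \cite{SaitoMHM}) that any morphism in $\textit{MHM}(V)$ is strictly filtered for the Hodge filtration $F$ of its underlying filtered $\cD_V$-module; hence $\phi^H$ is strict for $F^H$.

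It then remains to translate this strictness into the order filtrations. Theorem~\ref{thm:HodgeGKZ} gives $F^H_p\cM^0_{\widetilde{A}} = F^{ord}_{p+d}\cM^0_{\widetilde{A}}$, while Theorem~\ref{thm:Duality}, combined with the identification $\bD\cM^0_{\widetilde{A}}\simeq\cM^{-(c_0,c)}_{\widetilde{A}}$ and the behaviour of $F^H$ under a Tate twist, shows that $F^H_\bullet\bigl(\cM^{-(c_0,c)}_{\widetilde{A}}(-d-n)\bigr)$ is the order filtration $F^{ord}_{\bullet + d - c_0}\cM^{-(c_0,c)}_{\widetilde{A}}$ -- this is precisely the chain of equalities displayed just before the statement of the corollary. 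Substituting these two identifications into the strictness of $\phi^H$ turns it into the strictness of $\phi\colon(\cM^{-(c_0,c)}_{\widetilde{A}},F^{ord}_{\bullet-c_0}) \to (\cM^0_{\widetilde{A}},F^{ord}_\bullet)$, which is the claim.

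The only delicate point -- and the step I would be most careful about -- is the shift bookkeeping: one must match Saito's Tate-twist convention for $F^H$ with the convention underlying the display preceding the corollary, so that the shift $-d-n$ from Proposition~\ref{prop:morphdualGKZ} and the shift $c_0+n$ from Theorem~\ref{thm:Duality} combine into exactly the $-c_0$ index shift on the source appearing in the statement. Beyond this indexing, no new input is needed; the entire content is (i) recognizing $\phi$ as the $\cD$-module shadow of the mixed Hodge module morphism $\phi^H$ and (ii) the automatic strictness of such morphisms.
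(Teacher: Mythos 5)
Your proposal is correct and follows essentially the same route as the paper: the paper also obtains the corollary by viewing $\phi$ as the $\cD$-module underlying the mixed Hodge module morphism of Proposition~\ref{prop:morphdualGKZ} (with $\beta=0$, $\beta'=(c_0,c)$), invoking the automatic strictness of morphisms in $\textit{MHM}(V)$, and translating the Hodge filtrations into shifted order filtrations via Theorem~\ref{thm:HodgeGKZ} on the target and Theorem~\ref{thm:Duality} (plus the Tate twist bookkeeping) on the source. Your shift computation matches the chain of equalities the paper displays just before the corollary, so no gap remains.
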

\begin{proof}
Since both filtered modules
$(M^{-(c_0,c)}_{\widetilde{A}},F^{ord}_{\bullet-c_0})$ and $(M^0_{\widetilde{A}} , F_\bullet^{ord})$ underly mixed Hodge modules on $V$ by Theorem \ref{thm:HodgeGKZ} and the morphism
is induced from a morphism in $\MHM(V,\dC)$, we obtain the strictness statement we are looking for.
\end{proof}

\begin{remark}
If $\mbc[\mbn \widetilde{A}]$ is not Gorenstein but normal (and therefore Cohen-Macaulay) then the proof of Theorem \ref{thm:Duality} shows that
\[
\mbd M^{\widetilde{\beta}}_{\widetilde{A}} \simeq \mch^0\left(E+\beta,D_V \otimes Ext^{n-d}(S_{\widetilde{A}},\omega_{\mbc[\p_{\lambda}]})\right)\simeq \mch^0\left(E+\beta,D_V \otimes \omega_{S_{\widetilde{A}}}\right)
\]
Recall that the canonical module $\omega_{S_{\widetilde{A}}}$ of $S_{\widetilde{A}}$ is isomorphic to $\mbc[int(\mbn \widetilde{A})]$ in the category of $\mbz^{d+1}$-graded $\mbc[\underline{\p}_\lambda]$-modules. The module $\omega_{S_{\widetilde{A}}}$ carries a $\mbz$-grading given by the first component of the $\mbz^{d+1}$-grading. Hence $D_V \otimes_{\mbc[\underline{\p}_\lambda]} \omega_{S_{\widetilde{A}}}$ carries an order filtration which induces a filtration $F^{ord}_{\bullet}$ on $\mch^0(E+\beta,D_V \otimes \omega_{S_{\widetilde{A}}})$. We therefore get
\[
F_p^H \mbd \mcm^{\widetilde{\beta}} \simeq F^{ord}_{p-n-c_0} \mch^0(E+\beta,D_V \otimes \omega_{S_{\widetilde{A}}}).
\]

where $c_0 := \min\{ deg_\mbz(P) \mid P \in \mbc[int(\mbn A)]\}$. Let $\widetilde{c} \in deg(\mbc[int(\mbn \widetilde{A})]$ with $\widetilde{c} = (c_0,c)$. Similar to \cite[Proposition 4.4]{Walther1} it can be shown that the inclusion $S_{\widetilde{A}}[-\widetilde{c}] \hookrightarrow \mbc[int(\mbn \widetilde{A})]$ induces an isomorphism $M^{-\widetilde{\beta}-\widetilde{c}} \overset{\simeq}\lra \mch^0(E+\beta, D_V \otimes \omega_{S_{\widetilde{A}}})$ however we do not expect  $(M^{-\widetilde{\beta}-\widetilde{c}},F^{ord}_\bullet) \lra (\mch^0(E+\beta, D_V \otimes \omega_{S_{\widetilde{A}}}),F^{ord}_\bullet)$ to be a filtered isomorphism.
\end{remark}
\subsection{Hodge structures on affine hypersurfaces of tori}
\label{subsec:Batyrev}

In this subsection we explain how our main result implies in a rather direct way a classical theorem
of Batyrev concerning the description of the Hodge filtration of the relative cohomology of smooth affine hypersurfaces in algebraic tori.

We first want to recall the sheaf theoretic definition of relative cohomology. Let $X$ be a topological space and $K$ be a closed subset. Denote by $j:X \setminus K \ra X$ the open embedding of the complement. The relative cohomology of the pair $(X,K)$ is defined as the following hypercohomology:
\[
H^i(X,K;\mbc) := \mbh^i(X,j_! j^{-1} \mbq_X)
\]
If $X$ and $K$ are quasi-projective varieties the relative  cohomology of the pair $(X,K)$ carries a mixed Hodge structure, which is given by $\mbh^i(X,j_! j^{-1} \mbq_X^H)$.\\

We want to compute this in the following situation: Consider as in section \ref{sec:Radon} the family of Laurent polynomials $\varphi_A: T \times \Lambda \ra V = \mbc_{\lambda_0} \times \Lambda$, where $\dZ \widetilde{A}=\dZ^{d+1}$ and $\dN\widetilde{A}=\dZ^{d+1}\cap \dR_{\geq 0} \widetilde{A}$. Let $\Delta := \textup{Conv}(\underline{a}_0,\underline{a}_1,\ldots, \underline{a}_n)$ be the convex hull  of the exponents of $\varphi_A$, where $\underline{a}_0:=0$. Let $\tau \subset \Delta$ be a face of $\Delta$,  $x \in V$ and
\begin{align}
F_{A,x}^\tau  = \sum_{i : \underline{a}_i \in \tau} x_i \underline{t}^{\underline{a}_i} \notag
\end{align}

\begin{definition}
The fiber $\varphi^{-1}_A(x)$ is non-degenerate if for every face $\tau$ of $\Delta$ the equations
\[
F^\tau_{A,x} = t_1 \frac{\p F^\tau_{A,x}}{\p t_1}= \ldots = t_d \frac{\p F^\tau_{A,x}}{\p t_d} = 0
\]
have no common solution in $T$.
\end{definition}

Let $x \in V$ such that the fiber $\varphi_A^{-1}(x)$ is non-degenerate. We give a model of $H^i(T, \varphi_A^{-1}(x);\mbc)$ as the quotient of a graded semi-group ring and compute explicitly its Hodge filtration.
This recovers a result of Stienstra \cite[Theorem 7]{Sti} using results of Batyrev \cite{Bat4}.\\

\begin{lemma}
Let $x \in V$ and $i_x: \{x\} \ra V$ be the inclusion. Suppose $\varphi_A^{-1}(x)$ is non-degenerate. Then
\begin{enumerate}
\item the fiber $\varphi^{-1}_A(x)$ is smooth.
\item  the map $i_x$ is noncharacteristic with respect to $\mcm^0_{\widetilde{A}}$.
\end{enumerate}
\end{lemma}
\begin{proof}
The first statement follows directly from the definition for $\tau = \Delta$. The second statement follows from \cite[Lemma 3.3]{Adolphson}.
\end{proof}

Consider the following diagram (cf. diagram \eqref{eq:universaldiag})
\[
\begin{tikzcd}
{}Y \ar{rr} \ar[hook]{d}&& U \ar{rrd}{\pi_2^U}  \ar[hook]{d}{j_U} \\
T \times V \ar{rr}{g \times id} && \mbp(W) \times V  \ar{rr}{\pi_2} && V  \\
\Gamma \ar{rr} \ar[hook]{u} && Z  \ar[hook]{u}{i_Z} \ar{rru}[swap]{\pi_2^Z}
\end{tikzcd}
\]
where $Y$ resp. $\Gamma$ are the pull-backs such that both squares on the left are cartesian. Notice that $\Gamma$ is given as a subspace of $T \times V$ by the equation $\lambda_0 + \sum_{i=1}^n \lambda_i \underline{t}^{\underline{a}_i} = 0$; hence, $\Gamma$ is the graph of $\varphi_A$ and $Y$ is its complement in $T \times V$. Restricting this diagram to some $x \in V$ we therefore get
\begin{equation}\label{eq:AdjuncfiberCompl}
\begin{tikzcd}
{}T \setminus \varphi^{-1}(x) \ar{rr}{\overline{g}} \ar[hook]{d}[swap]{\overline{j}} && U_x \ar{rrd}{\pi_x^{U}}  \ar[hook]{d}{j} \\
T  \ar{rr}{g} && \mbp(W)  \ar{rr}{\pi_x} && \{x\}  \\
\varphi^{-1}(x) \ar{rr}{\widetilde{g}} \ar[hook]{u}{\tilde{i}} && Z_x  \ar[hook]{u}{i} \ar{rru}[swap]{\pi_x^Z}
\end{tikzcd}
\end{equation}
We will need the following statement
\begin{lemma}
Let $x \in V$ be such that $\varphi_A^{-1}(x)$ is smooth then we have an isomorphism in $D^b(\MHM(\mbp(W)))$:
\[
g_* \overline{j}_!\, \overline{j}^{-1}\, \mbq^H_T \simeq j_!\, \overline{g}_*\, \mbq^H_{T \setminus \varphi^{-1}(x)}
\]
\end{lemma}
\begin{proof}
The statement follows using the following chain of isomorphisms
\[
g_* \overline{j}_!\, \overline{j}^{-1}\, \mbq^H_T \;\overset{!}{\simeq}\; j_!\,j^{-1}\,g_*\, \mbq^H_T \;\simeq\; j_!\, \overline{g}_*\, \overline{j}^{-1}\, \mbq^H_T \;\simeq\; j_!\, \overline{g}_*\, \mbq^H_{T \setminus \varphi^{-1}(x)}
\]
where the second isomorphism follows from base change. It remains to show the first isomorphism. Notice that we have the following to triangles
\begin{align}
&j_! j^{-1} g_* \lra g_* \lra i_! i^* g_* \overset{+1}{\lra} \notag \\
&g_* \overline{j}_! \overline{j}^{-1} \lra g_* \lra g_* \overline{i}_! \overline{i}^* \overset{+1}{\lra} \notag
\end{align}
So it is enough to show $i_! i^* g_* \mbq^H_T \simeq g_* \tilde{i}_! \tilde{i}^* \mbq^H_T$. But this can be seen as follows:
\[
i_! i^* g_* \mbq^H_T \simeq i_! i^! g_* \mbq^H_T[2] \simeq i_! \tilde{g}_* \tilde{i}^!\mbq^H_T[2] \simeq i_! \tilde{g}_* \tilde{i}^* \mbq^H_T \simeq g_* \tilde{i}_! \tilde{i}^* \mbq^H_T
\]
where we use the smoothness of $\varphi^{-1}(x)$ in the first and third isomorphism.
\end{proof}

In order to proof the statement that the restriction of the GKZ-system is isomorphic to a relative cohomology group, we have to rewrite the GKZ-system as a Radon transform. For this, consider the following diagram
\[
\begin{tikzcd}
{}T \setminus \varphi^{-1}(x) \ar{rr}{\overline{g}} \ar[hook]{d}[swap]{\overline{i}_x} && U_x \ar{rr}{\pi_x^{U}}  \ar[hook]{d}{i_{U_x}} && \{x\} \ar{d}{i_x}\\
Y  \ar{rr} \ar{d}[swap]{\pi_1^Y} && U  \ar{d}{\pi_1^U}\ar{rr}{\pi_2^U} && V  \\
T \ar{rr}{g}  && \mbp(W)
\end{tikzcd}
\]
where all squares are cartesian.
\begin{proposition}
Let $x\in V$ be such that $\varphi^{-1}_A(x)$ is non-degenerate, then there is an isomorphism of mixed Hodge structures:
\[
i_x^* \,\mcm^0_{\tilde{A}} \simeq H^d(T, \varphi_A^{-1}(x);\mbc)
\]
and $H^i(T, \varphi_A^{-1}(x);\mbc) = 0$ for $i \neq d$.
\end{proposition}
\begin{proof}
Consider the following isomorphisms
\begin{align}
i_x^* \pi_{2!}^U (\pi_1^{U})^* g_* {^p}  \mbq^H_T
&\simeq \pi_{x!}^U i_{U_x}^* (\pi_1^{U})^*g_*{^p}\mbq^H_T \notag  &&\text{base change}\\
&\simeq \pi_{x!}^{U} (\pi_1^U \circ i_{U_x})^* g_* {^p}\mbq^H_T \notag \\
&\simeq \pi_{x!}^{U} (\pi_1^U \circ i_{U_x})^! g_* {^p}\mbq^H_T && (\pi_1^U \circ i_{U_x}) \; \text{open}\notag \\
&\simeq \pi_{x!}^{U} \overline{g}_*(\pi_1^Y \circ \overline{i}_{x})^!  {^p}\mbq^H_T \notag &&\text{base change} \\
&\simeq \pi_{x!}^{U} \overline{g}_*(\pi_1^Y \circ \overline{i}_{x})^*  {^p}\mbq^H_T &&(\pi_1^Y \circ \overline{i}_{x}) \; \text{open}\notag \\
&\simeq \pi_{x!}^{U} \overline{g}_* {^p} \mbq^H_{T \setminus \varphi^{-1}(x)} \notag
\end{align}

We can rewrite this further by looking at a part of diagram \eqref{eq:AdjuncfiberCompl}:
\[
\begin{tikzcd}
{}T \setminus \varphi^{-1}(x) \ar{rr}{\overline{g}} \ar[hook]{d}[swap]{\overline{j}} && U_x \ar{rrd}{\pi_x^{U}}  \ar[hook]{d}{j} \\
T  \ar{rr}{g} && \mbp(W)  \ar{rr}{\pi_x} && \{x\}
\end{tikzcd}
\]
We have
\[
\pi_{x*} g_* \overline{j}_!\, \overline{j}^{-1}\, {^p}\mbq^H_T \simeq \pi_{x*}j_!\, \overline{g}_*\, {^p}\mbq^H_{T \setminus \varphi^{-1}(x)} \simeq \pi_{x!}^{U} \overline{g}_* {^p} \mbq^H_{T \setminus \varphi^{-1}(x)} \simeq i_x^* \pi_{2!}^U (\pi_1^{U})^* g_* {^p}  \mbq^H_T
\]
where the last isomorphism follows from the calculation above.
If we take cohomology and keep in mind that $i_x$ is non-characteristic we get
\begin{align}
H^i(T,\varphi_A^{-1}(x);\mbc) &\simeq H^{i-d}(\pi_{x*} g_* \overline{j}_!\, \overline{j}^{-1}\, {^p}\mbq^H_T) \notag \\
&\simeq \mch^{i-d} i^*_x (\pi_{2!}^U (\pi_1^{U})^* g_* {^p}  \mbq^H_T)\notag \\
&\simeq i^*_x \mch^{i-d+n+1}(\pi_{2!}^U (\pi_1^{U})^* g_* {^p}  \mbq^H_T)\notag \\
&\simeq i^*_x \mch^{i-d+n+1}({^*}\mcr^\circ_c(g_* {^p}\mbq^H_T)) \notag
\end{align}
Since $\mch^k({^*}\mcr^\circ_c(g_* {^p}\mbq^H_T)) = 0$ for $k \neq n+1$ and
$\mch^k({^*}\mcr^\circ_c(g_* {^p}\mbq^H_T)) = \mcm^0_{\tilde{A}}$ for $k = n+1$ the claim follows.
\end{proof}

Denote by $S_{\widetilde{A}} := \mbc[\mbn \widetilde{A}] \subset \mbc[u_0^\pm,\ldots,u_d^\pm]$  the semigroup ring generated by $\underline{u}^{\tilde{\underline{a}}_0}, \ldots , \underline{u}^{\tilde{\underline{a}}_n}$ where $\widetilde{A} =(\tilde{\underline{a}}_0 , \ldots , \tilde{\underline{a}}_n)$ is the matrix from \ref{def:Atilde}.

Define the following differential operators
\[
D_k := \sum_{i=0}^n \left( \tilde{a}_{ki} x_i \underline{u}^{\widetilde{\underline{a}_i}} +  u_i \p_{u_i}\right) \qquad \text{for} \quad k=0,\ldots,d \quad \text{and fixed}\quad  x=(x_0,\ldots,x_n) \in V
\]
which act on $\mbc[u_0^\pm,\ldots,u_d^\pm]$ and which preserve $S_{\widetilde{A}}$. For $\underline{u}^{\underline{l}} =\underline{u}^{l_0 \cdot \widetilde{a}_0}\cdot \ldots \cdot \underline{u}^{l_n\cdot\widetilde{a}_n} \in S_{\widetilde{A}}$ we define the degree $deg(\underline{u}^{\underline{l}}) = \sum_{i=0}^n l_i$. Define a descending filtration $F^\bullet$ of $\mbc$-vector spaces on $S_{\widetilde{A}}$ where $F^{d+1} S_{\widetilde{A}}= 0$ and the filtration step $F^{d-k} S_{\widetilde{A}}$ is spanned by monomials $\underline{u}^{\underline{l}}$
with $deg(\underline{u}^{\underline{l}}) \leq k$.
\begin{thm}
Let $x \in V$ such that $\varphi_A^{-1}(x)$ is non-degenerate, then the following isomorphism of filtered vector spaces holds
\[
(i^*_x \mcm^0_{\widetilde{A}},F_\bullet^H) \simeq (S_{\widetilde{A}} / (D_k S_{\widetilde{A}})_{k=0,\ldots,d},F^{-\bullet})
\]
\end{thm}

\begin{proof}
Since we have assumed that $i_x$ is non-characteristic with respect to $\mcm^0_{\widetilde{A}}$ the only non-zero cohomology group of $i_x^* \mcm^0_{\widetilde{A}}$ is
\[
\mch^{0} i^*_x \mcm_{\widetilde{A}}^0 \simeq (\lambda_i-x_i)_{i=0,\ldots,n}\backslash\mcm_{\widetilde{A}}^0
\]

We define a $\mbc$-linear map
\begin{align}
\Psi':S_{\widetilde{A}} &\lra (\lambda_i-x_i)_{i=0,\ldots,n}\backslash\mcm_{\widetilde{A}}^0 \notag \\
\underline{u}^{l_0 \cdot \widetilde{a}_0}\cdot \ldots \cdot \underline{u}^{l_n\cdot\widetilde{a}_n} &\mapsto \p_{\lambda_0}^{l_0}\cdot \ldots \cdot \p_{\lambda_n}^{l_n} \notag
\end{align}
We want to show that this map factors over $S_{\widetilde{A}} / (D_k S_{\widetilde{A}})_{k=0,\ldots,d}$ so that $\Psi'$ descends to a map
\[
\Psi: S_{\widetilde{A}} / (D_k S_{\widetilde{A}})_{k=0,\ldots,d} \lra (\lambda_i-x_i)_{i=0,\ldots,n}\backslash\mcm_{\widetilde{A}}^0
\]
Let $P = \underline{u}^{l_0 \cdot \underline{\tilde{a}}_0}\cdot \ldots \cdot \underline{u}^{l_n \cdot  \underline{\tilde{a}}_n}$, then $\Psi'(P) = \p_{\lambda_0}^{l_0}\cdot \ldots \p_{\lambda_n}^{l_n}$ and
\begin{align}
\Psi'(D_k P) &= \Psi (\sum_{i=0}^n \tilde{a}_{ki} x_i \underline{u}^{\underline{a}_i}P +  \sum_{i=0}^n \tilde{a}_{ki} l_i P) \notag \\
&= \left(\sum_{i=0}^n \tilde{a}_{ki} x_i \p_{\lambda_i} + \sum_{i=0}^n \tilde{a}_{ki}l_i\right) \Psi(P) \notag \\
&= \left(\sum_{i=0}^n \tilde{a}_{ki} \lambda_i \p_{\lambda_i} + \sum_{i=0}^n \tilde{a}_{ki}l_i\right) \Psi(P) \notag \\
&= \Psi(P) \cdot \left(\sum_{i=0}^n \tilde{a}_{ki} \lambda_i \p_{\lambda_i}\right) \notag \\
&= 0 \notag
\end{align}

We will now construct an inverse $\Theta$ to $\Psi$. If $P \in D_V$ is a normally ordered element, we denote by $\overline{P} \in S_{\widetilde{A}}$ the element which is obtained from $P$ by replacing $\lambda_i$ with $x_i$ and $\p_{\lambda_i}$ with $\underline{u}^{l_i \underline{a}_i}$, i.e. if $P = \lambda_0^{k_0}\ldots \lambda_n^{k_n} \p_{\lambda_0}^{l_0}\ldots \p_{\lambda_n}^{l_n}$ the element $\overline{P}$ is given by $x_0^{k_0}\ldots x_n^{k_n} \underline{u}^{l_0\cdot \underline{a}_0}\ldots\underline{u}^{l_n \cdot \underline{a}_n}$. This gives the map

\begin{align}
\Theta': (\lambda_i -x_i)_{i=0,\ldots,n} \backslash D_V &\lra S_{\widetilde{A}} /(D_k S_{\widetilde{A}})_{k=0,\ldots,d} \notag \\
P &\mapsto \overline{P} \notag
\end{align}
 We want to show that $\Theta'$ factors over $(\lambda_i -x_i)_{i=0,\ldots,n} \backslash M^0_A$ so that $\Theta'$ descends to a map
 \begin{align}
\Theta: (\lambda_i -x_i)_{i=0,\ldots,n} \backslash M^0_A &\lra S_{\widetilde{A}} /(D_k S_{\widetilde{A}})_{k=0,\ldots,d} \notag \\
P &\mapsto \overline{P} \notag
\end{align}
We have to show that $\Theta'( P \cdot E_k) = 0$ and $\Theta'( P \cdot \Box_{\underline{l}})= 0 $ for $k= 0,\ldots ,d$ and $l \in \mbl$. We can assume that $P$ is a monomial $ \lambda_0^{j_0}\ldots \lambda_n^{j_n} \p_{\lambda_0}^{l_0}\ldots \p_{\lambda_n}^{l_n}$:
\begin{align}
\Theta'(\lambda_0^{j_0}\ldots \lambda_n^{j_n} \p_{\lambda_0}^{l_0}\ldots \p_{\lambda_n}^{l_n}E_k) &= \Theta'(\lambda_0^{j_0}\ldots \lambda_n^{j_n} \p_{\lambda_0}^{l_0}\ldots \p_{\lambda_n}^{l_n} \left(\sum_{i=0}^n \tilde{a}_{ki} \lambda_i \p_{\lambda_i} \right)) \notag \\
&= \Theta'(\lambda_0^{j_0}\ldots \lambda_n^{j_n} (\sum_{i=0}^n(\tilde{a}_{ki} \lambda_i \p_{\lambda_i} + \tilde{a}_{ki}l_i))\p_{\lambda_0}^{l_0}\ldots \p_{\lambda_n}^{l_n}) \notag \\
&=x_0^{j_0}\ldots x_n^{j_n} (\sum_{i=0}^n(\tilde{a}_{ki} x_i \underline{u}^{\underline{a}_i}+ \tilde{a}_{ki}l_i))\underline{u}^{l_0\cdot \underline{a}_0}\ldots \underline{u}^{l_n\cdot \underline{a}_0} \notag \\
&= (\sum_{i=0}^n(\tilde{a}_{ki} x_i \underline{u}^{\underline{a}_i}+ \tilde{a}_{ki}l_i))\cdot x_0^{j_0}\ldots x_n^{j_n} \cdot \underline{u}^{l_0\cdot \underline{a}_0}\ldots \underline{u}^{l_n\cdot \underline{a}_0} \notag \\
&= (\sum_{i=0}^n(\tilde{a}_{ki} x_i \underline{u}^{\underline{a}_i}+ u_i \p_{u_i}))\cdot x_0^{j_0}\ldots x_n^{j_n} \cdot \underline{u}^{l_0\cdot \underline{a}_0}\ldots \underline{u}^{l_n\cdot \underline{a}_0} \notag \\
&= 0 \notag
\end{align}
and
\begin{align}
\Theta'(\lambda_0^{j_0}\ldots \lambda_n^{j_n} \p_{\lambda_0}^{l_0}\ldots \p_{\lambda_n}^{l_n} \Box_{\underline{l}}) &= \Theta'(\lambda_0^{j_0}\ldots \lambda_n^{j_n} \p_{\lambda_0}^{l_0}\ldots \p_{\lambda_n}^{l_n}\left(\prod_{l_i > 0} \p_{\lambda_i}^{l_i} - \prod_{l_i <0} \p_{\lambda_i}^{-l_i} \right)) \notag \\
&= x_0^{j_0}\ldots x_n^{j_n} \underline{u}^{l_0\cdot \underline{a}_0}\ldots \underline{u}^{l_n\cdot \underline{a}_n}\left(\prod_{l_i > 0} \underline{u}^{l_i\cdot \underline{a}_i} - \prod_{l_i <0} \underline{u}^{-l_i \cdot \underline{a}_i} \right) \notag \\
&= x_0^{j_0}\ldots x_n^{j_n} \underline{u}^{l_0\cdot \underline{a}_0}\ldots \underline{u}^{l_n\cdot \underline{a}_n}\left( \underline{u}^{\sum_{l_i > 0} l_i\cdot \underline{a}_i} -  \underline{u}^{-\sum_{l_i <0} l_i \cdot \underline{a}_i} \right) \notag \\
&= 0 \notag
\end{align}
where the last equality follows from $\sum_{i=0}^n l_i \underline{a}_i = 0$.
This shows that $\Psi$ is an isomorphism. The statement about the Hodge filtration follows from Theorem \ref{thm:HodgeGKZ}, the fact that $x$ is a smooth point of $\mcm^0_{\widetilde{A}}$ and the definition of the inverse image of filtered $\mcd$-modules (cf. \cite{Saito1} chapter 3.5, notice that no shift in the Hodge filtration occurs since we are dealing with left $\mcd$-modules instead of right $\mcd$-modules as in loc. cit.).
\end{proof}

\section{Landau-Ginzburg models and non-commutative Hodge structures}
\label{sec:LGModel}

In this final section we will give a first application of our main result. It is concerned with
Hodge-theoretic properties of differential systems occurring in toric mirror symmetry. More precisely,
we will prove \cite[Conjecture 6.15]{ReiSe2} showing that the so-called \emph{reduced quantum $\cD$-module}
of a nef (also called weak Fano) complete intersection inside a smooth projective toric variety underlies a (variation of) non-commutative
Hodge structure(s). We will recall the necessary notation and results of loc.cit.
and then deduce this conjecture from our main Theorem \ref{thm:HodgeGKZ}. The basic strategy to obtain the proof of the conjecture is to identify the reduced quantum $\cD$-module with an object which is the Fourier-Laplace transform of a filtered $\cD$-module underlying a pure polarized Hodge module. The latter is nothing but the image of the duality morphism  from Corollary \ref{cor:StrictlyFiltered}. Notice that this corollary depends in an essential way on our main Theorem \ref{thm:HodgeGKZ}, since the strictness of the duality morphism with respect to the order filtrations holds only because the latter are (up to a shift) the Hodge filtrations of mixed Hodge modules. The identification
with the reduced quantum $\cD$-module relies on the explicit description of the latter from \cite{MM17} (already used extensively in \cite{ReiSe2}).

Let $\XSig$ be a smooth, projective and toric variety with $\dim_{\dC}(\XSig)=k$. Put
$m:=k+b_2(\XSig)$. Let $\cL_1,\ldots,\cL_l$ be globally generated line bundles
on $\XSig$ (in particular, they are nef according to \cite[Section 3.4]{Fulton}) and assume that $-K_{\XSig}-\sum_{i=1}^l c_1(\cL_i)$ is nef.
Put $\cE:=\oplus_{i=1}^l\cL_i$, and let $\cE^\vee$ be the dual vector bundle. Its total space $\dV(\cE^\vee):=\mathbf{Spec}_{\cO_{\XSig}}(\textup{Sym}_{\cO_{\XSig}}(\cE))$ is a quasi-projective toric variety with defining fan $\Sigma'$. The matrix
$A\in M((k+l)\times (m+l), \dZ)$ whose columns are the primitive integral generators of the rays of $\Sigma'$
then satisfies the conditions in Theorem \ref{thm:Duality}. More precisely, we have $\dZ\widetilde{A}=\dZ^{d+1}$ and
it follows from \cite[Proposition 5.1]{ReiSe2} that the semi-group $\dN \widetilde{A}$ is normal and that
we have $int(\dN\widetilde{A})=\widetilde{c}+\dN\widetilde{A}$, where $\widetilde{c}=\sum_{i=m+1}^{m+l} e_i=(l+1,\underline{0},\underline{1})$, $e_i$ being the
$i$'th standard vector in $\dZ^{1+m+l}$.

The strictly filtered duality morphism $\phi$ from Corollary \ref{cor:StrictlyFiltered} is more concretely given as
$$
\begin{array}{rcl}
\phi:(\cM_{\widetilde{A}}^{-(l+1,\underline{0},\underline{1})},F^{ord}_{\bullet-l-1}) & \longrightarrow & (\cM_{\widetilde{A}}^0,F^{ord}_{\bullet}) \\ \\
P & \longmapsto & P\cdot\partial_{\lambda_0}\cdot\partial_{\lambda_{m+1}}\cdot\ldots\cdot\partial_{\lambda_{m+l}}\, .
\end{array}
$$
\begin{proposition}\label{prop:PureHodge}
The image of $\phi$ underlies a pure Hodge module of weight $m+k+2l$, where the Hodge filtration is given by
$$
F^H_\bullet im(\phi) = im(\phi) \cap F^{ord}_{\bullet+k+l}\cM_{\widetilde{A}}^0.
$$
\end{proposition}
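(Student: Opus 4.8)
The plan is to exploit the factorization of the duality morphism $\phi$ through the natural map coming from Proposition \ref{prop:morphdualGKZ} together with the strictness statement of Corollary \ref{cor:StrictlyFiltered}, and to identify the image with the intermediate (minimal) extension sitting between $\bD\cM_{\widetilde{A}}^0(-n-d)$ (equivalently $\cM_{\widetilde{A}}^{-\widetilde{c}}(-n-d)$, by Theorem \ref{thm:Duality}) and $\cM_{\widetilde{A}}^0$. Concretely, I would first recall from Theorem \ref{thm:HodgeGKZ} that $\cM_{\widetilde{A}}^0$ underlies a mixed Hodge module with $F^H_\bullet=F^{ord}_{\bullet+d}$, and from Theorem \ref{thm:Duality} that its dual $\bD\cM_{\widetilde{A}}^0\simeq\cM_{\widetilde{A}}^{-\widetilde{c}}$ carries the Hodge filtration $F^H_p=F^{ord}_{p-n-c_0}$. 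In the present geometric situation $d=k+l$, $n=m+l$, $c_0=l+1$, and $\widetilde{c}=(l+1,\underline 0,\underline 1)$. The morphism $\phi$ of Corollary \ref{cor:StrictlyFiltered} is, up to the Tate twist bookkeeping already carried out before that corollary, a morphism of \emph{mixed} Hodge modules $\cM_{\widetilde{A}}^{-\widetilde{c}}(-n-d)\to\cM_{\widetilde{A}}^0$; its image $\mathrm{im}(\phi)$ is therefore a sub-object in $\mathrm{MHM}$, and since it is strictly filtered (Corollary \ref{cor:StrictlyFiltered}), the Hodge filtration on the image is the one induced from the source (equivalently from the target): $F^H_\bullet\,\mathrm{im}(\phi)=\mathrm{im}(\phi)\cap F^{ord}_{\bullet+k+l}\cM_{\widetilde{A}}^0$, using $d=k+l$. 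This already gives the filtration formula in the statement; what remains is the purity and the weight.

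For the weight, the key structural input is that both $\cM_{\widetilde{A}}^0$ and $\cM_{\widetilde{A}}^{-\widetilde{c}}$ arise, via Theorem \ref{thm:4termseq}, as the ``middle'' term of a four-term exact sequence of mixed Hodge modules whose outer terms are (sums of Tate twists of) constant Hodge modules $\,^p\dQ^H_V$ or their duals — i.e.\ the cohomology of the Radon transform of $g_*\,^p\dQ^H_T$. The morphism $\phi$ factors, by construction (see the proof of Proposition \ref{prop:morphdualGKZ}), through the composite
\[
\cH^{-n}({}^!\cR(g_!\bD\,{}^p\dQ^H_T))(-n-d)\lra \cH^n({}^*\cR(g_*{}^p\dQ^H_T))\lra \cH^{n+1}({}^*\cR^\circ_c(g_*{}^p\dQ^H_T)),
\]
and the image of $\phi$ is the image of this composite, which by exactness of the two four-term sequences is exactly the image of the middle map ${}^!\cR\to{}^*\cR$, sandwiched between the submodule $\cH^n({}^*\cR_{cst})$ killed off on one side and the quotient $\cH^{n+1}({}^*\cR^\circ_c)$ on the other. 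I would argue that $\mathrm{im}(\phi)$ is the \emph{intermediate extension} of the variation of Hodge structure living on the open locus where $\cM_{\widetilde{A}}^0$ is smooth (the complement of the discriminant / resonance locus), hence a \emph{pure} Hodge module. The weight is then computed from the weight of that generic variation: ${}^p\dQ^H_V$ has weight $\dim V=n+1$, its Radon transform ${}^*\cR$ shifts weights in the standard way (the kernel $\IC_Z$ contributes, and $\pi_{2*}$ on the universal family adds the fiber dimension), and the Tate twist $(-n-d)$ in $\phi$ adds $2(n+d)$ to the weight. Carrying the bookkeeping through with $n=m+l$, $d=k+l$ should land on weight $m+k+2l$; I would cross-check this against the normalization $\dim(W_u\times V)$ and the shift $n-d=m-k$ appearing repeatedly in Section 3, and against the expected weight of the pure Hodge module underlying a nondegenerate hypergeometric local system of rank equal to $\deg$ of the Newton polytope.

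The main obstacle I expect is the purity statement itself: a priori $\cM_{\widetilde{A}}^0$ is only a \emph{mixed} Hodge module (it has a nontrivial weight filtration, visible already from the fact that its generic part is a nontrivial variation with unipotent local monodromies at the boundary). What makes $\mathrm{im}(\phi)$ pure is precisely that $\phi$ realizes the canonical morphism $j_{!*}$ from the ``$!$'' to the ``$*$'' extension of the generic VHS across the discriminant; equivalently, one must show that $\mathrm{im}(\phi)$ has neither sub- nor quotient-objects supported on the discriminant. I would deduce this from the two exact sequences of Theorem \ref{thm:4termseq}: the outer terms there are constant (skyscraper-free, supported on all of $V$) Hodge modules, so the image of the concatenated map has no sub/quotient supported in codimension $\ge 1$ coming from those; the remaining possible supported pieces are controlled by \cite[Lemma 1.15]{Reich2} (uniqueness of the morphism, already invoked in Proposition \ref{prop:morphdualGKZ}) and by the fact — provable by a local computation at a generic point of the discriminant, or by citing that a GKZ-system with these combinatorial hypotheses has a self-dual intermediate extension — that $\cM_{\widetilde{A}}^{-\widetilde{c}}\to\cM_{\widetilde{A}}^0$ has image equal to both its own image and the image of the dual map. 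Once purity is in hand, strictness of $\phi$ (Corollary \ref{cor:StrictlyFiltered}) upgrades the filtered statement to the asserted $F^H_\bullet\,\mathrm{im}(\phi)=\mathrm{im}(\phi)\cap F^{ord}_{\bullet+k+l}\cM_{\widetilde{A}}^0$, completing the proof.
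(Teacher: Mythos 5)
Your derivation of the filtration formula is fine and is essentially the paper's: by Proposition \ref{prop:morphdualGKZ}, $\phi$ is (up to the Tate twist bookkeeping) a morphism of mixed Hodge modules with target $\cM^0_{\widetilde{A}}$, morphisms of mixed Hodge modules are strict for the Hodge filtration, so $F^H_\bullet\, im(\phi)=im(\phi)\cap F^H_\bullet \cM^0_{\widetilde{A}}$, and Theorem \ref{thm:HodgeGKZ} with $d=k+l$ converts this into $im(\phi)\cap F^{ord}_{\bullet+k+l}\cM^0_{\widetilde{A}}$.

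The genuine gap is the purity and the weight, which you yourself identify as the main obstacle but never actually close. The paper settles both by quoting \cite[Theorem 2.16]{ReiSe2} together with Proposition \ref{prop:morphdualGKZ}; the mechanism behind that citation is a weight argument you never invoke: via Theorem \ref{thm:4termseq}, $\phi$ is induced by the natural morphism $\cH^0(\varphi_{A!}\,\mbd\,{}^p\mbq^H_{T\times\Lambda})(-d-n)\rightarrow \cH^0(\varphi_{A*}\,{}^p\mbq^H_{T\times\Lambda})$, and since $\mbd\,{}^p\mbq^H_{T\times\Lambda}(-d-n)\simeq {}^p\mbq^H_{T\times\Lambda}$ is pure of weight $\dim(T\times\Lambda)=d+n=(k+l)+(m+l)=m+k+2l$, Saito's weight estimates ($\cH^0 f_!$ can only lower, $\cH^0 f_*$ only raise weights) force the image of that morphism to be pure of weight $m+k+2l$; $im(\phi)$ is a subquotient of this image, hence pure of the same weight. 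Your substitute arguments do not deliver this: constancy of the outer terms of the four-term sequences says nothing about possible sub- or quotient-objects of $im(\phi)$ supported on the discriminant; \cite[Lemma 1.15]{Reich2} is a uniqueness statement for morphisms, not a purity statement; and ``a local computation at a generic point of the discriminant'' or ``a self-dual intermediate extension'' is exactly what would have to be proved. Likewise the weight is only asserted (``should land on weight $m+k+2l$'', ``I would cross-check''), not derived, and the Radon-kernel dimension bookkeeping you sketch is not where it comes from -- it comes from the purity of ${}^p\mbq^H_{T\times\Lambda}$ and the $f_!$ versus $f_*$ weight yoga. Your intermediate-extension picture of $im(\phi)$ is consistent with the content of the cited result from \cite{ReiSe2}, but as written your argument establishes neither the purity nor the weight.
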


\begin{proof}
This is a consequence of \cite[Theorem 2.16]{ReiSe2} and of Proposition \ref{prop:morphdualGKZ}.
\end{proof}

A main point in the paper \cite{ReiSe2} is to consider the partial localized Fourier transformations of the
GKZ-systems $\cM_{\widetilde{A}}^\beta$. We recall the main construction and refer to \cite[Section 3.1]{ReiSe2} for details
(in particular concerning the definition and properties of the Fourier-Laplace functor $\FL$ and its ``localized'' version $\FL^{loc}$).
Let (as done already in section in \ref{subsec:Reminder}) $\Lambda$ be the affine space $\dC^{m+l}$ with coordinates $\lambda_1,\ldots,\lambda_{m+l}$ (so that $V=\dC_{\lambda_0}\times \Lambda$) and
put $\widehat{V}:=\dC_z\times \Lambda$. Let
$\widehat{\cM}^{(\beta_0,\beta)}_{A}$ be the $\cD_{\widehat{V}}$-module $\cD_{\widehat{V}}[z^{-1}]/ \cI$, where $\cI$ is the left ideal generated
by the operators $\widehat{\Box}_{\underline{l}}$ (for all $l\in\dL_A$), $\widehat{E}_j - \beta_j z$ (for $j=1,\ldots, k+l$) and $\widehat{E}- \beta_0 z$, which are defined by
$$
\begin{array}{rcl}
\widehat{\Box}_{\underline{l}} & := &  \prod\limits_{i:l_i<0} (z \cdot \partial_{\lambda_i})^{-l_i}  -  \prod\limits_{i:l_i>0} (z \cdot \partial_{\lambda_i})^{l_i}\, ,  \\ \\
\widehat{E}   & := & z^2\partial_z +\sum_{i=1}^{m+l} z\lambda_i\partial_{\lambda_i}\, ,  \\ \\
\widehat{E}_j & := & \sum_{i=1}^{m+l} a_{ji}\, z\lambda_i\partial_{\lambda_i}\, .
\end{array}
$$
We denote the corresponding $\mcd_{\widehat{V}}$-module by  $\widehat{\mcm}^{(\beta_0,\beta)}_{A}$.
Then we have (see \cite[Lemma 3.2]{ReiSe2})
$$
\FL^{loc}_\Lambda\left(\cM_{\widetilde{A}}^{(\beta_0,\beta)}\right)=\widehat{\cM}_A^{(\beta_0+1,\beta)}.
$$
Consider the filtration on $\cD_{\widehat{V}}$
for which $z$ has degree $-1$, $\partial_z$ has degree $2$ and $\deg(\lambda_i)=0$, $\deg(\partial_{\lambda_i})=1$.
Write $\MF^z(\cD_{\widehat{V}})$ for the category of well-filtered $\cD_{\widehat{V}}$-modules (that is, $\cD_{\widehat{V}}$-modules
equipped with a filtration compatible with the filtration on $\cD_{\widehat{V}}$ just described and such that
the corresponding Rees module is coherent over the corresponding Rees ring). Denote by
$G_\bullet$ the induced filtrations on the module $\widehat{\cM}_{A}^{(\beta_0,\beta)}$, which are $\cR_{\dC_z\times \Lambda}$-modules.
We have
$$
G_0 \widehat{\cM}^{(\beta_0,\beta)}_A = \cR_{\dC_z\times \Lambda}/\cR_{\dC_z\times \Lambda}(\widehat{\Box}_{\underline{l}})_{l\in\dL_A}+\cR_{\dC_z\times \Lambda} \widehat{E} + \cR_{\dC_z\times \Lambda}(\widehat{E}_j)_{k=1,\ldots,k+l}
$$
and $G_k \widehat{\cM}^{(\beta_0,\beta)}_A = z^k\cdot G_0 \widehat{\cM}^{(\beta_0,\beta)}_A$
In general, the modules $\widehat{\cM}^{(\beta_0,\beta)}_A$ and their filtration steps may be quite complicated.
However, we have considered in \cite{ReiSe2} their restriction to a specific Zariski open subset $\Lambda^\circ \subset \left(\Lambda \backslash \bigcup_{i=1}^{m+l}\{w_i=0\}\right)\subset \Lambda$ (called $W^\circ$ in
\cite[Remark 3.8]{ReiSe2}),
which contains the critical locus of the familiy of Laurent polynomials associated with the matrix $A$ (but
excludes certain singularities at infinity of this family). Denote by ${^\circ\!}\widehat{\cM}_A^{(\beta_0,\beta)}$ the restriction
$(\widehat{\cM}_A^{(\beta_0,\beta)})_{|\dC_z \times \Lambda^\circ}$ together with the induced filtration
$G_\bullet {^\circ\!}\widehat{\cM}_A^{(\beta_0,\beta)}$. Then $G_k {^\circ\!}\widehat{\cM}_A^{(\beta_0,\beta)}$ is $\cO_{\dC_z\times \Lambda^\circ}$-locally
free for all $k$. Moreover, the multiplication by $z$ is invertible on $\widehat{\cM}_A^{(\beta_0,\beta)}$, filtered
with respect to $G_\bullet$ (shifting the filtration by one) and so is its inverse. Hence, we have a strict morphism
$$
\cdot z: ({^\circ\!}\widehat{\cM}_A^{(\beta_0,\beta)}, G_\bullet) \longrightarrow ({^\circ\!}\widehat{\cM}_A^{(\beta_0-1,\beta)}, G_{\bullet+1}) .
$$

We also need a slightly modified version of the Fourier-Laplace transformed GKZ-systems. More precisely,
define the modules ${^\circ\!}\widehat{\cN}_A^\beta$ as the cyclic quotients
of $\cD_{\dC_z\times \Lambda^\circ}[z^{-1}]$ by the left ideal generated by $\widetilde{\Box}_{\underline{l}}$ for $\underline{l}\in\dL_A$ and $\widehat{E}_j-z\beta_j$ for $j=0,\ldots,k+c$, where
$$
\begin{array}{rcl}
\widetilde{\Box}_{\underline{l}} &:= &
\prod\limits_{i\in\{1,\ldots,m\}:\;l_i>0} \lambda_i^{l_i}(z\cdot \partial_i)^{l_i}\prod\limits_{i\in\{m+1,\ldots,m+l\}:\;l_i>0}
\prod\limits_{\nu=1}^{l_i} (\lambda_i(z\cdot\partial_i) -z\cdot \nu) \\ \\
&& -
\prod\limits_{i=1}^{m+l}\lambda_i^{l_i} \cdot\prod\limits_{i\in\{1,\ldots,m\}:\;l_i<0} \lambda_i^{-l_i}(z \cdot \partial_i)^{-l_i}\prod\limits_{i\in\{m+1,\ldots,m+l\}:\;l_i<0}
\prod\limits_{\nu=1}^{-l_i} (\lambda_i(z\cdot\partial_i) -z\cdot \nu).
\end{array}
$$

Consider the \emph{invertible} morphism
\begin{equation}\label{eq:IsoMN}
\Psi: {^\circ\!}\widehat{\cN}^{(0,\underline{0},\underline{0})}_{A} \longrightarrow {^\circ\!}\widehat{\cM}^{-(2l,\underline{0},\underline{1})}_{A}
\end{equation}
given by right multiplication with $z^l\cdot\prod_{i=m+1}^{m+l} \lambda_i$ (recall that $\lambda_i\neq 0$ on $\Lambda^\circ$).
We define $\widetilde{\phi}$ to be the composition $\widetilde{\phi} := \widehat{\phi}\circ\Psi$,
where $\widehat{\phi}$ is the morphism
$$
\widehat{\phi}:  {^\circ\!}\widehat{\cM}_A^{-(2l,\underline{0},\underline{1})} \longrightarrow {^\circ\!}\widehat{\cM}_A^{(-l,\underline{0},\underline{0})},
$$
given by right multiplication with $\partial_{\lambda_{m+1}}\cdot\ldots\cdot\partial_{\lambda_{m+l}}$. In concrete terms, we have:
$$
\begin{array}{rcl}
\widetilde{\phi}:{^\circ\!}\widehat{\cN}_{A}^{(0,\underline{0},\underline{0})} & \longrightarrow & {^\circ\!}\widehat{\cM}_{A}^{(-l,\underline{0},\underline{0})}\, , \\ \\
x & \longmapsto & \widehat{\phi}(x\cdot z^l\cdot\lambda_{m+1}\cdot\ldots\cdot\lambda_{m+l}) =
x\cdot (z\lambda_{m+1}\partial_{m+1})\cdot\ldots\cdot(z\lambda_{m+l}\partial_{m+l}).
\end{array}
$$
We have an induced filtration $G_\bullet {^\circ\!}\widehat{\cN}_A^{(0,\underline{0},\underline{0})}$ which satisfies
$$
G_0 {^\circ\!}\widehat{\cN}_A^{(0,\underline{0},\underline{0})} = \cR_{\dC_z\times \Lambda^\circ} /
\cR_{\dC_z\times \Lambda^\circ}(\widetilde{\Box}_{\underline{l}})_{\underline{l}\in\dL_A}+
\cR_{\dC_z\times \Lambda^\circ}(\widehat{E}_j-z\beta_j)_{j=0,\ldots,m+l}
$$
and $G_k {^\circ\!}\widehat{\cN}_A^{(0,\underline{0},\underline{0})} = z^k\cdot G_0 {^\circ\!}\widehat{\cN}_A^{(0,\underline{0},\underline{0})}$

In order to obtain the lattices $G_\bullet$ we need to extend the functor $\FL^{loc}_\Lambda$ to the category of filtered $\cD$-modules.
\begin{definition}
Let $(\cM, F_\bullet) \in \MF(\cD_V)=\MF(\cD_{\dC_{\lambda_0}\times \Lambda})$. Define $\cM[\partial_{\lambda_0}^{-1}]:=\cD_V[\partial_{\lambda_0}^{-1}]\otimes_{\cD_V}\cM$
and consider the natural localization morphism $\widehat{\textup{loc}}: \cM \rightarrow
\cM[\partial_{\lambda_0}^{-1}]$.
We define the saturation of $F_\bullet$ to be
\begin{equation}\label{eq:Saturation}
F_k \cM[\partial_{\lambda_0}^{-1}] := \sum_{j\geq 0} \partial_{\lambda_0}^{-j} \widehat{\textup{loc}}\left(F_{k+j} \cM\right)\, .
\end{equation}
and we denote by $G_\bullet \widehat{\cM}$ the filtration induced from
$F_k \cM[\partial_{\lambda_0}^{-1}]$ on $\widehat{\cM}:=\FL^{loc}_\Lambda(M) \in M_h(\cD_{\widehat{V}})=M_h(\cD_{\dC_z\times \Lambda})$.
Notice that for $(\cM, F_\bullet)=(\widehat{\cM}_A^{(\beta_0,\beta)},F^{ord}_\bullet)$, the two definitions of $G_\bullet$ coincide: As we have
$$
F^{ord}_k \cM^{(\beta_0-1,\beta)}_{A}[\partial_{\lambda_0}^{-1}] = im\left(\partial_{\lambda_0}^k\dC[\lambda_0,\lambda_1,\ldots,\lambda_{m+l}]
\langle\partial_{\lambda_0}^{-1},\partial_{\lambda_0}^{-1}\partial_{\lambda_1},\ldots,\partial_{\lambda_0}^{-1}\partial_{\lambda_{m+l}}\rangle\right) \textup{  in  }\cM^{(\beta_0-1,\beta)}_{\widetilde{A}}[\partial_{\lambda_0}^{-1}],
$$
the filtration induced by $F^{ord}_k \cM^{(\beta_0-1,\beta)}_{\widetilde{A}}[\partial_{\lambda_0}^{-1}]$ on $\widehat{\cM}^{(\beta_0,\beta)}_{A}$ is precisely
$G_k\widehat{\cM}^{(\beta_0,\beta)}_{A}$.

We denote by $(\FL^{loc}_\Lambda, \textup{Sat})$ the induced functor from the category $\MF(\cD_V)$ to the category
$\MF^z(\cD_{\widehat{\Lambda}})$ which sends $(\cM,F_\bullet)$ to $(\widehat{\cM},G_\bullet)$.
\end{definition}

From the above duality considerations, we deduce the following result.
\begin{proposition}\label{prop:StrictG}
The morphism
$$
\widetilde{\phi}:{^\circ\!}\widehat{\cN}_{A}^{(0,\underline{0},\underline{0})} \longrightarrow  {^\circ\!}\widehat{\cM}_{A}^{(-l,\underline{0},\underline{0})}
$$
is strict with respect to the filtration $G_\bullet$, in particular, we have
$$
\widetilde{\phi}\left(G_0 {^\circ\!}\widehat{\cN}_A^{(0,\underline{0},\underline{0})}\right)
=G_0 {^\circ\!}\widehat{\cM}_A^{(-l,\underline{0},\underline{0})} \cap \textup{im}(\widetilde{\phi})
$$
Moreover, the object $\left(im(\widetilde{\phi}), G_\bullet\right)$ is obtained via the functor
$(\FL^{loc}_\Lambda, \textup{Sat})$ from $\left(im(\phi), F^H_\bullet = F^{ord}_{\bullet+k+l}\right)$, which underlies
a pure Hodge module of weight $m+k+2l$ by
Proposition \ref{prop:PureHodge}.
\end{proposition}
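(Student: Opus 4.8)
The plan is to realise $\widetilde{\phi}$ as the image, under the filtered Fourier--Laplace functor $(\FL^{loc}_\Lambda,\textup{Sat})$ followed by restriction to $\dC_z\times\Lambda^\circ$, of the strictly filtered duality morphism $\phi$ provided by Corollary \ref{cor:StrictlyFiltered}, and then to read off both assertions from strictness of $\phi$ together with exactness of that functor. Concretely, since $A$ satisfies the hypotheses of Theorem \ref{thm:Duality} by \cite[Proposition 5.1]{ReiSe2} with $\widetilde{c}=(l+1,\underline{0},\underline{1})$, Corollary \ref{cor:StrictlyFiltered} yields the strictly filtered morphism $\phi\colon(\cM_{\widetilde{A}}^{-(l+1,\underline{0},\underline{1})},F^{ord}_{\bullet-l-1})\to(\cM_{\widetilde{A}}^{0},F^{ord}_{\bullet})$, $P\mapsto P\cdot\partial_{\lambda_0}\partial_{\lambda_{m+1}}\cdots\partial_{\lambda_{m+l}}$, whose image, with $F^H_\bullet=F^{ord}_{\bullet+k+l}\cap\textup{im}(\phi)$, underlies a pure Hodge module of weight $m+k+2l$ by Proposition \ref{prop:PureHodge}.

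I would first check that $(\FL^{loc}_\Lambda,\textup{Sat})$ is exact: the underlying functor $\FL^{loc}_\Lambda$ is built from the exact localisation $\cM\mapsto\cM[\partial_{\lambda_0}^{-1}]$ and the exact Fourier--Laplace functor (see \cite[Section 3.1]{ReiSe2}), while the saturation \eqref{eq:Saturation} is a directed union along the increasing family $\partial_{\lambda_0}^{-j}\widehat{\textup{loc}}(F_{k+j}\cM)$; hence it commutes with kernels, cokernels and images, and in particular carries strictly filtered morphisms to strictly filtered morphisms — and this persists after the flat restriction to $\dC_z\times\Lambda^\circ$. Second, I would identify the transform of $\phi$ with $\widetilde{\phi}$. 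Using $\FL^{loc}_\Lambda(\cM_{\widetilde{A}}^{(\beta_0,\beta)})=\widehat{\cM}_A^{(\beta_0+1,\beta)}$ and that under $\FL^{loc}_\Lambda$ right multiplication by $\partial_{\lambda_0}$ becomes right multiplication by $z^{-1}$ (up to sign) while right multiplication by the $\partial_{\lambda_i}$ with $i\geq1$ is unchanged, the transform of $\phi$, restricted to $\dC_z\times\Lambda^\circ$, is (up to sign) the morphism ${^\circ\!}\widehat{\cM}_A^{-(l,\underline{0},\underline{1})}\to{^\circ\!}\widehat{\cM}_A^{(1,\underline{0},\underline{0})}$, $P\mapsto P\cdot z^{-1}\partial_{\lambda_{m+1}}\cdots\partial_{\lambda_{m+l}}$. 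Pre-composing with the strict isomorphisms $\Psi$ of \eqref{eq:IsoMN} and $\cdot\,z^{-l}$, and post-composing with $\cdot\,z^{l+1}$ (each factor $\cdot\,z$ shifting $G_\bullet$ by one, as recalled before the statement), and using $z^{-1}z^{l+1}=z^{l}$, one recovers exactly $\widetilde{\phi}=\widehat{\phi}\circ\Psi$, $x\mapsto x\cdot(z\lambda_{m+1}\partial_{m+1})\cdots(z\lambda_{m+l}\partial_{m+l})$; here one also uses that $\Psi$, being right multiplication by the degree-$l$ unit $z^{l}\lambda_{m+1}\cdots\lambda_{m+l}$ on the localised modules, is strictly filtered for the lattices $G_\bullet$.

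Granting these two points, strictness of $\phi$ and exactness of $(\FL^{loc}_\Lambda,\textup{Sat})$ at once give that $\widetilde{\phi}$ is strictly filtered for $G_\bullet$; evaluating strictness in degree zero gives the displayed identity $\widetilde{\phi}\bigl(G_0\,{^\circ\!}\widehat{\cN}_A^{(0,\underline{0},\underline{0})}\bigr)=G_0\,{^\circ\!}\widehat{\cM}_A^{(-l,\underline{0},\underline{0})}\cap\textup{im}(\widetilde{\phi})$. Moreover, since the functor commutes with images of strictly filtered morphisms, $(\textup{im}(\widetilde{\phi}),G_\bullet)$ is the image, under $(\FL^{loc}_\Lambda,\textup{Sat})$ followed by the restriction to $\dC_z\times\Lambda^\circ$ and the above strict isomorphisms, of $(\textup{im}(\phi),F^H_\bullet)$, which underlies a pure Hodge module of weight $m+k+2l$ by Proposition \ref{prop:PureHodge}. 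The step I expect to require the most care is the last identification in the second paragraph: one must keep track of the shifts of $G_\bullet$ produced by $\Psi$ and by the powers of $z$ so that the filtration transported to $\textup{im}(\widetilde{\phi})$ is precisely $G_\bullet$ and not a shift of it; once this bookkeeping, together with the exactness of $(\FL^{loc}_\Lambda,\textup{Sat})$, is in place, everything else is formal.
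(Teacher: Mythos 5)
Your proposal is correct and follows essentially the same route as the paper: both deduce strictness of $\widetilde{\phi}$ from the strictness of $\phi$ in Corollary \ref{cor:StrictlyFiltered}, transported through the saturated localized Fourier--Laplace construction of equation \eqref{eq:Saturation} (the paper phrases this as $\widehat{\phi}$ being the $\partial_{\lambda_0}^{-1}$-linear extension of $\phi$), and then compose with the strictly filtered invertible morphism $\Psi$ and the $z$-power shifts to land on $\widetilde{\phi}=\widehat{\phi}\circ\Psi$ with the unshifted lattices $G_\bullet$. Your explicit bookkeeping of the parameter and filtration shifts via $\cdot z^{\pm l}$, $\cdot z^{l+1}$ is just a more detailed spelling-out of what the paper asserts directly, and the ``moreover'' clause is obtained exactly as in the paper from Proposition \ref{prop:PureHodge}.
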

\begin{proof}
The morphism $\Psi$ is invertible, filtered (shifting the filtration by $-l$) and its inverse is also filtered. Hence
it is strict. Therefore the strictness of $\widetilde{\phi}$ follows from the strictness of $z\widehat{\phi}$.
We will deduce it from the strictness property of the morphism $\phi$ in Corollary \ref{cor:StrictlyFiltered}.

 Notice that the morphism $\widehat{\phi}$ is obtained from $\phi$ by linear extension
in $\partial_{\lambda_0}^{-1}$. Recall that the morphism
$$
\phi:(\cM_{\widetilde{A}}^{-(l+1,\underline{0},\underline{1})},F^{ord}_{\bullet}) \longrightarrow  (\cM_{\widetilde{A}}^0,F^{ord}_{\bullet+l+1})
$$
was strict, hence  equation
\eqref{eq:Saturation} yields the strictness of
$$
\widehat{\phi}:(\widehat{\cM}_A^{-(2l,\underline{0},\underline{1})},G_{\bullet}) \longrightarrow (\widehat{\cM}_A^{(-l,\underline{0},\underline{0})},G_{\bullet+l})
$$
Finally, as already noticed above, this yields the strictness of
$$
\widetilde{\phi}=\widehat{\phi}\circ\Psi:(\widehat{\cN}_{A}^{(0,\underline{0},\underline{0})},G_{\bullet}) \longrightarrow (\widehat{\cM}_A^{(-l,\underline{0},\underline{0})},G_{\bullet}).
$$
\end{proof}
The next corollary is now a direct consequence of \cite[Corollary 3.15]{Sa8}.
\begin{corollary}\label{cor:ncHodge1}
The free $\cO_{\dC_z\times \Lambda^\circ}$-module
$G_0{^\circ\!}\widehat{\cM}^{(-l,\underline{0},\underline{0})}_A \cap im(\widetilde{\phi})$ underlies a variation of pure polarized non-commutative
Hodge structures on $\Lambda^\circ$ (see \cite{SaNcHodgeSurvey} for a detailed discussion of this notion).
\end{corollary}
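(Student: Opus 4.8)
The plan is to recognise the statement as an instance of the general principle, due to M.~Saito and C.~Sabbah and formulated as \cite[Corollary 3.15]{Sa8}, that the Fourier--Laplace transform of a polarizable pure Hodge module carries, on the locus where the underlying meromorphic flat bundle is smooth, a variation of polarized non-commutative Hodge structures. Concretely, the input is the filtered $\cD_V$-module $(im(\phi), F^H_\bullet)$: by Proposition \ref{prop:PureHodge} it underlies a pure Hodge module $M$ of weight $m+k+2l$ with $F^H_\bullet im(\phi)=im(\phi)\cap F^{ord}_{\bullet+k+l}\cM^0_{\widetilde{A}}$. The first point to record is that $M$ is \emph{polarizable}: this follows from Proposition \ref{prop:morphdualGKZ} together with \cite[Theorem 2.16]{ReiSe2}, which identify $im(\phi)$ with the image of the natural self-duality morphism $\bD\cM^0_{\widetilde{A}}(-n-d)\to\cM^0_{\widetilde{A}}$ of the pure, polarizable GKZ Hodge module, so that a polarization of $M$ is induced from this.

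Next I would transport $M$ through the functor $(\FL^{loc}_\Lambda,\textup{Sat})$. By Proposition \ref{prop:StrictG} the resulting filtered $\cR_{\dC_z\times\Lambda}$-module is exactly $(im(\widetilde{\phi}), G_\bullet)$, and, crucially, the strictness of $\widetilde{\phi}$ with respect to $G_\bullet$ asserted there guarantees that the $G_0$-lattice of this transform is precisely $G_0\widehat{\cM}^{(-l,\underline{0},\underline{0})}_A\cap im(\widetilde{\phi})$, with no loss of information in passing to Rees modules. Over the Zariski-open subset $\Lambda^\circ$ — which was chosen in \cite{ReiSe2} so as to contain the critical locus of the associated family of Laurent polynomials but to exclude its singularities at infinity — all the lattices $G_k{^\circ\!}\widehat{\cM}_A^{(\beta_0,\beta)}$ are $\cO_{\dC_z\times\Lambda^\circ}$-locally free, as recalled above. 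Thus the hypotheses of \cite[Corollary 3.15]{Sa8} are in place, and it yields that $G_0\widehat{\cM}^{(-l,\underline{0},\underline{0})}_A\cap im(\widetilde{\phi})$ underlies a variation of pure polarized non-commutative Hodge structures on $\Lambda^\circ$ in the sense of \cite{SaNcHodgeSurvey}, which is the assertion.

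The main obstacle here is not conceptual but bookkeeping: one must verify that the composite $\widetilde{\phi}=\widehat{\phi}\circ\Psi$ and the saturation operation \eqref{eq:Saturation} transport the \emph{polarization} of $M$ — and not merely its underlying pure Hodge structure — correctly, and that the various filtration shifts (the $-l$ coming from $\Psi$, the $l$ coming from $\widehat{\phi}$, and the $k+l$ relating $F^H_\bullet$ to $F^{ord}_\bullet$) are compatible with the weight and the normalization conventions used in \cite{Sa8}. Once the identification of $(im(\widetilde{\phi}), G_\bullet)$ with $(\FL^{loc}_\Lambda,\textup{Sat})$ applied to the polarized Hodge module $(im(\phi), F^H_\bullet)$ is secured — which is essentially the content of Proposition \ref{prop:StrictG} — together with the local freeness of $G_0$ over $\Lambda^\circ$, the corollary follows formally from \cite[Corollary 3.15]{Sa8} with no further computation required.
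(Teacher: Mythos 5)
Your proposal is correct and follows essentially the same route as the paper: the paper likewise deduces the corollary directly from \cite[Corollary 3.15]{Sa8}, with Proposition \ref{prop:PureHodge} supplying the pure (polarizable) Hodge module $im(\phi)$ and Proposition \ref{prop:StrictG} identifying $(im(\widetilde{\phi}),G_\bullet)$, via strictness, as its image under $(\FL^{loc}_\Lambda,\textup{Sat})$ with $G_0$-lattice $G_0\widehat{\cM}^{(-l,\underline{0},\underline{0})}_A\cap im(\widetilde{\phi})$. Your additional remarks on polarization and filtration shifts only make explicit what the paper leaves implicit.
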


The main result in \cite{ReiSe2} concerns a mirror statement for several quantum $\cD$-modules
which are associated with the toric variety $\XSig$ and the split vector bundle $\cE.$ In particular, one can consider
the reduced quantum $\cD$-module $\overline{\QDM}(\XSig,\cE)$ which is a vector bundle on
$\dC_z\times H^0(\XSig,\dC)\times B^*_\varepsilon$, where $B^*_\varepsilon:=\{q\in (\dC^*)^{b_2(\XSig)},|\,0<|q|<\varepsilon\}$ together with a flat connection
$$
\nabla: \overline{\QDM}(\XSig,\cE) \rightarrow \overline{\QDM}(\XSig,\cE)\otimes_{\cO_{\dC_z\times H^0(\XSig,\dC)\times B^*_\varepsilon}}
z^{-1}\Omega^1_{\dC_z\times H^0(\XSig,\dC)\times B^*_\varepsilon} \left(\log(\{0\}\times H^0(\XSig,\dC)\times B^*_\varepsilon)\right).
$$
We refer to \cite{MM17} for a detailed discussion of the definition of $\overline{\QDM}(\XSig,\cE)$, a short version
can be found in \cite[Section 4.1]{ReiSe2}.
Notice that in loc.cit., $\overline{\QDM}(\XSig,\cE)$ is defined on some larger set, but
in mirror type statements only its restriction to $H^0(\XSig,\dC)\times\dC_z\times B^*_\varepsilon$ is considered.
In the sequel, we will need to consider a Zariski open subset of $\KM \subset (\dC^*)^{b_2(\XSig)}$ which contains $B^*_\varepsilon$.
We recall the main result from \cite{MM17}, which gives a GKZ-type description
of $\overline{\QDM}(\XSig,\cE)$. We present it in a slightly different form, taking into account
\cite[Proposition 6.9]{ReiSe2}. Let $\cR_{\dC_z\times \cK\mcm^0}$ be the sheaf of Rees rings on $\dC_z\times \KM$, and
$R_{\dC_z\times \cK\mcm^0}$ its module of global sections. If we write $q_1, \ldots, q_r$ for the coordinates on $(\dC^*)^r$ (with
$r:=b_2(\XSig)$, then $R_{\dC_z\times \cK\mcm^0}$) is generated by $z q_i\partial_{q_i}$ and $z^2\partial_z$ over $\cO_{\dC_z\times \KM}$.
\begin{theorem}\label{thm:MirMM17}
For any $\cL\in \textup{Pic}(\XSig)$, write $\widehat{\cL}\in R_{\dC_z\times \KM}$ for the associated ``quantized operator''
as defined in \cite[Notation 4.2.]{MM17} or \cite[Theorem 6.7]{ReiSe2}.
Define the left ideal $J$ of $R_{\mbc_z \times \mck\mcm^\circ}$ by
$$
J:=R_{\dC_z\times \KM}(Q_{\underline{l}})_{\underline{l}\in\dL_{A'}}+R_{\dC_z\times \KM}\cdot \widehat{E}\, ,
$$
where
$$
\begin{array}{rcl}
Q_{\underline{l}} & := &
\prod\limits_{i\in\{1,\ldots,m\}:l_i>0}\prod\limits_{\nu=0}^{l_i-1}\left(\widehat{\cD}_i-\nu z\right)
\prod\limits_{j\in\{1,\ldots,c\}:l_{m+j}>0}\prod\limits_{\nu=1}^{l_{m+l}}\left(\widehat{\cL}_j+\nu z\right)\\ \\
& - & \underline{q}^{\underline{l}}\cdot
\prod\limits_{i\in\{1,\ldots,m\}:l_i<0}\prod\limits_{\nu=0}^{-l_i-1}\left(\widehat{\cD}_i-\nu z\right)
\prod\limits_{j\in\{1,\ldots,c\}:l_{m+j}<0}\prod\limits_{\nu=1}^{-l_{m+l}}\left(\widehat{\cL}_j+\nu z\right)\, ,\\ \\
\widehat{E} & := & z^2\partial_z-\widehat{K}_{\dV(\cE^\vee)}\, .
\end{array}
$$
Here we write $\cD_i\in\textup{Pic}(\XSig)$ for a line bundle associated with the torus invariant divisor
$D_i$, where $i=1,\ldots,m$.
Let $K\subset R_{\dC_z\times \KM}$ be the ideal
$$
K:=\left\{P\in R_{\dC_z\times \KM}\,|\, \exists p\in \dZ, k\in \dN: \prod_{i=0}^k\prod_{j=1}^c (\widehat{\cL}+p+i)P\in J \right\}
$$
and $\cK$ the associated sheaf of ideals in $\cR_{\dC_z\times \KM}$.

Suppose as above that the bundle $-K_{\XSig}-\sum_{j=1}^l \cL_j$ is nef, and moreover that each individual bundle $\cL_j$ is ample.
Then there
is a map $\textup{Mir}: B^*_\varepsilon \rightarrow H^0(\XSig,\dC)\times B^*_\varepsilon$ such that we have an isomorphism
of $\cR_{\dC_z\times B^*_\varepsilon}$-modules
$$
\left(\cR_{\dC_z\times \KM}/\cK\right)_{|\dC_z\times B^*_\varepsilon} \stackrel{\cong}{\longrightarrow} \left(\id_{\dC_z}\times \textup{Mir}\right)^* \overline{\QDM}(\XSig,\cE).
$$
\end{theorem}

In order to relate the quantum $\cD$-module $\overline{\QDM}(\XSig,\cE)$ with our results on GKZ-systems,
we will use the restriction map $\overline{\rho}:\KM\hookrightarrow \Lambda$ as constructed in \cite{ReiSe2} (discussion before Definition
6.3. in loc.cit.). Then it follows from the results of loc.cit., Proposition 6.10, that we have an isomorphism of $\cR_{\dC_z\times \KM}$-modules
$$
\cR_{\dC_z\times \KM}/\cK\cong \left(\id_{\dC_z}\times\overline{\rho}\right)^*\left(\widetilde{\phi}\left(G_0 {^\circ\!}\widehat{\cN}_A^{(0,\underline{0},\underline{0})}\right)\right)
$$

Now we can deduce from Corollary \ref{cor:ncHodge1} the main result of this section.
\begin{theorem}\label{thm:ncHodge2}
Consider the above situation of a $k$-dimensional toric variety $\XSig$, globally generated line bundles
$\cL_1,\ldots,\cL_l$ such that $-K_{\XSig}-\cE$ is nef, where $\cE=\oplus_{j=1}^l\cL_j$, with $\cL_j$ ample for $j=1,\ldots,l$. Then the smooth $\cR_{\dC_z\times\KM}$-module
$(\id_{\dC_z}\times \textup{Mir})^*\overline{\QDM}(\XSig,\cE)$ (i.e., the vector bundle over
$\dC_z\times\KM$ together with its connection operator $\nabla$)
underlies a variation of pure polarized non-commutative Hodge structures.
\end{theorem}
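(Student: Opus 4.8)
The plan is to assemble the statement from the identifications recalled in this section together with Corollary~\ref{cor:ncHodge1}, the substance of the argument having already been carried out in the earlier sections. First I would pass from the geometric object to a hypergeometric one using Theorem~\ref{thm:MirMM11}, whose hypotheses (each $\cL_j$ ample, $-K_{\XSig}-\cE$ nef) are exactly those assumed here: it gives an isomorphism of $\cR_{\dC_z\times B^*_\varepsilon}$-modules
\[
\left(\cR_{\dC_z\times \KM}/\cK\right)_{|\dC_z\times B^*_\varepsilon}\;\cong\;\left(\id_{\dC_z}\times\textup{Mir}\right)^*\overline{\QDM}(\XSig,\cE)\, .
\]
Next I would invoke the isomorphism of $\cR_{\dC_z\times\KM}$-modules from \cite[Proposition 6.10]{ReiSe2},
\[
\cR_{\dC_z\times\KM}/\cK\;\cong\;\left(\id_{\dC_z}\times\overline{\rho}\right)^*\!\left(\widetilde{\phi}\!\left(G_0\,{^\circ\!}\widehat{\cN}_A^{(0,\underline{0},\underline{0})}\right)\right)\, ,
\]
which exhibits the reduced quantum $\cD$-module as a pullback, along $\overline{\rho}:\KM\hookrightarrow\Lambda$, of the $G$-lattice $\widetilde{\phi}(G_0\,{^\circ\!}\widehat{\cN}_A^{(0,\underline{0},\underline{0})})$; here one uses that $\overline{\rho}$ factors through the Zariski open subset $\Lambda^\circ\subset\Lambda$ on which all the objects of the previous subsections are defined.

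By Proposition~\ref{prop:StrictG} (strictness of $\widetilde{\phi}$ with respect to $G_\bullet$), this lattice equals $G_0\,{^\circ\!}\widehat{\cM}_A^{(-l,\underline{0},\underline{0})}\cap\textup{im}(\widetilde{\phi})$, and $(\textup{im}(\widetilde{\phi}),G_\bullet)$ is the image under $(\FL^{loc}_\Lambda,\textup{Sat})$ of $(\textup{im}(\phi),F^H_\bullet)$, which underlies a pure polarized Hodge module of weight $m+k+2l$ by Proposition~\ref{prop:PureHodge}. Therefore Corollary~\ref{cor:ncHodge1} applies: the free $\cO_{\dC_z\times\Lambda^\circ}$-module $G_0\,{^\circ\!}\widehat{\cM}_A^{(-l,\underline{0},\underline{0})}\cap\textup{im}(\widetilde{\phi})$ carries a variation of pure polarized non-commutative Hodge structures on $\Lambda^\circ$ in the sense of \cite{SaNcHodgeSurvey}. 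The remaining step is formal: the defining data of such a variation (the $\cO$-lattice, the meromorphic flat connection with its logarithmic pole along $z=0$, the pairing, and the $\dR$-structure) are functorial under base change along a holomorphic map, and the pullback of a polarization of weight $w$ is again a polarization of weight $w$; applying this to $\id_{\dC_z}\times\overline{\rho}$, and then transporting along the isomorphism of Theorem~\ref{thm:MirMM11} on $\dC_z\times B^*_\varepsilon$ (the structure being in fact globally defined over $\dC_z\times\KM$ via the $\cR$-module $\cR_{\dC_z\times\KM}/\cK$), endows $(\id_{\dC_z}\times\textup{Mir})^*\overline{\QDM}(\XSig,\cE)$ with the asserted structure.

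The only points requiring genuine care are bookkeeping ones: one must check that $\overline{\rho}$ really maps $\KM$ into $\Lambda^\circ$ (contained in the discussion preceding \cite[Definition 6.3]{ReiSe2} together with \cite[Remark 3.8]{ReiSe2}), and one must track the Tate-type shifts introduced by $\phi$, by $\Psi$ and by the saturation functor $\textup{Sat}$, verifying that they are exactly absorbed so that Corollary~\ref{cor:ncHodge1} produces a \emph{pure}, not merely mixed, polarized non-commutative Hodge structure of the expected weight. I expect this shift-and-weight compatibility to be the main obstacle, although it is essentially mechanical; everything genuinely new has already been established, namely the identification of the Hodge filtration with the order filtration (Theorem~\ref{thm:HodgeGKZ}) and the strictness statements of Corollary~\ref{cor:StrictlyFiltered} and Proposition~\ref{prop:StrictG}.
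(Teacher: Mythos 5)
Your argument is correct and follows essentially the same route as the paper: strictness of $\widetilde{\phi}$ (Proposition \ref{prop:StrictG}) identifies the lattice with $\widetilde{\phi}(G_0\,{^\circ\!}\widehat{\cN}_A^{(0,\underline{0},\underline{0})})$, Corollary \ref{cor:ncHodge1} provides the pure polarized non-commutative Hodge structure on $\Lambda^\circ$, and the pullback along $\overline{\rho}$ combined with the mirror isomorphism of Theorem \ref{thm:MirMM11} transports it to $(\id_{\dC_z}\times\textup{Mir})^*\overline{\QDM}(\XSig,\cE)$. The bookkeeping you flag (that $\overline{\rho}$ lands in $\Lambda^\circ$, and the weight/shift compatibilities) is already absorbed into the cited results of \cite{ReiSe2} and Corollary \ref{cor:ncHodge1}, so no further work is needed.
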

\begin{proof}
The strictness of $\widetilde{\phi}$ as shown in Proposition \ref{prop:StrictG} shows that
$G_0\widehat{\cM}^{(-l,\underline{0},\underline{0})}_A \cap im(\widetilde{\phi}) = \widetilde{\phi}\left(G_0\cN^{(0,\underline{0},\underline{0})}\right)$,
hence, by Corollary \ref{cor:ncHodge1}, the module $\widetilde{\phi}\left(G_0\cN^{(0,\underline{0},\underline{0})}\right)$ underlies a variation of pure polarized
non-commutative Hodge structures on $\Lambda^0$. Hence the assertion follows from the mirror statement of Theorem \ref{thm:MirMM17}.
\end{proof}

\bibliographystyle{amsalpha}
\def\cprime{$'$}
\providecommand{\bysame}{\leavevmode\hbox to3em{\hrulefill}\thinspace}
\providecommand{\MR}{\relax\ifhmode\unskip\space\fi MR }
\providecommand{\MRhref}[2]{  \href{http://www.ams.org/mathscinet-getitem?mr=#1}{#2}
}
\providecommand{\href}[2]{#2}

\vspace*{1cm}

\nd
Thomas Reichelt\\
Mathematisches Institut \\
Universit\"at Heidelberg\\
69120 Heidelberg\\
Germany\\
treichelt@mathi.uni-heidelberg.de

\vspace*{1cm}

\nd
Christian Sevenheck\\
Fakult\"at f\"ur Mathematik\\
Technische Universit\"at Chemnitz\\
09107 Chemnitz\\
Germany\\
christian.sevenheck@mathematik.tu-chemnitz.de

\end{document}